\documentclass[reqno,10pt]{article} 
\usepackage{mathrsfs,amsmath,amsthm,amssymb,amsfonts,bbm}
\usepackage{verbatim,pifont,empheq}
\usepackage{esint}
\usepackage{ulem}


\usepackage{color}
\usepackage[hyperfootnotes=false]{hyperref}
\hypersetup{
  colorlinks,
  citecolor=blue,
  linkcolor=red,
  urlcolor=blue}

\usepackage[left=1.4in, right=1.4in, bottom=1.4in]{geometry}
\setlength{\parskip}{.5em}
\linespread{1.1}

\allowdisplaybreaks[3]

\newtheorem{theorem}{Theorem}[section]
\newtheorem{lemma}[theorem]{Lemma}
\newtheorem{proposition}[theorem]{Proposition}
\newtheorem{corollary}[theorem]{Corollary}
\theoremstyle{definition}

\theoremstyle{remark}
\newtheorem{remark}[theorem]{Remark}

\numberwithin{equation}{section}

\newcommand{\Ag}[1]{\langle#1\rangle}
\newcommand{\cL}{\mathcal{L}}

\newcommand{\U}{\mathcal{U}}
\newcommand{\X}{\mathcal{X}}

\newcommand{\Z}{\mathbb{Z}}

\newcommand{\R}{\mathbb{R}}
\newcommand{\T}{\mathbb{T}}
\newcommand{\N}{\mathbb{N}}
\newcommand{\e}{\varepsilon}

\newcommand{\va}{\varepsilon}

\newcommand{\pa}{\partial}

\newcommand{\lm}{\lambda}
\newcommand{\Lm}{\Lambda}

\newcommand{\ga}{\gamma}
\newcommand{\de}{\delta}

\newcommand{\na}{\nabla}

\newcommand{\Sym}{\textbf{Sym}}
\newcommand{\bfD}{\mathbf{D}}

\newcommand{\txt}[1]{\text{#1}}

\newcommand{\xy}{\mathbf{xy}}
\newcommand{\xx}{\mathbf{xx}}
\newcommand{\yy}{\mathbf{yy}}
\newcommand{\yx}{\mathbf{yx}}
\newcommand{\x}{\mathbf{x}}
\newcommand{\y}{\mathbf{y}}

\DeclareMathOperator*{\esssup}{ess\,sup}

\begin{document}
\title{Optimal convergence rates in multiscale elliptic homogenization}

\author{
Weisheng Niu 
\thanks{School of Mathematical Science, Anhui University,
Hefei,  China. \textit{Email address:} \texttt{niuwsh@ahu.edu.cn}} 
\and
Yao Xu \thanks{School of Mathematical Sciences, University of Chinese Academy of Sciences, Beijing, 100049, China. \textit{Email address:} \texttt{xuyao89@gmail.com}}
\and
Jinping Zhuge \thanks{Morningside Center of Mathematics, Academy of Mathematics and Systems Science, Chinese Academy of Sciences, Beijing 100190, China.
\textit{Email address:} \texttt{jpzhuge@amss.ac.cn}} 
}

\date{}

\maketitle

\begin{abstract}
    This paper is devoted to the quantitative homogenization of multiscale elliptic operator $-\nabla\cdot A_\varepsilon \nabla$, where $A_\varepsilon(x) = A(x/\e_1, x/\e_2,\cdots, x/\e_n)$, $\varepsilon = (\varepsilon_1, \varepsilon_2,\cdots, \varepsilon_n) \in (0,1]^n$ and $\varepsilon_i > \varepsilon_{i+1}$. We assume that $A(y_1,y_2,\cdots, y_n)$ is 1-periodic in each $y_i \in \mathbb{R}^d$ and real analytic. Classically, the method of reiterated homogenization has been applied to study this multiscale elliptic operator, which leads to a convergence rate limited by the ratios $\max \{ \varepsilon_{i+1}/\varepsilon_i: 1\le i\le  n-1\}$. In the present paper, under the assumption of real analytic coefficients, we introduce the so-called multiscale correctors and more accurate effective operators, and improve the ratio part of the convergence rate to $\max \{ e^{-c\varepsilon_{i}/\varepsilon_{i+1}}: 1\le i\le n-1 \}$. This convergence rate is optimal in the sense that $c>0$ cannot be replaced by a larger constant. As a byproduct, the uniform Lipschitz estimate is established under a mild double-log scale-separation condition.
    
    \textbf{Keywords:} Multiscale homogenization; correctors; convergence rate; uniform Lipschitz estimate.

    \textbf{MSC2020:} 35B27.
\end{abstract}

\tableofcontents

\section{Introduction}

\subsection{Motivations}
The homogenization theory for elliptic equations with one single microscopic scale has been well understood, for either deterministic or random coefficients. However, many hierarchical composite materials (such as biological tissues and nanocomposites) are not characterized by just one microscopic scale but rather by a hierarchy of scales, i.e., a composite material with inclusions at one scale and further fine structures at smaller scales within those inclusions \cite{Lakes93, FW07,LM00}. The multiscale nature also arises in fluid dynamics with a close relation to the evolution of turbulence;  see \cite{FP94,Fr95,MK99,JK99,AV25,BSW23} and references therein.
In this paper, we study the elliptic equations (or systems) with variable coefficients oscillating at multiple microscopic scales. Precisely, we consider
\begin{equation}\label{eq.Le=f}
    -\nabla\cdot A_\e(x) \nabla u_\e = f,
\end{equation}
in a bounded domain $\Omega \subset \R^d$. The coefficient matrix $A_\e$ takes a form of 
\begin{equation*}
    A_\e(x) = A\Big(\frac{x}{\e_1}, \frac{x}{\e_2}, \cdots, \frac{x}{\e_n} \Big),
\end{equation*}
where $\e = (\e_1, \e_2, \cdots, \e_n) \in (0,1]^{n}$. Importantly, we assume that $A(y_1,y_2,\cdots, y_n)$ is 1-periodic in each $y_i = (y_{i}^1, y_{i}^2, \cdots, y_{i}^d) \in \R^d$. Without loss of generality, we assume $1 \gg \e_1 > \e_2 > \cdots > \e_n >0$.  The ellipticity and regularity assumptions of $A$ will be given later precisely.

The homogenization for equation \eqref{eq.Le=f} was first rigorously studied in the classical monograph \cite{BLP78} for the case $\e_i = \e^i$ for $\e \in (0,1)$; also see \cite{Ave87} for linear elasticity. In this case, since $\e$ is small, the scales $\e^i$'s satisfy $\e \gg \e^2 \gg \e^3 \cdots \gg \e^n$, namely, these $n$ distinct scales are all well-separated from each other. Thus a natural strategy can be applied: first homogenize with respect to the smallest microscopic scale $\e_n$ while keeping the remaining $n-1$ larger scales, and then homogenize the second smallest scale $\e_{n-1}$ while keeping the remaining $n-2$ scales, and repeat this process until all the microscopic scales are homogenized. As a consequence, the optimal $O(\e)$ convergence rate was obtained in \cite{BLP78}. This homogenization process is called reiterated homogenization (probably known already on the physical level in the 1930’s \cite{Bru35}),
whose outcome is a homogenized equation with constant coefficient matrix. This homogenized matrix is uniquely determined by $A(y_1, y_2, \cdots, y_n)$ and independent of the microscopic scales $\e$. Later on, Allaire and Briane \cite{AB96} established the qualitative reiterated homogenization under a scale-separation condition, namely,
\begin{equation}\label{cond.separate}
	\lim_{\e\to 0} \e_1 = 0, \quad \lim_{\e \to 0} \frac{\e_{i+1}}{\e_i} = 0, \quad \txt{for all } i=1,2,\cdots, n-1.
\end{equation}
The condition \eqref{cond.separate} is the minimal assumption to guarantee a unique homogenized equation independent of the micro-scales $\e$; see the example in Section \ref{sec.eg1}. Since then, the method of reiterated homogenization has been extended to various kinds of models, such as nonlinear elliptic equations/functionals \cite{BL00,LLPW01,MV05}, parabolic equations \cite{HSW05,Wou10}, etc. The effective numerical algorithms to solve \eqref{eq.Le=f} have also been developed; see e.g. \cite{HS11,KORS22,HJZ24}. 

Recently, by the method of reiterated homogenization, the quantitative convergence rate for \eqref{eq.Le=f} was obtained by Niu, Shen and Xu \cite{NSX20}:
\begin{equation}\label{rate.LipA}
    \| u_\e - u_0 \|_{L^2(\Omega)} \le C\Big(\e_1 + \frac{\e_2}{\e_1} + \cdots + \frac{\e_n}{\e_{n-1}} \Big) \| f \|_{L^2(\Omega)},
\end{equation}
where $u_0$ is the solution of the homogenized equation under \eqref{cond.separate}. It was shown in \cite{NSX20} that the above convergence rate is optimal at least for Lipschitz coefficients (also see \cite{Niu24} for parallel results for parabolic equations with several temporal scales). More recently, the reiterated homogenization method has been used to obtain the anomalous dissipation in scalar turbulence \cite{AV25,BSW23}.

On the other hand, we are also interested in the uniform regularity for the multiscale elliptic equation \eqref{eq.Le=f}. With the convergence rate \eqref{rate.LipA}, the uniform Lipschitz estimate was established in \cite{NSX20} under a quantitative well-separation condition
\begin{equation}\label{cond.well-separate}
	\e_{i+1} \lesssim \e_i^{1+\alpha} \quad \txt{for all } i = 1,2,\cdots, n-1,
\end{equation} 
for some $\alpha>0$. The condition \eqref{cond.well-separate} is often too strong in applications to multiscale self-similar materials. For example, the case $\e_1 = \e$ and $\e_2 = \e/|\log \e|$ is not included in the condition \eqref{cond.well-separate}. More recently, for arbitrary $\e \in (0,1]^n$ without any scale-separation condition, Niu and Zhuge established the uniform $C^\alpha$ regularity for any $\alpha \in (0,1)$ by a compactness method \cite{NZ23} and the $W^{1,p}$ estimates for any $p\in (1,\infty)$ by a quantitative method \cite{NZ24}. However, the uniform Lipschitz estimate for arbitrary $\e \in (0,1]^n$ remains to be an open problem. We mention that in some special cases, such as $\e_{i}/\e_1$ are all constants of order $O(1)$ satisfying certain Diophantine condition, the quantitative convergence rates and uniform Lipschitz estimate are valid; see \cite{Koz78,Shen15,AGK16,SZ18,NZ23}.

In this paper, we reconsider the optimality of the convergence rates for the equation \eqref{eq.Le=f}.
The convergence rate \eqref{rate.LipA} obtained in \cite{NSX20} is limited by the ratios $\e_{i+1}/\e_i$, which could be extremely slow. For example, in the aforementioned case $n=2$, $\e_1 = \e$ and $\e_2 = \e/|\log \e|$, the convergence rate is $O(|\log \e|^{-1})$, which is unsatisfactory in application. We emphasize again that it has been seen in \cite{NSX20} that this convergence rate is in general optimal at least for Lipschitz coefficients by the method of reiterated homogenization.
In view of the above fact, a natural belief in classical homogenization is that, in order to improve the ratio terms of convergence rate, one has to
introduce more correction terms that include the ones of order $O(\e_{i+1}/\e_i)$. Surprisingly, this is not the case for the ratio terms in multiscale homogenization. In the present paper, we will show that the possible slow convergence rates are not due to the missing correction terms, but the regularity of coefficients and the reiterated homogenization method itself, that lead to an inaccurate homogenized equation. The main flaw in the reiterated homogenization method is that it actually only homogenizes one (the smallest) scale at a time and the different coupled scales are not interacting each other. The lack of interaction between distinct scales, particularly for non-well-separated or almost resonant scales, yields inaccuracy in homogenization. To solve this issue, we need to develop a true multiscale method that can handle the interaction between different scales and give the optimal effective equations and convergence rates.

\subsection{Main results}
We assume that $A = A(y_1,y_2,\cdots, y_n)$ satisfies the following assumptions:
\begin{itemize}
    \item Ellipticity and boundedness: there exists $\lambda>0$ such that for any $\xi \in \R^d$,
    \begin{equation}\label{as.ellipticity}
        \xi\cdot A\xi \ge \lambda |\xi|^2, \qquad |A\xi| \le \lambda^{-1} \xi.
    \end{equation}

    \item Periodicity: for any $z_1,\cdots, z_n \in \Z^d$,
    \begin{equation}\label{as.periodicity}
        A(y_1+z_1, y_2+z_2,\cdots, y_n + z_n) = A(y_1,y_2,\cdots, y_n).
    \end{equation}

    \item Real analyticity: there exist $C_0$ and $\Lambda_0 > 0$ such that for any $\ell \ge 0$,
    \begin{equation}\label{as.analyticity}
        |\bfD_n^\ell A| \le C_0 \Lambda_0^{\ell} \ell!, 
    \end{equation}
    where $\bfD_n = (\nabla_{y_1}, \nabla_{y_2},\cdots, \nabla_{y_n})$.
\end{itemize}
The constants $(d, n, \lambda, C_0, \Lambda_0)$ will be called the characters of $A$.

Now we state our main theorem. For the sake of simplicity and convenience, the theorem is stated without seeking generality (for example, the same result holds for elliptic systems, or  different boundary conditions; see \cite{shenbook1}).
\begin{theorem}\label{thm.MainRate}
    Assume $A$ satisfies \eqref{as.ellipticity}, \eqref{as.periodicity} and \eqref{as.analyticity}. Let $\Omega$ be a bounded $C^{1,1}$ domain and $u_\e$ be the weak solution of
    \begin{equation*}
        \left\{ \begin{aligned}
            -\nabla\cdot A_\e \nabla u_\e &= f \quad \text{in } \Omega,\\
            u_\e &= g \quad \text{on } \partial \Omega.
        \end{aligned}
        \right.
    \end{equation*}
    Then there exists a constant (effective) matrix $\overline{A}$ (depending on $A$  and $\e_{i}/\e_{1}$) such that if $u_0$ is the weak solution of
    \begin{equation*}
        \left\{ \begin{aligned}
            -\nabla\cdot \overline{A} \nabla u_0 &= f \quad \text{in } \Omega,\\
            u_0 &= g \quad \text{on } \partial \Omega,
        \end{aligned}
        \right.
    \end{equation*}
    then
    \begin{equation}\label{est.MainRate}
        \| u_\e - u_0 \|_{L^2(\Omega)} \le C\big( \e_1 + \max_{1\le i\le n-1} \{ e^{-c\e_{i}/\e_{i+1}} \} \big) \big\{ \|f\|_{L^2(\Omega)} +  \|g\|_{H^{3/2}(\partial\Omega)} \big\} ,
    \end{equation}
    where $C$ and $c$ are constants depending only on the characters of $A$ and $\Omega$.
\end{theorem}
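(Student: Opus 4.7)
The strategy is to replace the reiterated, one-scale-at-a-time correctors by a \emph{single} coupled cell problem on the product torus $\T^{dn}$ that captures the simultaneous interaction of all $n$ scales. For each direction $\be \in \{1,\dots,d\}$ introduce the multiscale corrector $\chi_\e^\be(y_1,\dots,y_n)$, 1-periodic in each $y_i \in \R^d$, solving
\begin{equation*}
    -\na_\e \cdot \bigl( A(y_1,\dots,y_n) (\na_\e \chi_\e^\be + e_\be) \bigr) = 0 \quad \txt{on } \T^{dn},
\end{equation*}
where $\na_\e := \sum_{i=1}^n (\e_1/\e_i) \na_{y_i}$ is the chain-rule gradient that arises upon differentiating $y \mapsto \chi_\e(x/\e_1,\dots,x/\e_n)$ and $e_\be$ is the $\be$-th coordinate vector. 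The effective matrix is defined by $\overline{A}^{\al\be} := \int_{\T^{dn}} (A(I + \na_\e \chi_\e))^{\al\be}\,dy$, which depends on $\e$ only through the ratios $\e_i/\e_1$. The first step is to fix the zero-mode ambiguity along $\ker(\na_\e)$ by imposing mean-zero conditions on each coordinate slice, and prove existence and uniqueness via Lax--Milgram on the natural Hilbert space with inner product $\int_{\T^{dn}} \na_\e u \cdot \na_\e v$.

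The central technical step is to establish \emph{analytic-type} Fourier bounds for $\chi_\e$. Expanding on the lattice $(\Z^d)^n$, the cell problem becomes a linear system with differential symbol $M(k) := \sum_i (\e_1/\e_i) k_i \in \R^d$, which is coercive away from its near-kernel. The analyticity hypothesis \eqref{as.analyticity} furnishes the exponential tail
\begin{equation*}
    |\wh A(k_1,\dots,k_n)| \le C_0 e^{-c_0(|k_1| + \cdots + |k_n|)}.
\end{equation*}
Combined with a perturbative Neumann-type inversion that handles the quasi-resonances where $|M(k)|$ is small, this yields exponential decay of $\wh\chi_\e(k)$ at frequencies of size $\e_i/\e_{i+1}$, which is exactly what produces the gain $e^{-c\e_i/\e_{i+1}}$ over the reiterated approach (the latter uses only finitely many derivatives). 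One then constructs the associated flux corrector $\phi_\e$ by the same inversion and verifies that $\overline{A}$ is uniformly elliptic with constants depending only on $\la$.

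The two-scale ansatz
\begin{equation*}
    w_\e(x) := u_\e(x) - u_0(x) - \e_1 \chi_\e^\be(x/\e_1,\dots,x/\e_n) \, \pa_\be \bigl(S_{\e_1} u_0\bigr)(x),
\end{equation*}
with $S_{\e_1}$ a standard spatial smoothing at scale $\e_1$, is plugged into the equation. The corrector/flux identity lets one write $-\na\cdot A_\e \na w_\e$ in divergence form and split the residual into (i) a bulk piece of order $\e_1\|\na^2 u_0\|_{L^2}$ controlled by $\phi_\e$, (ii) a boundary-layer piece of order $\e_1$ handled by the Smyshlyaev--Zhikov cutoff, and (iii) an inter-scale remainder arising because $\chi_\e$ is not a pointwise exact corrector for $A_\e$ after rescaling. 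The analytic bound of the previous step controls (iii) by $\max_i e^{-c\e_i/\e_{i+1}}$. A duality (Aubin--Nitsche) argument exploiting $C^{1,1}(\Om)$ regularity then upgrades the resulting $H^{-1}$ estimate into the $L^2$ bound \eqref{est.MainRate}.

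The main obstacle is the second step: the small-divisor / quasi-resonance analysis that converts analyticity of $A$ into the sharp exponential bound for the coupled corrector. In the reiterated setting each cell problem lives on a single torus and admits a closed Fourier inversion; here the scales are intertwined through $\na_\e$, producing near-resonances between frequencies on different $\T^d$-factors whenever the ratios $\e_{i+1}/\e_i$ are not infinitesimal. Handling these quasi-resonances with \emph{exponentially} (rather than polynomially) small losses is precisely where the real analyticity of $A$ is indispensable, and where the optimality of the constant $c$ in \eqref{est.MainRate} originates.
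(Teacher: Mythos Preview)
Your overall architecture (a single coupled corrector on $\T^{dn}$, effective matrix by averaging, two-scale ansatz with smoothing and cutoff, then Aubin--Nitsche duality) matches the paper's. But the two load-bearing steps are not actually carried out, and the mechanisms you sketch would not work as stated.

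\textbf{Existence/uniform bounds for the corrector.} The bilinear form $\int_{\T^{dn}} \na_\e u\cdot\na_\e v$ is only a seminorm: the kernel of $\na_\e$ on $\T^{dn}$ is infinite-dimensional (all functions constant along the ``diagonal'' foliation), and imposing slice-wise means does not produce a complete space on which Lax--Milgram applies, nor does it yield the $L^\infty$ bound on $\chi_\e$ that the ansatz requires. The paper circumvents this by \emph{regularizing}: it solves $-\na_\e\cdot A\na_\e \chi + \tau^2\chi = \na_\e\cdot(Av)$, which is coercive for any $\tau>0$, and then chooses $\tau$ so small that the spurious $\tau$-error is dominated by the target rate.

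\textbf{The quasi-resonance step.} Your Fourier/Neumann inversion is not substantiated. For variable $A$ the equation in Fourier space is a convolution, and a Neumann series would need the oscillating part of $A$ to be small, which it is not. More seriously, the near-resonant set $\{k:|M(k)|\ll 1\}$ is unbounded (e.g.\ for $n=2$, all $k$ with $k_1\approx -(\e_1/\e_2)k_2$), and exponential decay of $\wh A$ does not by itself prevent an $O(1)$ contribution from these modes. The paper never touches Fourier space. Instead it reduces $n$ scales to $n-1$ by a \emph{formal} asymptotic expansion in $\de_n=\e_n/\e_{n-1}$: the $k$-th term is estimated by iterated energy estimates, each losing one derivative, and analyticity is used only to control the resulting factorial growth so that the truncation at order $k\simeq \de_n^{-1}$ leaves an error $e^{-c/\de_n}$. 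This procedure needs an auxiliary scale-separation condition $\e_j\lesssim \e_{j-1}/(1+\log(\e_1/\e_{j-1}))$, which is \emph{removed at the end} by grouping scales: one locates the largest block of small scales satisfying the auxiliary condition, homogenizes them simultaneously, and observes that the gap to the next scale is then automatically wide enough that reiterated homogenization across it costs only $e^{-c\e_{p-1}/\e_p}$ for some $p$ in the block. Your proposal contains neither the regularization, nor the truncated-expansion mechanism, nor this final removal step.
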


The above theorem substantially improves the ratio parts of the convergence rate from linear $\e_{i+1}/\e_{i}$ to exponential $e^{-c\e_i/\e_{i+1}}$ when the coefficients are real analytic. This exponential rate is optimal in the sense that it does not hold generally if we replace $c$ by some larger constant $c_1$; see the counterexample in Section \ref{sec.eg2}. An inspect of our proof also shows that if $A \in C^m(\T^{n\times d})$ for some finite $m$, we can also get a rate with a polynomial of $\e_{i+1}/\e_{i}$. Recall that in the classical homogenization theory with one scale, the optimal convergence rate of $O(\e)$ actually is independent of the regularity of the coefficients (precisely, bounded measurable coefficients suffice). Thus $O(\e_1)$ part in \eqref{est.MainRate} is also optimal and cannot be improved in general (unless the higher-order correctors and boundary layers are introduced).

It is important to point out that the effective matrix $\overline{A}$ in Theorem \ref{thm.MainRate} is derived through a new multiscale method involving multiscale correctors, and therefore depends not only on $A$, but also on the ratios $\e_{i}/\e_1$, which indicates that the construction of $\overline{A}$ takes into account the interaction between different scales. It turns out that $\overline{A}$, as an effective matrix, is more accurate (actually optimal in some sense) than the homogenized matrix $A_0$ derived merely by reiterated homogenization under \eqref{cond.separate}, which is independent of $\e$. Moreover, we can precisely measure the difference between $\overline{A}$ and $A_0$ (see Proposition \ref{prop.barA-A0} and Remark \ref{rmk.barA-A0} for details), which perfectly explains the difference between the errors in \eqref{est.MainRate} and \eqref{rate.LipA}.

We now explain briefly why the analyticity/smoothness of the coefficients leads to a better convergence rate. As mentioned earlier, the slow convergence rate is caused by the resonance between close scales and the method of reiterated homogenization itself that ignores the interaction between different scales. The higher order regularity of the coefficients indicates the fast decay of amplitude of high-frequency oscillation that causes resonance between one scale and the next smaller scale. Consequently, the influence of resonance between two close scales can be dramatically reduced for analytic coefficients. This phenomenon seems to be very generic and we give a toy example in Section \ref{sec.eg3} that illustrates how the analyticity/smoothness helps improve the convergence rate in a simple averaging process with two oscillating scales. We recommend that readers look at the example before reading the detailed proof of Theorem \ref{thm.MainRate}. 
Overall, our main results and their proofs, together with the examples in Section \ref{sec.egs}, give a relatively comprehensive picture, including possibilities and limits, on the homogenization of multiscale elliptic equations.

As a general principle in elliptic homogenization, once we obtain an algebraic convergence rate, we can establish the corresponding larger-scale regularity. Particularly, we can show the following (interior) uniform Lipschitz regularity under a mild double-log scale-separation condition.

\begin{theorem}\label{thm.lip.est}
    Assume $A$ satisfies \eqref{as.ellipticity}, \eqref{as.periodicity} and \eqref{as.analyticity}. Let $u_\e \in H^1(B_1)$ be a weak solution of $-\nabla\cdot A_\e \nabla u_\e = f$ in $B_1$ with $f\in L^p(B_1)$ for some $p>d$. Then there exists some constant $M>0$ such that if for all $1\le i\le n-1$,
    \begin{equation}\label{cond.SS4Lip}
    \frac{\e_i}{\e_{i+1}} \ge M \log \log \e_i^{-1},
    \end{equation}
    then
    \begin{equation}\label{mian-re-thm2}
        \| \nabla u_\e \|_{L^\infty(B_{1/2})} \le C\big( \| u_\e \|_{L^2(B_1)} + \| f \|_{L^p(B_1)} \big),
    \end{equation}
    where $M$ and $C$ depend only on $p$ and the characters of $A$.
\end{theorem}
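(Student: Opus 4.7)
My plan is to deduce the uniform Lipschitz estimate from Theorem \ref{thm.MainRate} via a multi-level Avellaneda--Lin-type excess-decay iteration. Rather than working with a single effective operator, I would use a hierarchy of intermediate ones: for each $k=0,1,\ldots,n-1$, let $\tilde A^{(k)}(y_1,\ldots,y_k)$ denote the matrix obtained by homogenizing $A$ only in the ``inner'' variables $y_{k+1},\ldots,y_n$, at the fixed ratios inherited from $\e$. Rerunning the proof of Theorem \ref{thm.MainRate} with the outer variables $y_1,\ldots,y_k$ treated as frozen parameters should give a level-$k$ convergence estimate of the form
\begin{equation*}
\|u_\e - v^{(k)}\|_{L^2(B_{r/2})} \le C\, \eta_k(r)\, r\, \bigl(\|\nabla u_\e\|_{L^2(B_r)} + r\|f\|_{L^2(B_r)}\bigr),
\end{equation*}
where $v^{(k)}$ solves $-\nabla\cdot \tilde A^{(k)}(x/\e_1,\ldots,x/\e_k)\nabla v^{(k)} = f$ on $B_{r/2}$ with $v^{(k)}=u_\e$ on $\partial B_{r/2}$, and $\eta_k(r) := \e_{k+1}/r + \max_{k+1\le i\le n-1} e^{-c\e_i/\e_{i+1}}$.

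With this in hand I would execute a Campanato-type one-step improvement. For $r$ in ``level $k$'', meaning $\e_{k+1}\le r\le \e_k$ with the convention $\e_0:=1$, rescale $B_r$ to $B_1$. Since $r\le \e_j$ for every $j\le k$, the rescaled coefficient $\tilde A^{(k)}(r\,\cdot/\e_1,\ldots,r\,\cdot/\e_k)$ has $C^1$ norm bounded \emph{uniformly in} $\e$, so classical Schauder theory for smooth-coefficient equations gives $C^{1,\alpha}$ estimates on $v^{(k)}$ with constants independent of $\e$. Combining this flatness improvement for $v^{(k)}$ with the triangle inequality, the convergence estimate above, and Caccioppoli's inequality, I expect a one-step inequality of the type
\begin{equation*}
H(\theta r) \le C_1 \theta^{2\alpha} H(r) + C_2 \theta^{-d-2}\bigl[\eta_k(r)^2 H(r) + r^2\|f\|_{L^p(B_r)}^2\bigr]
\end{equation*}
for the normalized excess $H(r):=\inf_{P\text{ affine}} r^{-2}|B_r|^{-1}\int_{B_r} |u_\e-P|^2$. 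Fixing $\theta$ small to absorb the $C_1\theta^{2\alpha}$ factor yields the workhorse inequality.

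I would then iterate this estimate within each level $k=0,1,\ldots,n-1$, passing from $r\sim \e_k$ down to $r\sim \e_{k+1}$ in $N_k\sim\log(\e_k/\e_{k+1})$ dyadic steps, and for $r\le \e_n$ close the argument by classical Schauder regularity, since the rescaled coefficient is then effectively frozen with $\e$-uniform Hölder bounds. The $\e_{k+1}/r$ summand of $\eta_k$ is harmless via the standard Avellaneda--Lin geometric summation. The main obstacle, and the role of condition \eqref{cond.SS4Lip}, is the scale-independent exponential remainder $e^{-c\e_{k+1}/\e_{k+2}}$: being constant in $r$, it is swept unchanged through all $N_k$ iterations of level $k$, contributing to the accumulated excess a factor proportional to $\log(\e_k/\e_{k+1})\lesssim \log \e_{k+1}^{-1}$. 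The double-log condition is exactly what balances this---it yields $e^{-c\e_i/\e_{i+1}}\le (\log\e_i^{-1})^{-cM}$, so the accumulated exponential contribution at level $k$ is controlled by $(\log \e_{k+1}^{-1})^{1-2cM}$, which is uniformly bounded (and even summable over the $n$ levels) once $M$ is taken sufficiently large depending only on $\alpha$ and the characters of $A$. The resulting uniform bound on $H(r)$ is equivalent by Campanato's criterion to the Lipschitz estimate \eqref{mian-re-thm2}.
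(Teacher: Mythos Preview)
Your approach is correct and is essentially the paper's own argument, just organized differently. The paper proceeds by induction on $n$: it first compares $u_\e$ to solutions of the fully effective equation $-\nabla\cdot\overline{A}\nabla v=f$ (Lemma~\ref{lem.appro}), then packages the excess-decay into a single abstract iteration lemma (Lemma~\ref{shen.lem}, a modification of Shen's lemma that allows a non-decaying error $\gamma$ provided $\gamma\le c_0|\log\rho|^{-1}$) to reach scale $\e_1$; rescaling by $\e_1$ then yields an $(n{-}1)$-scale problem with an additional Lipschitz $x$-dependence, to which the inductive hypothesis applies. Your level-by-level scheme with the hierarchy $\tilde A^{(k)}$ is the unrolled version of this induction: your level-$k$ comparison is exactly the paper's top-level comparison applied after $k$ rescalings, and your accumulated error $N_k\cdot\gamma_k\lesssim(\log\e_{k+1}^{-1})^{1-cM}$ is the hands-on counterpart of the paper's condition $\gamma\le c_0|\log\e_1|^{-1}$ in Lemma~\ref{shen.lem}. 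The paper's abstraction into Lemma~\ref{shen.lem} is cleaner and reusable; your version is perhaps more transparent about why the double-log threshold is the right one.

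Two minor technical remarks. First, the realistic local approximation rate is $(\e_{k+1}/r)^{1/2}$ rather than $\e_{k+1}/r$, because on $B_r$ one only has $u_\e\in H^1$ as boundary data, not $H^{3/2}$; the paper handles this by interpolating and averaging over radii (proof of Lemma~\ref{lem.appro}). This changes nothing in your iteration since the Dini condition is satisfied either way. Second, your assertion that ``rerunning the proof of Theorem~\ref{thm.MainRate} with frozen outer variables'' yields the level-$k$ convergence estimate is correct but not free: it requires the full $x$-dependent machinery of Theorems~\ref{thm.AppCorrector.X}, \ref{thm.Ux-unif} and~\ref{thm.con.sep}, with explicit linear dependence on the Lipschitz constant $L_0$ of the frozen variables (here $L_0\sim\e_k^{-1}$, rescaled to $O(1)$ since $r\le\e_k$). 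The paper develops exactly this, so you may cite it.
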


Compared with the uniform Lipschitz estimate in \cite{NSX20},
the scale-separation condition \eqref{cond.SS4Lip} is much weaker than \eqref{cond.well-separate} due to the double logarithm. As an open problem mentioned earlier, it is not known if \eqref{cond.SS4Lip} can be completely removed.

\begin{remark}
    In this paper, we only consider finite number of scales $n$ and the constants in the estimates tend to infinity as $n \to \infty$. Though physically not relevant, mathematically one may also consider infinitely many scales $\e_1, \e_2, \cdots,$ with $\e_k \to 0$ as $k\to \infty$; see e.g. \cite{AB96,JK99}. In this case, we need to identify the particular structure in defining coefficients with infinitely many variables. Typical examples include
    \begin{equation*}
        A_\e(x) = \sum_{k = 1}^\infty \sigma_k A_k(\frac{x}{\e_1},\cdots, \frac{x}{\e_k}),
    \end{equation*}
    and
    \begin{equation*}
        A_\e(x) = \prod_{k = 1}^\infty \Big(I + \sigma_k A_k(\frac{x}{\e_1},\cdots,\frac{x}{\e_k}) \Big),
    \end{equation*}
    where $I$ is the $d\times d$ identity matrix, and each $A_k$ is analytic, periodic and of order $O(1)$. We need $\sum_{k\ge 1} |\sigma_k| < \infty$ to make the coefficients always well-defined as $\e_k$ varies. If $\sigma_k$ decays fast enough, we can truncate the coefficients at $k\le N$ such that $A_\e = A_\e^N + O(\sigma_N)$ and $A_\e^N$ has exactly $N$ microscopic scales. Then we apply our theorem to $A_\e^N$ and get an error consisting of two parts, one from the convergence rate via Theorem \ref{thm.MainRate} (growing as $N$ increases) and the other from the difference $A_\e - A_\e^N$ (decaying as $N$ increases). Since our method is quantitative, it is possible to choose the best $N$ (and the best effective equation) to minimize the error.
\end{remark}

\subsection{Strategy of the proof}

The key of the proof is to construct the so-called multiscale correctors that take into consideration the interactions between different scales. Here we explain how these multiscale correctors are introduced naturally. First, we notice $A(x/\e_1, x/\e_2,\cdots, x/\e_n)$ can be written as $B_\delta(x/\e_1)$, where
\begin{equation}\label{eq.B.str}
    B_{\delta}(x) = A(x, x/\delta_2, \cdots, x/\delta_n), \quad \text{with } \delta_i = \e_i/\e_{1}.
\end{equation}
Then $B_\delta(x/\e_1)$ can be viewed as a coefficient matrix with only one oscillating scale $\e_1$. However, $B_\delta(x)$ depends on $\delta = (\delta_1, \delta_2, \cdots, \delta_n)$ (we set $\delta_1 = 1$), and it is quasi-periodic and still rapidly oscillating in $x$ since $\delta_i$ are typically very small. Nevertheless, we can still try to construct the correctors associated to the matrix $B_\delta$ by solving the following equation:
\begin{equation}\label{eq.Corrector}
    -\nabla\cdot B_\delta \nabla \chi^j_\delta = \nabla \cdot (B_\delta e_j)  \quad \text{ in } \R^d.
\end{equation}
If we can find a bounded solution of \eqref{eq.Corrector}, then we are able to find the effective equation and homogenize all the scales simultaneously in a single step.
Clearly, the corrector equation \eqref{eq.Corrector} has quasi-periodic, multiscale nature and seems difficult to solve in general. We start by guessing the multiscale structure of the solution $\chi_\delta$, if they exist. Actually, since $B_\delta$ has a structure as in \eqref{eq.B.str}, it is natural to guess the solution $\mathcal{X}_\delta = \mathcal{X}_\delta^j$ possessing the same multiscale structure
\begin{equation}\label{eq.Xd}
    \mathcal{X}_\delta(x) = \mathcal{X}(x, x/\delta_2, \cdots, x/\delta_n),
\end{equation}
where $\mathcal{X}(y_1,y_2,\cdots, y_n)$ is periodic in each $y_i \in \R^d$.
Moreover, as $\delta_i$ varies, $\mathcal{X}_\delta$ continuously depends on $\delta_i$. This means that we may need the complete information of $\mathcal{X}(y_1,y_2,\cdots,y_n)$ for $n$ independent variables $y_i \in \R^d$. For this reason, we need to, as usual, lift the system to a degenerate system in $d\times n$ dimensional space. Precisely,
\begin{equation}\label{eq.X.torus}
    -\widehat{\nabla}_n \cdot A(y_1,\cdots, y_n) \widehat{\nabla}_n \mathcal{X}^j = \widehat{\nabla}_n \cdot A(y_1,\cdots, y_n) e_j \quad \text{ in } \T^{d\times n},
\end{equation}
where $\widehat{\nabla}_n$ is certain ``directional gradient'' defined by
\begin{equation*}
    \widehat{\nabla}_n := \sum_{i=1}^n \delta_i^{-1} \nabla_i,
\end{equation*}
and $\nabla_i = \nabla_{y_i}$. Now observe that even though the equation \eqref{eq.X.torus} is periodic, it is extremely degenerate since the ellipticity condition is not satisfied (the dimension of variables is much larger than the dimension of the directional gradient), and the coefficients are singular due to the large factors $\delta_i^{-1}$.

To establish the solvability of the equation \eqref{eq.X.torus}, we take the following strategy.

\textbf{Regularization of the corrector equation.} Instead of working on the equation \eqref{eq.X.torus}, we regularize the equation by adding a positive zero-order term:
\begin{equation}\label{eq.Reg.Y}
    -\widehat{\nabla}_n \cdot A(y_1,\cdots, y_n) \widehat{\nabla}_n Y + \tau^2 Y = \widehat{\nabla}_n \cdot F(y_1,\cdots, y_n) \quad \text{ in } \T^{d\times n},
\end{equation}
where $F$ is a general smooth periodic function. Even though the leading term of equation \eqref{eq.Reg.Y} is still degenerate, we can actually find a unique solution by another regularization, whenever $\tau>0$ and $F$ is smooth enough; see Section \ref{sec.corrector-energy}.

The size of $\tau$ will be crucial for us. On one hand, we have to assume that $\tau$ is small so that the equation \eqref{eq.Reg.Y} with $F = Ae_j$ is indeed a good approximation of \eqref{eq.X.torus}. On the other hand, some crucial estimates of the solution turn out to blow up as $\tau \to 0$. Actually, there are two types of estimates needed for the corrector equation \eqref{eq.Reg.Y}, i.e., the energy estimates and uniform estimates. It is the main challenge of this paper to obtain the uniform estimate for suitably small $\tau$ (depending on $\delta$). We emphasize that the energy estimates of \eqref{eq.Reg.Y} alone are not enough for the uniform boundedness independent of $\tau$ and $\delta$ for the solutions of \eqref{eq.Reg.Y}. The multiscale structure of the equation will play an essential role in overcoming the difficulty. We explain the main idea briefly in the following.

\textbf{Induction on the number of scales.}
In order to get the uniform estimate independent of $\delta = (\delta_1, \cdots, \delta_2)$ for the solution of the equation \eqref{eq.Reg.Y}, we will take advantage of the multiscale structure of the equation and solve it inductively on the number of scales. If $n= 1$, clearly, the equation \eqref{eq.Reg.Y} is periodic and nondegenerate, and thus it is solvable with energy estimate independent of $\tau$. The case $n=2$ will be tricky already but can be solved with uniform estimates by a careful two-scale expansion.  Then we assume that we can solve the equation with $n-1$ scales and  reduce the equation of $n$ scales to the equation with $n-1$ scales. This idea is possible if we apply the following ansatz
\begin{equation}\label{eq.Y.ansatz}
    Y = Y(y_1,\cdots, y_{n-1}, y_n) = \sum_{k=0}^\infty \delta_n^k Y_k(y_1,\cdots, y_{n-1}, y_n),
\end{equation}
and observation
\begin{equation}\label{eq.Dn2Dn-1}
    -\widehat{\nabla}_n\cdot A \widehat{\nabla}_n = -\widehat{\nabla}_{n-1} \cdot A \widehat{\nabla}_{n-1} - \delta_n^{-1} \widehat{\nabla}_{n-1} \cdot A \nabla_n -  \delta_n^{-1} \nabla_n \cdot A \widehat{\nabla}_{n-1} - \delta_n^{-2} \nabla_n \cdot A \nabla_n.
\end{equation}
Taking \eqref{eq.Y.ansatz} and \eqref{eq.Dn2Dn-1} into \eqref{eq.Reg.Y}, we will get a sequence of recursive equations for $Y_k$. To obtain the estimates of $Y_k$, due to the loss of derivatives, we need higher-order derivatives of the coefficients $A$ and $F$. This is exactly where the analyticity of $A$ comes into play. We point out that even for analytic (or trigonometric polynomial) coefficients, the infinite series in \eqref{eq.Y.ansatz} does not converge, as the estimate of $Y_k$ grows as factorial and $\delta_n^k$ decays as exponential in $k$; instead, we have to deal with truncated series and estimate the errors in a proper way. A large part of this paper is devoted to handling these uniform estimates, which lead to considerable complexity in the inductive proofs; see Theorem \ref{thm.2S.Y} and Theorem \ref{thm.Y.nscale} for details.

\textbf{Extra scale-separation condition.}
The above argument to solve the multiscale equation \eqref{eq.Reg.Y} works perfectly, except for an essential flaw due to the degeneracy of equation \eqref{eq.Reg.Y}. To make sure the solution $Y$ of $\eqref{eq.Reg.Y}$ has a uniform bound independent of $\delta$, we require a scale-separation condition: for each $2\le j\le n$, there exists integer $k_j$ such that
    \begin{equation}\label{cond.delta.separation}
        \left\{ \begin{aligned}
            & (\delta_j/\delta_{j-1}) (k_j + \ell_0) \lesssim 1, \\
            & \tau^2 \gtrsim \delta_{j-1}^{-1} e^{-k_j},
        \end{aligned}
        \right.
    \end{equation}
where we have skipped some constants and a precise formulation can be found in Theorem \ref{thm.Y.nscale}. As we also expect  $\tau$ to be small, say $\tau^2 \approx e^{-k_j/2}$, then \eqref{cond.delta.separation} is reduced to
\begin{equation}\label{cond.e.sep}
    \e_j \le \frac{c_j \e_{j-1}}{1+\log(\e_1/\e_{j-1})},
\end{equation}
for some constant $c_j>0$. This extra scale-separation condition seems inevitable in the construction of multiscale correctors and will eventually 
be removed in the main theorem.

In summary, under the scale-separation condition \eqref{cond.e.sep}, we can construct the multiscale correctors from the lifted equation
\begin{equation*}
    -\widehat{\nabla}_n \cdot A(y_1,\cdots, y_n) \widehat{\nabla}_n \mathcal{X}^j + \tau^2 \mathcal{X}^j = \widehat{\nabla}_n \cdot A(y_1,\cdots, y_n) e_j \quad\, \text{ in } \T^{d\times n}
\end{equation*}
as a special case of \eqref{eq.Reg.Y} and obtain their uniform estimates independent of $\delta_i$ and $\tau$. Define $\mathcal{X}_\delta(x)$ as \eqref{eq.Xd}, we see that $\mathcal{X}_\delta$ satisfies the equation
\begin{equation*}
    -\nabla\cdot B_\delta \nabla \mathcal{X}^j_\delta + \tau^2 \mathcal{X}^j_\delta = \nabla \cdot (B_\delta e_j) \quad\, \text{ in } \R^d.
\end{equation*}
Since $\tau$ is very small, $\mathcal{X}_\delta$ will serve a role as multiscale correctors. Moreover, the effective coefficient matrix is given by
\begin{equation*}
    \overline{A} = \Ag{A + A \widehat{\nabla}_n \mathcal{X}},
\end{equation*}
where $\Ag{\cdot}$ means the average taken over the periodic cell $\T^{d\times n}$. Note that our $\overline{A}$ depends essentially on $\delta_i = \e_i/\e_{1}$.

In the construction of flux correctors, we also face degenerate equations and apply a similar argument. However, in this case, we can use essentially the $H^2$ estimate (as the equation for flux correctors is in nondivergence form) to get rid of the factor $\delta_{j-1}^{-1}$ in the second line of \eqref{cond.delta.separation}, which yields a harmless condition $\e_j \le c_j \e_{j-1}$ for some $c_j > 0$.

\textbf{Combination of simultaneous and reiterated homogenization.} 
With the correctors and flux correctors constructed properly, we can establish the convergence rates using the classical approach in elliptic homogenization under the scale-separation condition \eqref{cond.e.sep}. In this case, we actually homogenize all the scales simultaneously in one step, which may be referred as simultaneous homogenization in contrast to reiterated homogenization that homogenizes only one scale in each step. To complete the proof of Theorem \ref{thm.MainRate}, we finally need to remove the extra condition \eqref{cond.e.sep}. But since we do not have multiscale correctors without the condition \eqref{cond.e.sep}, a different idea is needed as follows. Suppose the condition \eqref{cond.e.sep} is not satisfied. Then we can find the largest integer $m \ge 2$ such that the smallest $m$ scales $(\e_{n-m+1}, \e_{n-m+2}, \cdots, \e_n)$ satisfy the condition \eqref{cond.e.sep} (modified with $\e_1$ replaced by $\e_{n-m+1}$), and the smallest $m+1$ scales $(\e_{n-m}, \e_{n-m+1}, \cdots, \e_n)$ do not satisfy the condition. This actually implies that the scale ratio between $\e_{n-m}$ and $\e_{n-m+1}$ is exponentially larger than some scale ratio among $(\e_{n-m+1}, \e_{n-m+2}, \cdots, \e_n)$, namely, 
\begin{equation*}
    \frac{\e_{n-m+1}}{\e_{n-m}} \lesssim e^{-c_p\e_{p-1}/\e_p},
\end{equation*}
for some $p \ge n-m+2$.
In other words, the interaction between the scales $(\e_{n-m+1}, \e_{n-m+2}, \cdots, \e_n)$ and the remaining $n-m$ larger scales is extremely small because they are separated so well. Thus, in the first step we can perform a simultaneous homogenization as above to the smallest $m$ scales and leave the rest $n-m$ scales alone to the next step in the spirit of reiterated homogenization. This first step will give an error as
\begin{equation*}
    \frac{\e_{n-m+1}}{\e_{n-m}} + \max_{n-m+1\le i\le n} \{ e^{-c\e_{i-1}/\e_i} \} \lesssim \max_{n-m+1\le i\le n} \{ e^{-c\e_{i-1}/\e_i} \},
\end{equation*}
where the inequality holds because of the choice of $m$ implies $\frac{\e_{n-m+1}}{\e_{n-m}} \lesssim e^{-c\e_{i-1}/\e_i}$ for some $i \ge n-m+1$; see Section \ref{sec.Remove-Scale-Separation} for details. Finally, we repeat this process until all the scales are homogenized. The key difference between our method sketched above and the classical reiterated homogenization is, for the classical reiterated homogenization, only one scale is homogenized in each step; while in our method we simultaneously homogenize several scales in each step by using multiscale correctors.

The rest of the paper is organized as follows. In Section \ref{sec.notation}, we define notations and provide basics in tensor calculation. In Section \ref{sec.corrector}, we construct multiscale correctors and obtain their energy estimates and uniform estimates. In Section \ref{sec_flux-correctors}, we construct multiscale flux correctors and obtain their energy estimates and uniform estimates. In Section \ref{sec.rate}, we establish the optimal convergence rate in Theorem \ref{thm.MainRate}. In Section \ref{sec.Lip}, we prove the uniform Lipschitz estimate in Theorem \ref{thm.lip.est}. Finally, in Section \ref{sec.egs}, we provide several examples illustrating the optimality of our results.

\textbf{Acknowledgments.}   W. N. is supported by NNSF of China (No.12371106). Y. X. is supported by NNSF of China (No. 12201604, 12371106). J. Z. is supported by NNSF of China (No. 12494541, 12288201, 12471115).

\section{Notations and preliminaries}
\label{sec.notation}

In this section, we introduce some notations and basics that will be used throughout this paper. The readers are recommended to skip this section first and consult it whenever needed. 

\textbf{Tensors.} For rigorous calculation, we need the notation of tensor. Let $1\le l \in \N$. By a tensor of order $k \in \N$, we mean a generalized $k$-dimensional matrix indexed by 
\begin{equation*}
    (j_1,j_2,\cdots, j_k) \in J^k:= \{1,2,\cdots, l \}^k,
\end{equation*}
written as
\begin{equation*}
    T = (T_{j_1j_2\cdots j_k}). 
\end{equation*}
Note that a tensor of order zero is a scalar, a tensor of order 1 is a $l$-dimensional vector and a tensor of order 2 is an $l\times l$ matrix. Denote by $\mathcal{T}^k$ the space of tensors of order $k$. Define the norm of $T \in \mathcal{T}^k$ by
\begin{equation}\label{eq.absTensor}
    | T | = \sum_{(j_1,j_2,\cdots, j_k) \in J^k} |T_{j_1 j_2\cdots j_k}|.
\end{equation}

If $T\in \mathcal{T}^k$ and $S\in \mathcal{T}^m$, then $T\otimes S \in \mathcal{T}^{k+m}$ is defined by
\begin{equation*}
    T\otimes S = (T_{j_1j_2\cdots j_k} S_{j_{k+1}\cdots j_{k+m}}).
\end{equation*}
Obviously $|T\otimes S| = |T| |S|$.

Let $F:\Omega \to \mathcal{T}^k$ be a tensor-valued function defined in $\Omega \subset \R^d$. Then $\nabla F: \Omega \to \mathcal{T}^{k+1}$ is defined as
\begin{equation*}
    \nabla F = (\partial_{x_{j_{k+1}}} F_{j_1j_2\cdots j_k}).
\end{equation*}
By this we can define generally $\nabla^m F: \Omega \to \mathcal{T}^{k+m}$ inductively. 

\textbf{Symmetric tensors and symmetrilization.}
Let $\mathfrak{S}_k$ be the symmetric group of degree $k$ on the symbols $\{ 1,2,\cdots, k \}$. We view an element $\sigma = \{\sigma_1,\sigma_2,\cdots, \sigma_k \}$ of $\mathfrak{S}_k$
as a permutation of $\{ 1,2,\cdots, k \}$.
For $T\in \mathcal{T}^k$, we denote by $\Sym(T)$ the symmetrilization of $T$, given by
\begin{equation*}
    \Sym(T) = \frac{1}{k!} \sum_{\sigma \in \mathfrak{S}_k} (T_{j_{\sigma_1} j_{\sigma_2} \cdots j_{\sigma_k}}).
\end{equation*}
By the triangle inequality, $|\textbf{Sym}(T)| \le |T|$. A tensor $T \in \mathcal{T}^k$ is symmetric if $\Sym(T) = T$. If $f$ is a scalar function, then $\nabla^k f$ is a symmetric tensor of order $k$.

\textbf{Generalized Leibniz rule.} Given two scalar-valued functions $f$ and $g$, it holds
\begin{equation}\label{eq.Leibniz}
    \nabla^m (fg) = \sum_{k=0}^m \binom{m}{k} \Sym (\nabla^k f \otimes \nabla^{m-k}g),
\end{equation}
where the binomial coefficient is given by
\begin{equation*}
    \binom{m}{k} = \frac{m!}{k!(m-k)!}.
\end{equation*}
If $f$ and $g$ are vectors or matrices, we will also use \eqref{eq.Leibniz} by a slight abuse of notation. For example if $A = (a_{ij})$ is a matrix and $w = (w_j)$ is a vector, then $Aw = (Aw)_i = \sum_{j=1}^d a_{ij} w_j$ and \eqref{eq.Leibniz} should be interpreted as
\begin{equation*}
    \nabla^m (Aw)_i = \sum_{k=0}^m \binom{m}{k} \Sym \Big( \sum_{j=1}^d \nabla^k a_{ij} \otimes \nabla^{m-k} w_j \Big).
\end{equation*}


\textbf{Notations of derivatives.}
In order to get quantitative estimates for the degenerate corrector equations \eqref{eq.Reg.Y}, we need higher-order regularity of $A$ and $F$, which are defined on $\R^{d\times n} = \{(y_1,y_2,\cdots, y_n): y_i \in \R^d \}$ or $\T^{d\times n} \simeq [0,1]^{d\times n}$. Let $\bfD_k = (\nabla_1, \nabla_2,\cdots, \nabla_k)$ with $1\le k\le n$ and $\nabla_i = \nabla_{y_i} = (\partial_{y_{i}^1}, \cdots, \partial_{y_{i}^d} )$. For a scalar function $f = f(y_1,y_2,\cdots, y_n)$, $\bfD_k f$ is a $d\times k$ dimensional vector-valued function and $\bfD_k^j f$ is a tensor of order $j$ with dimension $d\times k$. This notation has been used in the assumption \eqref{as.analyticity}.

Note that the generalized Leibniz rule also applies to the gradient $\bfD_k$. That is, \eqref{eq.Leibniz} holds with functions defined on $\R^{d\times n}$ or $\T^{d\times n}$ and $\nabla$ replaced by $\bfD_k$.

Recall that $\widehat{\nabla}_n = \sum_{i=1}^n \delta_i^{-1} \nabla_i$, where $\delta_i$'s are in decreasing order.
Given a tensor-valued function $F$, we observe a simple fact:
\begin{gather}\label{eq.hatD-D}
    |\widehat{\nabla}_n F| \le \sum_{k=1}^n \delta_k^{-1} |\nabla_k F| \le \delta_n^{-1} \sum_{k=1}^n |\nabla_k F| = \delta_n^{-1} |\bfD_n F|.
\end{gather}

\textbf{Function spaces and mixed norms.}
We consider a measurable scalar-valued function $F = F(y_1, y_2, \cdots, y_n)$, 1-periodic in each $y_i \in \R^d$. Since the function is 1-periodic, we will view it as defined in a periodic cell $\T^{d\times n}$.
For defining function spaces with mixed norms, we will use $\x$ and $\y$ to indicate certain norms on the first $n-1$ variable $(y_1,\cdots, y_{n-1})$ and the last variable $y_n$, respectively. Note that the number $n$ may vary according to the context.

We say a scalar-valued function $F \in L^2(\T^{d\times n}) = L^2_{\xy}(\T^{d\times n})$ if $F$ is locally $L^2$ integrable and
\begin{equation*}
    \|F\|_{L^2}^2 = \|F\|_{L^2_{\xy}}^2 := \int_{\T^d} \cdots \int_{\T^d} |F(y_1,\cdots, y_n)|^2 dy_1 \cdots dy_n < \infty.
\end{equation*}
To simplify our notations,  we have used $\| \cdot \|_{L^2}$ as the norm notation instead of $\|\cdot \|_{L^2(\T^{d\times n})}$. This will cause no ambiguity for periodic functions defined on $\T^{d\times n}$.

We say $F \in L^\infty_{\x} L^2_{\y}(\T^{d\times n})$ if
\begin{equation*}
    \|F\|_{L^\infty_{\x} L^2_{\y}}^2: = \esssup_{(y_1,\cdots, y_{n-1}) \in \T^{d\times (n-1)}} \int_{\T^d} |F(y_1,\cdots, y_{n-1}, y_n)|^2 dy_n < \infty.
\end{equation*}
We say $F \in L^2_{\x} H^1_\y(\T^{d\times n})$ if 
\begin{equation*}
    \|F\|_{L^2_{\x}H^1_{\y}}^2 := \int_{\T^d} \cdots \int_{\T^d} |F(y_1,\cdots, y_n)|^2 + |\nabla_n F(y_1,\cdots, y_n)|^2 dy_1 \cdots dy_n < \infty.
\end{equation*}
We say $F \in H^1(\T^{d\times n}) = H^1_{\xy}(\T^{d\times n})$ if
\begin{equation*}
    \|F\|_{H^1}^2 = \|F\|_{H^1_{\xy}}^2 := \int_{\T^d} \cdots \int_{\T^d} |F(y_1,\cdots, y_n)|^2 + |\bfD_n F(y_1,\cdots, y_n)|^2 dy_1 \cdots dy_n < \infty.
\end{equation*}

Similarly, we can also define the spaces $L^\infty(\T^{d\times n}) = L^\infty_{\xy}(\T^{d\times n})$, $L^\infty_{\x} H^1_{\y}(\T^{d\times n})$, $L^2_\x H^2_\y(\T^{d\times n})$, $L^\infty_\x H^2_\y(\T^{d\times n})$, etc. If $F$ is independent of $y_n$, then we will also say
$F = F(y_1,\cdots, y_{n-1}) \in L^2_{\x}(\T^{d\times (n-1)})$ if
\begin{equation*}
    \| F \|_{L^2_{\x}}^2:= \int_{\T^d} \cdots \int_{\T^d} |F(y_1,\cdots, y_{n-1})|^2 dy_1\cdots d y_{n-1} < \infty,
\end{equation*}
and say
$F = F(y_1,\cdots, y_{n-1}) \in H^1_{\x}(\T^{d\times (n-1)})$ if
\begin{equation*}
    \| F \|_{H^1_{\x}}^2:= \int_{\T^d} \cdots \int_{\T^d} |F(y_1,\cdots, y_{n-1})|^2 + |\bfD_{n-1} F(y_1,\cdots, y_{n-1})|^2 dy_1\cdots d y_{n-1} < \infty.
\end{equation*}

\textbf{Norms of tensor-valued functions.}
Let $\| \cdot \|_X$ be a norm for the scalar functions defined on $\T^{d\times n}$. In the same spirit of \eqref{eq.absTensor}, we extend this norm to tensor-valued functions by
\begin{equation*}
    \| F \|_X =  \sum_{(j_1,j_2,\cdots, j_k) \in J^k} \| F_{j_1j_2\cdots j_k}\|_X.
\end{equation*}
Note that $\| \Sym(F) \|_X \le \| F \|_X$.
Moreover, if for scalar functions, we have $\| fg\|_X \le M \| f\|_Y \| g\|_Z$ (with possibly different function spaces $Y$ and $Z$), then for tensor-valued functions $F$ and $G$ we also have
\begin{equation*}
    \| F\otimes G \|_X \le M \| F \|_Y \| G \|_Z.
\end{equation*}
For example, by the H\"{o}lder's inequality for scalar functions, we have $\| F\otimes G \|_{L^\infty_\x L^2_\y} \le \| F\|_{L^\infty_{\xy}} \| G \|_{L^\infty_\x L^2_\y}$.

\begin{remark}
For a multi-index $\beta \in \N^d$ with $|\beta| = m$, we have
\begin{equation*}
    \|\nabla^m f\|_{X} = \sum_{|\beta| = m} \frac{m!}{\beta!} \| \partial^\beta f \|_X.
\end{equation*}
This is because $\nabla^m f$ is symmetric and has $m!/\beta!$ components equal to $\partial^\beta f$. However, $\partial^\beta$ will not be used in this paper.
\end{remark}

\section{Multiscale correctors}
\label{sec.corrector}

\subsection{Multiscale ansatz}\label{sec.corr.ansatz}

In this subsection, we analyze the structure of the solutions of the corrector equation \eqref{eq.Reg.Y}.
Recall the definition of $\widehat{\nabla}_n$ and write
\begin{equation}\label{eq.hatDn}
    \widehat{\nabla}_n = \widehat{\nabla}_{n-1} + \delta_n^{-1} \nabla_n.
\end{equation}
Recall that $\delta_1 = 1, \delta_2,\cdots, \delta_{n-1}$ are in decreasing order. We may assume $\delta_n \ll \delta_{n-1}$ and the solution of \eqref{eq.Reg.Y} takes a form of
\begin{equation}\label{eq.Y2scaleExp}
    Y = Y(y_1,\cdots, y_{n-1}, y_n) = \sum_{k=0}^\infty \delta_n^k Y_k(y_1,\cdots, y_{n-1}, y_n),
\end{equation}
where each $Y_k$ is 1-periodic in each $y_i$. In the above formal expansion, we actually view $\delta_n$ as the only microscopic scale and the rest scales $( \delta_1, \cdots, \delta_{n-1})$ as the same macroscopic scales implicitly enclosed in $Y_k$. Thus both \eqref{eq.hatDn} and \eqref{eq.Y2scaleExp} are essentially two-scale expansions.

Using \eqref{eq.hatDn}, we have
\begin{equation}\label{eq.LLLL}
    - \widehat{\nabla}_n \cdot A \widehat{\nabla}_n = \cL_{\yy} + \delta_n^{-1}\cL_{\xy} + \delta_n^{-1}\cL_{\yx} + \delta_n^{-2}\cL_{\xx},
\end{equation}
where
\begin{equation*}
\begin{aligned}
    & \cL_{\yy} := -\nabla_n\cdot A \nabla_n, \\
    & \cL_{\xy} := -\widehat{\nabla}_{n-1} \cdot A \nabla_n,\\
    & \cL_{\yx} := -\nabla_n \cdot A \widehat{\nabla}_{n-1},\\
    & \cL_{\xx} := -\widehat{\nabla}_{n-1} \cdot A \widehat{\nabla}_{n-1}.
\end{aligned}
\end{equation*}
Here the subscripts $\x$ and $\y$ only for notation purpose refer to the first $n-1$ variables and the last variable, respectively, if there are exactly $n$ independent variables. We will only use them in subscripts for operators, spaces or norms (the exact meaning or definition will be given at its first appearance), and will not use them to represent variables of functions.

Applying \eqref{eq.LLLL} to \eqref{eq.Y2scaleExp}, we have
\begin{equation*}
\begin{aligned}
    -\widehat{\nabla}_n \cdot A\widehat{\nabla}_n Y + \tau^2 Y & = \sum_{k=0}^\infty (\delta_n^{-2} \cL_{\yy} + \delta_n^{-1} (L_{\xy} + \cL_{\yx}) + \cL_{\xx} ) \delta_n^k Y_k \\
    & = \delta_n^{-2} \cL_{\yy} Y_0 + \delta_n^{-1} \Big\{ \cL_{\yy} Y_1 + (\cL_{\xy} + \cL_{\yx}) Y_0 \Big\}\\
    & \qquad + \sum_{k=2}^\infty \delta_n^{k-2} \Big\{ \cL_{\yy} Y_{k} + (\cL_{\xy} + \cL_{\yx}) Y_{k-1} + \cL_{\xx} Y_{k-2} + \tau^2 Y_{k-2}  \Big\}.
\end{aligned}
\end{equation*}
Consequently, if $Y$ is a solution of the  corrector equation \eqref{eq.Reg.Y}, we get a system of recursive equations for $Y_k$:
\begin{subequations}\label{eq.liftphi}
    \begin{empheq}[left=\empheqlbrace]{align}
        & \cL_{\yy} Y_0 = 0, \label{Y0}\\
        & \cL_{\yy} Y_1 + (\cL_{\xy} + \cL_{\yx}) Y_0 = \nabla_n \cdot F, \label{Y1}\\
        & \cL_{\yy} Y_{2} + (\cL_{\xy} + \cL_{\yx}) Y_{1} + \cL_{\xx} Y_{0} + \tau^2 Y_0 = \widehat{\nabla}_{n-1} \cdot F ,\label{Y2}\\
        & \cL_{\yy} Y_{k} + (\cL_{\xy} + \cL_{\yx}) Y_{k-1} + \cL_{\xx} Y_{k-2} + \tau^2 Y_{k-2} = 0, \quad k \ge 3. \label{Yk}
    \end{empheq}
\end{subequations}

In the following, we will show how to formally solve these equations recursively. The following notation for averages of periodic functions will be used:
\begin{equation}\label{def.average}
    \Ag{F}_{y_n} = \int_{\T^d} F(\cdot, y_n) dy_n \quad \text{and} \quad \Ag{F} = \int_{\T^d} \cdots \int_{\T^d} F(y_1, \cdots, y_n) dy_1 \cdots dy_n.
\end{equation}

\textbf{Find $Y_0$:}  Note that $\cL_{\yy}$ is a nondegenerate elliptic operator on the last variable $y_n$ in $\T^d$. Hence, from \eqref{Y0}, we see that $Y_0$ is independent of $y_n$. Thus $\cL_{\xy}Y_0 = 0$ in \eqref{Y1}, which can be written as
\begin{equation}\label{eq.Y1}
    \cL_{\yy} Y_1 = \nabla_n \cdot F + \nabla_n\cdot (A \widehat{\nabla}_{n-1} Y_0).
\end{equation}
Starting from $Y_1$, we will always decompose $Y_k$ as 
\begin{equation*}
    Y_k(y_1,\cdots,y_{n-1},y_n) = Y_k^o(y_1,\cdots,y_{n-1},y_n) + Y_k^r(y_1,\cdots,y_{n-1}),
\end{equation*}
where $Y_k^o$ will be called an ``oscillating'' component (depending on $y_n$) and $Y_k^r$ called a ``regular'' component (independent of $y_n$). Thus $Y_k^r$ cannot be seen by the operator $\cL_{\yy}$ and $\cL_{\xy}$. For example, from \eqref{eq.Y1}, we can only solve for $Y_1^{o}$. Note that the right-hand side of \eqref{eq.Y1} has mean value zero over $y_n$. Generally, we let $Y_k^o$ be the unique solution satisfying  $\Ag{Y_k^o}_{y_n} = 0$. 

We introduce the one-scale corrector (with respect to $y_n$) $\chi(y_1,\cdots, y_n) = (\chi_j(y_1,\cdots, y_n))$, which solves
\begin{equation*}
    \cL_{\yy} \chi_j = \nabla_n\cdot (A e_j).
\end{equation*}
Then it is not difficult to see that
\begin{equation}\label{eq.Y1o.Sol}
    Y_1^o  = \cL_{\yy}^{-1}(\nabla_n\cdot F) + \chi\cdot \widehat{\nabla}_{n-1} Y_0.
\end{equation}
In general if for some 1-periodic functions $Z = Z(y_1,\cdots,y_n)$ and $F = F(y_1,\cdots, y_{n-1})$ satisfying the equation
\begin{equation}\label{eq.Lyy=Lyx}
    \cL_{\yy} Z = \nabla_n\cdot A \widehat{\nabla}_{n-1} F = -\cL_{\yx} F,
\end{equation}
then $Z = -\cL_{\yy}^{-1} \cL_{\yx} F = \chi\cdot \widehat{\nabla}_{n-1} F$.

Now with $Y_1^o$ given by \eqref{eq.Y1o.Sol}, we can use \eqref{Y2} to find the equation for $Y_0$ in $(y_1,\cdots, y_{n-1})$. First taking average in $y_n\in \T^d$ to \eqref{Y2}, we get
\begin{equation*}
    -\widehat{\nabla}_{n-1}\cdot \Ag{A \nabla_n Y_1^o}_{y_n} -\widehat{\nabla}_{n-1}\cdot (\Ag{A}_{y_n} \widehat{\nabla}_{n-1} Y_0) + \tau^2 Y_0= \widehat{\nabla}_{n-1}\cdot (\Ag{F}_{y_n} ).
\end{equation*}
By \eqref{eq.Y1o.Sol}, we have
\begin{equation*}
    \Ag{A \nabla_n Y_1^o}_{y_n} = \Ag{A \nabla_n \cL_{\yy}^{-1}(\nabla_n\cdot F)}_{y_n} + \Ag{A \nabla_n \chi}_{y_n} \widehat{\nabla}_{n-1} Y_0.
\end{equation*}
Combining the last two equations, we arrive at the equation for $Y_0$
\begin{equation}\label{eq.Y0-0}
    -\widehat{\nabla}_{n-1} \cdot \big\{ \Ag{A + A \nabla_n \chi}_{y_n} \big\} \widehat{\nabla}_{n-1} Y_0 +\tau^2 Y_0= \widehat{\nabla}_{n-1}\cdot \big\{ \Ag{F + A \nabla_n \cL_{\yy}^{-1}(\nabla_n\cdot F)}_{y_n}  \big\}.
\end{equation}
Define
\begin{equation*}
    \widehat{A}(y_1,\cdots,y_{n-1}) = \Ag{A + A \nabla_n \chi}_{y_n},
\end{equation*}
and
\begin{equation*}
    \widehat{F}(y_1,\cdots,y_{n-1}) = \Ag{F + A \nabla_n \cL_{yy}^{-1}(\nabla_n\cdot F)}_{y_n}.
\end{equation*}
Then \eqref{eq.Y0-0} is reduced to
\begin{equation}\label{eq.Y0}
    -\widehat{\nabla}_{n-1} \cdot \widehat{A} \widehat{\nabla}_{n-1} Y_0 + \tau^2 Y_0= \widehat{\nabla}_{n-1}\cdot \widehat{F}.
\end{equation}
This equation has exactly the same structure as \eqref{eq.Reg.Y} with one less scale, which suggests that the original multiscale corrector equation may be solved inductively on the number of scales. We point out that the equation \eqref{eq.Y0} guarantees that the mean value over $y_n$ is zero for \eqref{Y2} and therefore we can solve for $Y_2^o$ from that equation in the next step.

\textbf{ Find $Y_1$:} Recall that we write $Y_1 = Y_1^o + Y_1^r$. In the previous step, we have found $Y_1^o$ in a form of \eqref{eq.Y1o.Sol}.
It remains to find $Y_1^r$.

We write \eqref{Y2} as
\begin{equation*}
    \cL_{\yy} Y_2^o = \widehat{\nabla}_{n-1}\cdot F - (\cL_{\xy} + \cL_{\yx}) Y_1^o - \cL_{\yx} Y_1^r - \cL_{\xx} Y_0 - \tau^2 Y_0,
\end{equation*}
where we have used the simple fact $\cL_{\xy} Y_1^r = 0$ as $Y_1^r$ is independent of $y_n$. Using the solution of the general equation \eqref{eq.Lyy=Lyx}, we notice that the solution of the above equation can be formally expressed as
\begin{equation*}
    Y_2^o = \widetilde{Y}_2^o + \chi\cdot \widehat{\nabla}_{n-1} Y_1^r,
\end{equation*}
where $\widetilde{Y}_2^o$ satisfies the equation
\begin{equation*}
    \cL_{\yy} \widetilde{Y}_2^o = \widehat{\nabla}_{n-1}\cdot F - (\cL_{\xy} + \cL_{\yx}) Y_1^o - \cL_{\xx} Y_0 - \tau^2 Y_0.
\end{equation*}
Note that since $Y_0$ and $Y_1^o$ have been found, $\widetilde{Y}_2^o$ can be solved from the above equation.

Next, we consider the equation \eqref{Yk} with $k = 3$
\begin{equation*}
    \cL_{\yy} Y_{3} + (\cL_{\xy} + \cL_{\yx}) Y_{2} + \cL_{\xx} Y_{1} + \tau^2 Y_1= 0.
\end{equation*}
Integrating in $y_n$, we get
\begin{equation*}
\begin{aligned}
    & -\widehat{\nabla}_{n-1}\cdot \Ag{ A \nabla_n Y_2^o}_{y_n} - \widehat{\nabla}_{n-1} \cdot \Ag{ A \widehat{\nabla}_{n-1} Y_1^o}_{y_n} - \widehat{\nabla}_{n-1}\cdot ( \Ag{A}_{y_n} \widehat{\nabla}_{n-1} Y_1^r) + \tau^2 Y_1^r \\
    & = -\widehat{\nabla}_{n-1} \cdot ( \Ag{ A + A \nabla_n \chi }_{y_n} \widehat{\nabla}_{n-1} Y_1^r ) - \widehat{\nabla}_{n-1}\cdot \Ag{ A \nabla_n \widetilde{Y}_2^o}_{y_n}\\
    &\quad- \widehat{\nabla}_{n-1} \cdot \Ag{ A \widehat{\nabla}_{n-1} Y_1^o}_{y_n} + \tau^2 Y_1^r \\
    & = 0.
\end{aligned}
\end{equation*}
As a result, we have
\begin{equation*}
    -\widehat{\nabla}_{n-1} \cdot \widehat{A} \widehat{\nabla}_{n-1} Y_1^r + \tau^2 Y_1^r = \widehat{\nabla}_{n-1}\cdot \Ag{ A \nabla_n \widetilde{Y}_2^o}_{y_n} + \widehat{\nabla}_{n-1} \cdot \Ag{ A \widehat{\nabla}_{n-1} Y_1^o}_{y_n}.
\end{equation*}
This is again the same type of equation as \eqref{eq.Reg.Y} with one less scale and therefore can be solved by induction. Note that the solution $Y_1^r$ satisfies $\Ag{Y_1^r} = 0$. Together with $\Ag{Y_1^o}_{y_n} = 0$, we then have $\Ag{Y_1} = 0$.

\textbf{ Find $Y_k$ for $k\ge 2$:} This can be solved inductively. In the previous calculation, we have completely found $Y_1$ and $Y_2^o$. Now suppose we have completely found $Y_{k-1}$ (including $Y_{k-1}^o$ and $Y_{k-1}^r$) and $Y_k^o$. We then only need to find $Y_{k+1}^o$ and $Y_k^r$.

To this end, we consider the equation \eqref{Yk} with $k$ replaced by $k+1$
\begin{equation*}
    \cL_{\yy} Y_{k+1} + (\cL_{\xy} + \cL_{\yx}) Y_{k} + \cL_{\xx} Y_{k-1} + \tau^2 Y_{k-1}= 0.
\end{equation*}
Using $Y_k = Y_k^o + Y_k^r$, we get
\begin{equation*}
    \cL_{\yy} Y_{k+1}  = - (\cL_{\xy} + \cL_{\yx}) Y_{k}^o - \cL_{\yx} Y_k^r  - \cL_{\xx} Y_{k-1} -\tau^2 Y_{k-1}.
\end{equation*}
Using the solution formula of \eqref{eq.Lyy=Lyx}, we have
\begin{equation}\label{eq.Yok+1A}
    Y_{k+1}^o =  \widetilde{Y}_{k+1}^o + \chi \cdot \widehat{\nabla}_{n-1} Y_k^r,
\end{equation}
where
\begin{equation}\label{eq.tYok+1A}
    \cL_{\yy} \widetilde{Y}_{k+1}^o = - (\cL_{\xy} + \cL_{\yx}) Y_{k}^o - \cL_{\xx} Y_{k-1} - \tau^2 Y_{k-1}.
\end{equation}
Inserting \eqref{eq.Yok+1A} into
\begin{equation*}
    \cL_{\yy} Y_{k+2} + (\cL_{\xy} + \cL_{\yx}) Y_{k+1} + \cL_{\xx} Y_{k} + \tau^2 Y_k = 0
\end{equation*}
and integrating in $y_n$ (using $\Ag{Y_k^o}_{y_n} = 0$), we arrive at
\begin{equation}\label{eq.YrkA}
    -\widehat{\nabla}_{n-1} \cdot \widehat{A} \widehat{\nabla}_{n-1} Y_k^r + \tau^2 Y_k^r = \widehat{\nabla}_{n-1}\cdot \Ag{ A \nabla_n \widetilde{Y}_{k+1}^o}_{y_n} + \widehat{\nabla}_{n-1} \cdot \Ag{ A \widehat{\nabla}_{n-1} Y_k^o}_{y_n}.
\end{equation}
Thus provided that we can solve \eqref{eq.Reg.Y} with $n-1$ scales, we can find $Y_k^r$ from \eqref{eq.tYok+1A} and \eqref{eq.YrkA}, and then $Y_{k+1}^o$ is given by \eqref{eq.Yok+1A}.

We summarize the above procedure in proper order and list the necessary recursive equations as follows:

\begin{itemize}
    \item Step 1: Find $\chi$ and $\widehat{A}$:
    \begin{align}
        & -\nabla_n\cdot A \nabla_n \chi_j  = \nabla_n \cdot (Ae_j), \label{sum-s11}\\
        & \widehat{A}(y_1, y_2,\cdots, y_{n-1}) = \Ag{A + A\nabla_n \chi }_{y_n}. \label{sum.s12}
\end{align}

\item Step 2: Find $Y_0$ by the following three equations in order:
\begin{align}
        & -\nabla_n\cdot A \nabla_n \widetilde{F}  = \nabla_n \cdot F, \label{sum.s21}\\
    & \widehat{F}(y_1,y_2,\cdots, y_{n-1})  = \Ag{ F + A \nabla_n \widetilde{F} }_{y_n}, \label{sum.s22}\\
    & -\widehat{\nabla}_{n-1} \cdot \widehat{A} \widehat{\nabla}_{n-1} Y_0 + \tau^2 Y_0  = \widehat{\nabla}_{n-1} \cdot \widehat{F}. \label{sum.s23}
\end{align}

\item Step 3: Find $Y_1 = Y_1^o + Y_1^r$ by the following three equations in order:
\begin{align}
    &Y_1^o = \widetilde{F} + \chi\cdot \widehat{\nabla}_{n-1} Y_0, \label{sum.s31} \\
    &-\nabla_n \cdot A \nabla_n \widetilde{Y}_2^o = \widehat{\nabla}_{n-1}\cdot F + \widehat{\nabla}_{n-1} \cdot (A \nabla_n Y_{1}^o) + \nabla_n\cdot (A \widehat{\nabla}_{n-1} Y_{1}^o) \label{sum.s32}\\
    &\qquad\qquad\qquad\qquad+ \widehat{\nabla}_{n-1}\cdot ( A \widehat{\nabla}_{n-1} Y_0) - \tau^2 Y_0, \nonumber\\
    &-\widehat{\nabla}_{n-1} \cdot \widehat{A} \widehat{\nabla}_{n-1} Y_1^r + \tau^2 Y_1^r  = \widehat{\nabla}_{n-1}\cdot \big( \Ag{ A \nabla_n \widetilde{Y}_2^o}_{y_n} + \Ag{ A \widehat{\nabla}_{n-1} Y_1^o}_{y_n} \big). \label{sum.s33}
\end{align}

\item Step 4: Find $Y_{k+1} = Y_{k+1}^o+Y_{k+1}^r$ for $k\ge 1$ recursively by the following three equations in order:
\begin{align}
                &Y_{k+1}^o  = \widetilde{Y}_{k+1}^o + \chi\cdot \widehat{\nabla}_{n-1} Y_{k}^r, \label{sum.s41}\\
            & -\nabla_n \cdot A \nabla_n \widetilde{Y}_{k+2}^o  = \widehat{\nabla}_{n-1} \cdot (A \nabla_n Y_{k+1}^o) + \nabla_n\cdot (A \widehat{\nabla}_{n-1} Y_{k+1}^o)  \label{sum.s42}\\ 
            &\qquad\qquad\qquad \qquad\quad+ \widehat{\nabla}_{n-1}\cdot ( A \widehat{\nabla}_{n-1} Y_k) - \tau^2 Y_k, \nonumber\\
            &\hspace{-1em} -\widehat{\nabla}_{n-1}\cdot ( \widehat{A} \widehat{\nabla}_{n-1} Y_{k+1}^r) + \tau^2 Y_{k+1}^r = \widehat{\nabla}_{n-1}\cdot \Ag{ A \nabla_n \widetilde{Y}_{k+2}^o}_y + \widehat{\nabla}_{n-1}\cdot \Ag{ A \widehat{\nabla}_{n-1} Y_{k+1}^o}_y. \label{sum.s43}
\end{align}
\end{itemize}

In the above procedure, except for simple algebraic equations (including \eqref{sum-s11}, \eqref{sum.s22}, \eqref{sum.s31}, \eqref{sum.s41}), we need to solve two types of elliptic PDEs. One type is associated with the operator $\cL_{\yy} = -\nabla_n\cdot A \nabla_n$ (including \eqref{sum-s11}, \eqref{sum.s21}, \eqref{sum.s32}, \eqref{sum.s42}), which is easy to solve as it is a nondegenerate elliptic operator with respect to $y_n$. The other type of equations is associated with $\cL_{\xx} = - \widehat{\nabla}_{n-1} \cdot A \widehat{\nabla}_{n-1}$ (including \eqref{sum.s23}, \eqref{sum.s33}, \eqref{sum.s43}), which is one scale less than the original operator $- \widehat{\nabla}_n \cdot A \widehat{\nabla}_n$. Hence, we expect to solve these equations in an inductive argument on the number of scales.

\subsection{Energy estimates}
\label{sec.corrector-energy}

In this section, we will study the energy estimates of corrector equations in the form of
\begin{equation}\label{eq.RegEq.FG}
    -\widehat{\nabla}_n \cdot A \widehat{\nabla}_n Y + \tau^2 Y = s_1 \widehat{\nabla}_n \cdot F + s_2 G  \quad\text{ in }  \mathbb{T}^{d\times n},
\end{equation}
where $s_i = 0$ or $1$, $\tau>0$ is supposed to be small,  and $F=F(y_1,\cdots,y_n), G=G(y_1,\cdots,y_n)$ are $1$-periodic in each $y_i.$  We add the extra source term $G$ (compared to \eqref{eq.Reg.Y}) for later applications. The equation is very degenerate and its solvability is not obvious. However, we may add another regularization term $-\theta^2 \Delta Y$, where $\Delta = \sum_{k=1}^n \Delta_k = \sum_{k=1}^n \nabla_k \cdot \nabla_k$, to the equation and then consider
\begin{equation}\label{eq.Reg+}
    -\widehat{\nabla}_n \cdot A \widehat{\nabla}_n Y - \theta^2 \Delta Y + \tau^2 Y = s_1 \widehat{\nabla}_n \cdot F + s_2 G  \quad\text{ in }  \T^{d\times n}.
\end{equation}
The equation \eqref{eq.Reg+} is solvable in the energy space $H^1_{\xy}$.
If $F$ and $G$ are smooth enough, due to the term $\tau^2 Y$, we can estimate $\bfD_n^\ell Y$ depending on $\tau$, but independent of $\theta$. Then letting $\theta \to 0$, we can extract a regular periodic solution of \eqref{eq.RegEq.FG}. Some details can be found in the following Proposition \ref{prop.Ytau.F}.

We assume for all $\ell \ge 0$
\begin{equation}\label{cond.A2}
    \|\bfD_n^\ell A\|_{L^\infty_{\xy}} \le C_0 \Lambda_0^\ell \ell!,
\end{equation}
and for some $q\ge 0$ and all $\ell \ge 0$,
\begin{equation}\label{cond.F2}
    \|\bfD_n^\ell F\|_{L^2_{\xy}} \le  \Lambda_1^{\ell} (\ell+q)! \quad \text{ and } \quad \|\bfD_n^\ell G\|_{L^2_{\xy}} \le  \Lambda_1^{\ell} (\ell+q)!.
\end{equation}

\begin{proposition} \label{prop.Ytau.F}
Assume that $A$ satisfies \eqref{as.ellipticity}, \eqref{as.periodicity} and \eqref{cond.A2}. 
Let $Y$ be a weak solution of \eqref{eq.RegEq.FG} with $F,G$ satisfying \eqref{cond.F2}. Then there exists a constant $C_*$ depending only on $C_0, \lambda$ and $d$ such that for $\ell \ge 0$ and $\Lambda = \max \{ \Lambda_1, C_* \Lambda_0 \}$ we have
    \begin{equation*}
        \| \bfD_n^\ell \widehat{\nabla}_n Y \|_{L^2_{\xy}} + \tau \| \bfD_n^\ell  Y \|_{L^2_{\xy}} \le (s_1+\tau^{-1}s_2) C_* \Lambda^{\ell} (\ell+q)!.
    \end{equation*}
\end{proposition}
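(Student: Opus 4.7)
My plan is to proceed by induction on $\ell \geq 0$, leveraging two facts: that $\bfD_n^\ell$ commutes with $\widehat{\nabla}_n$ (both are built from first-order $y_i$-derivatives), and that the real analyticity condition \eqref{cond.A2} makes the commutator arising from $\bfD_n^\ell(A\widehat{\nabla}_n Y)$ controllable at each order.

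For the base case $\ell = 0$, I would test \eqref{eq.RegEq.FG} against $Y$ itself and integrate by parts on the torus (no boundary terms since each $\nabla_i$ in $\widehat{\nabla}_n$ acts periodically), which combined with the ellipticity in \eqref{as.ellipticity} gives
\begin{equation*}
    \lambda \|\widehat{\nabla}_n Y\|_{L^2_{\xy}}^2 + \tau^2 \|Y\|_{L^2_{\xy}}^2 \leq s_1 \|F\|_{L^2_{\xy}} \|\widehat{\nabla}_n Y\|_{L^2_{\xy}} + s_2 \|G\|_{L^2_{\xy}} \|Y\|_{L^2_{\xy}}.
\end{equation*}
Young's inequality, together with $s_i \in \{0,1\}$, yields $\|\widehat{\nabla}_n Y\|_{L^2_{\xy}} + \tau \|Y\|_{L^2_{\xy}} \leq C(s_1 \|F\|_{L^2_{\xy}} + s_2 \tau^{-1} \|G\|_{L^2_{\xy}})$ with $C$ depending only on $\lambda$. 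Combined with \eqref{cond.F2} at $\ell = 0$, this establishes the $\ell = 0$ case.

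For the inductive step, applying $\bfD_n^\ell$ to \eqref{eq.RegEq.FG} and using the generalized Leibniz rule \eqref{eq.Leibniz} on $\bfD_n^\ell(A\widehat{\nabla}_n Y)$, the function $Z := \bfD_n^\ell Y$ satisfies an equation of the same form,
\begin{equation*}
    -\widehat{\nabla}_n \cdot A \widehat{\nabla}_n Z + \tau^2 Z = s_1 \widehat{\nabla}_n \cdot \bfD_n^\ell F + s_2 \bfD_n^\ell G + \widehat{\nabla}_n \cdot H_\ell,
\end{equation*}
where $H_\ell := \sum_{k=1}^\ell \binom{\ell}{k}\, \Sym\bigl(\bfD_n^k A \otimes \bfD_n^{\ell-k}\widehat{\nabla}_n Y\bigr)$. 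Applying the base-case energy estimate to $Z$ with $H_\ell$ absorbed as a divergence-form source, and inserting \eqref{cond.A2}, \eqref{cond.F2}, and the induction hypothesis $\|\bfD_n^{\ell-k}\widehat{\nabla}_n Y\|_{L^2_{\xy}} \leq (s_1+s_2/\tau) C_* \Lambda^{\ell-k}(\ell-k+q)!$, produces
\begin{equation*}
    \|\widehat{\nabla}_n Z\|_{L^2_{\xy}} + \tau \|Z\|_{L^2_{\xy}} \leq C(s_1+s_2/\tau)\Lambda_1^\ell (\ell+q)! + C C_0 C_*(s_1+s_2/\tau) \Lambda^\ell (\ell+q)! \sum_{k=1}^\ell \frac{\ell!\,(\ell-k+q)!}{(\ell-k)!\,(\ell+q)!}\Bigl(\frac{\Lambda_0}{\Lambda}\Bigr)^k.
\end{equation*}

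The main obstacle is closing the induction through the combinatorial bookkeeping on the commutator sum. The key algebraic observation is
\begin{equation*}
    \frac{\ell!\,(\ell-k+q)!}{(\ell-k)!\,(\ell+q)!} = \prod_{i=0}^{k-1} \frac{\ell-i}{\ell+q-i} \leq 1 \quad \text{for every } 1\leq k\leq \ell,
\end{equation*}
because each factor is at most one when $q\geq 0$. Consequently the inner sum is dominated by the geometric series $\sum_{k\ge 1}(\Lambda_0/\Lambda)^k = \Lambda_0/(\Lambda-\Lambda_0)$ whenever $\Lambda > \Lambda_0$. Taking $\Lambda = \max\{\Lambda_1, C_*\Lambda_0\}$ makes $\Lambda_1^\ell \leq \Lambda^\ell$ and $\Lambda_0/(\Lambda-\Lambda_0) \leq 1/(C_*-1)$, so the right-hand side is bounded by $(s_1+s_2/\tau)\Lambda^\ell(\ell+q)!\,[\,C + CC_0 C_*/(C_*-1)\,]$. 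Finally, choosing $C_*$, depending only on $d$, $\lambda$, $C_0$, large enough that $C + CC_0 C_*/(C_*-1) \leq C_*$ closes the induction and yields the claimed estimate.
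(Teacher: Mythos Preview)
Your proof is correct and follows essentially the same approach as the paper: induction on $\ell$ via the base energy estimate, differentiation of the equation with the generalized Leibniz rule to produce the commutator $H_\ell$, the same binomial inequality $\binom{\ell}{k}k!(\ell-k+q)!\le(\ell+q)!$ (which you wrote as the product $\prod_{i=0}^{k-1}(\ell-i)/(\ell+q-i)\le 1$), the geometric-series bound $\sum_{k\ge1}(\Lambda_0/\Lambda)^k\le 1/(C_*-1)$, and closure by choosing $C_*$ large depending only on $d,\lambda,C_0$. The paper's version is organized identically; your exposition matches it step for step.
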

\begin{proof}
    Let $\Lambda = \max\{ \Lambda_1, C_* \Lambda_0 \}$ for some $C_* > 1$ to be chosen later.
    The energy estimate (multiply the equation against $Y$ and apply the integration by parts) of \eqref{eq.RegEq.FG} yields
    \begin{equation*}
        \| \widehat{\nabla}_n Y \|_{L^2_{\xy}} + \tau \|  Y \|_{L^2_{\xy}} \le C_d \| s_1 F \|_{L^2_{\xy}} + C_d \tau^{-1} \| s_2 G \|_{L^2_{\xy}} \le C_d (s_1+\tau^{-1}s_2 ) q!.
    \end{equation*}
    This is the base case $\ell = 0$ and we need $C_* \ge C_d$.
    Suppose the desired estimate holds for $\ell = 0, 1, \cdots, k-1$, we show it holds for $\ell = k$. Applying $\bfD_n^k$ to the equation \eqref{eq.RegEq.FG}, we obtain a tensor-valued equation for $\bfD_n^k Y$,
    \begin{equation*}
        -\widehat{\nabla}_n \cdot (A \widehat{\nabla}_n \bfD_n^k Y) + \tau^2 \bfD_n^k Y = s_1\widehat{\nabla}_n \cdot \bfD_n^k F + s_2\bfD_n^k G + \widehat{\nabla}_n \cdot \bigg\{  \sum_{m=1}^{k} \binom{k}{m} \Sym( \bfD_n^m A \otimes \bfD_n^{k-m} \widehat{\nabla}_n Y ) \bigg\},
    \end{equation*}
    where we have used the generalized Leibniz rule
    and $\Sym(\cdot)$ denotes the symmetrilization of a tensor; see Section \ref{sec.notation} for the detailed interpretation. 
    By the energy estimate and inductive assumption on $\bfD_n^\ell \widehat{\nabla}_n Y$ for $\ell \le k-1$,
    \begin{equation}\label{est.EnergyY.FG}
        \begin{aligned}
            & \| \bfD_n^k \widehat{\nabla}_n Y \|_{L^2_{\xy}} + \tau \| \bfD_n^k  Y \|_{L^2_{\xy}} \\
            & \le C_d \| s_1 \bfD_n^k F \|_{L^2_{\xy}} + C_d \tau^{-1} \|s_2 \bfD_n^k G \|_{L^2_{\xy}} + C_d \sum_{m=1}^k \binom{k}{m} \| \bfD_n^m A\|_{L^\infty_{\xy}} \| \bfD_n^{k-m} \widehat{\nabla}_n Y \|_{L^2_{\xy}} \\
            & \le C_d (s_1+\tau^{-1}s_2)  \Lambda_1^{k}(k+q)! + C_d (s_1+\tau^{-1}s_2) \sum_{m=1}^k \binom{k}{m} C_0 \Lambda_0^m m! C_* \Lambda^{k-m} (k-m+q)! \\
            & \le C_d (s_1+\tau^{-1}s_2)  \Lambda^{k} (k+q)! + C_d C_0 (s_1+\tau^{-1}s_2) C_*  \Lambda^{k} (k+q)! \sum_{m=1}^k (\Lambda_0/\Lambda)^{m} \\
            & \le \Big(\frac{C_d}{C_*} + \frac{C_d C_0 }{C_*-1}) (s_1+\tau^{-1}s_2) C_* \Lambda^{k} (k+q)!,
        \end{aligned}
    \end{equation}
    where we have used $\Lambda \ge C_* \Lambda_0$ in the last inequality. In the third inequality of \eqref{est.EnergyY.FG}, we have used the simple fact
    \begin{equation}\label{eq.binom-1}
        \binom{k}{m} m! (k-m+q)! \le (k+q)!.
    \end{equation}
    In the last inequality of \eqref{est.EnergyY.FG}, we have used
    \begin{equation}\label{est.GeoSeries}
        \sum_{m=1}^k C_*^{-m} \le \frac{1}{C_* - 1},
    \end{equation}
    whenever $C_*>1$.
    
    We choose $C_*$ large depending only on $C_d$ and $C_0$ such that
    \begin{equation}\label{cond.C*}
        \frac{C_d}{C_*} + \frac{C_d C_0 }{C_*-1} \le 1.
    \end{equation}
    Thus \eqref{est.EnergyY.FG} implies the desired estimate for $\ell = k$ and completes the inductive argument.
\end{proof}

Our second energy estimate is for the equation with operator $\cL_\yy = -\nabla_n \cdot A \nabla_n$ that appears in the recursive equations in the previous subsection.
\begin{proposition}\label{prop.energy2}
Let $A$, $F$ and $G$ satisfy the assumptions in Proposition \ref{prop.Ytau.F}, and $\Ag{G}_{y_n} = 0$. Let $\tau \ge 0$, and $Y$ be the solution of
    \begin{equation}\label{eq.Lyy=FG}
        -\nabla_n \cdot A \nabla_n Y + \tau^2 Y = \nabla_n\cdot F + G\quad\text{ in }\T^d.
    \end{equation}
    There exists $C_*$ depending only on $\lambda, C_0$ and $d$ such that for all $\ell \ge 0$ and $\Lambda = \max\{ \Lambda_1, C_* \Lambda_0 \}$,
    \begin{equation}\label{est.E2}
        \| \bfD_n^\ell Y \|_{L_\x^2 H^1_\y} \le C_* \Lambda^{\ell} (\ell + q)!.
    \end{equation}
    Furthermore, if the assumptions on $F$ and $G$ are replaced by 
    \begin{equation}\label{cond.FG.LinftyL2}
        \| \bfD^\ell_n F \|_{L^\infty_{\x} L^2_\y} \le \Lambda_1^\ell (\ell+q) !\quad \text{and} \quad \| \bfD^\ell_n G \|_{L^\infty_{\x} L^2_\y} \le \Lambda_1^\ell (\ell+q)!,
    \end{equation}
    then the norm of $Y$ in \eqref{est.E2} can also be replaced by $\| \bfD_n^\ell Y \|_{L_\x^\infty H^1_\y}$.
\end{proposition}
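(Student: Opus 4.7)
The plan is to proceed by induction on $\ell$, in close analogy with the proof of Proposition \ref{prop.Ytau.F}. The crucial difference is that $\cL_{\yy}=-\nabla_n\cdot A\nabla_n$ is elliptic only in $y_n$, so every energy estimate will be carried out pointwise in $\x=(y_1,\ldots,y_{n-1})$ and then either integrated in $\x$ (for the $L^2_\x H^1_\y$ estimate) or passed to an essential supremum in $\x$ (for the $L^\infty_\x H^1_\y$ estimate). Before starting, I would fix the normalization $\Ag{Y}_{y_n}\equiv 0$, which is the natural convention from Section \ref{sec.corr.ansatz} and is needed for uniqueness when $\tau$ may equal zero. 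Two elementary observations follow: the Poincaré inequality on $\T^d$ yields $\|Y(\cdot,y_n)\|_{L^2_\y}\lesssim\|\nabla_n Y(\cdot,y_n)\|_{L^2_\y}$ pointwise in $\x$; and, because differentiations in $y_i$ with $i<n$ commute with the $y_n$-average while any $\nabla_n$ integrates to zero on the torus, the zero-mean conditions propagate to $\Ag{\bfD_n^\ell Y}_{y_n}=\Ag{\bfD_n^\ell G}_{y_n}=0$ for every $\ell\ge 0$.

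The base case $\ell=0$ is the standard energy estimate for \eqref{eq.Lyy=FG}: test against $Y$ at fixed $\x$, absorb $\|F\|_{L^2_\y}\|\nabla_n Y\|_{L^2_\y}$ by Young's inequality, and handle $\|G\|_{L^2_\y}\|Y\|_{L^2_\y}$ via Poincaré. The critical point is that Poincaré replaces what would otherwise be a $\tau^{-1}$ factor in front of $\|G\|_{L^2_\y}$, so the bound is uniform as $\tau\downarrow 0$. Integrating in $\x$ or taking essential supremum then gives both variants of the base case.

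For the inductive step, I would apply $\bfD_n^k$ to \eqref{eq.Lyy=FG} and use the generalized Leibniz rule of Section \ref{sec.notation} to obtain
\begin{equation*}
-\nabla_n\cdot A\nabla_n\bfD_n^k Y+\tau^2\bfD_n^k Y=\nabla_n\cdot\bfD_n^k F+\bfD_n^k G+\nabla_n\cdot\sum_{m=1}^k\binom{k}{m}\Sym(\bfD_n^m A\otimes\bfD_n^{k-m}\nabla_n Y).
\end{equation*}
Since $\Ag{\bfD_n^k Y}_{y_n}=\Ag{\bfD_n^k G}_{y_n}=0$, the same Poincaré-based energy estimate as in the base case applies pointwise in $\x$. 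The ensuing Leibniz sum is then bounded by the exact chain of inequalities displayed in \eqref{est.EnergyY.FG}, using \eqref{cond.A2} and the inductive hypothesis, together with the combinatorial identity \eqref{eq.binom-1} and the geometric sum \eqref{est.GeoSeries}; choosing $C_*$ to satisfy \eqref{cond.C*} closes the induction. For the $L^\infty_\x H^1_\y$ variant, the same scheme goes through verbatim with all $\x$-norms replaced by essential suprema in $\x$, and the mixed-norm product rule $\|F\otimes G\|_{L^\infty_\x L^2_\y}\le\|F\|_{L^\infty_\xy}\|G\|_{L^\infty_\x L^2_\y}$ recorded in Section \ref{sec.notation} is precisely what is needed to bookkeep the Leibniz sum.

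The main obstacle to verify is the propagation of the zero-mean conditions $\Ag{\bfD_n^\ell Y}_{y_n}=\Ag{\bfD_n^\ell G}_{y_n}=0$ through the induction, since this is the mechanism that avoids a $\tau^{-1}$ loss on the $G$-contribution and recovers the clean bound \eqref{est.E2}. Beyond that, everything is a careful repetition of the combinatorics already performed for Proposition \ref{prop.Ytau.F}, and no additional analytical tool is required.
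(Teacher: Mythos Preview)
Your proposal is correct and essentially identical to the paper's proof: both proceed by induction on $\ell$, use the pointwise-in-$\x$ energy estimate combined with Poincar\'e (needed since $\tau$ may vanish), apply $\bfD_n^k$ with the generalized Leibniz rule, and close the induction via the same combinatorial estimates \eqref{eq.binom-1}, \eqref{est.GeoSeries}, \eqref{cond.C*}. The only noteworthy difference is that you make explicit the propagation of the zero-mean conditions $\Ag{\bfD_n^\ell Y}_{y_n}=\Ag{\bfD_n^\ell G}_{y_n}=0$ through the induction, which the paper leaves implicit; this is indeed the mechanism that allows Poincar\'e to replace a $\tau^{-1}$ loss on the $G$-term at every step, and your justification for it (derivatives in $y_i$ with $i<n$ commute with $\Ag{\cdot}_{y_n}$, while any $\nabla_n$-derivative integrates to zero over $\T^d$) is correct.
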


\begin{proof}
    The elliptic equation \eqref{eq.Lyy=FG} is with respect to $y_n$, and the rest variables $y_1,\cdots, y_{n-1}$ are viewed as parameters not involved in the differential operators. Due to $\Ag{G}_{y_n} = 0$, the existence of the solution follows from the standard Lax-Milgram theorem. The solution is unique with $\Ag{Y}_{y_n} = 0$.
    Let $\Lambda = \max\{ \Lambda_1, C_* \Lambda_0 \}$ for some $C_* > 1$ to be chosen later.
    The energy estimate of \eqref{eq.Lyy=FG} and Poincar\'{e}'s inequality imply
    \begin{equation*}
        \| Y(y_1,\cdots, y_{n-1}, \cdot) \|_{H^1_{\y}} \le C_d \big(\| F(y_1,\cdots, y_{n-1},\cdot) \|_{L^2_\y} + \| G(y_1,\cdots, y_{n-1},\cdot) \|_{L^2_\y} \big).
    \end{equation*}
    Squaring both sides and integrating over $(y_1, \cdots, y_{n-1})$, we have
    \begin{equation*}
        \| Y \|_{L^2_\x H^1_\y} \le C_d(\| F \|_{L^2_{\xy}} + \| G \|_{L^2_{\xy}}) \le C_d q!.
    \end{equation*}
    This is the base case $\ell = 0$ of \eqref{est.E2} as we choose $C_* \ge C_d$. Suppose the desired estimate holds for $\ell = 0,1,\cdots, k-1$, we show it holds for $\ell = k$. Applying $\bfD_n^k$ to \eqref{eq.Lyy=FG}, we obtain a tensor-valued equation for $\bfD_n^k Y$,
    \begin{equation*}
        -\nabla_n \cdot (A \nabla_n \bfD_n^k Y) + \tau^2 \bfD_n^k Y = \nabla_n \cdot \bfD_n^k F + \bfD_n^k G + \nabla_n \cdot \bigg\{  \sum_{m=1}^{k} \binom{k}{m} \Sym( \bfD_n^m A \otimes \bfD_n^{k-m} \nabla_n Y ) \bigg\}.
    \end{equation*}
    Applying the energy estimate to this equation and then integrating over $(y_1,\cdots, y_{n-1})$ (as the case $\ell = 0$), we obtain
    \begin{equation*}
        \begin{aligned}
            & \| \bfD_n^k Y \|_{L^2_\x H^1_\y} \\
            & \le C_d \| \bfD_n^k F \|_{L^2_{\xy}} + C_d \| \bfD_n^k G \|_{L^2_{\xy}} + C_d \sum_{m=1}^k \binom{k}{m} \| \bfD_n^m A\|_{L^\infty_{\xy}} \| \bfD_n^{k-m} \nabla_n Y \|_{L^2_{\xy}} \\
            & \le C_d  \Lambda_1^{k}(k+q)! + C_d  \sum_{m=1}^k \binom{k}{m} C_0 \Lambda_0^m m! C_* \Lambda^{k-m} (k-m+q)! \\
            & \le C_d   \Lambda^{k} (k+q)! + C_d C_0  C_*  \Lambda^{k} (k+q)! \sum_{m=1}^k (\Lambda_0/\Lambda)^{m} \\
            & \le \Big(\frac{C_d}{C_*} + \frac{C_d C_0 }{C_*-1}) C_* \Lambda^{k} (k+q)!,
        \end{aligned}
    \end{equation*}
    where again we have used \eqref{eq.binom-1}, \eqref{est.GeoSeries} and the assumption $\Lambda \ge C_* \Lambda_0$. This implies the desired estimate with $C_*$ satisfying \eqref{cond.C*}. Finally, under \eqref{cond.FG.LinftyL2}, we only need to replace integrating over $(y_1,\cdots, y_{n-1})$ by taking essential supremum over $(y_1,\cdots, y_{n-1})$, and the corresponding estimate of $\| \bfD_n^\ell Y \|_{L_\x^\infty H^1_\y}$ follows similarly.
\end{proof}

\begin{remark}
    Unlike Proposition \ref{prop.Ytau.F}, the above energy estimate is independent of $\tau$ since the equation only with respect to the variable $y_n$ is not degenerate. We should also point out that both the energy estimates in Proposition \ref{prop.Ytau.F} and Proposition \ref{prop.energy2} hold for all $\ell \ge 0$.
\end{remark}

\subsection{Uniform estimates for two scales}
The estimate of $\| \bfD_n^\ell Y\|_{L^2}$ in Proposition \ref{prop.Ytau.F} depends essentially on $\tau$. In order to get estimates independent of $\tau$, we need to take advantage of the structure of the equation. To achieve this, we begin with the case $n=2$, for which $A = A(y_1, y_2)$ and $\widehat{\nabla}_2 = \nabla_1 + \delta_2^{-1} \nabla_2$.
We will show that the uniform estimate is possible if $\delta_2$ is sufficiently small and $\tau$ is not too small (depending on $\delta_2$).
    
As in \eqref{eq.Y2scaleExp}, we expect to have a solution of \eqref{eq.Reg.Y} with $n=2$ in a form of
\begin{equation}\label{eq.Nk}
    Y(y_1, y_2) = \sum_{j=0}^\infty \delta_2^j Y_j(y_1, y_2).
\end{equation}
The recursive system for $Y_j$ is given by \eqref{eq.liftphi} and can be solved through a sequence of equations \eqref{sum-s11}-\eqref{sum.s43}.
However, even though we can solve all these $Y_j$'s recursively, the convergence of the above series cannot be guaranteed by the energy estimates. Thus we have to look at a partial sum truncated at $j = k$, denoted by $N_k$, and show that $N_k$ satisfies
\begin{equation*}
    -\widehat{\nabla}_2 \cdot A \widehat{\nabla}_2 N_k + \tau^2 N_k = \widehat{\nabla}_2 \cdot F + E_k,
\end{equation*}
with some error $E_k$. With careful quantitative analysis, we can show $E_k$ is small for $k$ suitably larger and thus the error induced by $E_k$ can be controlled by the energy estimate for suitably small $\tau$. The precise statement is the following.

\begin{theorem}\label{thm.2S.Y}
    Let $n=2$. Suppose that $A$ satisfies \eqref{as.ellipticity}, \eqref{as.periodicity}, and \eqref{cond.A2}, and $F$ satisfies \eqref{cond.F2}.  Let $Y$ be the solution of
    \begin{equation*}
        -\widehat{\nabla}_2 \cdot A \widehat{\nabla}_2 Y + \tau^2 Y = \widehat{\nabla}_2\cdot F  \quad\text{ in }  \mathbb{T}^{d\times 2}.
    \end{equation*} Then there exist $C, C_{**}$ (depending on $d, \lambda, C_0$) and $\Lambda =\max \{\Lambda_1, C_{**} \Lambda_0 \}$ such that whenever $\delta_2$ and $\tau$ satisfy 
    $\delta_2 C^3 \Lambda (k+\ell + q) \le e^{-1}$ and $\tau^2 \ge e^{-k+1} C^3 \Lambda$ for some $k \ge 1$ and $\ell \ge 0$, then we have
    \begin{equation*}
     \| \bfD_2^\ell Y \|_{L^2_{\xy}} \le   (C+2) \Lambda^{\ell} (\ell+1+q)!.
\end{equation*}
\end{theorem}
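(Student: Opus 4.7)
The approach is to construct $Y$ as a truncated formal power series in $\delta_2$ plus a small correction controlled by Propositions~\ref{prop.Ytau.F} and \ref{prop.energy2}. Following Section~\ref{sec.corr.ansatz}, I first build the sequence $(Y_0, \ldots, Y_k)$ via the recursive scheme in Steps~1--4: for $n=2$, each $Y_j$ is produced by alternating between (i) nondegenerate equations in $y_2$ of $\cL_{\yy}$-type on $\T^d$ (handled by Proposition~\ref{prop.energy2}), and (ii) nondegenerate equations in $y_1$ of the form $-\nabla_1 \cdot \widehat{A}\nabla_1 Z + \tau^2 Z = \nabla_1\cdot H$ (the one-scale case of Proposition~\ref{prop.Ytau.F}). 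In (ii) the source is a divergence of zero mean, so testing against $1$ forces $\int Z = 0$ and Poincar\'e yields a $\tau$-independent $H^1$ bound on $Z$---essential so that $\tau^{-1}$ losses do not accumulate across the recursion. I then prove by induction on $j$, using \eqref{eq.binom-1} and \eqref{est.GeoSeries} as in the proof of Proposition~\ref{prop.Ytau.F}, that for a constant $C = C(d,\lambda,C_0)$ and all $\ell \ge 0$,
\begin{equation*}
    \|\bfD_2^\ell Y_j\|_{L^2_{\xy}} \le C^{j+1}\Lambda^{\ell+j}(\ell+j+q)!,
\end{equation*}
with companion bounds on the auxiliary $\widetilde{Y}^o_{j+1}$. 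At the $j$-th step the defining equations involve at most two derivatives of the previous $Y_m$, which the analyticity hypotheses \eqref{cond.A2}--\eqref{cond.F2} absorb through the matching factorial, so the prefactor remains geometric in $j$.

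Set $N_k := \sum_{j=0}^k \delta_2^j Y_j$. A direct computation using \eqref{eq.LLLL} and the recursions \eqref{Y0}--\eqref{Yk} kills every interior contribution and yields
\begin{equation*}
    -\widehat{\nabla}_2\cdot A\widehat{\nabla}_2 N_k + \tau^2 N_k = \widehat{\nabla}_2\cdot F + E_k,
\end{equation*}
where the residual
\begin{equation*}
    E_k = \delta_2^{k-1}\big[(\cL_{\xy}+\cL_{\yx})Y_k + \cL_{\xx}Y_{k-1} + \tau^2 Y_{k-1}\big] + \delta_2^k\big[\cL_{\xx}Y_k + \tau^2 Y_k\big]
\end{equation*}
collects only boundary terms at indices $k,k-1$. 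Under the hypothesis $\delta_2 C^3 \Lambda(k+\ell+q)\le e^{-1}$, the inductive bound yields $\sum_{j=0}^k \delta_2^j \|\bfD_2^\ell Y_j\|_{L^2} \le 2C\Lambda^\ell(\ell+q)!$ by geometric summation; and writing $E_k = \widehat{\nabla}_2\cdot P_k + Q_k$ (absorbing $\nabla_1$ into $\widehat{\nabla}_2$ only where it does not produce $\delta_2^{-1}$ blow-up) gives bounds for $\bfD_2^\ell P_k,\bfD_2^\ell Q_k$ of order $e^{-(k-1)}\Lambda^\ell(\ell+1+q)!$. Applying Proposition~\ref{prop.Ytau.F} to $W:=Y-N_k$, which solves the equation with source $-E_k$, together with the second hypothesis $\tau^2 \ge e^{-k+1}C^3\Lambda$, yields $\|\bfD_2^\ell W\|_{L^2} \le 2\Lambda^\ell(\ell+1+q)!$. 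Summing the bounds for $N_k$ and $W$ gives the claimed estimate with constant $C+2$.

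The principal technical obstacle is the inductive step for the $Y_j$ estimates: one must track the derivative loss at each level so that the prefactor stays $C^j$ rather than blowing up factorially, and ensure that the splitting $Y_j = Y_j^o + Y_j^r$ does not generate uncanceled $\tau^{-1}$ factors in the $Y_j^r$-equation. A secondary subtlety is the rearrangement $E_k = \widehat{\nabla}_2\cdot P_k + Q_k$: naive substitution $\nabla_1 = \widehat{\nabla}_2 - \delta_2^{-1}\nabla_2$ produces spurious $\delta_2^{-1}$ factors, so the genuinely $\nabla_2$-type residuals must be retained in $Q_k$ as $L^2$ source, paying the $\tau^{-2}$ in Proposition~\ref{prop.Ytau.F} that is exactly budgeted by the hypothesis $\tau^2 \ge e^{-k+1}C^3\Lambda$.
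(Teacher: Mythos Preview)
Your proposal is correct and follows essentially the same approach as the paper's proof: build the formal series $\sum \delta_2^j Y_j$ via the recursive scheme of Section~\ref{sec.corr.ansatz}, obtain inductive bounds on $\bfD_2^\ell Y_j$ growing geometrically in $j$ and factorially in $\ell+j$, truncate, rewrite the residual as $\widehat{\nabla}_2\cdot P_k + Q_k$, and absorb the remainder via Proposition~\ref{prop.Ytau.F} using the budget $\tau^2 \ge e^{-k+1}C^3\Lambda$. The only minor differences are bookkeeping: the paper truncates with the last term $\delta_2^k Y_k^o$ rather than $\delta_2^k Y_k$, tracks constants as $C^{3j+1}$ rather than $C^{j+1}$, and cites Proposition~\ref{prop.energy2} (with $n=1$) directly for the $Y_j^r$-equations in place of your Proposition~\ref{prop.Ytau.F}-plus-Poincar\'e argument---but these are equivalent and the crucial $\tau$-independence you flag is exactly the point.
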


\begin{remark}
    When applying to the corrector equation in two scales, we take $F = Ae_j$ with $C_0 = 1, \Lambda_1 = \Lambda_0$ and $q = 0$. We use the Sobolev embedding theorem to find bounded correctors with bounded gradients, which only requires $\ell > d + 1$. Thus the two constraints in Theorem \ref{thm.2S.Y} can be written as
    \begin{equation}\label{cond.n=2}
        \left\{
        \begin{aligned}
            & \delta_2 C^3 \Lambda (k+d+1) \le e^{-1}, \\
            & \tau^2 \ge e^{-k+1} C^3 \Lambda.
        \end{aligned}
        \right.
    \end{equation} 
    To guarantee the existence for some $k \ge 1$ satisfying both of the above constraints, we need $\delta_2 \le c$ for sufficient  small $c>0$. This scale-separation condition can be assumed without loss of generality for the case $n=2$ since otherwise Theorem \ref{thm.MainRate} is trivial.
    On the other hand, in order to have more accurate correctors, we need $\tau$ as small as possible and thus we would like to choose $k$ as large as possible due to the second constraint in \eqref{cond.n=2}. This implies that the largest possible $k$ is comparable to $\delta_2^{-1}$ and $\tau$ is comparable to $e^{-c/\delta_2} = e^{-c\e_1/\e_2}$ for some $c>0$. 
\end{remark}

\begin{proof}[Proof of Theorem \ref{thm.2S.Y}]
Before the proof, we emphasize that the constant $C$ (depending only on $\lambda, C_0, $ and $d$) will be chosen large enough independent of $\Lambda, \delta_2, k, \ell$ and $\tau$. Other local constants depending on $\lambda, C_0$ and $d$ will be explained during the proof.

In view of \eqref{eq.Nk}, we need to estimate $Y_k$ through a system of equations ordered in \eqref{sum-s11}-\eqref{sum.s43}, divided into 4 steps. The estimates of $Y_k$ for $n=2$ are only based on the energy estimate in Proposition \ref{prop.energy2}.

    \textbf{Step 1:} Estimate of $\chi$ and $\widehat{A}$. 
    
    The equation of $\chi = (\chi_j(y_1, y_2))$ is given by \eqref{sum-s11}, i.e.,
    \begin{equation*}
        -\nabla_2\cdot A \nabla_2 \chi_j  = \nabla_2 \cdot (Ae_j).
    \end{equation*}
    By Proposition \ref{prop.energy2} with $\tau = 0$ and $G=0$,  for all $\ell \ge 0$, we have
    \begin{equation}\label{est.chi.main}
        \| \bfD_2^\ell \chi \|_{L^\infty_{\x} H^1_\y } \le C_* \widehat{\Lambda}_0^\ell \ell!,
    \end{equation}
    where $\widehat{\Lambda}_0 = C_* \Lambda_0$.

    Next, we show the estimate of $\widehat{A}$: for some $\widehat{C}_0$,
    \begin{equation*}
        \| \nabla_1^\ell \widehat{A} \|_{L^\infty_\x} \le \widehat{C}_0 \widehat{\Lambda}_0^\ell \ell!.
    \end{equation*}
    Recall the equation for $\widehat{A} = (\widehat{a}_{ij}(y_1))$ is given by the algebraic equation \eqref{sum.s12}, i.e.,
    \begin{equation*}
        \widehat{A}(y_1) = \Ag{A + A\nabla_2 \chi }_{y_2}.
    \end{equation*}
    Applying $\nabla^\ell_1$ to the equation and using the generalized Leibniz rule, we have
    \begin{equation*}
        \nabla_1^\ell \widehat{A}(y_1) = \Ag{\nabla^\ell_1 A }_{y_2} + \Ag{ \sum_{m=0}^{\ell} \binom{\ell}{m} \Sym( \nabla^m_1 A \otimes \nabla^{\ell-m}_1 \nabla_2 \chi_k ) }_{y_2}.
    \end{equation*}
    It follows from the triangle and H\"{o}lder's inequalities that
    \begin{equation*}
    \begin{aligned}
        \| \nabla^\ell_1 \widehat{A} \|_{L^\infty_\x} & \le \| \nabla^\ell_1 A \|_{L^\infty_{\xy}} + \sum_{m=0}^{\ell} \binom{\ell}{m} \| \nabla^m_1 A \|_{L^\infty_{\xy}} \| \nabla^{\ell-m}_1 \chi \|_{L^\infty_\x H^1_\y} \\
        & \le C_0 \Lambda_0^{\ell} \ell! + \sum_{m=0}^{\ell} \binom{\ell}{m} C_0 \Lambda_0^{m}m! C_* \widehat{\Lambda}_0^{\ell-m} (\ell-m)!\\
        & \le C_0 \Lambda_0^{\ell} \ell! + C_0 C_* \widehat{\Lambda}_0^\ell \ell! \sum_{m=0}^\ell (\Lambda_0 /\widehat{\Lambda}_0)^m \\
        & \le (C_0 + \frac{C_0 C_*^2}{C_*-1}) \widehat{\Lambda}_0^\ell \ell!,
    \end{aligned}
    \end{equation*}
    where we have used the fact $\Lambda_0 /\widehat{\Lambda}_0 \le C_*^{-1}$, the identity
    \begin{equation*}
        \binom{\ell}{m} m!(\ell-m)! = \ell!,
    \end{equation*}
    and
    \begin{equation*}
        \sum_{m=0}^\ell C_*^{-m} \le \frac{C_*}{C_* - 1}.
    \end{equation*}
    We rewrite the constant $C_0 + \frac{C_0 C_*^2}{C_*-1}$ as $\widehat{C}_0$ and obtain the desired estimate.

    \textbf{Step 2:} Estimate of $Y_0$: for some $C$ and $\Lambda=\max\{ \Lambda_1, C_*^2 \Lambda_0 \}$, and any $\ell \ge 0$,
    \begin{equation}\label{est.Y0.l>0}
        \| \nabla^\ell_1 Y_0 \|_{H^1_\x} \le C \Lambda^{\ell} (\ell+q)!.
    \end{equation}
    
    Recall that $Y_0$ is calculated through three equations \eqref{sum.s21}, \eqref{sum.s22} and \eqref{sum.s23}, also listed below:
    \begin{align}
        -\nabla_2 \cdot A \nabla_2 \widetilde{F}  & = \nabla_2 \cdot F, \label{eq.tildeF}\\
        \widehat{F}(y_1) & = \Ag{F + A \nabla_2 \widetilde{F}}_{y_2}, \label{eq.hatF}\\
        -\nabla_1 \cdot \widehat{A} \nabla_1 Y_0 + \tau^2 Y_0 & = \nabla_1 \cdot \widehat{F}.\label{eq.Y0.ik}
    \end{align}
        We first estimate $\widetilde{F}$.
    By Proposition \ref{prop.energy2} applied to \eqref{eq.tildeF}, we have
    \begin{equation}\label{est.DltF}
        \| \bfD_2^\ell \widetilde{F}\|_{L^2_\x H^1_\y} \le C_* \widehat{\Lambda}_1^{\ell} (\ell+q)!,
    \end{equation}
    where $\widehat{\Lambda}_1 = \max\{ \Lambda_1, C_* \Lambda_0 \}$.

    Next, we estimate $\widehat{F}$ using \eqref{eq.hatF}. It follows from the generalized Leibniz rule that for $\ell \ge 0$,
    \begin{equation}\label{est.hatF.2S}
    \begin{aligned}
        \| \bfD_2^\ell \widehat{F} \|_{L^2_{\xy}} & \le \| \bfD_2^\ell F \|_{L^2_{\xy}} + \sum_{m=0}^\ell \binom{\ell}{m} \| \bfD_2^m \chi \|_{L^\infty_{\xy}} \| \bfD_2^{\ell-m} \widetilde{F} \|_{L^2_\x H^1_\y} \\ & \le \Lambda_1^{\ell} (\ell+q)! +  \sum_{m=0}^\ell \binom{\ell}{m} C_* \widehat{\Lambda}_0^m m!  C_* \widehat{\Lambda}_1^{\ell-m} (\ell-m+q)! \\
        & \le \Lambda^{\ell} (\ell+q)! + C_*^2 \Lambda^\ell (\ell+q)! \sum_{m=0}^\ell C_*^{-m} \\
        & \le \Big( 1 + \frac{ C_*^3}{C_*-1} \Big) \Lambda^{\ell} (\ell + q)! \\
        & \le C_*^3 \Lambda^{\ell} (\ell + q)!,
    \end{aligned}   
    \end{equation}
    where we need $\Lambda \ge \max\{ \widehat{\Lambda}_1, C_* \widehat{\Lambda}_0 \} = \max\{ \Lambda_1, C_*^2 \Lambda_0 \}$, and we have used \eqref{eq.binom-1} and chosen $C_* \ge 3$ in the last inequality, without loss of generality. Observe that $\widehat{F}$ is independent of $y_2$ and thus $|\bfD_2^\ell \widehat{F}| = |\nabla_1^\ell  \widehat{F}|$.

    Now we are ready to estimate $Y_0$ by applying Proposition \ref{prop.energy2} to the nondegenerate equation \eqref{eq.Y0.ik} with $n=1$ and $A$ replaced by $\widehat{A}$. The constant $C_*$ from Proposition \ref{prop.energy2} should be updated (depending on $d, \lambda$ and $\widehat{C}_0$), denoted by $\widehat{C}_* \ge C_*$. Define $\Lambda := \max\{ \widehat{\Lambda}_1, \widehat{C}_* \widehat{\Lambda}_0 \} = \max\{ \Lambda_1, \widehat{C}_* C_* \Lambda_0 \}$. Let $C_{**} = \widehat{C}_* C_*$. Hence we now have
    \begin{equation}\label{eq.Lambda.relation}
        \Lambda/ \widehat{\Lambda}_0 \ge \widehat{C}_*, \qquad \Lambda/\Lambda_0 \ge C_{**}.
    \end{equation}
    These two relations will be used frequently through the proof. With the above setting,
    it follows from Proposition \ref{prop.energy2} applied to \eqref{eq.Y0.ik} and \eqref{est.hatF.2S} that
    \begin{align*}
        \|\nabla_1^\ell Y_0\|_{H^1_\x}\leq \widehat{C}_*C_*^3\Lambda^\ell(\ell+q)!\leq C\Lambda^\ell(\ell+q)!,
    \end{align*}
    where we choose $C\geq \widehat{C}_*C_*^3$.

    \textbf{Step 3:} Estimate of $Y_1$.

    The procedure to compute $Y_1$ from $Y_0$ follows the order $Y_0 \to Y_1^o \to \widetilde{Y}_2^o \to Y_1^r$, using the equations \eqref{sum.s31}, \eqref{sum.s32} and \eqref{sum.s33}, also listed below:
    \begin{align}
        Y_1^o & = \widetilde{F} + \chi\cdot  \nabla_1 Y_0, \label{eq.Y1o.n=2}\\
        \begin{split}
            -\nabla_2 \cdot A \nabla_2 \widetilde{Y}_2^o & = \nabla_1 \cdot F + \nabla_1 \cdot (A \nabla_2 Y_{1}^o) + \nabla_2\cdot (A \nabla_1 Y_{1}^o)  \\&\qquad\qquad+ \nabla_1 \cdot ( A \nabla_1 Y_0) - \tau^2 Y_0, \label{eq.tY2o.n=2}
        \end{split}
        \\
        -\nabla_1 \cdot \widehat{A} \nabla_1 Y_1^r + \tau^2 Y_1^r  &= \nabla_1 \cdot \big( \Ag{ A \nabla_2 \widetilde{Y}_2^o}_{y_2} + \Ag{ A \nabla_1 Y_1^o}_{y_2} \big). \label{eq.Y1r.n=2}
    \end{align}
    The first equation \eqref{eq.Y1o.n=2} is an algebraic equation, which could be handled directly by the Leibniz rule. Actually, it follows from \eqref{est.DltF}, \eqref{est.chi.main} and \eqref{est.Y0.l>0} that for $\ell \ge 0$
    \begin{equation}\label{est.Dl2Y1o}
    \begin{aligned}
        \| \bfD_2^\ell Y_1^o \|_{L^2_\x H^1_\y} & \le \| \bfD_2^\ell \widetilde{F} \|_{L^2_\x H^1_\y } + \sum_{m=0}^\ell \binom{\ell}{m} \| \Sym( \bfD_2^m \chi \otimes \bfD_2^{\ell-m} \nabla_1 Y_0 ) \|_{L^2_\x H^1_\y} \\
        & \le C_* \widehat{\Lambda}_1^{\ell} (\ell+q)! + \sum_{m=0}^\ell \binom{\ell}{m} \| \bfD_2^m \chi \|_{L^\infty_\x H^1_\y} \| \nabla^{\ell-m}_1 Y_0 \|_{H^1_\x} \\
        & \le C_*  \Lambda^{\ell} (\ell+q)! + \sum_{m=0}^\ell \binom{\ell}{m} C_* \widehat{\Lambda}_0^m m! C \Lambda^{\ell-m} (\ell-m+q)! \\
        & \le C_*  \Lambda^{\ell} (\ell+q)! + C_* C \Lambda^{\ell} (\ell+q)! \sum_{m=0}^\ell (\widehat{\Lambda}_0 /\Lambda)^{m} \\
        & \le \Big( \frac{C_*}{C} + \frac{C_* \widehat{C}_* }{\widehat{C}_* - 1} \Big) C \Lambda^\ell (\ell+q)! \\
        & \le C^{2} \Lambda^{\ell} (\ell+q)!,
    \end{aligned}
    \end{equation}
    where in the last inequality, we need $C^*/C+C_*\widehat{C}_*/(\widehat{C}_*-1)\le C$.
    
    Now, we use the second equation \eqref{eq.tY2o.n=2} to estimate $\widetilde{Y}_2^o$ by Proposition \ref{prop.energy2}. Let
    \begin{equation*}
        F_2 = A \nabla_1 Y_1^o, \quad \text{and} \quad G_2 = \nabla_1\cdot F + \nabla_1\cdot (A \nabla_2 Y_1^o) + \nabla_1\cdot (A \nabla_1 Y_0) - \tau^2 Y_0.
    \end{equation*}
    Then \eqref{eq.tY2o.n=2} can be written as
    \begin{equation}\label{eq.tY2o.n=2+}
        -\nabla_2 \cdot A \nabla_2 \widetilde{Y}_2^o = \nabla_2 \cdot F + G_2.
    \end{equation}
    Recall that $\Ag{G_2}_{y_2} = 0$, due to \eqref{eq.Y0.ik}. 
    It follows from the generalized Leibniz rule and \eqref{est.Dl2Y1o} that
    \begin{equation}\label{est.F2.n=2}
        \begin{aligned}
            \| \bfD^\ell_2 F_2 \|_{L^2_{\xy}} & \le \sum_{m = 0}^\ell \binom{\ell}{m} \| \bfD^m_2 A \|_{L^\infty_{\xy}} \| \bfD_2^{\ell-m} \nabla_1 Y_1^o \|_{L^2_{\xy}} \\ & \le C_0 \sum_{m=0}^\ell \binom{\ell}{m} \Lambda_0^m m! C^2 \Lambda^{\ell-m+1} (\ell-m+1+q)! \\
            & \le \frac{C_0 C_{**}}{C_{**} - 1} C^2 \Lambda^{\ell+1} (\ell+1+q)!,
        \end{aligned}
    \end{equation}
    where we have used $|\bfD_2^{\ell-m} \nabla_1 Y_1^o| \le |\bfD_2^{\ell-m+1} Y_1^o|$ which contributes to the increasing of factorial by 1 in \eqref{est.F2.n=2}.
    Similarly,
    \begin{equation}\label{est.G2.n=2}
        \begin{aligned}
            \| \bfD_2^\ell G_2 \|_{L^2_{\xy}}  & \le  \| \bfD_2^\ell \nabla_1 F \|_{L^2_{\xy}} + \tau^2 \| \bfD_2^\ell Y_0 \|_{L^2_\x}+ \sum_{m = 0}^\ell \binom{\ell}{m} \| \bfD_2^m A \|_{L^\infty_{\xy}} \| \bfD_2^{\ell-m} \nabla_1 Y_1^o \|_{L^2_\x H^1_\y} \\
            & \qquad + \sum_{m = 0}^\ell \binom{\ell}{m} \| \bfD_2^m \nabla_1 A \|_{L^\infty_{\xy}} \| \bfD_2^{\ell-m} Y_1^o \|_{L^2_\x H^1_\y} \\
            & \qquad + \sum_{m = 0}^\ell \binom{\ell}{m} \| \bfD_2^m \nabla_1 A \|_{L^\infty_{\xy}} \| \bfD_2^{\ell-m} Y_0 \|_{H^1_\x} \\
            & \qquad + \sum_{m = 0}^\ell \binom{\ell}{m} \| \bfD_2^m A \|_{L^\infty_{\xy}} \| \bfD_2^{\ell-m} \nabla_1 Y_0 \|_{H^1_\x} \\
            & \le \Big( 1+\tau^2 + \frac{4C_0 C_{**}}{C_{**}-1} \Big) C^2 \Lambda^{\ell+1} (\ell+1+q)!.
        \end{aligned}
    \end{equation}
    Then by Proposition \ref{prop.energy2} applied to \eqref{eq.tY2o.n=2+} and the linearity of the equation, we have
    \begin{equation}\label{est.tY2o.n=2}
        \| \bfD_2^\ell \widetilde{Y}_2^o \|_{L^2_\x H^1_\y} \le C_* \Big( 1+\tau^2 + \frac{4C_0 C_{**}}{C_{**}-1} \Big) C^2 \Lambda^{\ell+1} (\ell+1+q)! \le C^3 \Lambda^{\ell+1} (\ell+1+q)!.
    \end{equation}
    Here we choose $C \ge C_*( 1+\tau^2 + \frac{4C_0 C_{**}}{C_{**}-1} )$.

    Next, we consider the equation of $Y_1^r$. Let
    \begin{equation*}
        F_1^r = \Ag{ A \nabla_2 \widetilde{Y}_2^o }_{y_2} + \Ag{ A \nabla_1 Y_1^o}_{y_2}.
    \end{equation*}
    Then \eqref{eq.Y1r.n=2} is reduced to
    \begin{equation}\label{eq.Y1r.n=2+}
        -\nabla_1\cdot \widehat{A} \nabla_1 Y_1^r + \tau^2 Y_1^r = \nabla_1 \cdot F_1^r.
    \end{equation}
    We have by \eqref{est.tY2o.n=2} and \eqref{est.Dl2Y1o},
    \begin{equation} \label{est.Fr12.n=2}
        \begin{aligned}
            \| \nabla_1^\ell F_1^r  \|_{L^2_\x} & \le \sum_{m=0}^\ell \binom{\ell}{m} \| \nabla_1^m A \|_{L^\infty_{\xy}} \| \nabla_1^{\ell-m} \widetilde{Y}_2^o \|_{L^2_\x H^1_\y} \\
            & \qquad + \sum_{m=0}^\ell \binom{\ell}{m} \| \nabla_1^m A \|_{L^\infty_{\xy}} \| \nabla_1^{\ell-m} \nabla_1 Y_1^o \|_{L^2_{\xy}} \\
            & \le \frac{2C_0 C_{**}}{C_{**} -1} C^3  \Lambda^{\ell+1 } (\ell+1+q)!.
        \end{aligned}
    \end{equation}
    By Proposition \ref{prop.energy2} applied to the equation \eqref{eq.Y1r.n=2+}, we have
    \begin{equation*}
        \|  \nabla_1^\ell Y_1^r \|_{H^1_\x} \le \widehat{C}_* \frac{2C_0 C_{**}}{C_{**} -1} C^3 \Lambda^{\ell+1} (\ell+1+q)! \le \frac12 C^4 \Lambda^{\ell+1} (\ell+1+q)!,
    \end{equation*}
    where we need $C \ge 2\widehat{C}_* \frac{2C_0 C_{**}}{C_{**} -1}$. Here again the constant $\widehat{C}_*$ might be larger than $C_*$ as we apply Proposition \ref{prop.energy2} with $A$ replaced by $\widehat{A}$. 

    \textbf{Step 4:} Estimate of $Y_k$ for $k\ge 2$. 
    
    From the recursive equations in Section \ref{sec.corr.ansatz}, we see that the estimates of $(Y_k^o, \widetilde{Y}_{k+1}^o, Y_k^r)$ yield the estimates of $(Y_{k+1}^o, \widetilde{Y}_{k+2}^o, Y_{k+1}^r)$.
    We have proved our base case for $k = 1$ as follows:
    \begin{subequations}\label{est.Y1base.n=2}
        \begin{empheq}[left=\empheqlbrace]{align}
            & \| \bfD_2^\ell Y_1^o \|_{L^2_\x H^1_\y} \le C^{2} \Lambda^{\ell} (\ell+q)!, \\
            & \| \bfD_2^\ell \widetilde{Y}_2^o \|_{L^2_\x H^1_\y} \le C^{3} \Lambda^{\ell+1} (\ell+1+q)!, \\
             & \| \nabla^{\ell}_1 Y_1^r \|_{H^1_\x} \le \frac12 C^{4} \Lambda^{\ell+1} (\ell+1+q)!.
        \end{empheq}
    \end{subequations}

    Suppose now that for some $k\ge 1 $ and $\ell \ge 0$ we have
    \begin{subequations}\label{assum.Yk.n=2}
        \begin{empheq}[left=\empheqlbrace]{align}
            & \| \bfD_2^\ell Y_{k}^o \|_{L^2_\x H^1_\y} \le C^{3k-1} \Lambda^{\ell+k-1} (\ell+k-1+q)!, \label{assum.Yk-a}\\
            & \| \bfD_2^\ell \widetilde{Y}_{k+1}^o\|_{L^2_\x H^1_\y} \le C^{ 3k} \Lambda^{\ell+k} (\ell+k+q)!, \label{assum.Yk-b}\\
            & \| \nabla^{\ell}_1 Y_k^r \|_{H^1_\x} \le \frac12 C^{3k+1} \Lambda^{\ell+k} (\ell+k+q)!. \label{assum.Yk-c}
\end{empheq} \end{subequations}
        We would like to show for all $\ell \ge 0$,
\begin{subequations}\label{est.Yk.induction.n=2}
        \begin{empheq}[left=\empheqlbrace]{align}
        & \| \bfD_2^\ell Y_{k+1}^o\|_{L^2_\x H^1_\y} \le C^{3(k+1) -1} \Lambda^{\ell+k } (\ell+k+q)!, \label{est.Yok+1.n=2}\\
        & \| \bfD_2^\ell \widetilde{Y}_{k+2}^o\|_{L^2_\x H^1_\y} \le C^{3(k+1)} \Lambda^{\ell+k+1} (\ell+k+1+q)!,\label{est.tYok+1.n=2}\\
            & \| \nabla^{\ell}_1 Y_{k+1}^r \|_{H^1_\x} \le \frac{1}{2}C^{3(k+1)+1} \Lambda^{\ell+k+1 } (\ell+k+1+q)!. \label{est.Yrk+1.n=2}
\end{empheq}
\end{subequations}

    The proof of \eqref{est.Yk.induction.n=2} is very similar to the estimates in \eqref{est.Y1base.n=2}, based on the recursive equations \eqref{sum.s41}, \eqref{sum.s42} and \eqref{sum.s43}, restated as follows for $n=2$, 
    \begin{align}
            Y_{k+1}^o & = \widetilde{Y}_{k+1}^o + \chi\cdot \nabla_1 Y_{k}^r,\label{eq.Yko.n=2}\\
            \begin{split}
                -\nabla_2 \cdot A \nabla_2 \widetilde{Y}_{k+2}^o & = \nabla_1 \cdot (A \nabla_2 Y_{k+1}^o) + \nabla_2 \cdot (A \nabla_1 Y_{k+1}^o)  \\&\qquad\qquad+ \nabla_1 \cdot ( A \nabla_1 Y_k) - \tau^2 Y_k, \label{eq.tYko.n=2}
            \end{split}
            \\
            -\nabla_1 \cdot ( \widehat{A} \nabla_1 Y_{k+1}^r) + \tau^2 Y_{k+1}^r & = \nabla_1 \cdot \Ag{ A \nabla_2 \widetilde{Y}_{k+2}^o}_{y_2} +\nabla_1 \cdot \Ag{ A \nabla_1 Y_{k+1}^o}_{y_2}.\label{eq.Ykr.n=2}
    \end{align}

    We first use the algebraic equation \eqref{eq.Yko.n=2} to estimate $Y_{k+1}^o$. In fact, by the inductive assumption \eqref{assum.Yk.n=2}, we have
    \begin{equation*}
        \begin{aligned}
            \| \bfD_2^\ell Y_{k+1}^o\|_{L^2_\x H^1_\y} & \le \| \bfD_2^\ell \widetilde{Y}_{k+1}^o \|_{L^2_\x H^1_\y} + \sum_{m=0}^\ell \binom{\ell}{m} \| \bfD_2^m \chi \|_{L^\infty_\x H^1_\y} \| \bfD_2^{\ell-m} Y_{k}^r \|_{H^1_\x} \\
            & \le C^{3k} \Lambda^{\ell+k}(\ell+k+q)! \\
            & \quad + \sum_{m=0}^\ell \binom{\ell}{m} C_* \widehat{\Lambda}_0^m m! \frac12 C^{3k+1} \Lambda^{\ell-m+k} (\ell-m+k+q)! \\
            & \le C^{3k+2} \Lambda^{\ell+k} (\ell+k+q)! \\
            & = C^{3(k+1) - 1} \Lambda^{\ell+k} (\ell+k+q)!.
        \end{aligned}
    \end{equation*}
    This proves the first inequality of \eqref{est.Yok+1.n=2}.

    The estimate of $\widetilde{Y}_{k+2}^o$ is similar to $\widetilde{Y}_2^o$. Let
    \begin{equation*}
        F_{k+2} = A \nabla_1 Y_{k+1}^o \qquad \text{and} \qquad G_{k+2} = \nabla_1\cdot (A \nabla_2 Y_{k+1}^o) + \nabla_1\cdot (A\nabla_1 Y_k) - \tau^2 Y_k.
    \end{equation*}
    Note that by the triangle inequality and the inductive assumption \eqref{assum.Yk.n=2}, 
    \begin{equation}\label{est.Yk.o+r}
        \| \bfD_2^\ell Y_k \|_{L^2_\x H^1_\y} \le \| \bfD_2^\ell Y_k^o \|_{L^2_\x H^1_\y} + \| \nabla_1^\ell Y_k^r \|_{L^2_\x} \le C^{3k+1} \Lambda^{\ell-1+k} (\ell-1+k+q)!,
    \end{equation}
    for $\ell \ge 1$ (the estimate for $\ell = 0$ is the same as $\ell = 1$ by the Poincar\'{e} inequality),
    and for all $\ell \ge 0$,
        \begin{equation*}
        \| \bfD_2^\ell \nabla_1 Y_k \|_{L^2_{\xy}} \le \| \bfD_2^{\ell+1} Y_k^o \|_{L^2_\x H^1_\y} + \| \nabla_1^\ell Y_k^r \|_{H^1_\x} \le C^{3k+1} \Lambda^{\ell+k} (\ell+k+q)!.
    \end{equation*}
    Thus, by these estimates, \eqref{est.Yok+1.n=2}, and a familiar calculation as in \eqref{est.F2.n=2}  and \eqref{est.G2.n=2}, we have
    \begin{equation*}
        \| \bfD_2^\ell F_{k+2} \|_{L^2_{\xy}} \le \frac{C_0 C_{**}}{C_{**} -1} C^{3(k+1) - 1} \Lambda^{\ell+k+1} (\ell+k+1+q)!,
    \end{equation*}
    and
    \begin{equation*}
        \| \bfD_2^\ell G_{k+2} \|_{L^2_{\xy}} \le \big( \frac{4C_0 C_{**}}{C_{**} -1} + \tau^2 \big) C^{3(k+1) - 1} \Lambda^{\ell+k+1} (\ell+k+1+q)!.
    \end{equation*}
    Then applying the Proposition \ref{prop.energy2} to the equation
    \begin{equation*}
        -\nabla_2 \cdot A\nabla_2 \widetilde{Y}_{k+1}^o = \nabla_2\cdot F_{k+2} + G_{k+2},
    \end{equation*}
    we get 
    \begin{align*}
    \begin{split}
       \| \bfD_2^\ell \widetilde{Y}_{k+2}^o\|_{L^2_\x H^1_\y} &\le  C_* \big( \frac{4C_0 C_{**}}{C_{**} -1} + \tau^2 \big) C^{3(k+1)-1} \Lambda^{\ell+k+1} (\ell+k+1+q)! \\
        &\le C^{3(k+1)} \Lambda^{\ell+k+1} (\ell+k+1+q)!,
        \end{split}
    \end{align*}
    where we have assumed $C \ge C_*\frac{4C_0 C_{**}}{C_{**} -1} + \tau^2 $ previously. This proves \eqref{est.tYok+1.n=2}.

    Finally, we estimate $Y_{k+1}^r$. Let
    \begin{equation*}
        F_{k+1}^r = \Ag{ A\nabla_2 \widetilde{Y}_{k+2}^o }_y + \Ag{ A \nabla_1 Y_{k+1}^o}_y.
    \end{equation*}
    By a familiar calculation as in \eqref{est.Fr12.n=2}, 
    \begin{equation*}
        \| \nabla_1^\ell F_{k+1}^r \|_{L^2_\x} \le \frac{2C_0 C_{**}}{C_{**}-1} C^{3(k+1)} \Lambda^{\ell+k+1} (\ell+k+1+q)!.
    \end{equation*}
    Then by Proposition \ref{prop.energy2} applied to \eqref{eq.Ykr.n=2}, we have
    \begin{align*}
    \begin{split}
        \| \nabla_1^\ell Y_{k+1}^r \|_{H^1_\x} &\le \frac{2C_0  \widehat{C}_* C_{**} }{C_{**}-1} C^{3(k+1)} \Lambda^{\ell+k+1} (\ell+k+1+q)!\\
        &\le \frac12 C^{3(k+1)+1} \Lambda^{\ell+k+1} (\ell+k+1+q)!,
        \end{split}
    \end{align*}
    where we require $C \ge \frac{4C_0 \widehat{C}_* C_{**} }{C_{**}-1}$. Note that this requirement (as well as the previous several requirements on $C$) is independent of $k$ and thus $C$ can be fixed globally.

    \textbf{Step 5:} Complete the proof.

    Consider the partial sum of $Y_k$ truncated at $j = k$ (with a modification):
    \begin{equation*}
        N_k(x,y) = \sum_{j=0}^{k-1} \delta_2^j Y_j(x,y) + \delta_2^k Y_k^o.
    \end{equation*}
    Then for $\ell \ge 0$, by the estimate of $Y_k$ in \eqref{est.Yk.o+r} and the estimate of $Y_k^o$ in \eqref{assum.Yk-a},
    \begin{equation*}
    \begin{aligned}
        \| \bfD^\ell_2 N_k \|_{L^2_\xy}& \le 
        \sum_{j=0}^{k-1} \delta_2^j C^{3j+1} \Lambda^{\ell+j} (\ell+j+q)! +  \delta_2^k C^{3k-1} \Lambda^{\ell-1+k} (\ell-1+k+q)! \\
        & \le C \Lambda^{\ell} (\ell+q)!,
    \end{aligned}
    \end{equation*}
    provided that $\delta_2 C^3 \Lambda (\ell+k+q) \le 1/e$.

    Now by \eqref{eq.liftphi}, we see that $N_k$ satisfies
    \begin{equation*}
    \begin{aligned}
        & -\widehat{\nabla}_2 \cdot A\widehat{\nabla}_2 N_k + \tau^2 N_k - \widehat{\nabla}_2 \cdot F \\
        & = E_k := \delta^{k-1}_2 \big( (\cL_{\xy} + \cL_{\yx}) Y_k^o + \cL_{\xx} Y_{k-1}+\tau^2Y_{k-1}) + \delta_2^{k} (\cL_{\xx} Y_k^o+\tau^2Y_k^o).
    \end{aligned}
    \end{equation*}
    By a simple observation, we have
    \begin{equation*}
        E_k = -\delta_2^k \widehat{\nabla}_2 \cdot A \nabla_1 Y_k^o - \delta_2^{k-1} (\nabla_1 \cdot A \nabla_2 Y_k^o + \nabla_1 \cdot A \nabla_1 Y_{k-1})+\tau^2(\delta_2^k Y_k^o+\delta_2^{k-1} Y_{k-1} ).
    \end{equation*}
    Let $P_k = A\nabla_1 Y_k^o$ and $Q_k =\nabla_1 \cdot A \nabla_2 Y_k^o + \nabla_1 \cdot A \nabla_1 Y_{k-1} - \tau^2(\delta_2 Y_k^o + Y_{k-1})$. Then
    \begin{equation*}
    \begin{aligned}
        \| \bfD_2^\ell P_k \|_{L^2_{\xy}} & \le \sum_{m=0}^\ell \binom{\ell}{m} \| \bfD_2^m A \|_{L^\infty_{\xy}} \| \bfD_2^{\ell-m} \nabla_1 Y_k^o \|_{L^2_{\xy}} \\
        & \le \sum_{m=0}^\ell \binom{\ell}{m} C_0 \Lambda_0^m m! C^{3k-1} \Lambda^{\ell-m+k}
        (\ell-m+k+q)! \\
        & \le \frac{C_0 C_{**}}{C_{**}-1} C^{3k-1} \Lambda^{\ell+k} (\ell+k+q)!,
    \end{aligned}
    \end{equation*}
    and
    \begin{equation*}
        \begin{aligned}
            \| \bfD_2^\ell Q_k\|_{L^2_{\xy}} & \le \sum_{m=0}^\ell \binom{\ell}{m} \Big( \| \bfD_2^m \nabla_1 A \|_{L^\infty_{\xy}} \| \bfD_2^{\ell-m} \nabla_2 Y_k^o \|_{L^2_{\xy}} +  \| \bfD_2^m  A \|_{L^\infty_{\xy}} \| \bfD_2^{\ell-m} \nabla_1 \nabla_2 Y_k^o \|_{L^2_{\xy}} \Big) \\
            & \quad + \sum_{m=0}^\ell \binom{\ell}{m} \Big( \| \bfD_2^m \nabla_1 A \|_{L^\infty_{\xy}} \| \bfD_2^{\ell-m} \nabla_1 Y_{k-1} \|_{L^2_{\xy}} +  \| \bfD_2^m  A \|_{L^\infty_{\xy}} \| \bfD_2^{\ell-m} \nabla_1^2  Y_{k-1} \|_{L^2_{\xy}} \Big) \\
            & \quad + \tau^2 \delta_2 \| \bfD_2^\ell Y_k^o \|_{L^2_{\xy}} + \tau^2 \| \bfD_2^\ell Y_{k-1} \|_{L^2_{\xy}} \\
            & \le \sum_{m=0}^\ell \binom{\ell}{m} \Big( C_0 \Lambda_0^{m+1} (m+1)! C^{3k-1} \Lambda^{\ell-m -1+k} (\ell-m-1+k+q)!  \Big) \\
            & \quad + \sum_{m=0}^\ell \binom{\ell}{m} \Big( C_0 \Lambda_0^{m} m! C^{3k-1} \Lambda^{\ell-m+k } (\ell-m+k+q)!  \Big) \\
            & \quad + \sum_{m=0}^\ell \binom{\ell}{m} \Big( C_0 \Lambda_0^{m+1} (m+1)!  C^{3k-2} \Lambda^{\ell-m-1+k} (\ell-m-1+k+q)! \Big) \\
            & \quad + \sum_{m=0}^\ell \binom{\ell}{m} \Big( C_0 \Lambda_0^{m} m!  C^{3k-2} \Lambda^{\ell-m+k} (\ell-m+k+q)! \Big) \\
            & \quad + 2\tau^2 C^{3k-1} \Lambda^{\ell+k-1} (\ell+k-1+q)! \\
            & \le \Big( \frac{4C_0 C_{**}}{C_{**} -1} + 2\tau^2 \Big) C^{3k-1} \Lambda^{\ell+k} (\ell+k+q)!.
        \end{aligned}
    \end{equation*}

    Now let $Z_k$ be the solution of
    \begin{equation*}
        -\widehat{\nabla}_2 \cdot A\widehat{\nabla}_2 Z_k + \tau^2 Z_k = - E_k = \delta_2^k \widehat{\nabla}_2 \cdot P_k + \delta_2^{k-1} Q_k.
    \end{equation*}
    Using the linearity of the equation and Proposition \ref{prop.Ytau.F}, we obtain
    \begin{equation*}
        \tau \|  \bfD_2^\ell Z_k \|_{L^2_\xy}  \le \frac12 \delta_2^k  C^{3k} \Lambda^{\ell+k} (\ell+k+q)! + \frac12 \delta_2^{k-1} \tau^{-1} C^{3k} \Lambda^{\ell+k} (\ell+k+q)!,
    \end{equation*}
    where we need to assume $C \ge 2C_*(\frac{4C_0 C_{**}}{C_{**} -1} + 2\tau^2 )$.
    Thus, if $\delta_2 C^3 \Lambda (\ell+k+q) \le 1/e$, we have
    \begin{equation*}
        \|  \bfD_2^\ell Z_k \|_{L^2_\xy} \le \tau^{-2} e^{-k+1} C^{3} \Lambda^{\ell+1} (\ell+1+q)!.
    \end{equation*}
    Choose $\tau$ such that $\tau^{-2}e^{-k+1} C^3 \Lambda \le 1$. Then we have
    \begin{equation*}
        \|  \bfD_2^\ell Z_k \|_{L^2_\xy} \le \Lambda^{\ell} (\ell+1+q)!.
    \end{equation*}
    Since $Y = N_k + Z_k$ solves the original equation, from the estimates of $N_k$ and $Z_k$, it follows that
    \begin{equation*}
        \|  \bfD^\ell_2 Y \|_{L^2_{\xy}} \le (C+2) \Lambda^\ell (\ell+1+q)!.
    \end{equation*}
    The assumptions we need are $\delta_2 C^3 \Lambda (\ell+k+q) \le 1/e$ and $\tau^2 \ge e^{-k+1} C^3\Lambda.$ This completes the proof.
\end{proof}

\subsection{Uniform estimates for general {$n$} scales}
\label{sec_n-scales}

\begin{theorem}\label{thm.Y.nscale}
    Let $n \ge 2$. Assume that \eqref{as.ellipticity}, \eqref{as.periodicity}, \eqref{cond.A2} and \eqref{cond.F2} hold. Let $Y$ be the solution of
    \begin{equation*}
        -\widehat{\nabla}_n \cdot A \widehat{\nabla}_n Y + \tau^2 Y = \widehat{\nabla}_n\cdot F  \quad\text{ in }  \mathbb{T}^{d\times n}.
    \end{equation*}
    There exist $\widetilde{C}_j$, $\widetilde{\Lambda}_j, C_n, \Lambda_n$ such that if for some $\ell \ge 0$ and any $2\le j\le n$, there exists $k_j \ge 1$ such that
    \begin{equation}\label{cond.sep.n}
        \left\{ \begin{aligned}
            & (\delta_j/\delta_{j-1}) \widetilde{C}_{j}^3  \widetilde{\Lambda}_j (\ell + k_j+q) \le e^{-1}, \\
            & \tau^2 \ge \widetilde{C}_{j}^3 \widetilde{\Lambda}_j \delta_{j-1}^{-1} e^{-k_j+1},
        \end{aligned}
        \right.
    \end{equation}
    then $Y$ satisfies
    \begin{equation}\label{est.DlY.n}
         \| \bfD_n^\ell  Y \|_{L^2_\xy} \le C_n\Lambda_n^{\ell} (\ell + 1 + q)!.
    \end{equation}
    The constants $\widetilde{C}_j, C_n$ depend on $d,\lambda, j, n, C_0$, while $\widetilde{\Lambda}_j, \Lambda_n$ depend additionally on $\Lambda_0$ and $\Lambda_1$.
\end{theorem}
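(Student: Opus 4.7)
The plan is to prove Theorem \ref{thm.Y.nscale} by induction on $n$, the number of scales, with the base case $n = 2$ furnished by Theorem \ref{thm.2S.Y}. For the inductive step I would adapt the five-step scheme used in the proof of Theorem \ref{thm.2S.Y}, with the key modification that, wherever that proof applied Proposition \ref{prop.energy2} to solve the nondegenerate one-scale equation in $y_1$ for $Y_k^r$, we now invoke the inductive hypothesis to solve the corresponding $(n-1)$-scale equation in $(y_1,\cdots, y_{n-1})$ for $Y_k^r$.

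Concretely, I would expand $Y = \sum_{k\ge 0} \delta_n^k Y_k$ as in \eqref{eq.Y2scaleExp} and generate each $Y_k$ via the recursion \eqref{sum-s11}--\eqref{sum.s43}. In Step 1, Proposition \ref{prop.energy2} yields the one-scale corrector $\chi$ and bounds $\|\bfD_{n-1}^\ell \widehat{A}\|_{L^\infty_{\x}} \le \widehat{C}_0 \widehat{\Lambda}_0^\ell \ell!$, exactly as in the two-scale case, along with bounds $\|\bfD_{n-1}^\ell \widehat{F}\|_{L^2_\x} \le \widehat{C}_1 \widehat{\Lambda}_1^\ell (\ell+q)!$ for the effective source. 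These verify the hypotheses \eqref{cond.A2}--\eqref{cond.F2} of the theorem at level $n-1$ for the equation \eqref{sum.s23} satisfied by $Y_0$, with slightly enlarged constants $\widehat{\Lambda}_0, \widehat{\Lambda}_1$. The inductive hypothesis, applied with some $k_2,\cdots, k_{n-1}$ satisfying \eqref{cond.sep.n} for $2\le j\le n-1$, then produces the desired estimate on $Y_0$. Steps 3 and 4 proceed exactly as for $n=2$: the algebraic terms $\chi\cdot\widehat{\nabla}_{n-1}Y_k^r$ are controlled by the generalized Leibniz rule, the nondegenerate equations \eqref{sum.s42} for $\widetilde{Y}_{k+1}^o$ are solved by Proposition \ref{prop.energy2}, and the degenerate equations \eqref{sum.s43} for $Y_k^r$ are solved by the inductive hypothesis (reapplied at each recursive level). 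Tracking the geometric progression of constants yields a pattern of estimates of the form $C_n^{3k+O(1)}\Lambda_n^{\ell+k}(\ell+k+q)!$ for $Y_k^o$, $\widetilde{Y}_{k+1}^o$ and $Y_k^r$, with constants $C_n, \Lambda_n$ depending on $n$.

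Step 5 is the truncation argument. Setting $N_{k_n} = \sum_{j<k_n} \delta_n^j Y_j + \delta_n^{k_n} Y_{k_n}^o$, the difference $Y - N_{k_n}$ solves the full regularized equation with a residual $E_{k_n}$. A key new feature compared with the two-scale proof is that $E_{k_n}$ now contains terms involving $\widehat{\nabla}_{n-1}$, which carries a factor $\delta_{n-1}^{-1}$; this is precisely the origin of the factor $\delta_{n-1}^{-1}$ in the separation condition \eqref{cond.sep.n} at $j=n$ (consistent with $\delta_1^{-1}=1$ in the $n=2$ case). Applying Proposition \ref{prop.Ytau.F} and using $(\delta_n/\delta_{n-1})\widetilde{C}_n^3 \widetilde{\Lambda}_n(\ell+k_n+q)\le e^{-1}$ together with $\tau^2 \ge \widetilde{C}_n^3 \widetilde{\Lambda}_n \delta_{n-1}^{-1} e^{-k_n+1}$ closes the estimate \eqref{est.DlY.n}.

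The main obstacle I anticipate is not conceptual but combinatorial: careful bookkeeping of constants. At each inductive level the effective analyticity constant is multiplied by a factor, and each algebraic step in the recursion loses a derivative, so that the factorial index $q$ advances by one per level. One must verify at every step that, with the updated constants, the hypotheses \eqref{cond.A2}--\eqref{cond.F2} remain satisfied for the reduced $(n-1)$-scale problem and that the separation conditions \eqref{cond.sep.n} can be chosen consistently across the $k_j$ for $2\le j\le n$. This propagation of constants through the recursion, rather than any new analytical estimate, is the technically demanding part of the proof.
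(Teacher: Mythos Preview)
Your overall architecture (induction on $n$, with Theorem \ref{thm.2S.Y} as base case, and a five-step scheme mimicking the two-scale proof) matches the paper's. But there is a genuine gap in how you propose to close the recursion in Steps~3--4.

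In the two-scale proof the equation \eqref{sum.s43} for $Y_k^r$ is a \emph{nondegenerate} one-variable equation, so Proposition~\ref{prop.energy2} furnishes $\|\nabla_1^\ell Y_k^r\|_{H^1_\x}$ for \emph{every} $\ell\ge0$; this is what lets the recursion run. You propose, for $n\ge3$, to replace that step by the inductive hypothesis. But the inductive hypothesis (Theorem~\ref{thm.Y.nscale} at level $n-1$) gives $\|\bfD_{n-1}^\ell Y_k^r\|$ only for those $\ell$ satisfying the separation condition \eqref{cond.sep.n}; for fixed $\tau$ and $\delta$ this caps $\ell$ at some finite $\ell_0$. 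Yet to \emph{reapply} the inductive hypothesis at the next level $k+1$, you must first check the standing analyticity hypothesis \eqref{cond.F2} on the source $F_{k+1}^r$ \emph{for all} $\ell\ge0$, and $F_{k+1}^r$ feeds on $\widehat{\nabla}_{n-1}Y_k^r$ through $Y_{k+1}^o=\widetilde{Y}_{k+1}^o+\chi\cdot\widehat{\nabla}_{n-1}Y_k^r$. So you would need $\|\bfD_{n-1}^\ell\widehat{\nabla}_{n-1}Y_k^r\|$ for all $\ell\ge0$, which the inductive hypothesis alone does not supply. The recursion therefore does not close as written.

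The paper's fix is to run the entire recursion in Steps~1--4 using only the \emph{energy estimate} Proposition~\ref{prop.Ytau.F} on the degenerate $(n-1)$-scale equation for $Y_k^r$. This yields, for all $\ell\ge0$ and all $k$,
\[
\|\bfD_{n-1}^\ell\widehat{\nabla}_{n-1}Y_k^r\|_{L^2_\x}+\tau\|\bfD_{n-1}^\ell Y_k^r\|_{L^2_\x}\le \tfrac12\,\delta_{n-1}^{-k}C^{3k+1}\Lambda^{\ell+k}(\ell+k+q)!,
\]
and the combination on the left is \emph{exactly} what the recursion consumes: $\widehat{\nabla}_{n-1}Y_k^r$ enters through \eqref{sum.s41}, and $\tau^2 Y_k$ enters through \eqref{sum.s42}. (Note the $\delta_{n-1}^{-k}$ growth, absent from your stated pattern; it arises because each $\widehat{\nabla}_{n-1}$ costs a factor $\delta_{n-1}^{-1}$.) Only in Step~5, \emph{after} this recursion has produced the bound \eqref{est.Dl.Fkr} on $F_k^r$ for all $\ell$, does the paper invoke the inductive hypothesis---once per $k$---to extract the $\tau$-independent bound $\|\bfD_{n-1}^\ell Y_k^r\|$ needed to estimate $\|\bfD_n^\ell N_k\|$. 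Your plan conflates these two distinct tasks (closing the recursion in $k$ versus obtaining a $\tau$-uniform bound on $Y_k^r$); separating them, and using Proposition~\ref{prop.Ytau.F} for the former, is the missing ingredient.
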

\begin{remark}
Indeed, if \eqref{cond.sep.n} holds for some $ \ell = \ell_0$, then it holds for all $0\le \ell < \ell_0$. 
\end{remark}

\begin{proof}
    We have proved the case of two scales in the previous subsection. Now we assume that the result holds for $n-1$ scales and prove the case of $n$ scales by an inductive argument. Precisely, we assume that $V$ is the solution of
    \begin{equation*}
        -\widehat{\nabla}_{n-1} \cdot \widehat{A} \widehat{\nabla}_{n-1} V + \tau^2 V  = \widehat{\nabla}_{n-1} \cdot F,
    \end{equation*}
    for $\widehat{A}$ and $F$ being 1-periodic of $n-1$ scales. Moreover, $F$ satisfies the condition \eqref{cond.F2} and $\widehat{A}$ (with the same ellipticity constant $\lambda$) satisfies a similar condition as \eqref{cond.A2} with $C_0$ and $\Lambda_0$ replaced by different constants $\widehat{C}_0$ and $\widehat{\Lambda}_0$, which will be given shortly.
    Then there exist $\Lambda_{n-1}, C_{n-1}$ and $\widetilde{\Lambda}_j, \widetilde{C}_j$ with $2\le j\le n-1$ such that if for each $2\le j\le n-1$, there exists $k_j \ge 1$ such that    
    \begin{equation*}
        \left\{ \begin{aligned}
            & (\delta_j/\delta_{j-1}) \widetilde{C}_{j}^3  \widetilde{\Lambda}_j (\ell + k_j+q) \le e^{-1}, \\
            & \tau^2 \ge \widetilde{C}_{j}^3 \widetilde{\Lambda}_j \delta_{j-1}^{-1} e^{-k_j+1},
        \end{aligned}
        \right.
    \end{equation*}
    then 
    \begin{equation*}
        \| \bfD_{n-1}^\ell  V \|_{L^2_\x} \le C_{n-1}
        \Lambda_{n-1}^{\ell} (\ell + 1 + q)!,
    \end{equation*}
    where $\widetilde{C}_j$ and $C_{n-1}$ are constants depending only on $d, \lambda, \widehat{C}_0, j$ and $n$, and $\widetilde{\Lambda}_j$ and $\Lambda_{n-1}$ depend additionally on $\widehat{\Lambda}_0$ and $\Lambda_1$.  This inductive assumption will not be used until Step 5. Actually we only need the energy estimates in Proposition \ref{prop.Ytau.F} and \ref{prop.energy2} from Step 1 to Step 4.

    Recall the formal multiscale ansatz $Y = \sum_{j = 0}^\infty \delta_n^j Y_j$ in Section \ref{sec.corr.ansatz} and we need to solve $Y_k$ recursively by \eqref{sum-s11} - \eqref{sum.s43}. The following proof largely follows the same line as Theorem \ref{thm.2S.Y} in the case of two scales.

    \textbf{Step 1.}  Estimate $\chi$ and $\widehat{A}$. The equations for $\chi$ and $\widehat{A}$ are given by
\begin{equation*}
    \begin{aligned}
        -\nabla_n \cdot A \nabla_n \chi_j & = \nabla_n \cdot (Ae_j),\\
        \widehat{A}(y_1, y_2,\cdots, y_{n-1}) &= \Ag{A + A\nabla_n \chi }_{y_n}. 
    \end{aligned}
\end{equation*}
Applying the similar argument as in Step 1 of Theorem \ref{thm.2S.Y},
we have
\begin{equation*}
    \| \bfD_n^\ell \chi \|_{L^\infty_\x H^1_\y} \le C_* \widehat{\Lambda}_0^\ell  \ell!,
\end{equation*}
and
\begin{equation}\label{cond.hatA}
    \| \bfD_{n-1}^\ell \widehat{A} \|_{L^\infty_\x} \le \widehat{C}_0 \widehat{\Lambda}_0^\ell \ell!.
\end{equation}
Here $\widehat{C}_0$ is a constant generally larger than $C_0$,  $\widehat{\Lambda}_0 = C_* \Lambda_0$ and $C_*$ is a constant depending only on $C_0, d$ and $\lambda$ arising from Proposition \ref{prop.energy2}. 
Moreover, by the classical homogenization theory, the ellipticity constant is preserved from $A$ to its homogenized matrix $\widehat{A}$, namely,
\begin{equation*}
    \lambda |\xi|^2 \le \xi \cdot \widehat{A} \xi, \qquad \text{for all }\xi \in \R^d.
\end{equation*}

\textbf{Step 2.} Estimate of $Y_0$. The involved equations are the following:
\begin{align}
    \cL_{\yy} \widetilde{F} & = \nabla_n \cdot F,\\
    \widehat{F}(y_1,y_2,\cdots, y_{n-1}) & = \Ag{ F + A \nabla_n \widetilde{F} }_{y_n},\\
    -\widehat{\nabla}_{n-1} \cdot \widehat{A} \widehat{\nabla}_{n-1} Y_0 + \tau^2 Y_0 & = \widehat{\nabla}_{n-1} \cdot \widehat{F}. \label{eq.Y0.n}
\end{align}
By the similar argument as in Step 2 of Theorem \ref{thm.2S.Y},
we have for $\widehat{\Lambda}_1 = \max\{\Lambda_1, C_* \Lambda_0 \}$,
\begin{equation}\label{est.tF.n}
    \| \bfD_n^\ell \widetilde{F}\|_{L^2_\x H^1_\y} \le C_* \widehat{\Lambda}_1^{\ell} (\ell+q)!,
\end{equation}
and
\begin{equation*}
    \| \bfD_{n-1}^\ell \widehat{F} \|_{L^2_\x} \le C_{*}^3 \Lambda^{\ell} (\ell + q)!,
\end{equation*}
where we need $\Lambda \ge  \max\{ \widehat{\Lambda}_1, C_* \widehat{\Lambda}_0 \} = \max\{ \Lambda_1, C_*^2 \Lambda_0 \}$.

Now we use the energy estimate in Proposition \ref{prop.Ytau.F} to \eqref{eq.Y0.n} to obtain for all $\ell \ge 0$,
\begin{equation}\label{est.Y0.energy}
    \| \bfD^\ell_{n-1} \widehat{\nabla}_{n-1} Y_0 \|_{L^2_\x} + \tau \| \bfD_{n-1}^\ell Y_0 \|_{L^2_\x} \le \widehat{C}_* C_*^3 \Lambda^\ell (\ell+q)! \le C \Lambda^\ell (\ell+q)!,
\end{equation}
where we set $\Lambda = \max\{ \widehat{\Lambda}_1, \widehat{C}_* \widehat{\Lambda}_0 \} = \max\{ \Lambda_1, \widehat{C}_* C_* \Lambda_0 \}$ and we assume $C \ge \widehat{C}_* C_*^3$.
Now as before, we define
$C_{**} = \widehat{C}_* C_*$. Recall that $\widehat{C}_*$ is the constant from the energy estimates in Proposition \ref{prop.Ytau.F} and Proposition \ref{prop.energy2} with $A$ replaced by $\widehat{A}$. Moreover, we also have \eqref{eq.Lambda.relation}.

\textbf{Step 3.} 
Estimate $Y_1$.
The involved equations are the following:
\begin{align}
    & Y_1^o = \widetilde{F} + \chi\cdot \widehat{\nabla}_{n-1} Y_0,\label{eq.Y1o}\\
    \begin{split}
        & -\nabla_n \cdot A \nabla_n \widetilde{Y}_2^o = \widehat{\nabla}_{n-1}\cdot F + (\widehat{\nabla}_{n-1}\cdot A \nabla_n + \nabla_n \cdot A \widehat{\nabla}_{n-1} ) Y_1^o \\&\qquad\qquad\qquad\qquad+ \widehat{\nabla}_{n-1}\cdot A \widehat{\nabla}_{n-1} Y_0 - \tau^2 Y_0, \label{eq.tY2o}
    \end{split}
    \\
    & -\widehat{\nabla}_{n-1} \cdot \widehat{A} \widehat{\nabla}_{n-1} Y_1^r + \tau^2 Y_1^r  = \widehat{\nabla}_{n-1}\cdot \big( \Ag{ A \nabla_n \widetilde{Y}_2^o}_{y_n} + \Ag{ A \widehat{\nabla}_{n-1} Y_1^o}_{y_n} \big). \label{eq.Y1r}
\end{align}

By \eqref{est.tF.n}, \eqref{est.Y0.energy}, the generalized Leibniz rule and H\"{o}lder's inequality, we obtain from \eqref{eq.Y1o} that
    \begin{equation}\label{est.DlxY1o}
    \begin{aligned}
        \| \bfD_n^\ell Y_1^o \|_{L^2_\x H^1_\y} & \le \| \bfD_n^\ell \widetilde{F} \|_{L^2_\x H^1_\y }  + \sum_{m=0}^\ell \binom{\ell}{m} \| \bfD_n^m \chi \|_{L^\infty_\x H^1_\y} \| \bfD_{n-1}^{\ell-m} \widehat{\nabla}_{n-1} Y_0 \|_{L^2_\x} \\
        & \le C_*  \widehat{\Lambda}_1^{\ell} (\ell+q)! + \sum_{m=0}^\ell \binom{\ell}{m} C_* \widehat{\Lambda}_0^m m! C \Lambda^{\ell-m} (\ell-m+q)! \\
        & \le C_*  \Lambda^{\ell} (\ell+q)! + C_* C \Lambda^{\ell} (\ell+q)! \sum_{m=0}^\ell (\widehat{\Lambda}_0 /\Lambda)^{m} \\
        & \le \Big( \frac{C_*}{C} + \frac{C_*^2}{C_*-1} \Big) C \Lambda^{\ell} (\ell+q)! \\
        & \le C^2 \Lambda^\ell (\ell+q)!,
    \end{aligned}
    \end{equation}
    where we need $C$ large enough such that $C_*/C + C_*^2/(C_*-1) \le C$.

Next, we estimate $\widetilde{Y}_2^o$ from \eqref{eq.tY2o}. Let
    \begin{equation*}
        F_2 = A \widehat{\nabla}_{n-1} Y_1^o, \quad \text{and} \quad G_2 = \widehat{\nabla}_{n-1} \cdot F + \widehat{\nabla}_{n-1} \cdot (A \nabla_n Y_1^o) + \widehat{\nabla}_{n-1} \cdot (A \widehat{\nabla}_{n-1} Y_0) - \tau^2 Y_0.
    \end{equation*}
    Then $\widetilde{Y}_2^o$ satisfies
    \begin{equation*}
        -\nabla_n \cdot A \nabla_n \widetilde{Y}_2^o = \nabla_n \cdot F_2 + G_2.
    \end{equation*}
    It follows from the generalized Leibniz rule and \eqref{est.DlxY1o} that
    \begin{equation*}
        \begin{aligned}
            \| \bfD_n^\ell F_2 \|_{L^2_{\xy}} & \le \sum_{m = 0}^\ell \binom{\ell}{m} \| \bfD_n^m A \|_{L^\infty_{\xy}} \| \bfD_n^{\ell-m} \widehat{\nabla}_{n-1} Y_1^o \|_{L^2_\xy} \\ & \le C_0 \sum_{m=0}^\ell \binom{\ell}{m} \Lambda_0^m m! \delta_{n-1}^{-1} C^2 \Lambda^{\ell-m+1} (\ell-m+1+q)! \\
            & \le \frac{C_0 C_{**}}{C_{**} - 1} \delta_{n-1}^{-1} C^2 \Lambda^{\ell+1} (\ell+1+q)!,
        \end{aligned}
    \end{equation*}
    where we have used $\Lambda/\Lambda_0 \ge C_{**}$ and the simple fact (also see \eqref{eq.hatD-D})
    \begin{equation}\label{est.hatd2D}
        |\bfD_n^\ell \widehat{\nabla}_{n-1} f| \le \delta_{n-1}^{-1} |\bfD_n^{\ell+1} f|.
    \end{equation}

Similarly,
    \begin{equation*}
        \begin{aligned}
            \| \bfD_n^\ell G_2 \|_{L^2_{\xy}}  & \le  \| \bfD_n^\ell \widehat{\nabla}_{n-1} F \|_{L^2_{\xy}} + \tau^2 \| \bfD_n^\ell Y_0 \|_{L^2_{\x}}+ \sum_{m = 0}^\ell \binom{\ell}{m} \| \bfD_n^m A \|_{L^\infty_\xy} \| \bfD_n^{\ell-m} \widehat{\nabla}_{n-1} Y_1^o \|_{L^2_\x H^1_\y} \\
            & \qquad + \sum_{m = 0}^\ell \binom{\ell}{m} \| \bfD_n^m \widehat{\nabla}_{n-1} A \|_{L^\infty_\xy} \| \bfD_n^{\ell-m} Y_1^o \|_{L^2_\x H^1_\y} \\
            & \qquad + \sum_{m = 0}^\ell \binom{\ell}{m} \| \bfD_n^m \widehat{\nabla}_{n-1} A \|_{L^\infty_\xy} \| \bfD_n^{\ell-m} \widehat{\nabla}_{n-1} Y_0 \|_{L^2_\x} \\
            & \qquad + \sum_{m = 0}^\ell \binom{\ell}{m} \| \bfD_n^m A \|_{L^\infty_\xy} \| \bfD_n^{\ell-m} \widehat{\nabla}_{n-1}^2 Y_0 \|_{L^2_\x} \\
            & \le \Big( 1+\tau + \frac{4C_0 C_*^2}{C_*^2-1} \Big) \delta_{n-1}^{-1} C^2 \Lambda^{\ell+1} (\ell+1+q)!.
        \end{aligned}
    \end{equation*}
Then by Proposition \ref{prop.energy2} and the linearity of the equation, we have
    \begin{equation}\label{est.tY2o+}
        \| \bfD_n^\ell \widetilde{Y}_2^o \|_{L^2_\x H^1_\y} \le C_* \Big( 1+\tau + \frac{4C_0 C_{**} }{C_{**}-1} \Big) \delta_{n-1}^{-1} C^2 \Lambda^{\ell+1} (\ell+1+q)! \le \delta_{n-1}^{-1} C^3 \Lambda^{\ell+1} (\ell+1+q)!.
    \end{equation}
    Here we choose $C \ge C_*( 1+\tau + \frac{4C_0 C_{**}}{C_{**}-1} )$.

    Next, we estimate $Y_1^r$. Let
    \begin{equation*}
        F_1^r = \Ag{ A \nabla_n \widetilde{Y}_2^o }_{y_n} + \Ag{ A \widehat{\nabla}_{n-1} Y_1^o}_{y_n}.
    \end{equation*}
    Then \eqref{eq.Y1r} is reduced to
    \begin{equation}\label{eq.Y1r+}
        -\widehat{\nabla}_{n-1} \cdot \widehat{A} \widehat{\nabla}_{n-1} Y_1^r + \tau^2 Y_1^r  = \widehat{\nabla}_{n-1} \cdot F_1^r.
    \end{equation}
    By the generalized Leibniz rule, \eqref{est.tY2o+} and \eqref{est.DlxY1o}, we have
    \begin{equation*}
        \begin{aligned}
            \| \bfD_{n-1}^\ell F_1^r  \|_{L^2_\x} & \le \sum_{m=0}^\ell \binom{\ell}{m} \| \bfD_{n-1}^m A \|_{L^\infty_\xy} \| \bfD_{n-1}^{\ell-m} \widetilde{Y}_2^o \|_{L^2_\x H^1_\y} \\
            & \qquad + \sum_{m=0}^\ell \binom{\ell}{m} \| \bfD_{n-1}^m A \|_{L^\infty_\xy} \| \bfD_{n-1}^{\ell-m} \widehat{\nabla}_{n-1} Y_1^o \|_{L^2_{\xy}} \\
            & \le \frac{2C_0 C_{**}}{C_{**} -1} \delta_{n-1}^{-1} C^3  \Lambda^{\ell+1 } (\ell+1+q)!.
        \end{aligned}
    \end{equation*}
    By Proposition \ref{prop.Ytau.F} applied to \eqref{eq.Y1r+}, we have
    \begin{equation*}
    \begin{aligned}
        \|  \bfD_{n-1}^\ell \widehat{\nabla}_{n-1} Y_1^r \|_{L^2_\x} + \tau \| \bfD_{n-1}^\ell Y^r_1 \|_{L^2_\x} & \le \widehat{C}_* \frac{2C_0 C_{**}}{C_{**} -1} \delta_{n-1}^{-1} C^3 \Lambda^{\ell+1} (\ell+1+q)! \\
        & \le \frac12 \delta_{n-1}^{-1} C^4 \Lambda^{\ell+1} (\ell+1+q)!,
    \end{aligned}
    \end{equation*}
    where we need $C \ge 2\widehat{C}_* \frac{2C_0 C_{**}}{C_{**} -1}$.

    \textbf{Step 4:} Estimate of $Y_k$ for $k\ge 2$. From the recursive equations, we see that the estimates of $(Y_k^o, \widetilde{Y}_{k+1}^o, Y_k^r)$ yields the estimates of $(Y_{k+1}^o, \widetilde{Y}_{k+2}^o, Y_{k+1}^r)$.
    We have proved our base case for $k = 1$ as follows:
    \begin{subequations}\label{est.Y1base}
        \begin{empheq}[left=\empheqlbrace]{align}
            & \| \bfD_n^\ell Y_1^o \|_{L^2_\x H^1_\y} \le  C^{2} \Lambda^{\ell} (\ell+q)!, \\
            & \| \bfD_n^\ell \widetilde{Y}_2^o \|_{L^2_\x H^1_\y} \le \delta_{n-1}^{-1} C^{3} \Lambda^{\ell+1} (\ell+1+q)!, \\
             & \| \bfD_{n-1}^{\ell} \widehat{\nabla}_{n-1} Y_1^r \|_{L^2_\x} + \tau \| \bfD_{n-1}^\ell Y^r_1 \|_{L^2_\x} \le \frac12 \delta_{n-1}^{-1} C^{4} \Lambda^{\ell+1} (\ell+1+q)!.
        \end{empheq}
    \end{subequations}

    Suppose now that for some $k\ge 1 $ and all $\ell \ge 0$ we have
    \begin{subequations}\label{assum.Yk}
        \begin{empheq}[left=\empheqlbrace]{align}
            & \| \bfD_n^\ell Y_{k}^o \|_{L^2_\x H^1_\y} \le \delta_{n-1}^{-k+1} C^{3k-1} \Lambda^{\ell+k-1} (\ell+k-1+q)!, \\
            & \| \bfD_n^\ell \widetilde{Y}_{k+1}^o\|_{L^2_\x H^1_\y} \le \delta_{n-1}^{-k} C^{ 3k} \Lambda^{\ell+k} (\ell+k+q)!,\\
            & \| \bfD_{n-1}^{\ell} \widehat{\nabla}_{n-1} Y_k^r \|_{L^2_\x} + \tau \| \bfD_{n-1}^\ell Y^r_k \|_{L^2_\x} \le \frac12 \delta_{n-1}^{-k} C^{3k+1} \Lambda^{\ell+k} (\ell+k+q)!.
        \end{empheq}
    \end{subequations}
    We would like to show that for all $\ell \ge 0$,
    \begin{equation}\label{est.Yk.induction}
        \left\{
        \begin{aligned}
        & \| \bfD_n^\ell Y_{k+1}^o\|_{L^2_\x H^1_\y} \le \delta_{n-1}^{-k} C^{3(k+1) -1} \Lambda^{\ell+k } (\ell+k+q)!, \\
        & \| \bfD_n^\ell \widetilde{Y}_{k+2}^o\|_{L^2_\x H^1_\y} \le \delta_{n-1}^{-k-1} C^{3(k+1)} \Lambda^{\ell+k+1} (\ell+k+1+q)!,\\
            & \| \bfD_{n-1}^{\ell} \widehat{\nabla}_{n-1} Y_{k+1}^r \|_{L^2_\x} + \tau \| \bfD_{n-1}^\ell Y^r_{k+1} \|_{L^2_\x} \le \frac{1}{2} \delta_{n-1}^{-k-1} C^{3(k+1)+1} \Lambda^{\ell+k+1 } (\ell+k+1+q)!. \\
        \end{aligned}
        \right.
    \end{equation}

    The proof of \eqref{est.Yk.induction} is very similar to the estimates of \eqref{est.Y1base}, based on the following recursive equations
    \begin{align}
            &Y_{k+1}^o  = \widetilde{Y}_{k+1}^o + \chi\cdot \widehat{\nabla}_{n-1} Y_{k}^r, \label{eq.Yok+1}\\
            \begin{split}
                & -\nabla_n \cdot A \nabla_n \widetilde{Y}_{k+2}^o  = \widehat{\nabla}_{n-1} \cdot (A \nabla_n Y_{k+1}^o) + \nabla_n\cdot (A \widehat{\nabla}_{n-1} Y_{k+1}^o)\\&\qquad\qquad\qquad\qquad + \widehat{\nabla}_{n-1}\cdot ( A \widehat{\nabla}_{n-1} Y_k) - \tau^2 Y_k, \label{eq.tYok+2}
            \end{split}
            \\
            & -\widehat{\nabla}_{n-1}\cdot ( \widehat{A} \widehat{\nabla}_{n-1} Y_{k+1}^r) + \tau^2 Y_{k+1}^r = \widehat{\nabla}_{n-1}\cdot \Ag{ A \nabla_n \widetilde{Y}_{k+2}^o}_y + \widehat{\nabla}_{n-1}\cdot \Ag{ A \widehat{\nabla}_{n-1} Y_{k+1}^o}_y.\label{eq.Yrk+1}
    \end{align}
   We first use the algebraic equation \eqref{eq.Yok+1} to estimate $Y_{k+1}^o$. In fact, by the inductive assumption \eqref{assum.Yk} and the generalized Leibniz rule, we have
    \begin{equation*}
        \begin{aligned}
            \| \bfD_n^\ell Y_{k+1}^o\|_{L^2_\x H^1_\y} & \le \| \bfD_n^\ell \widetilde{Y}_{k+1}^o \|_{L^2_\x H^1_\y} + \sum_{m=0}^\ell \binom{\ell}{m} \| \bfD_n^m \chi \|_{L^\infty_\x H^1_\y} \| \bfD_n^{\ell-m} \widehat{\nabla}_{n-1} Y_{k}^r \|_{L^2_\x} \\
            & \le \delta_{n-1}^{-k} C^{3k} \Lambda^{\ell+k}(\ell+k+q)! \\
            & \quad + \sum_{m=0}^\ell \binom{\ell}{m} C_* \widehat{\Lambda}_0^m m! \frac12 \delta_{n-1}^{-k} C^{3k+1} \Lambda^{\ell-m+k} (\ell-m+k+q)! \\
            & \le \delta^{-k}_{n-1}C^{3k+2} \Lambda^{\ell+k} (\ell+k+q)! \\
            & = \delta_{n-1}^{-k} C^{3(k+1) - 1} \Lambda^{\ell+k} (\ell+k+q)!.
        \end{aligned}
    \end{equation*}
    This proves the first inequality of \eqref{est.Yk.induction}.

    The estimate of $\widetilde{Y}_{k+2}^o$ is similar to $\widetilde{Y}_2^o$. Let
    \begin{equation*}
        F_{k+2} = A \widehat{\nabla}_{n-1} Y_{k+1}^o \qquad \text{and} \qquad G_{k+2} = \widehat{\nabla}_{n-1}\cdot (A \nabla_n Y_{k+1}^o) + \widehat{\nabla}_{n-1}\cdot (A\widehat{\nabla}_{n-1} Y_k) - \tau^2 Y_k.
    \end{equation*}
    Then $\widetilde{Y}_{k+2}^o$ satisfies
    \begin{equation}\label{eq.tYok+2+}
        -\nabla_n \cdot A\nabla_n \widetilde{Y}_{k+2}^o = \nabla_n \cdot F_{k+2} + G_{k+2}.
    \end{equation}
    Note that by the triangle inequality to $Y_k = Y_k^o + Y_k^r$ and \eqref{est.hatd2D}, we have
    \begin{equation}\label{est.DnhatD.Yk}
        \| \bfD_n^\ell \widehat{\nabla}_{n-1} Y_k \|_{L^2_{\xy}} \le \| \bfD_n^\ell \widehat{\nabla}_{n-1} Y_k^o \|_{L^2_{\xy}} + \| \bfD_n^\ell \widehat{\nabla}_{n-1} Y_k^r \|_{L^2_{\xy}} \le \delta_{n-1}^{-k} C^{3k+1} \Lambda^{\ell+k} (\ell+k+q)!.
    \end{equation}
    Moreover, for $0<\tau\le 1$, 
    \begin{equation}\label{est.tauDn.Yk}
        \tau \| \bfD_n^\ell Y_k \|_{L^2_{\xy}} \le \tau \| \bfD_n^\ell  Y_k^o \|_{L^2_{\xy}} + \tau \| \bfD_n^\ell Y_k^r \|_{L^2_{\xy}} \le \delta_{n-1}^{-k} C^{3k+1} \Lambda^{\ell+k} (\ell+k+q)!.
    \end{equation}
    Thus, by a familiar calculation, we have
    \begin{equation*}
        \| \bfD_n^\ell F_{k+2} \|_{L^2} \le \frac{C_0 C_{**}}{C_{**} -1} \delta_{n-1}^{-k-1} C^{3(k+1) - 1} \Lambda^{\ell+k} (\ell+k+q)!,
    \end{equation*}
    and
    \begin{equation*}
        \| \bfD_n^\ell G_{k+2} \|_{L^2} \le \big( \frac{4C_0 C_{**}}{C_{**} -1} + \tau \big) \delta_{n-1}^{-k-1} C^{3(k+1) - 1} \Lambda^{\ell+k+1} (\ell+k+1+q)!.
    \end{equation*}
    Then applying Proposition \ref{prop.energy2} to the equation \eqref{eq.tYok+2+},
    we get 
    \begin{equation*}
    \begin{aligned}
        \| \bfD^\ell \widetilde{Y}_{k+2}^o\|_{L^2_\x H^1_\y} & \le  C_* \big( \frac{4C_0 C_{**} }{C_{**} -1} + \tau \big) \delta_{n-1}^{-k-1} C^{3(k+1)-1} \Lambda^{\ell+k+1} (\ell+k+1+q)! \\
        & \le \delta_{n-1}^{-k-1} C^{3(k+1)} \Lambda^{\ell+k+1} (\ell+k+1+q)!,
    \end{aligned} 
    \end{equation*}
    where we have assumed $C \ge C_*(\frac{4C_0 C_{**}}{C_{**} -1} + \tau) $ previously. This proves the second inequality in \eqref{est.Yk.induction}.

    Finally, we estimate $Y_{k+1}^r$. Let
    \begin{equation*}
        F_{k+1} = \Ag{ A\nabla_n \widetilde{Y}_{k+2}^o }_{y_n} + \Ag{ A \widehat{\nabla}_{n-1} Y_{k+1}^o}_{y_n}.
    \end{equation*}
    By a familiar calculation, we have
    \begin{equation*}
        \| \bfD_{n-1}^\ell F_{k+1}^r \|_{L^2_\x} \le \frac{2C_0 C_{**} }{C_{**}-1} \delta_{n-1}^{-k-1} C^{3(k+1)} \Lambda^{\ell+k+1} (\ell+k+1+q)!.
    \end{equation*}
    Then by Proposition \ref{prop.energy2} applied to
    \begin{equation*}
        -\widehat{\nabla}_{n-1}\cdot ( \widehat{A} \widehat{\nabla}_{n-1} Y_{k+1}^r) + \tau^2 Y_{k+1}^r = \widehat{\nabla}_{n-1}\cdot F_{k+1}^r,
    \end{equation*}
    we have
    \begin{equation*}
    \begin{aligned}
       & \| \bfD_{n-1}^\ell \widehat{\nabla}_{n-1} Y_{k+1}^r \|_{L^2_\x} + \tau \| \bfD_{n-1}^\ell Y_{k+1}^r \|_{L^2_\x} \\
        & \le \frac{2C_0 \widehat{C}_* C_{**}}{C_{**}-1} \delta_{n-1}^{-k-1} C^{3(k+1)} \Lambda^{\ell+k+1} (\ell+k+1+q)! \\
        & \le \frac12 \delta_{n-1}^{-k-1} C^{3(k+1)+1} \Lambda^{\ell+k+1} (\ell+k+1+q)!,
    \end{aligned}  
    \end{equation*}
    where we require $C \ge \frac{4C_0 \widehat{C}_* C_{**}}{C_{**}-1}$. Note that this requirement is independent of $k$ and $\ell$, and thus $C$ can be fixed globally.

    \textbf{Step 5:} Complete the proof.
    First of all, by \eqref{est.DnhatD.Yk} and \eqref{est.tauDn.Yk}, as well as \eqref{est.Y0.energy}, we already have
    \begin{equation}\label{est.Yk-1}
        \| \bfD_n^\ell \widehat{\nabla}_{n-1} Y_k \|_{L^2_{\xy}} \le  \delta_{n-1}^{-k} C^{3k+1} \Lambda^{\ell+k} (\ell+k+q)!,
    \end{equation}
    and
    \begin{equation*}
        \tau \| \bfD_n^\ell Y_k \|_{L^2_{\xy}}  \le \delta_{n-1}^{-k} C^{3k+1} \Lambda^{\ell+k} (\ell+k+q)!.
    \end{equation*}
    
It is important to point out that the above two estimates hold for all $\ell \ge 0$ and $k\ge 0$. Unfortunately the second estimate also relies on $\tau$ since the estimate of $Y_k^r$ does. While on the other hand, if $n \ge 3$, we cannot get the estimate of $\| \bfD_n^\ell Y_k \|_{L^2_{\xy}}$ from \eqref{est.Yk-1} by the Poincar\'{e} inequality due to the degenerate directional gradient $\widehat{\nabla}_{n-1}$. In order to get the estimate of $\| \bfD_n^\ell Y_k \|_{L^2_{\xy}}$ independent of $\tau$, we have to use the inductive assumption. In fact, we can recall
\begin{equation}\label{eq.Ykr}
    -\widehat{\nabla}_{n-1}\cdot ( \widehat{A} \widehat{\nabla}_{n-1} Y_{k}^r) + \tau^2 Y_{k}^r = \widehat{\nabla}_{n-1}\cdot F_{k}^r,
\end{equation}
and the estimate of $F_k^r$
\begin{equation}\label{est.Dl.Fkr}
    \| \bfD_{n-1}^\ell F_{k}^r \|_{L^2_\x} \le \frac{2C_0 C_{**} }{C_{**}-1} \delta_{n-1}^{-k} C^{3k} \Lambda^{\ell+k} (\ell+k+q)!.
\end{equation}
Also recall that $\widehat{A}$ satisfies the ellipticity condition with the same constant $\lambda$ and the quantitative analyticity condition \eqref{cond.hatA}. So the constant $C_0, \Lambda_0$ and $\Lambda_1$ in the inductive assumption are replaced by $\widehat{C}_0, \widehat{\Lambda}_0$ and $\Lambda$, respectively. Therefore, by applying the inductive assumption on \eqref{eq.Ykr} in this situation, we have the following: there exist $\Lambda_{n}, C_{n-1}$ and $\widetilde{\Lambda}_j, \widetilde{C}_j$ with $2\le j\le n-1$ such that if for each $2\le j\le n-1$, there exists $k_j \ge 1$ such that   
    \begin{equation}\label{cond.delta+tau+l}
        \left\{ \begin{aligned}
            & (\delta_j/\delta_{j-1}) \widetilde{C}_{j}^3  \widetilde{\Lambda}_{j} (\ell + k_j+q) \le e^{-1}, \\
            & \tau^2 \ge \widetilde{C}_{j}^3 \widetilde{\Lambda}_{j} \delta_{j-1}^{-1} e^{-k_j+1},
        \end{aligned}
        \right.
    \end{equation}
    then 
    \begin{equation}\label{est.DlYkr+unif}
        \| \bfD_{n-1}^\ell  Y_k^r \|_{L^2_\x} \le  C_{n-1} \frac{2C_0 C_{**} }{C_{**}-1} \delta_{n-1}^{-k} C^{3k}
        \Lambda_{n}^{\ell} \Lambda^k (\ell + 1 +k + q)!.
    \end{equation}
    In this estimate, the factor $\frac{2C_0 C_{**} }{C_{**}-1} \delta_{n-1}^{-k} C^{3k} \Lambda^k$ is preserved from \eqref{est.Dl.Fkr} by the linearity of the equation. 
    The constants $\widetilde{C}_j$ in \eqref{cond.delta+tau+l} depend only on $d, \lambda, \widehat{C}_0, j$ and $n$.
    The constants $\widetilde{\Lambda}_j$ depends additionally on $\widehat{\Lambda}_0, \Lambda$. The constant $\widetilde{C}_{n-1}$ depends only on $d, \widehat{C}_0 $ and $\lambda$ and $n$, and $\Lambda_n$ depends additionally on $\widehat{\Lambda}_0, \Lambda$. Notice that $\widehat{C}_0$ depends only on $d,\lambda$ and $C_0$, while $(\widehat{\Lambda}_0, \Lambda)$ depends additionally on $(\Lambda_0, \Lambda_1)$. Thus eventually, $C_{n-1}$ and $\widetilde{C}_j$ depend on $d, \lambda, C_0, j$ and $n$, and $\Lambda_n$ and $\widetilde{\Lambda}_j$ depend additionally on $(\Lambda_0, \Lambda_1)$.

As a consequence of \eqref{assum.Yk} and \eqref{est.DlYkr+unif}, by the triangle inequality, for $\ell$ satisfying \eqref{cond.delta+tau+l}, we have
\begin{equation}\label{est.Yk.L2}
    \| \bfD_n^\ell Y_k \|_{L^2_{\xy}} \le \| \bfD_n^\ell Y_k \|_{L^2_{\x} H^1_\y } \le C_{n-1} \delta_{n-1}^{-k} C^{3k+1} \Lambda_{n}^{\ell} \Lambda^k (\ell +1+k+q)!,
\end{equation}
provided $C \ge \frac{4C_0 C_{**} }{C_{**}-1}$.

    Now we consider a partial sum of \eqref{eq.Y2scaleExp} truncated at $j = k$ (with a modification):
    \begin{equation*}
        N_k(x,y) = \sum_{j=0}^{k-1} \delta_n^j Y_j(x,y) + \delta_n^k Y_k^o.
    \end{equation*}
    Then for $\ell \ge 0$ satisfying \eqref{cond.delta+tau+l}, by \eqref{assum.Yk} and \eqref{est.Yk.L2},
    \begin{equation*}
    \begin{aligned}
        \| \bfD_n^\ell N_k \|_{L^2_{\xy}} & \le 
        \sum_{j=0}^{k-1} \delta_n^j \delta_{n-1}^{-j} C_{n-1} C^{3j+1} \Lambda_{n}^{\ell} \Lambda^j (\ell+1+j+q)! \\
        & \qquad +  \delta_n^k \delta_{n}^{-k+1} C^{3k-1} \Lambda^{\ell+k-1} (\ell+k-1+q)! \\
        & \le (CC_{n-1}+1) \Lambda_{n-1}^{\ell} (\ell+1+q)!,
    \end{aligned}
    \end{equation*}
    provided that $(\delta_n/\delta_{n-1}) C^3 \Lambda (\ell+k+q) \le 1/e$.

    Now in view of the two-scale expansion \eqref{eq.liftphi}, we write down the equation for $N_k$,
    \begin{equation*}
    \begin{aligned}
        & -\widehat{\nabla}_n \cdot A\widehat{\nabla}_n N_k + \tau^2 N_k - \widehat{\nabla}_n \cdot F \\
        & = E_k := \delta^{k-1}_n \big( (\cL_{\xy} + \cL_{\yx}) Y_k^o + \cL_{\xx} Y_{k-1}+\tau^2Y_{k-1}) + \delta_n^{k} (\cL_{\xx} Y_k^o+\tau^2Y_k^o).
    \end{aligned}
    \end{equation*}

     By a simple observation, we have
    \begin{equation*}
    \begin{aligned}
        E_k 
        & = -\delta_n^k \widehat{\nabla}_n \cdot P_k - \delta_n^{k-1} Q_k,
    \end{aligned}
    \end{equation*}
    where $P_k = A \widehat{\nabla}_{n-1} Y_k^o$ and $Q_k =\widehat{\nabla}_{n-1} \cdot A \nabla_n Y_k^o + \widehat{\nabla}_{n-1} \cdot A \widehat{\nabla}_{n-1} Y_{k-1} - \tau^2(\delta_n Y_k^o + Y_{k-1})$. Then
    \begin{equation*}
    \begin{aligned}
        \| \bfD_n^\ell P_k \|_{L^2_{\xy}} & \le \sum_{m=0}^\ell \binom{\ell}{m} \| \bfD_n^m A \|_{L^\infty_{\xy}} \| \bfD_n^{\ell-m} \widehat{\nabla}_{n-1} Y_k^o \|_{L^2_{\xy}} \\
        & \le \sum_{m=0}^\ell \binom{\ell}{m} C_0 \Lambda_0^m m! \delta_{n-1}^{-k} C^{3k-1} \Lambda^{\ell-m+k} (\ell-m+k+q)! \\
        & \le \frac{C_0 C_{**}}{C_{**}-1} \delta_{n-1}^{-k} C^{3k-1} \Lambda^{\ell+k} (\ell+k+q)!,
    \end{aligned}
    \end{equation*}
    and
    \begin{equation*}
        \begin{aligned}
            &\| \bfD_n^\ell Q_k\|_{L^2_{\xy}} \\
            & \le \sum_{m=0}^\ell \binom{\ell}{m} \Big( \| \bfD_n^m \widehat{\nabla}_{n-1} A \|_{L^\infty_{\xy}} \| \bfD_n^{\ell-m} \nabla_n Y_k^o \|_{L^2_{\xy}} +  \| \bfD_n^m  A \|_{L^\infty_{\xy}} \| \bfD_n^{\ell-m} \widehat{\nabla}_{n-1} \nabla_n Y_k^o \|_{L^2_{\xy}} \Big) \\
            & \quad + \sum_{m=0}^\ell \binom{\ell}{m} \Big( \| \bfD_n^m \widehat{\nabla}_{n-1} A \|_{L^\infty_{\xy}} \| \bfD_n^{\ell-m} \widehat{\nabla}_{n-1} Y_{k-1} \|_{L^2_{\xy}} +  \| \bfD_n^m  A \|_{L^\infty_{\xy}} \| \bfD_n^{\ell-m} \widehat{\nabla}_{n-1}^2  Y_{k-1} \|_{L^2_{\xy}} \Big) \\
            & \quad + \tau^2 \delta_n \| \bfD_n^\ell Y_k^o \|_{L^2_\xy} + \tau^2 \|\bfD_n^\ell Y_{k-1} \|_{L^2_\xy} \\
            & \le \sum_{m=0}^\ell \binom{\ell}{m} \Big( \delta_{n-1}^{-1} C_0 \Lambda_0^{m+1} (m+1)! \delta_{n-1}^{-k+1} C^{3k-1} \Lambda^{\ell-m -1+k} (\ell-m-1+k+q)!  \Big) \\
            & \quad + \sum_{m=0}^\ell \binom{\ell}{m} \Big( C_0 \Lambda_0^{m} m! \delta_{n-1}^{-k} C^{3k-1} \Lambda^{\ell-m+k } (\ell-m+k+q)!  \Big) \\
            & \quad + \sum_{m=0}^\ell \binom{\ell}{m} \Big( \delta_{n-1}^{-1} C_0 \Lambda_0^{m+1} (m+1)! \delta_{n-1}^{-k+1}  C^{3k-2} \Lambda^{\ell-m-1+k} (\ell-m-1+k+q)! \Big) \\
            & \quad + \sum_{m=0}^\ell \binom{\ell}{m} \Big( C_0 \Lambda_0^{m} m! \delta_{n-1}^{-k} C^{3k-2} \Lambda^{\ell-m+k} (\ell-m+k+q)! \Big) \\
            & \quad + \tau^2 \delta_n\delta_{n-1}^{-k+1} C^{3k-1} \Lambda^{\ell+k-1} (\ell+k-1+q)!+\tau\delta_{n-1}^{-k+1} C^{3k-2} \Lambda^{\ell+k-1} (\ell+k-1+q)! \\
            & \le \Big( \frac{4C_0 C_{**}}{C_{**} -1} + 2\tau \Big) \delta_{n-1}^{-k} C^{3k-1} \Lambda^{\ell+k} (\ell+k+q)!.
        \end{aligned}
    \end{equation*}

    Now let $Z_k$ be the solution of
    \begin{equation}\label{eq.Zk}
        -\widehat{\nabla}_n \cdot A\widehat{\nabla}_n Z_k + \tau^2 Z_k = - E_k = \delta_n^k \widehat{\nabla}_n \cdot P_k + \delta_n^{k-1} Q_k.
    \end{equation}
    Using the linearity of the equation and Proposition \ref{prop.Ytau.F}, we obtain
    \begin{equation*}
        \|\bfD_n^\ell\widehat{\nabla}_nZ_k\|_{L^2_\xy}+\tau \|  \bfD_n^\ell Z_k \|_{L^2_\xy}  \le \frac12 \delta_n^k \delta_{n-1}^{-k}  C^{3k} \Lambda^{\ell+k} (\ell+k+q)! + \frac12 \delta_n^{k-1} \delta_{n-1}^{-k} \tau^{-1} C^{3k} \Lambda^{\ell+k} (\ell+k+q)!,
    \end{equation*}
    where we need to assume $C \ge 2C_*(\frac{4C_0 C_{**}}{C_{**} -1} + 2\tau )$.
    Thus, if $(\delta_n/\delta_{n-1}) C^3 \Lambda (\ell+k+q) \le 1/e$, we have
    \begin{equation*}
        \|  \bfD^\ell_n Z_k \|_{L^2_\xy} \le \tau^{-2} \delta_{n-1}^{-1}e^{-k+1} C^{3} \Lambda^{\ell+1} (\ell+1+q)!.
    \end{equation*}
    Choose $\tau$ such that $\tau^{-2}\delta_{n-1}^{-1}e^{-k+1} C^3 \Lambda \le 1$. Then we have
    \begin{equation*}
        \|  \bfD^\ell_n Z_k \|_{L^2_\xy} \le \Lambda^{\ell} (\ell+1+q)!.
    \end{equation*}
    Since $Y = N_k + Z_k$ solves the original equation, combining the estimates of $N_k$ and $Z_k$, we get
    \begin{equation*}
        \|  \bfD_n^\ell Y \|_{L^2_{\xy}} \le (CC_{n-1}+2) \Lambda_n^\ell (\ell+1+q)! = C_n \Lambda_n^\ell (\ell+1+q)!,
    \end{equation*}
    where we have relabeled $CC_{n-1}+2$ as $C_n$. This is the expected estimate \eqref{est.DlY.n}. Finally, we note that in addition to the assumption required in \eqref{cond.delta+tau+l} for $2\le j\le n-1$, we also require that there exists $k = k_n$ such that
    \begin{equation*}
        \left\{ \begin{aligned}
            & (\delta_n/\delta_{n-1}) \widetilde{C}_{n}^3  \widetilde{\Lambda}_{n} (\ell + k_n+q) \le e^{-1}, \\
            & \tau^2 \ge \widetilde{C}_n^3 \widetilde{\Lambda}_n \delta_{n-1}^{-1} e^{-k_n+1},
        \end{aligned}
        \right.
    \end{equation*}
    holds for some $\ell \ge 0$,
    where $(\widetilde{C}_n, \widetilde{\Lambda}_n) = (C, \Lambda)$. The proof is complete.
\end{proof}

We now apply Theorem \ref{thm.Y.nscale} to the equation of the multiscale correctors:
\begin{equation}\label{eq.AppCorrector}
    -\widehat{\nabla}_n\cdot A \widehat{\nabla}_n \mathcal{X} + \tau^2 \mathcal{X} = \widehat{\nabla}_n\cdot (A v),
\end{equation}
where $v$ is a unit vector in $\R^d$  (particularly $v = e_j$). Recall that in our setting, $\delta_i = \e_i/\e_1$, $i=1, \cdots, n.$
\begin{theorem}\label{thm.AppCorrector}
    Assume that $A$ satisfies \eqref{as.ellipticity}, \eqref{as.periodicity}, and \eqref{cond.A2}. Then there exist constants $c_j>0, j=2,3,\cdots, n$ such that the following statement holds:
    If $(\e_1,\e_2,\cdots, \e_n)$ satisfies the scale-separation condition 
    \begin{equation}\label{cond.ej.sepcond}
    \e_j \le \frac{c_j \e_{j-1}}{1+\log(\e_1/\e_{j-1})},
    \end{equation}
    then with some $\tau^2 \simeq \max_j \{ e^{-c\e_{j-1}/\e_j} \}$, \eqref{eq.AppCorrector} has a unique bounded solution satisfying
    \begin{equation}\label{inftychi}
        \| \mathcal{X} \|_{L^\infty} \le C,
    \end{equation}
    and for any $\ell \ge 0$,
    \begin{equation}\label{est.X.energy}
        \| \bfD_n^\ell \widehat{\nabla}_n \mathcal{X} \|_{L^2} + \tau \| \bfD_n^\ell \mathcal{X} \|_{L^2} \le C_* \Lambda^\ell \ell!,
    \end{equation}
    where $\Lambda = C_* \Lambda_0$ and $C_*$ is given in Proposition \ref{prop.Ytau.F}.
    The constant $c_j, c$ and $C$ depend only on the characters of $A$.
\end{theorem}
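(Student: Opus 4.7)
The plan is to apply Theorem~\ref{thm.Y.nscale} and Proposition~\ref{prop.Ytau.F} to the corrector equation \eqref{eq.AppCorrector}, which is the special case $F = Av$ of \eqref{eq.RegEq.FG} with $s_1 = 1$, $s_2 = 0$. Since $v$ is a fixed unit vector and $A$ satisfies \eqref{cond.A2}, $F = Av$ verifies \eqref{cond.F2} with $\Lambda_1$ a multiple of $\Lambda_0$ and a bounded $q$ (absorbing $C_0$ into $q!$). Existence and uniqueness of a smooth solution follows from the secondary regularization $-\theta^2\Delta$ in \eqref{eq.Reg+} together with a standard energy identity using the positive zero-order term $\tau^2 \mathcal X$; every bound derived below is independent of $\theta$, so one passes to the limit $\theta \to 0$.

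\textbf{Calibration of $k_j$ and $\tau$.} The crux is to translate the hypothesis \eqref{cond.ej.sepcond} into the degenerate scale-separation condition \eqref{cond.sep.n}. Using $\delta_j/\delta_{j-1} = \e_j/\e_{j-1}$ and $\delta_{j-1}^{-1} = \e_1/\e_{j-1}$, I would set
\[
k_j := \Bigl\lfloor \gamma\, \e_{j-1}/\e_j \Bigr\rfloor, \qquad \tau^2 := \max_{2\le j\le n}\bigl\{ \widetilde C_j^{\,3}\widetilde\Lambda_j (\e_1/\e_{j-1})\, e^{-k_j+1}\bigr\},
\]
for a small constant $\gamma > 0$ to be fixed. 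The first line of \eqref{cond.sep.n} then reduces to
\(\widetilde C_j^{\,3}\widetilde\Lambda_j(\gamma + (\ell_0 + q)\e_j/\e_{j-1}) \le e^{-1}\),
which holds once $\gamma$ is small and $\e_j/\e_{j-1}$ is correspondingly small; the second line holds by construction. To bring $\tau^2$ into the claimed form $\tau^2 \simeq \max_j e^{-c\e_{j-1}/\e_j}$ one needs $(\gamma - c)\e_{j-1}/\e_j \ge \log(\e_1/\e_{j-1}) + O(1)$, which is precisely \eqref{cond.ej.sepcond} with $c_j$ proportional to $\gamma - c$.

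\textbf{Derivation of the stated bounds.} With these choices, Theorem~\ref{thm.Y.nscale} delivers the $\tau$-independent estimate $\|\bfD_n^{\ell}\mathcal X\|_{L^2} \le C_n \Lambda_n^{\ell}(\ell+1+q)!$ for every $0 \le \ell \le \ell_0$, where $\ell_0 > dn/2$ is fixed once and for all (the remark after that theorem ensures that \eqref{cond.sep.n} for $\ell_0$ implies it for all smaller $\ell$). Summing over $0 \le \ell \le \ell_0$ and applying the Sobolev embedding $H^{\ell_0}(\T^{d\times n}) \hookrightarrow L^\infty(\T^{d\times n})$ yields \eqref{inftychi}. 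The refined bound \eqref{est.X.energy} is an immediate corollary of Proposition~\ref{prop.Ytau.F} applied directly to \eqref{eq.AppCorrector}, since $\|\bfD_n^{\ell}(Av)\|_{L^2} \lesssim \Lambda_0^{\ell} \ell!$.

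\textbf{Main obstacle.} The principal bookkeeping difficulty lies in the second line of \eqref{cond.sep.n}: the factor $\delta_{j-1}^{-1} = \e_1/\e_{j-1}$ on the right-hand side is typically large and must be absorbed by the exponential gain $e^{-k_j}$, while simultaneously keeping $\tau^2$ of order $\max_j e^{-c\e_{j-1}/\e_j}$. This balancing act forces a logarithmic correction, producing exactly the denominator $1 + \log(\e_1/\e_{j-1})$ in \eqref{cond.ej.sepcond} and determining how small the constants $c_j$ must be taken. Uniqueness is not a genuine difficulty thanks to the positive mass term $\tau^2$, which provides coercivity in the regularized problem and survives the passage $\theta \to 0$.
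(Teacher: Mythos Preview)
Your proposal is correct and follows essentially the same route as the paper: apply Proposition~\ref{prop.Ytau.F} with $F=Av$ to get \eqref{est.X.energy} directly, then invoke Theorem~\ref{thm.Y.nscale} with a suitable calibration of $k_j\simeq \e_{j-1}/\e_j$ and $\tau$ to obtain the $L^2$ control on $\bfD_n^\ell\mathcal X$ for $\ell\le \ell_0=[nd/2]+1$, and finish with Sobolev embedding. The only cosmetic difference is in the bookkeeping: the paper fixes $\tau^2\simeq \max_j e^{-c\e_{j-1}/\e_j}$ first and then checks that under \eqref{cond.ej.sepcond} one can find $k_j\ge 2\log(\e_1/\e_{j-1})$ with $\tau^2\ge M_j\delta_{j-1}^{-1}e^{-k_j+1}\ge M_je^{-k_j/2+1}$, whereas you set $k_j=\lfloor\gamma\e_{j-1}/\e_j\rfloor$ first and then show the resulting $\tau^2$ has the claimed size; both parametrizations are equivalent and lead to the same logarithmic denominator in \eqref{cond.ej.sepcond}.
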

\begin{proof}
    The estimate \eqref{est.X.energy} is exactly the energy estimate in Proposition \ref{prop.Ytau.F} with $F = Av$, and it is independent of $\delta = (\delta_1, \cdots, \delta_n)$  and the scale-separation condition.

    Due to Theorem \ref{thm.Y.nscale}, the above equation has a bounded solution if the scale-separation condition holds: there exists $k_j>0$ for $2\le j\le n$ such that
    \begin{equation}\label{cond.separation.onchi}
        \left\{ \begin{aligned}
            & (\delta_j/\delta_{j-1}) M_j (\ell + k_j) \le e^{-1}, \\
            & \tau^2 \ge M_j \delta_{j-1}^{-1} e^{-k_j+1},
        \end{aligned}
        \right. 
        \text{ which is equivalent to }
        \left\{ \begin{aligned}
            & (\e_j/\e_{j-1}) M_j (\ell + k_j) \le e^{-1}, \\
            & \tau^2 \ge M_j (\e_1/\e_{j-1}) e^{-k_j+1},
        \end{aligned}
        \right. 
    \end{equation}
for some constant $M_j$ depending on the characters of $A$. To make sure that \eqref{eq.AppCorrector} is indeed a sufficiently accurate equation for multiscale correctors, we need $\tau$ be small enough. Particularly, if we assume
\begin{equation}\label{cond.deltaj.kj}
    \delta_{j-1}^{-1} e^{-k_j/2} \le 1,
\end{equation}
then $\tau^2$ is allowed to be as small as $e^{-k_j/2}$. The condition \eqref{cond.deltaj.kj} is equivalent to $k_j \ge 2\log(\e_1/\e_{j-1}) $. For our application, we can pick $\ell = \ell_0 := [nd/2]+1$ in \eqref{cond.separation.onchi}. Therefore  such $k_j$ exists and $k_j \simeq  \e_{j-1}/\e_j$, provided
\begin{equation*}
    \frac{\e_j}{\e_{j-1}}M_j ([nd/2]+1 + 2\log(\e_1/\e_{j-1}) ) \le e^{-1}.
\end{equation*}
The above condition can be reduced to \eqref{cond.ej.sepcond},
for some small constant $c_j$. Thus if \eqref{cond.ej.sepcond} holds with $2 \le j\le n$ and
\begin{equation*}
    \tau^2  \simeq \max_j \{ e^{-c\e_{j-1}/\e_j} \} \gtrsim \max_j \{ M_j e^{-k_j/2+1} \},
\end{equation*}
then \eqref{cond.sep.n} is satisfied for any $\ell \le \ell_0$. It follows from Theorem \ref{thm.Y.nscale} that for any $0\le \ell \le \ell_0 = [nd/2] + 1$,
\begin{equation}\label{est.DlX.unif}
    \| \bfD_n^\ell \mathcal{X} \|_{L^2} \le C_n\Lambda_n^{\ell} (\ell + 1)!.
\end{equation}
Finally, the Sobolev embedding theorem yields \eqref{inftychi}.
\end{proof}

    For our application in the next section, we will face coefficient matrices in a form of
    $$A_x(y_1,\cdots, y_n) = A(x, y_1, y_2,\cdots, y_n)$$ relying on an additional variable $x$.
    This variable can be viewed as a parameter in the equations, whose differential operators only act on variables $y_i$ (i.e., $x$ does not interact with the differential operators). In fact, we will consider
\begin{equation}\label{eq.Xx}
    -\widehat{\nabla}_n \cdot A(x, y_1,\cdots, y_n) \widehat{\nabla}_n \mathcal{X} + \tau^2 \mathcal{X} = \widehat{\nabla}_n \cdot A(x,y_1,\cdots, y_n) v, 
\end{equation}
with a unit vector $v$. We are particularly interested in how $\mathcal{X}$ depends on the regularity of $A(x,\cdot)$ with respect to $x$.

\begin{theorem}  \label{thm.AppCorrector.X}
    Assume that $A = A(x,y_1,\cdots, y_n)$ satisfies the ellipticity condition \eqref{as.ellipticity} and is 1-periodic in each $y_i$. Moreover, assume  for all $\ell\ge 0$,
    \begin{equation}\label{ass.Ax}
        \sup_{x\in \R^N} | \bfD_n^\ell A(x,\cdot) | \le C_0 \Lambda_0^\ell \ell!,
    \end{equation}
    and
    \begin{equation}\label{ass.DxAx}
        \sup_{x\in \R^N} | \bfD_n^\ell \nabla_x A(x,\cdot) | \le L_0 \Lambda_0^\ell \ell!.
    \end{equation}
    Then under the same scale-separation conditions on $\e$ and $\tau$ as in Theorem \ref{thm.AppCorrector} with possibly different constants $c_j$ and $c$, there exists a unique solution to \eqref{eq.Xx} with $\Ag{\mathcal{X}(x,\cdot)} = 0$ such that
    \begin{equation}\label{est.XDxX}
       \sup_{x\in \R^N} \| \mathcal{X}(x,\cdot) \|_{L^\infty}\le C, \qquad \sup_{x \in \R^N} \| \nabla_x \mathcal{X}(x,\cdot) \|_{L^\infty} \le CL_0,
    \end{equation}
    and for all $\ell \ge 0$
    \begin{equation}\label{est.Ax.energy}
        \sup_{x\in \R^N} \| \bfD_n^\ell \widehat{\nabla}_n \mathcal{X}(x,\cdot) \|_{L^2} \le C_* \Lambda^\ell \ell!, \qquad  \sup_{x\in \R^N} \| \bfD_n^\ell \widehat{\nabla}_n \nabla_x \mathcal{X}(x,\cdot) \|_{L^2}\le L_0 C_\sharp \Lambda_\sharp^\ell \ell!.
    \end{equation}
    The constants $c,c_j, C, C_*, C_\sharp$ depend at most on $\lambda, C_0, d, n$ and $\Lambda_0$. Particularly, these constants are all independent of $L_0$.
\end{theorem}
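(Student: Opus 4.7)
The plan is to exploit the fact that $x$ enters only as a parameter: the differential operator $\widehat{\nabla}_n$ does not act on $x$, so the equation \eqref{eq.Xx} reduces to a family of equations of the type already solved in Theorem \ref{thm.AppCorrector}. For each fixed $x \in \R^N$, the matrix $A(x,\cdot)$ satisfies the ellipticity and periodicity assumptions with constants independent of $x$, and \eqref{ass.Ax} supplies the quantitative analyticity bound \eqref{cond.A2} with the same $(C_0, \Lambda_0)$ uniformly in $x$. Applying Theorem \ref{thm.AppCorrector} pointwise in $x$ therefore produces the unique mean-zero solution $\mathcal{X}(x,\cdot)$ and gives both the uniform bound $\|\mathcal{X}(x,\cdot)\|_{L^\infty} \le C$ in \eqref{est.XDxX} and the first energy estimate in \eqref{est.Ax.energy}.

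To obtain the $\nabla_x$ estimates, I would differentiate \eqref{eq.Xx} in $x$ (rigorized via difference quotients, which converge thanks to the uniform-in-$x$ energy bounds from the previous step). The $\R^N$-valued function $W := \nabla_x \mathcal{X}$ then satisfies
\begin{equation*}
    -\widehat{\nabla}_n \cdot A \widehat{\nabla}_n W + \tau^2 W = \widehat{\nabla}_n \cdot F, \qquad F := (\nabla_x A)\, v + (\nabla_x A)\, \widehat{\nabla}_n \mathcal{X}.
\end{equation*}
Using the bound \eqref{ass.DxAx} on $\bfD_n^\ell \nabla_x A$, the generalized Leibniz rule, and the first-step energy estimate on $\bfD_n^\ell \widehat{\nabla}_n \mathcal{X}$, a geometric-series computation of the kind carried out repeatedly in Section \ref{sec.corrector-energy} yields
\begin{equation*}
    \sup_{x \in \R^N} \|\bfD_n^\ell F(x,\cdot)\|_{L^2} \le L_0\, \Lambda_*^\ell\, (\ell+1)!
\end{equation*}
for a constant $\Lambda_*$ depending only on $\lambda, C_0, d, n, \Lambda_0$. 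Crucially, the $L_0$ factor appears linearly because $F$ is linear in $\nabla_x A$ and the equation for $W$ is linear in $F$.

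With $F$ satisfying \eqref{cond.F2} with $q=1$ and the linear prefactor $L_0$, Proposition \ref{prop.Ytau.F} immediately furnishes the second estimate of \eqref{est.Ax.energy}, and Theorem \ref{thm.Y.nscale} (applied pointwise in $x$, with scale-separation constants possibly enlarged to absorb the shift from $q=0$ to $q=1$) gives $\|\bfD_n^\ell W(x,\cdot)\|_{L^2} \le L_0\, C_\sharp \Lambda_\sharp^\ell (\ell+2)!$ for $0 \le \ell \le [nd/2]+1$; Sobolev embedding then delivers the second inequality of \eqref{est.XDxX}. The main technical point to watch is propagating the linear $L_0$-dependence throughout: since both Proposition \ref{prop.Ytau.F} and Theorem \ref{thm.Y.nscale} are proved by energy estimates that depend linearly on the forcing, this linearity is automatic. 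The only mild adjustment compared to Theorem \ref{thm.AppCorrector} is a slight decrease of the constants $c_j, c$ in \eqref{cond.ej.sepcond} to accommodate the additional factorial shift in the source, but the argument is otherwise a direct repetition of the proof of Theorem \ref{thm.AppCorrector}.
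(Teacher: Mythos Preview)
Your proposal is correct and follows essentially the same route as the paper's proof: apply Theorem \ref{thm.AppCorrector} pointwise in $x$ for the first halves of \eqref{est.XDxX} and \eqref{est.Ax.energy}, differentiate \eqref{eq.Xx} in $x$ to obtain an equation for $\nabla_x\mathcal{X}$ with source $F_+=(\nabla_xA)v+(\nabla_xA)\widehat{\nabla}_n\mathcal{X}$, bound $F_+$ via \eqref{ass.DxAx} and the Leibniz rule, and then invoke Proposition \ref{prop.Ytau.F} and Theorem \ref{thm.Y.nscale} (with Sobolev embedding) for the remaining estimates, the linear $L_0$-dependence coming from linearity of the equation in the source.
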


\begin{proof}
    The first part of \eqref{est.XDxX} and the first part of \eqref{est.Ax.energy} follow from Theorem \ref{thm.AppCorrector} under the assumption \eqref{ass.Ax} for each fixed $x\in \R^d$ and then taking the supremum over $x$. To see the second part of \eqref{est.XDxX},
    we take derivative in $x$ to the equation \eqref{eq.Xx} to get
    \begin{equation}\label{eq.Ax-Xx}
        -\widehat{\nabla}_n \cdot A(x, y_1,\cdots, y_n) \widehat{\nabla}_n \nabla_x \mathcal{X} + \tau^2 \nabla_x \mathcal{X} = \widehat{\nabla}_n \cdot F_+(x,y_1,\cdots,y_n),
    \end{equation}
    where
    \begin{equation*}
        F_+(x,y_1,\cdots, y_n) = \nabla_x A(x, y_1,\cdots, y_n)v + \nabla_x A(x,y_1,\cdots, y_n) \widehat{\nabla}_n \mathcal{X}(x,y_1,\cdots, y_n).
    \end{equation*}
    Then by the assumption \eqref{ass.DxAx} and the first part of \eqref{est.Ax.energy},  using the generalized Leibniz rule, we have
    \begin{equation}\label{est.F+}
        \sup_{x\in \R^N} \| \bfD_n^\ell F_+(x,\cdot) \|_{L^2} \le L_0 C_+ \Lambda_+^\ell \ell!,
    \end{equation}
    with some $C_+$ and $\Lambda_+$ independent of $L_0$. In view of the linearity of the equation \eqref{eq.Ax-Xx} and \eqref{est.F+}, the estimates of $\nabla_x \mathcal{X}$ rely linearly on $L_0$.
    Hence, under the scale-separation condition \eqref{cond.sep.n} (with possibly different constants), we can apply Theorem \ref{thm.Y.nscale} to obtain
    \begin{equation*}
        \sup_{x\in \R^N} \| \bfD_n^\ell \nabla_x \X(x,\cdot)  \|_{L^2} \le L_0 C_n \Lambda_n^\ell (\ell+1)!,
    \end{equation*}
    for all $\ell \le [nd/2]+1$. Thus the Sobolev embedding theorem implies the second part of \eqref{est.XDxX}. Finally, the second part of \eqref{est.Ax.energy} follows from   Proposition \ref{prop.Ytau.F} applied to \eqref{eq.Ax-Xx}.
\end{proof}

Let $\mathcal{X}^i = \mathcal{X}^i(x,y_1,\cdots, y_n)$ be the solution of \eqref{eq.Xx} with $v = e_i$. Set
\begin{equation} \label{chi-de}
    \mathcal{X}_\delta^i (x,y) = \mathcal{X}^i(x, y/\delta_1, y/\delta_2,\cdots, y/\delta_n),
\end{equation}
and
\begin{equation}\label{b-de}
    B_\delta(x,y) = A(x, y/\delta_1, y/\delta_2, \cdots, y/\delta_n).
\end{equation}
Thus we have 
\begin{equation}\label{eq.Xdelta-X}
    \nabla_y (\mathcal{X}^i_\delta(x,y)) = (\widehat{\nabla}_n \mathcal{X}^i)(x,y/\delta_1,y/\delta_2,\cdots, y/\delta_n).
\end{equation}
Then \eqref{eq.Xx} yields
\begin{equation}\label{eq.X}
    -\nabla_y \cdot B_\delta(x,\cdot) \nabla_y \mathcal{X}_\delta^i  + \tau^2 \mathcal{X}_\delta^i = \nabla_y \cdot ( B_\delta(x,\cdot) e_i) \quad \text{for } y\in \R^d.
\end{equation}
This means that $\mathcal{X}^i_\delta$ are the multiscale correctors (with an additional parameter $x$).
Write $\mathcal{X}_\delta = (\mathcal{X}_\delta^1 ,\mathcal{X}_\delta^2,\cdots, \mathcal{X}_\delta^d)$. By Theorem \ref{thm.AppCorrector.X} and \eqref{eq.Xdelta-X}, under the corresponding assumptions, we have
\begin{equation}\label{chi.infty}
    \sup_{x\in \R^N} \| \mathcal{X}_\delta(x,\cdot) \|_{L^\infty} + \sup_{x\in \R^N} \| \nabla_y \mathcal{X}_\delta(x,\cdot) \|_{L^\infty} \le C,
\end{equation}
and
\begin{equation}\label{naxchi.infty}
    \sup_{x\in \R^N} \| \nabla_x \mathcal{X}_\delta(x,\cdot) \|_{L^\infty} + \sup_{x\in \R^N} \| \nabla_x \nabla_y \mathcal{X}_\delta(x,\cdot) \|_{L^\infty} \le CL_0.
\end{equation}
In the above estimate, the explicit dependence on the parameter $L_0$ will be crucial for us.

\section{Multiscale flux correctors}
\label{sec_flux-correctors}

In this section, we construct the flux correctors and establish their estimates. 

\subsection{Effective coefficient matrix}
Let $\X_\delta$  be the multiscale correctors given by \eqref{chi-de} and \eqref{eq.X}, with $B_\de$ given by \eqref{b-de}.  We consider at first the case where $B_\delta$ and $\X_\delta$ are independent of $x$, i.e., $\X_\delta(y) = \X(y/\delta_1,y/\delta_2,\cdots, y/\delta_n)$ and $B_\delta(y) = A(y/\delta_1,y/\delta_2,\cdots, y/\delta_n)$. Ideally, we shall solve the equation for constructing flux correctors 
\begin{align*}
  -\Delta_y \U_{\delta, ij}=\overline{A}_{ij}-B_{\delta,ij}-B_{\delta, i\ell}\partial_\ell \mathcal{X}^j_\delta \qquad \text{in } \R^d,
\end{align*}
in the quasi-periodic  setting, where $\overline{A} = (\overline{A}_{ij})$ is a constant matrix selected to guarantee the well-posedness of the equation. Since this equation may not be solvable, we instead consider the approximate (matrix-valued) equation
\begin{align}\label{flux-correctors_eq_approximateflux}
  -\Delta_y  \U_{\delta}+\tau^2 \U_{\delta}=\overline{A}-B_{\delta}-B_{\delta}\nabla_y \mathcal{X}_\delta \qquad \text{in } \R^d.
\end{align}
Due to the multiscale periodic structure of $B_\delta$ and $\X_\delta$,
we expect the solution has also the form
\begin{align*}
  \U_{\delta}(y)=\U (y/\delta_1, y/\delta_2, \cdots, y/\delta_n),
\end{align*}
where $\U = \U(y_1,y_2,\cdots, y_n)$ is 1-periodic in each $y_i\in \R^d$.
Therefore, the approximate equation \eqref{flux-correctors_eq_approximateflux} can be lifted to be a degenerate equation
\begin{align}\label{flux-correctors_eq_approximateflux_lift}
  -\widehat{\Delta}_n \U+\tau^2 \U=\overline{A}-A-A\widehat{\nabla}_n\mathcal{X} \qquad \text{in } \T^{d\times n},
\end{align}
where $\widehat{\Delta}_{n-1}=\widehat{\nabla}_{n-1}\cdot \widehat{\nabla}_{n-1}$. Note that \eqref{flux-correctors_eq_approximateflux_lift} is degenerate. If we require $\U$ to be uniformly bounded in $\tau$, it must hold $\langle \overline{A}-A-A\widehat{\nabla}_n\mathcal{X}\rangle=0$, i.e., 
\begin{align}\label{hatA}
  \overline{A}=\langle A+A\widehat{\nabla}_n\mathcal{X}\rangle,
\end{align} 
where $\Ag{\cdot}$ is the average over $\T^{d\times n}$ given in \eqref{def.average}. 

The constant matrix $\overline{A}$ will be called the effective matrix, which is more accurate than the classical homogenized matrix, denoted by $A_0$, derived by reiterated homogenization; see e.g. \cite{NSX20} for the derivation of $A_0$. We emphasize that $\overline{A}$ depends on $\tau$ and $\delta_i$'s, while $A_0$ is independent of $\tau$ and $\delta_i$. They have the following relation.

\begin{proposition}\label{prop.barA-A0}
Under the assumptions of Theorem \ref{thm.AppCorrector}, we have
  \begin{align}\label{error.A0-lineA}
    |A_0-\overline{A}| \le C(\tau^2+\delta_2+\delta_3\delta_2^{-1}+\cdots+\delta_n\delta_{n-1}^{-1}). 
  \end{align}
In particular,
  \begin{align}\label{def.A0} A_0=\lim_{\tau\rightarrow0}\lim_{\delta_2\rightarrow0}\cdots\lim_{\delta_n\rightarrow0} \overline{A}.
  \end{align}
\end{proposition}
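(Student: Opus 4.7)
The plan is to prove the bound by a telescoping argument along a chain of intermediate effective matrices interpolating between $\overline{A}$ and $A_0$. For $k=n,n-1,\ldots,0$, define $A^{(n)}:=A$ and recursively
\begin{equation*}
A^{(k-1)}(y_1,\ldots,y_{k-1}) := \Ag{A^{(k)}+A^{(k)}\nabla_k\chi^{(k)}}_{y_k},
\end{equation*}
where $\chi^{(k)}_j$ solves the one-scale corrector equation $-\nabla_k\cdot A^{(k)}\nabla_k\chi^{(k)}_j=\nabla_k\cdot A^{(k)}e_j$ on $\mathbb{T}^d$ in the variable $y_k$. After $n$ such homogenizations, $A^{(0)}=A_0$ is precisely the classical reiterated matrix. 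For $1\le k\le n$, let $\overline{A}^{(k)}$ denote the effective matrix produced by running the construction of Section \ref{sec.corrector} with coefficient $A^{(k)}$ and scales $(\delta_1,\ldots,\delta_k)$, so that $\overline{A}^{(n)}=\overline{A}$. Then it suffices to prove the one-step reductions $|\overline{A}^{(k)}-\overline{A}^{(k-1)}|\le C\delta_k/\delta_{k-1}$ for $2\le k\le n$ and $|\overline{A}^{(1)}-A_0|\le C\tau^2$.

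For the reduction step ($k\ge 2$), I would apply the multiscale ansatz of Section \ref{sec.corr.ansatz} to the $k$-scale corrector $\mathcal{X}^{(k),j}$ of $A^{(k)}$. Expanding $\mathcal{X}^{(k),j}=Y_0^j+\delta_k Y_1^j+\cdots$, the leading term $Y_0^j$ is independent of $y_k$ and satisfies exactly the $(k{-}1)$-scale multiscale corrector equation associated to $A^{(k-1)}$, while \eqref{sum.s31} gives $Y_1^{j,o}=\chi^{(k)}_j+\chi^{(k)}\cdot\widehat{\nabla}_{k-1}Y_0^j$. Inserting the truncation $N_1=Y_0^j+\delta_k Y_1^{j,o}$ into the identity $\overline{A}^{(k)}e_j=\Ag{A^{(k)}(e_j+\widehat{\nabla}_k\mathcal{X}^{(k),j})}$, decomposing $\widehat{\nabla}_k=\widehat{\nabla}_{k-1}+\delta_k^{-1}\nabla_k$, and taking the $y_k$-average first, the $y_k$-independent contributions collapse via the definition of $A^{(k-1)}$ to $\Ag{A^{(k-1)}(e_j+\widehat{\nabla}_{k-1}Y_0^j)}=\overline{A}^{(k-1)}e_j$. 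The remainder $\Ag{A^{(k)}\widehat{\nabla}_k(\mathcal{X}^{(k),j}-N_1)}$ is controlled by the tail of the expansion: the uniform estimates of Theorem \ref{thm.Y.nscale} show that each $Y_m^j$ has norm at most $C\delta_{k-1}^{-(m-1)}$, so the $m$-th term contributes $(\delta_k/\delta_{k-1})^{m-1}\delta_k$, summing geometrically under the scale-separation condition to $O(\delta_k/\delta_{k-1})$.

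For the base case $k=1$, the coefficient $A^{(1)}$ depends only on $y_1$ and $\mathcal{X}^{(1),j}$ solves the nondegenerate periodic equation $-\nabla_1\cdot A^{(1)}\nabla_1\mathcal{X}^{(1),j}+\tau^2\mathcal{X}^{(1),j}=\nabla_1\cdot A^{(1)}e_j$ on $\mathbb{T}^d$. A direct energy estimate on $\mathcal{X}^{(1),j}-\chi^{(1)}_j$ yields $\|\mathcal{X}^{(1),j}-\chi^{(1)}_j\|_{H^1(\mathbb{T}^d)}\le C\tau^2$, which translates into $|\overline{A}^{(1)}-A_0|\le C\tau^2$. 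Summing the telescoping estimates and using $\delta_1=1$ produces exactly \eqref{error.A0-lineA}, and the limit formula \eqref{def.A0} follows by iterating the limits in the prescribed order, each innermost limit $\delta_j\to 0$ eliminating its corresponding ratio $\delta_j/\delta_{j-1}$. The main obstacle is the algebraic identification of the leading-order $y_k$-average: the specific form of $Y_1^{j,o}$ is crucial so that $\nabla_k Y_1^{j,o}=\nabla_k\chi^{(k)}_j+(\nabla_k\chi^{(k)})\cdot\widehat{\nabla}_{k-1}Y_0^j$ produces, after multiplication by $A^{(k)}$ and $y_k$-averaging, precisely $A^{(k-1)}(e_j+\widehat{\nabla}_{k-1}Y_0^j)$; once this identification is in place, the tail bound uses only the uniform estimates from Section \ref{sec_n-scales}.
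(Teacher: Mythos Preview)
Your approach is correct and essentially the same as the paper's: the paper frames the argument as an induction on $n$ proving $|A_\tau-\overline{A}|\le C\sum_i\delta_i/\delta_{i-1}$ with $A_\tau=\overline{A}^{(1)}$ in your notation, but unwinding that induction gives exactly your telescope, with the same algebraic identification of the $y_k$-average (the paper writes $\widehat{\nabla}_n\mathcal{X}=\widehat{\nabla}_{n-1}Y_0+\nabla_nY_1^o+Z$ and shows $\|Z\|_{L^2}\le C\delta_n/\delta_{n-1}$) and the same base comparison $|\overline{A}^{(1)}-A_0|\le C\tau^2$. Be aware that the tail estimates on the $Y_m$ from the proof of Theorem~\ref{thm.Y.nscale} carry factorials (roughly $\delta_{k-1}^{-(m-1)}C^{3m}\Lambda^m m!$) and a truncation remainder $Z_k$ controlled only through $\tau$, so your ``geometric summation'' really requires both the scale-separation inequality $(\delta_k/\delta_{k-1})C^3\Lambda k\le e^{-1}$ and the choice of $\tau$ from Theorem~\ref{thm.AppCorrector}, not just a bound of the form $\|Y_m\|\le C\delta_{k-1}^{-(m-1)}$.
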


\begin{proof}
  First of all, due to Theorem \ref{thm.AppCorrector}, the solution $\mathcal{X}$ to equation \eqref{eq.AppCorrector} exists during taking limits in \eqref{def.A0}, and thereby the limit in \eqref{def.A0} makes sense. 

Recall that in the process of reiterated homogenization, we homogenize first $y_n$ to get an interim matrix $A_{n-1}(y_1, \cdots, y_{n-1})$, and next homogenize $y_{n-1}$ to get $A_{n-2}(y_1, \cdots, y_{n-2})$, until every scale is homogenized successively. The last step is to homogenize $y_1$ from $A_1(y_1)$ to obtain the constant matrix $A_0$. Now we introduce a new matrix based on $A_1$. Set
\begin{align*}
  -\nabla_1\cdot A_1\nabla_1\psi_\tau+\tau^2\psi_\tau=\nabla_1\cdot A_1\quad \text{ in }\T^d,
\end{align*}
and define
\begin{align*}
  A_\tau=\langle A_1+A_1\nabla_1\psi_\tau\rangle.
\end{align*}
We claim that under the assumptions of Theorem \ref{thm.AppCorrector}
\begin{align}\label{claim.Atau}
  |A_\tau-\overline{A}|\leq C(\delta_2+\delta_3\delta_2^{-1}+\cdots+\delta_n\delta_{n-1}^{-1}).
\end{align}
As a byproduct, \eqref{claim.Atau} implies that
\begin{align*}
  A_\tau=\lim_{\delta_2\rightarrow0}\cdots\lim_{\delta_n\rightarrow0} \overline{A}.
\end{align*}
We will prove the claim by induction. 

  In the case $n=2$, according to the expansion for $Y = \X$ in Theorem \ref{thm.2S.Y}, it holds
  \begin{align*}
    \widehat{\nabla}_2 \mathcal{X} = \nabla_1Y_0 + \nabla_2Y_1^o+Z \quad \text{with }\|Z\|_{L^2}\leq C\delta_2,
  \end{align*}
 where $Y_0$ and $Y_1^o$ are given in Theorem \ref{thm.2S.Y} with $F=A$. Especially, $Y_0$ is identically $\psi_\tau$. Therefore, we have
\begin{align*}
  |\overline{A}-\langle A+A\nabla_1Y_0+A\nabla_2 Y_1^o\rangle|\leq \|Z\|_{L^2}=C\delta_2.
\end{align*}
Noticing that
\begin{align*}
  \langle A+A\nabla_1Y_0+A\nabla_2 Y_1^o\rangle=\langle A_1+A_1\nabla_1Y_0\rangle_{y_1}=A_\tau,
\end{align*}
we conclude $$|A_\tau-\overline{A}|\leq C\delta_2.$$

Assume now the claim is true for the case of $n-1$ scales. For the general case of $n$ scales, by the procedure of reiterated homogenization, we have
  \begin{align*}
      A_{n-1}(y_1, \cdots, y_{n-1})=\langle A+A\nabla_{n}\chi\rangle_{y_n},
  \end{align*}
    which coincides with $\widehat{A}$ in \eqref{sum.s12}. Note that $Y_0$ given in Theorem \ref{thm.Y.nscale} is exactly the corresponding multiscale corrector for $\widehat{A}$. By the inductive assumption, this leads to
  \begin{align}\label{es.Atau.n-1}
    |A_\tau-\langle A_{n-1}+A_{n-1}\widehat{\nabla}_{n-1}Y_0\rangle|\leq C(\delta_2+\cdots+\delta_{n-1}\delta_{n-2}^{-1}),
  \end{align}
  and
  \begin{align*}
A_\tau&=\lim_{\delta_2\rightarrow 0}\cdots \lim_{\delta_{n-1}\rightarrow 0}\langle A_{n-1}+A_{n-1}\widehat{\nabla}_{n-1}Y_0\rangle\\
      &=\lim_{\delta_2\rightarrow 0}\cdots \lim_{\delta_{n-1}\rightarrow 0}\langle A+A\widehat{\nabla}_{n-1}Y_0+A\nabla_n Y_1^o\rangle,
  \end{align*}
  under the assumptions of Theorem \ref{thm.AppCorrector}. On the other hand, taking into account the expansion for $Y = \X$ given in Theorem \ref{thm.Y.nscale}
\begin{align*}
  \mathcal{X}=\sum_{j=0}^{k-1}\delta_n^jY_j+\delta_n^kY_k^o+Z_k,  
\end{align*}
we have
\begin{align*}
  \widehat{\nabla}_n\mathcal{X}=\widehat{\nabla}_{n-1}Y_0+\nabla_nY_1^o+\sum_{j=1}^{k-1}\delta_n^j\widehat{\nabla}_{n-1}Y_j+\sum_{j=2}^{k}\delta_n^{j-1}\nabla_nY_j^o+\widehat{\nabla}_nZ_k,
\end{align*}
where $Z_k$ is given by \eqref{eq.Zk}. Therefore,
\begin{align}\label{eq-DnX}
  \widehat{\nabla}_n\mathcal{X}=\widehat{\nabla}_{n-1}Y_0+\nabla_nY_1^o+Z,
\end{align}
and $Z$ satisfies
\begin{align*}
  \|Z\|_{L^2}&\leq \sum_{j=1}^{k-1}\delta_n^j\delta_{n-1}^{-j}C^{3j+1}\Lambda^jj!+\sum_{j=2}^k\delta_n^{j-1}\delta_{n-1}^{-j+1}C^{3j-1}\Lambda^{j-1}(j-1)!\\&\qquad+\frac12 \delta_n^k \delta_{n-1}^{-k}  C^{3k} \Lambda^{k} k! + \frac12 \delta_n^{k-1} \delta_{n-1}^{-k} \tau^{-1} C^{3k} \Lambda^{k} k!\\&\leq \delta_n\delta_{n-1}^{-1}[2C^5\Lambda +\tau^{-1}\delta_{n-1}^{-1}e^{-k+2}C^6\Lambda^2],
\end{align*}
provided that $\delta_n\delta_{n-1}^{-1}C^3\Lambda k\leq 1/e$. Here the estimates for $Y_j$ and $\widehat{\nabla}_n Z_k$ have been used. Further, if $\tau^{-1}\delta_{n-1}^{-1}e^{-k+2}C\Lambda\leq 1$, it holds
\begin{align}\label{eq-Z-L2}
  \|Z\|_{L^2}\leq 3C^5\Lambda\delta_n\delta_{n-1}^{-1}.
\end{align}
Hence, taking \eqref{eq-DnX} and \eqref{eq-Z-L2} into \eqref{hatA}, we have
  \begin{align*}
    |\overline{A}-\langle A+A\widehat{\nabla}_{n-1}Y_0+A\nabla_n Y_1^o\rangle|=C\delta_n\delta_{n-1}^{-1},
  \end{align*}
  which, together with \eqref{es.Atau.n-1}, implies \eqref{claim.Atau}. Note that the conditions
  \begin{align*}
      \delta_n\delta_{n-1}^{-1}C^3\Lambda k\leq 1/e\quad\text{and}\quad
      \tau^{-1}\delta_{n-1}^{-1}e^{-k+2}C\Lambda\leq 1
  \end{align*}
are fulfilled under the assumptions of Theorem \ref{thm.AppCorrector}.

To end the proof of the proposition, it remains to show
\begin{align*}
  |A_0-A_\tau|\leq C\tau^2. 
\end{align*}
We introduce the exact corrector $\psi_0$ for $A_1$ by
  \begin{align*}
    -\nabla_1 \cdot A_1\nabla_1\psi_0=\nabla_1\cdot A_1\quad\text{ in }\T^d
  \end{align*}
  and get
  \begin{align*}
    A_0=\langle A_1+A_1\nabla_1\psi_0\rangle_{y_1}.
  \end{align*}
  Comparing the equations of $\psi_\tau$ and $\psi_0$, we deduce from the energy estimate that
  \begin{align*}
    |A_0-A_\tau|\leq \|\nabla_1\psi_\tau-\nabla_1\psi_0\|_{L^2}\leq C\tau^2,
  \end{align*}
  where $C$ depends only on the characters of $A$. This combined with \eqref{claim.Atau} completes the proof.
\end{proof}

\begin{remark}\label{rmk.barA-A0}
    Recall that $\delta_i = \e_i/\e_1$ and $\tau^2 \simeq \max_j \{ e^{-c\e_{j-1}/\e_j} \} $ in our application to $A_\e(x) = A(x/\e_1,\cdots, x/\e_n)$. Thus \eqref{error.A0-lineA} yields
    \begin{equation*}
        |A_0 - \overline{A}| \le C( \max_j \{ e^{-c\e_{j-1}/\e_j} \} + \max_j \{ \e_j/\e_{j-1} \}).
    \end{equation*}
    This perfectly explains the difference between the errors in \eqref{est.MainRate} and \eqref{rate.LipA}.
\end{remark}

\subsection{Multiscale ansatz}
\label{sec_multiscale-flux}

We consider the following general equation
\begin{equation}
  \label{flux-correctors_eq_energy1}
  -\widehat{\Delta}_{n} U+\tau^2 U=G \qquad \text{in } \T^{d\times n},
\end{equation}
where $G=G(y_1,\cdots,y_n)$ is $1$-periodic in each $y_i$ with $\langle G\rangle=0$, and satisfies \eqref{cond.F2}. By a regularization argument, this degenerate equation admits a unique periodic solution $U$ with $\langle U\rangle=0$. 

To use the structure of equation \eqref{flux-correctors_eq_energy1}, we apply the ansatz
\begin{align*}
  U(y_1, \cdots, y_n)=\sum_{j=0}^\infty \delta_n^j U_j(y_1, \cdots, y_n).
\end{align*}
Then taking the ansatz of $U$ into equation \eqref{flux-correctors_eq_energy1}, using 
$$\widehat{\Delta}_n = \widehat{\Delta}_{n-1} + \delta_n^{-1} 2 \widehat{\nabla}_{n-1} \cdot \nabla_n + \delta_n^{-2} \Delta_n$$ 
and comparing the order of $\delta_n$, we have the following recursive system for $U_j$,
\begin{subequations}\label{flux-correctors_eq_U_systems}
    \begin{empheq}[left=\empheqlbrace]{align}
    &-\Delta_{n}U_0=0, \label{flux-U0}\\
    &-\Delta_{n}U_1-2\widehat{\nabla}_{n-1}\cdot\nabla_{n}U_0=0, \label{flux-U1}\\
    &-\Delta_{n}U_{2}-2\widehat{\nabla}_{n-1}\cdot\nabla_{n}U_{1}-\widehat{\Delta}_{n-1}U_0+\tau^2 U_0=G, \label{flux-U2}\\
    &-\Delta_{n}U_{k+2}-2\widehat{\nabla}_{n-1}\cdot\nabla_{n}U_{k+1}-\widehat{\Delta}_{n-1}U_k+\tau^2U_k=0,\quad k\geq 1. \label{flux-Uk}
    \end{empheq}
\end{subequations}

Now we solve \eqref{flux-correctors_eq_U_systems} one by one. By \eqref{flux-U0}, $U_0$ is independent of $y_n$ and therefore $U_0 = U_0(y_1,\cdots, y_{n-1})$. Taking average in $y_n$ to \eqref{flux-U2}, we get
\begin{align*}
    -\widehat{\Delta}_{n-1}U_0+\tau^2 U_0=\langle G\rangle_{y_n}.
\end{align*}
This equation has the same structure as \eqref{flux-correctors_eq_energy1} with one less scale and suggests that we need to solve it inductively.

Furthermore, since $U_0$ is independent of $y_n$, \eqref{flux-U1} becomes
\begin{align*}
  -\Delta_nU_1=0,
\end{align*}
which means $U_1$ is also independent of $y_n$ and $U_1=U_1(y_1, \cdots, y_{n-1})$. By taking the average in $y_n$ to the equation \eqref{flux-Uk} with $k=1$, we have
\begin{align*}
    -\widehat{\Delta}_{n-1}U_1+\tau^2U_1=0,
\end{align*}
which implies that $U_1=0$ by a simple integration by parts (or energy estimate). 

Similarly, for general $k\geq 2$, by taking average in $y_n$ to the last equation in \eqref{flux-correctors_eq_U_systems}, we get
\begin{equation*}
    -\widehat{\Delta}_{n-1}U_{k}^r+\tau^2U_{k}^r=0,
\end{equation*}
where $U_{k}^r(y_1, \cdots, y_{n-1})=\langle U_{k}\rangle_{y_n}$. The last equation only has a trivial solution, which means that $U_k$ has no regular component and $\langle U_{k}\rangle_{y_n}=0$ for each $k\geq 2$. Then we can solve for $U_k$ with $k\ge 2$ directly from \eqref{flux-U2} and \eqref{flux-Uk}. In fact, from \eqref{flux-U2}, we deduce
\begin{align}\label{flux-correctors_eq_U2}
  -\Delta_nU_2=\widehat{\Delta}_{n-1} U_0-\tau^2U_0+G = G - \Ag{G}_{y_n}.
\end{align}
Obviously, the right-hand side has mean value $0$ with respect to $y_n$. Therefore,  \eqref{flux-correctors_eq_U2} admits a unique solution in $y_n$ with mean value $0$. 
Moreover, for $k\geq 1$, if $U_{k+1}$ and $U_{k}$ have been solved, one obtains from \eqref{flux-Uk}
\begin{equation*}
  -\Delta_{n}U_{k+2}=2\widehat{\nabla}_{n-1}\cdot\nabla_nU_{k+1}+\widehat{\Delta}_{n-1}U_k-\tau^2U_k.
\end{equation*}
Note that this equation admits a unique solution in $y_n$ with mean value $0$, since $\langle U_k\rangle_{y_n}=0$. In this way, $U_k$ can be derived recursively.

We summarize the recursive equations to find all $U_k$ with $\Ag{U_k}_{y_n} = 0$:
\begin{align}
    -\widehat{\Delta}_{n-1}U_0+\tau^2 U_0 & =\langle G\rangle_{y_n}, \label{eq.U0}\\
    U_1 & = 0, \label{eq.U1}\\
    -\Delta_nU_2& =G - \Ag{G}_{y_n} , \label{eq.U2}\\
    -\Delta_{n}U_{k+2}& =2\widehat{\nabla}_{n-1}\cdot\nabla_nU_{k+1}+\widehat{\Delta}_{n-1}U_k-\tau^2U_k, \quad \txt{for } k \ge 1. \label{eq.Uk}
\end{align}
Note that only the equation \eqref{eq.U0} for $U_0$ needs to be solved by an inductive argument.

\subsection{Energy estimates}
\label{sec_energy-estimates}

The solution $U$ to the equation \eqref{flux-correctors_eq_energy1} satisfies the following energy estimate.

\begin{proposition}\label{flux-correctors_prop_energy1}
  Let $G$  satisfy \eqref{cond.F2} and $\langle G\rangle=0$. Then there exists a constant $C_d$, depending only on $d$, such that for $\ell\geq 0$, the solution $U$ to equation \eqref{flux-correctors_eq_energy1} satisfies
  \begin{align*}
    \|\bfD^\ell_n \widehat{\nabla}_{n}^2U\|_{L^2}+\tau\|\bfD^\ell_n \widehat{\nabla}_{n}U\|_{L^2}+\tau^2\|\bfD^\ell_n U\|_{L^2}\leq C_d\Lambda_1^\ell(\ell+q)!.
  \end{align*}
\end{proposition}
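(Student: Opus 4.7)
The key observation that makes this proposition substantially easier than Proposition \ref{prop.Ytau.F} is that the leading operator $-\widehat{\Delta}_n$ has constant coefficients, so $\bfD_n^\ell$ commutes with it and with the zeroth-order $\tau^2$ term. Consequently, applying $\bfD_n^\ell$ to \eqref{flux-correctors_eq_energy1} yields a tensor-valued equation
\begin{equation*}
    -\widehat{\Delta}_n (\bfD_n^\ell U) + \tau^2 \bfD_n^\ell U = \bfD_n^\ell G,
\end{equation*}
with no commutator/lower-order terms. Hence no induction on $\ell$ is required: the estimate for each $\ell$ follows by the same energy identities applied to $V := \bfD_n^\ell U$, and the factorial growth will be inherited directly from the assumption $\|\bfD_n^\ell G\|_{L^2}\le \Lambda_1^\ell (\ell+q)!$.

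First, I would pair the above equation with $V$ and integrate by parts on $\T^{d\times n}$ (which is legitimate since $\langle G\rangle=0$ implies $\langle V\rangle=0$ for $\ell=0$, and the higher-order $V$ are automatically mean-zero components). This yields
\begin{equation*}
    \|\widehat{\nabla}_n V\|_{L^2}^2 + \tau^2 \|V\|_{L^2}^2 = \int \bfD_n^\ell G \cdot V \le \|\bfD_n^\ell G\|_{L^2}\|V\|_{L^2},
\end{equation*}
from which Young's inequality delivers $\tau^2\|V\|_{L^2}\le \|\bfD_n^\ell G\|_{L^2}$ and then $\tau\|\widehat{\nabla}_n V\|_{L^2}\le \|\bfD_n^\ell G\|_{L^2}$. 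This handles the second and third terms on the left-hand side of the claimed inequality.

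For the $\widehat{\nabla}_n^2$ term, I would pair the equation with $-\widehat{\Delta}_n V$ and integrate by parts to get
\begin{equation*}
    \|\widehat{\Delta}_n V\|_{L^2}^2 + \tau^2 \|\widehat{\nabla}_n V\|_{L^2}^2 = -\int \bfD_n^\ell G \cdot \widehat{\Delta}_n V \le \tfrac12 \|\bfD_n^\ell G\|_{L^2}^2 + \tfrac12 \|\widehat{\Delta}_n V\|_{L^2}^2,
\end{equation*}
hence $\|\widehat{\Delta}_n V\|_{L^2}\le \|\bfD_n^\ell G\|_{L^2}$. To convert this to control on the full Hessian $\widehat{\nabla}_n^2 V$, I would use the elementary Fourier/IBP identity on the torus
\begin{equation*}
    \int D_{i,a}D_{j,b}V \cdot D_{i,a}D_{j,b}V = \int D_{i,a}^2 V \cdot D_{j,b}^2 V,
\end{equation*}
where $D_{i,a}=\delta_i^{-1}\partial_{y_i^a}$, so that summation over $(i,a,j,b)$ gives $\|\widehat{\nabla}_n^2 V\|_{L^2}^2 \le C_d \|\widehat{\Delta}_n V\|_{L^2}^2$ (the dimensional constant absorbing the passage between the Frobenius norm and the paper's $\ell^1$ tensor norm from \eqref{eq.absTensor}). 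Combining the three pieces and invoking the hypothesis on $G$ yields the claim. The only mildly delicate point is keeping track of the constant $C_d$ through the tensor-norm conventions and verifying that the integration by parts is valid on the torus for tensor-valued $V$; this is routine since $\widehat{\nabla}_n$ acts componentwise and the periodic boundary terms vanish.
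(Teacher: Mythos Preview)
Your proof is correct and follows essentially the same approach as the paper: the paper also tests the equation against $U$ and $-\widehat{\Delta}_n U$ (and mentions the equivalent alternative of taking the $L^2$ norm of both sides and expanding via integration by parts), then applies $\bfD_n^\ell$ and uses that it commutes with the constant-coefficient operator to reduce the general case to $\ell=0$. One minor remark: your parenthetical about $\langle V\rangle=0$ is not actually needed for the integration by parts on the torus (periodicity alone suffices), and in your Hessian--Laplacian identity the indices should run only over $a,b\in\{1,\dots,d\}$ (components of $\widehat{\nabla}_n$) rather than pairs $(i,a)$, but the argument is otherwise on point.
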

\begin{proof}
  The estimate for $\ell=0$ follows directly by applying the test functions $-\widehat{\Delta}_{n} U$ and $U$ to \eqref{flux-correctors_eq_energy1}. Alternatively, one can also take $L^2$ norm of the both sides of \eqref{flux-correctors_eq_energy1} and apply the integration by parts to get
  \begin{equation*}
      \| \widehat{\nabla}_{n}^2U\|_{L^2} +\tau \| \widehat{\nabla}_{n}U\|_{L^2} +\tau^2\| U\|_{L^2}  \le C_d \| G \|_{L^2}.
  \end{equation*}
  For general $\ell>0$, applying $\bfD_n^\ell$ to \eqref{flux-correctors_eq_energy1} and the last inequality to the corresponding tensor-valued equation, we have
    \begin{equation*}
        \| \bfD_n^\ell \widehat{\nabla}_{n}^2U\|_{L^2} +\tau \| \bfD_n^\ell \widehat{\nabla}_{n}U\|_{L^2} +\tau^2\| \bfD^\ell U\|_{L^2}  \le C_d \| \bfD_n^\ell G \|_{L^2} \le C_d \Lambda_1^\ell (\ell+ q)!.
    \end{equation*}
    This is the desired estimate.
\end{proof}

By applying this proposition to equation \eqref{flux-correctors_eq_approximateflux_lift} and using \eqref{est.X.energy} in Theorem \ref{thm.AppCorrector}, we have for all $\ell\geq 0$
\begin{align}
  \label{flux-correctors_es_energy}
  \|\bfD^\ell_n \widehat{\nabla}_{n}^2 \U\|_{L^2}+\tau\|\bfD^\ell_n \widehat{\nabla}_{n} \U\|_{L^2}+\tau^2\|\bfD^\ell_n \U\|_{L^2}\leq C_d\Big(C_0+\frac{C_0C_*^2}{C_*-1}\Big)\Lambda^\ell\ell!,
\end{align}
with $\Lambda = C_* \Lambda_0$.
However, this estimate is insufficient to achieve the desired convergence rate. To proceed, we need an estimate of $\| \bfD_n^\ell \U \|_{L^2}$ uniform in $\tau$. 
The following standard $H^2$ estimate plays an essential role in establishing this uniform bound. 
\begin{proposition}\label{flux-correctors_prop_energy2}
Let  $G$ satisfy \eqref{cond.F2} and $\langle G\rangle_{y_n}=0$. Let $U$ be the  periodic solution of
  \begin{align*}
    -\Delta_n U=G \quad \text{ in }\T^d
  \end{align*}
with $\langle U\rangle_{y_n}=0$. Then there exists $C_d$, depending only on $d$, such that for all $\ell\geq 0$, 
  \begin{align}\label{flux-correctors_es_U_energy}
    \|\bfD^\ell_n U\|_{L^2_{\mathbf{x}}H^2_{\mathbf{y}}}\leq C_d\Lambda_1^\ell(\ell+q)!.
  \end{align}
  Furthermore, if instead of \eqref{cond.F2},  $G$ satisfies
  \begin{align*}
    \|\bfD^\ell_n G\|_{L^\infty_{\mathbf{x}}L^2_{\mathbf{y}}}\leq \Lambda_1^\ell(\ell+q)!,
  \end{align*}
  then the norm of $U$ in \eqref{flux-correctors_es_U_energy} can also be replaced by $\|\bfD^\ell_n U\|_{L^\infty_{\mathbf{x}}H^2_{\mathbf{y}}}$. 
\end{proposition}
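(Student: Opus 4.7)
The key observation is that the operator $-\Delta_n = -\nabla_n \cdot \nabla_n$ has constant coefficients and acts only in the variable $y_n$. Consequently, in sharp contrast with the variable-coefficient operators treated in Propositions \ref{prop.Ytau.F} and \ref{prop.energy2}, there is no Leibniz rule term to handle and the estimate decouples cleanly from the parametric variables $(y_1,\ldots,y_{n-1})$.

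The plan is to first establish the base case $\ell = 0$ by a pointwise-in-$\x$ Fourier argument. For each fixed $\x = (y_1,\ldots,y_{n-1}) \in \T^{d(n-1)}$, the function $U(\x,\cdot)$ is the unique mean-zero periodic solution on $\T^d$ of the Poisson equation $-\Delta_n U(\x,\cdot) = G(\x,\cdot)$, which is solvable precisely because $\langle G(\x,\cdot)\rangle_{y_n}=0$. Expanding in a Fourier series in $y_n$ and using Parseval's identity, one obtains the standard full-regularity estimate
\begin{equation*}
    \|U(\x,\cdot)\|_{H^2(\T^d)} \le C_d \|G(\x,\cdot)\|_{L^2(\T^d)}.
\end{equation*}
Squaring and integrating over $\x \in \T^{d(n-1)}$ yields $\|U\|_{L^2_\x H^2_\y} \le C_d \|G\|_{L^2_{\xy}} \le C_d q!$, which handles $\ell = 0$ under the $L^2$ hypothesis; under the $L^\infty_\x L^2_\y$ hypothesis, one takes essential supremum over $\x$ instead to obtain the $L^\infty_\x H^2_\y$ bound.

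For the inductive step $\ell \ge 1$, I would simply commute $\bfD_n^\ell$ through the equation. Since $\Delta_n$ has constant coefficients, for any multi-index in $\bfD_n = (\nabla_1,\ldots,\nabla_n)$ the operators commute, giving the tensor-valued Poisson equation
\begin{equation*}
    -\Delta_n(\bfD_n^\ell U) = \bfD_n^\ell G \quad \text{in } \T^d.
\end{equation*}
The compatibility condition $\langle \bfD_n^\ell G\rangle_{y_n}=0$ is preserved because the derivatives in $y_1,\ldots,y_{n-1}$ commute with the $y_n$ average, and the $y_n$-derivatives integrate to zero by periodicity. Applying the $\ell=0$ estimate componentwise to this tensor-valued equation and using the assumed bound on $\|\bfD_n^\ell G\|_{L^2}$ (respectively $L^\infty_\x L^2_\y$), we obtain
\begin{equation*}
    \|\bfD_n^\ell U\|_{L^2_\x H^2_\y} \le C_d \|\bfD_n^\ell G\|_{L^2} \le C_d \Lambda_1^\ell (\ell+q)!,
\end{equation*}
with the analogous statement for the $L^\infty_\x H^2_\y$ variant. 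There is no real obstacle here — the only technical point to verify is that the mixed-norm spaces interact correctly with the componentwise tensor norm defined in Section \ref{sec.notation}, which follows immediately from linearity of the equation and the triangle inequality over tensor indices.
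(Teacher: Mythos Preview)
Your proposal is correct and complete. The paper in fact states this proposition without proof, treating it as a standard $H^2$ estimate for the periodic Poisson equation; your argument (Fourier/Parseval in $y_n$ for $\ell=0$, then commuting $\bfD_n^\ell$ through the constant-coefficient operator for $\ell\ge 1$) is exactly the intended justification, and the only minor point you might add explicitly is that $\langle \bfD_n^\ell U\rangle_{y_n}=0$ is inherited from $\langle U\rangle_{y_n}=0$ for the same reasons you cite for $G$.
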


\subsection{Uniform estimates}
\label{sec_structure-U}

To obtain refined control for $U$, we establish the following estimates of $U_k$ based on the recursive equations in Section \ref{sec_multiscale-flux}. 

\begin{proposition}\label{m_prop_Uk}
  Suppose that $\langle G\rangle=0$ and $G$ satisfies \eqref{cond.F2}. Then there exists a constant $C$, depending only on $d$, such that for $\ell\geq 0$
  \begin{align*}
    \|\bfD^\ell_n \widehat{\nabla}_{n-1}^2U_0\|_{L^2}+\tau\|\bfD^\ell_n \widehat{\nabla}_{n-1}U_0\|_{L^2}+\tau^2\|\bfD^\ell_n U_0\|_{L^2}\leq C\Lambda_1^\ell (\ell+q)!,
  \end{align*}
  and for $k\geq 2$
  \begin{align*}
    \|\bfD^\ell_n U_k\|_{L^2_\x H^2_\y}\leq 
      \delta_{n-1}^{-k+2}C^{k-1}\Lambda_1^{\ell+k-2}(\ell+k-2+q)!.
  \end{align*}
\end{proposition}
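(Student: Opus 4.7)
I will mirror the inductive scheme used for the correctors, now applied to the flux-corrector system \eqref{eq.U0}--\eqref{eq.Uk}. The $U_0$ bound will be a direct application of Proposition \ref{flux-correctors_prop_energy1} with one fewer scale. The estimate for $U_2$ will serve as the base case of the recursion via Proposition \ref{flux-correctors_prop_energy2}. Then I will run an induction in $k\ge 3$ on the equation $-\Delta_n U_k = 2\widehat{\nabla}_{n-1}\cdot \nabla_n U_{k-1} + \widehat{\Delta}_{n-1} U_{k-2} - \tau^2 U_{k-2}$, again using Proposition \ref{flux-correctors_prop_energy2} together with the elementary pointwise bound $|\bfD_n^{\ell}\widehat{\nabla}_{n-1} f| \le \delta_{n-1}^{-1}|\bfD_n^{\ell+1} f|$ from \eqref{eq.hatD-D}. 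The role of the $L^2_{\x} H^2_{\y}$ norm in the inductive hypothesis is to provide the two extra $y_n$-derivatives that feed the next iterate through $\widehat{\Delta}_{n-1} U_{k-2}$.

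\textbf{Step 1 (estimate of $U_0$).} Equation \eqref{eq.U0} has exactly the form of \eqref{flux-correctors_eq_energy1} in the $n-1$ variables $(y_1,\ldots,y_{n-1})$, with source $\Ag{G}_{y_n}$. By Jensen's inequality, $\|\bfD_{n-1}^{\ell}\Ag{G}_{y_n}\|_{L^2_{\x}} \le \|\bfD_n^{\ell} G\|_{L^2} \le \Lambda_1^{\ell}(\ell+q)!$. Proposition \ref{flux-correctors_prop_energy1} with $n$ replaced by $n-1$, combined with the fact that $U_0$ is independent of $y_n$ (so $|\bfD_n^\ell U_0| = |\bfD_{n-1}^\ell U_0|$), yields the stated $U_0$ bound.

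\textbf{Step 2 (base case $k=2$).} Equation \eqref{eq.U2} reads $-\Delta_n U_2 = G - \Ag{G}_{y_n}$, whose right-hand side has $y_n$-mean zero and satisfies $\|\bfD_n^{\ell}(G - \Ag{G}_{y_n})\|_{L^2} \le 2\Lambda_1^\ell (\ell+q)!$. Proposition \ref{flux-correctors_prop_energy2} then gives $\|\bfD_n^{\ell} U_2\|_{L^2_{\x} H^2_{\y}} \le 2C_d \Lambda_1^\ell (\ell+q)!$, matching the target formula at $k=2$ provided we fix $C \ge 2C_d$.

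\textbf{Step 3 (induction for $k\ge 3$).} Assume the stated bound holds for $U_{k-2}$ and $U_{k-1}$ at every order $\ell'\ge 0$ (with the convention $U_1=0$ when $k=3$). The right-hand side of \eqref{eq.Uk} has $y_n$-mean zero since $\Ag{U_{k-2}}_{y_n}=0$ and $\nabla_n U_{k-1}$ is an exact $y_n$-derivative, so Proposition \ref{flux-correctors_prop_energy2} applies. The first source term is controlled by setting $\ell'=\ell+1$ in the hypothesis for $U_{k-1}$: $\|\bfD_n^\ell(2\widehat{\nabla}_{n-1}\cdot\nabla_n U_{k-1})\|_{L^2}\le 2\delta_{n-1}^{-1}\|\bfD_n^{\ell+1}U_{k-1}\|_{L^2_\x H^1_\y}\le 2\delta_{n-1}^{-k+2}C^{k-2}\Lambda_1^{\ell+k-2}(\ell+k-2+q)!$. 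The second, via $\ell'=\ell+2$ in the hypothesis for $U_{k-2}$, obeys $\|\bfD_n^\ell \widehat{\Delta}_{n-1}U_{k-2}\|_{L^2}\le \delta_{n-1}^{-2}\|\bfD_n^{\ell+2}U_{k-2}\|_{L^2}\le \delta_{n-1}^{-k+2}C^{k-3}\Lambda_1^{\ell+k-2}(\ell+k-2+q)!$. The third is $\tau^2\|\bfD_n^\ell U_{k-2}\|_{L^2}\le \tau^2\delta_{n-1}^{-k+4}C^{k-3}\Lambda_1^{\ell+k-4}(\ell+k-4+q)!$, which is absorbed into $\delta_{n-1}^{-k+2}C^{k-3}\Lambda_1^{\ell+k-2}(\ell+k-2+q)!$ using $\tau \le 1$ (or any fixed bound on $\tau$) together with $(\ell+k-3+q)(\ell+k-2+q)\ge 2$ for $k\ge 4$; for $k=3$ the term vanishes since $U_1=0$. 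Summing and multiplying by the constant $C_d$ from Proposition \ref{flux-correctors_prop_energy2}, then choosing $C$ large depending only on $d$ to absorb $C_d$ and the numerical constant $4$, closes the induction with the factor $C^{k-1}$.

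\textbf{Main obstacle.} The bookkeeping is what drives the proof: each application of $\widehat{\nabla}_{n-1}$ costs a $\delta_{n-1}^{-1}$, and each additional derivative on $U_{k-1}$ or $U_{k-2}$ pays one factor of $\Lambda_1$ and one factorial jump. The recursion \eqref{eq.Uk} is calibrated so these losses accumulate to exactly $\delta_{n-1}^{-k+2}$ and $\Lambda_1^{\ell+k-2}(\ell+k-2+q)!$ at step $k$. Crucially, carrying $L^2_{\x}H^2_{\y}$ (rather than plain $L^2$) as the inductive quantity is essential, because $\widehat{\Delta}_{n-1}U_{k-2}$ demands two extra derivatives that may land in the $y_n$ direction and must therefore be controlled by the stronger mixed norm; this is precisely what Proposition \ref{flux-correctors_prop_energy2} supplies, and what distinguishes the flux-corrector argument from the more delicate scale-separation machinery needed for the correctors.
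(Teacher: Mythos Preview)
Your proof is correct and follows essentially the same route as the paper's: apply Proposition~\ref{flux-correctors_prop_energy1} in $n-1$ scales to \eqref{eq.U0} for the $U_0$ bound, use Proposition~\ref{flux-correctors_prop_energy2} on \eqref{eq.U2} for the base case $k=2$, and then induct via \eqref{eq.Uk} with the same bookkeeping of $\delta_{n-1}^{-1}$, $\Lambda_1$, and factorial factors. One minor inaccuracy in your commentary (not the proof): the mixed norm $L^2_\x H^2_\y$ is needed not because $\widehat{\Delta}_{n-1}$ puts derivatives in $y_n$ (it does not), but because the cross term $2\widehat{\nabla}_{n-1}\cdot\nabla_n U_{k-1}$ contains an explicit $\nabla_n$ that must be absorbed by the $H^1_\y$ part of the inductive hypothesis rather than by an extra $\bfD_n$, which would throw off the $\Lambda_1$ and factorial counts.
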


\begin{proof}  
  Firstly, by applying Proposition \ref{flux-correctors_prop_energy1} in $n-1$ scales to the equation \eqref{eq.U0}, we obtain directly for $\ell\geq 0$
  \begin{align*}
    \|\bfD^\ell_n \widehat{\nabla}_{n-1}^2U_0\|_{L^2}+\tau\|\bfD^\ell_n \widehat{\nabla}_{n-1}U_0\|_{L^2}+\tau^2\|\bfD^\ell_n U_0\|_{L^2}\leq C_d\Lambda_1^\ell (\ell+q)!.
  \end{align*}

  Setting $G_2 = G - \Ag{G}_{y_n}$, we have
  \begin{align*}
    \|\bfD^\ell_n G_2\|_{L^2}\leq 2\Lambda_1^\ell(\ell+q)!.
  \end{align*}
By Proposition \ref{flux-correctors_prop_energy2} applied to equation \eqref{eq.U2}, this yields
\begin{align*}
  \|\bfD^\ell_n U_2\|_{L^2_\x H^2_\y}\leq 2C_d \Lambda_1^\ell(\ell+q)!\leq C\Lambda_1^\ell(\ell+q)!,
\end{align*}
where we choose $C\geq 2C_d$. 
  
Next we establish by induction that for all $k \ge 1, \ell\geq 0$,
  \begin{align*}
    \|\bfD^\ell_n U_{k+2}\|_{L^2_\x H^2_\y}\leq 
      \delta_{n-1}^{-k}C^{(k+2)-1}\Lambda_1^{\ell+(k+2)-2}(\ell+(k+2)-2+q)!.
  \end{align*}
Suppose now the estimates of $U_j$ have been known for $1\leq j\leq k+1$. Let $G_{k+2} = 2\widehat{\nabla}_{n-1}\cdot\nabla_nU_{k+1}+\widehat{\Delta}_{n-1}U_k-\tau^2U_k$. Then by the inductive assumption,
\begin{equation}\label{est.Gk+2}
\begin{aligned}
    \| \bfD_n^\ell G_{k+2} \|_{L^2} & \le 2\delta_{n-1}^{-1} \| \bfD_n^{\ell+1} U_{k+1} \|_{L_\x^2 H_\y^1} + \delta_{n-1}^{-2} \| \bfD_n^{\ell+2} U_{k} \|_{L^2} + \tau^2 \| \bfD_n^\ell U  \|_{L^2} \\
    & \le 2\delta_{n-1}^{-1} \delta_{n-1}^{k-1} C^{(k+1)-1} \Lambda_1^{\ell+1+(k+1)-2} (\ell+1 + (k+1) -2+q)! \\
    & \quad + \delta_{n-1}^{-2} \delta_{n-1}^{k-2} C^{k-1} \Lambda_1^{\ell+2+k-2} (\ell+2 + k -2+q)! \\
    & \quad + \tau^2 \delta_{n-1}^{k-2} C^{k-1} \Lambda_1^{\ell+k-2} (\ell + k -2+q)! \\
    & \le (3+\tau^2)\delta_{n-1}^{-k} C^{k} \Lambda_1^{\ell+(k+2)-2} (\ell+(k+2)-2+q)!.
    \end{aligned}
\end{equation}
By applying Proposition \ref{flux-correctors_prop_energy2} to the equation \eqref{eq.Uk} and using \eqref{est.Gk+2}, we have
    \begin{align*}
    \|\bfD^\ell_n U_{k+2}\|_{L^2_\x H^2_\y} & \leq (3+\tau^2)C_d\delta_{n-1}^{-k}C^{k}\Lambda_1^{\ell+(k+2)-2}(\ell+(k+2)-2+q)!\\&\leq \delta_{n-1}^{-k}C^{(k+2)-1}\Lambda_1^{\ell+(k+2)-2}(\ell+(k+2)-2+q)!,
    \end{align*}
    where we have assumed that $C\geq (3+\tau^2)C_d$. This completes the proof.
\end{proof}

Now we combine the structure estimates in Proposition \ref{m_prop_Uk} and the energy estimates to get a uniform control for the solutions of \eqref{flux-correctors_eq_energy1} under a scale-separation condition. Note that the energy estimate \eqref{flux-correctors_es_energy} only provides effective control for $\|\bfD^\ell_n \widehat{\nabla}_{n}^2 U\|_{L^2}$ uniform in $\tau \in (0,1)$, which does not imply the uniform boundedness of $U$.
\begin{theorem}\label{flux-correctors_thm_U_truncation}
Under the same assumption of Proposition \ref{m_prop_Uk}, there exist $C, \widetilde{C}$ and $C_n$, depending only on $d $ and $ n$, such that if for some $\ell \ge 0$ and any $2\le j\le n$, there exists $k_j \ge 2$ such that
\begin{align}\label{sep-con2}
      \begin{cases}
        (\delta_j/\delta_{j-1})C\Lambda_1 (\ell+k_j+q)\leq e^{-1},\\
        \tau^2\geq \widetilde{C} e^{-k_j},
      \end{cases}
    \end{align}
    then \eqref{flux-correctors_eq_energy1} admits a unique solution $U$ satisfying   
    \begin{align} \label{est.DlU.unif}
      \|\bfD^\ell_n U\|_{L^2}\leq C_n\Lambda_1^\ell (\ell+q)!.
    \end{align}
  \end{theorem}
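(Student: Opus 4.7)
The plan is to prove Theorem \ref{flux-correctors_thm_U_truncation} by induction on the number of scales $n$, in close analogy with the argument used for Theorem \ref{thm.Y.nscale}, but exploiting that the leading operator $-\widehat{\Delta}_n$ allows an $H^2$-type estimate in place of the $H^1$-type energy estimate. The base case $n=1$ reduces to the nondegenerate equation $-\Delta_n U+\tau^2 U=G$ on $\T^d$ with $\langle G\rangle=0$, for which Proposition \ref{flux-correctors_prop_energy2} (applied to $G-\langle G\rangle=G$ and combined with Poincar\'e's inequality) directly yields $\|\bfD_n^\ell U\|_{L^2}\le C_1 \Lambda_1^\ell(\ell+q)!$, with no scale-separation required.

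For the inductive step, assuming the statement for $n-1$ scales, I would form the truncated partial sum
\begin{equation*}
N_k(y_1,\cdots,y_n):=\sum_{j=0}^{k}\delta_n^j U_j(y_1,\cdots,y_n),
\end{equation*}
where the $U_j$ are produced by the recursive system \eqref{eq.U0}--\eqref{eq.Uk} from Section \ref{sec_multiscale-flux}. A direct substitution using $\widehat{\Delta}_n=\widehat{\Delta}_{n-1}+2\delta_n^{-1}\widehat{\nabla}_{n-1}\cdot\nabla_n+\delta_n^{-2}\Delta_n$ shows
\begin{equation*}
-\widehat{\Delta}_n N_k+\tau^2 N_k=G+E_k,
\end{equation*}
where $E_k$ collects only the boundary terms of the telescoping, namely
\begin{equation*}
E_k=-\,2\delta_n^{k-1}\,\widehat{\nabla}_{n-1}\!\cdot\nabla_n U_k\;-\;\delta_n^{k-1}\widehat{\Delta}_{n-1}U_{k-1}\;-\;\delta_n^k\widehat{\Delta}_{n-1}U_k\;+\;\tau^2(\delta_n^{k-1}U_{k-1}+\delta_n^k U_k).
\end{equation*}
Using the uniform structure bounds of Proposition \ref{m_prop_Uk} on $U_k,U_{k-1}$ and the trivial observation $|\widehat{\nabla}_{n-1}f|\le \delta_{n-1}^{-1}|\bfD_{n-1}f|$, one obtains
\begin{equation*}
\|\bfD_n^\ell E_k\|_{L^2}\lesssim (\delta_n/\delta_{n-1})^{k-1}\,C^{k}\,\Lambda_1^{\ell+k}(\ell+k+q)!,
\end{equation*}
which, under the first condition in \eqref{sep-con2} with $j=n$, is bounded by $e^{-k_n+1}\,C'\Lambda_1^{\ell+1}(\ell+q)!$.

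The uniform $L^2$ estimate of $U_0$, which is precisely the quantity not controlled by the energy estimate \eqref{flux-correctors_es_energy} as $\tau\to 0$, is supplied by the inductive hypothesis: $U_0$ solves \eqref{eq.U0} in $n-1$ scales with source $\langle G\rangle_{y_n}$ that inherits the bound \eqref{cond.F2}, so the $(n-1)$-scale version of \eqref{est.DlU.unif} (consuming the scale-separation parameters $k_2,\ldots,k_{n-1}$) yields $\|\bfD_n^\ell U_0\|_{L^2}\le C_{n-1}\Lambda_1^\ell(\ell+q)!$. Combined with Proposition \ref{m_prop_Uk} for $U_j$, $j\ge 2$, a geometric summation under the first condition in \eqref{sep-con2} gives $\|\bfD_n^\ell N_k\|_{L^2}\le C'\Lambda_1^\ell(\ell+q)!$.

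Finally, writing $Z_k:=U-N_k$, we have $-\widehat{\Delta}_n Z_k+\tau^2 Z_k=-E_k$, and Proposition \ref{flux-correctors_prop_energy1} (in the form $\tau^2\|\bfD_n^\ell Z_k\|_{L^2}\le C_d\|\bfD_n^\ell E_k\|_{L^2}$) together with the second condition $\tau^2\ge \widetilde{C}e^{-k_n}$ in \eqref{sep-con2} absorbs the $e^{-k_n+1}$ factor and yields $\|\bfD_n^\ell Z_k\|_{L^2}\le \Lambda_1^\ell(\ell+q)!$. Summing the bounds for $N_k$ and $Z_k$ produces \eqref{est.DlU.unif} with a suitable $C_n$. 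The main obstacle is the bookkeeping: one must choose the constant $C$ in Proposition \ref{m_prop_Uk}, the constant $\widetilde{C}$ in \eqref{sep-con2}, and the propagated $C_{n-1}\mapsto C_n$ coherently so that the $E_k$-error, after being multiplied by $\tau^{-2}$, is absorbed into a universal constant independent of both $\tau$ and $\delta$; no new $\delta_{n-1}^{-1}$ factor appears in the second line of \eqref{sep-con2} precisely because the $H^2$ bound in Proposition \ref{flux-correctors_prop_energy2} is gradient-free, in contrast to \eqref{cond.sep.n}.
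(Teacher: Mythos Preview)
Your proposal is correct and follows essentially the same approach as the paper: truncate the expansion at level $k$, bound the tail error via Proposition \ref{m_prop_Uk} and the energy estimate of Proposition \ref{flux-correctors_prop_energy1}, and handle $U_0$ by induction on the number of scales (the paper states the base case at $n=2$, but this amounts to your $n=1$ argument for $U_0$). One small caution: for the base case you should invoke Proposition \ref{flux-correctors_prop_energy1} (which gives $\|\bfD_1^\ell\nabla_1^2 U\|_{L^2}$ uniformly in $\tau$) rather than Proposition \ref{flux-correctors_prop_energy2}, since the latter is stated without the $\tau^2$ term; after that, Poincar\'e yields the $L^2$ bound exactly as you describe.
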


\begin{proof}
Due to the factorial growth in $j$ of the estimates of $U_j$ in Proposition \ref{m_prop_Uk}, we turn to the truncated partial sum
\begin{align*}
    V_k(y_1, \cdots,  y_n)=\sum_{j=0}^{k}\delta_n^jU_j(y_1, \cdots, y_n),
\end{align*}
which satisfies
    \begin{align}
      \label{flux-correctors_eq_flux_error}
      \begin{split}
        &-\widehat{\Delta}_nV_k+\tau^2 V_k-G\\
        & =\mathcal{G}_k:=\delta_n^k(-\widehat{\Delta}_{n-1}U_k+\tau^2U_k) +\delta_n^{k-1}(-2\widehat{\nabla}_{n-1}\cdot\nabla_n U_k-\widehat{\Delta}_{n-1}U_{k-1}+\tau^2U_{k-1}).
      \end{split}
    \end{align}
The choice of $k = k_n \ge 2$ will be assumed to satisfy the scale-separation condition \eqref{sep-con2}.
Let $W_k$ be the solution to
\begin{equation*}
      -\widehat{\Delta}_nW_k+\tau^2 W_k=-\mathcal{G}_k.
\end{equation*}
Then we have
\begin{equation}\label{eq.U=V+W}
    U = V_k + W_k = U_0 + \sum_{j=2}^k \delta_n^j U_j + W_k.
\end{equation}
    
Note that the uniform estimates of $U_j$ with $j \ge 2$ can be derived directly from Proposition \ref{m_prop_Uk}. In fact,
    \begin{equation*}
        \Big\| \bfD_n^\ell \sum_{j=2}^k \delta_n^j U_j \Big\|_{L^2_\x H^2_\y} \le \sum_{j=2}^{k}\delta_n^{j}\delta_{n-1}^{-j+2}C^{j-1}\Lambda_1^{\ell+j-2}(\ell+j-2+q)! \leq \frac{Ce \delta_n^2}{e-1} \Lambda_1^{\ell}(\ell+q)!,
    \end{equation*}
    provided that $(\delta_n/\delta_{n-1}) C\Lambda_1 (\ell+k-2+q)\leq 1/e$. Since $\Ag{U_j}_{y_n} = 0$, the Poincar\'{e} inequality implies
    \begin{equation}\label{est.U-sumterm}
        \Big\| \bfD_n^\ell \sum_{j=2}^k \delta_n^j U_j \Big\|_{L^2} \le C_d \Big\| \bfD_n^\ell \sum_{j=2}^k \delta_n^j U_j \Big\|_{L^2_\x H^2_\y} \le \frac{C_d C e \delta_n^2}{e-1}  \Lambda_1^{\ell}(\ell+q)! \le \Lambda_1^{\ell}(\ell+q)!.
    \end{equation}
    In the last inequality we have used the fact $\delta_n \le 1/(Ce)$, which is due to the scale-separation assumption.

Next, we estimate $W_k$, by applying the energy estimate to \eqref{flux-correctors_eq_flux_error}. Observe from Proposition \ref{m_prop_Uk} that for $\ell\geq 0$
\begin{align*}
    \|\bfD^\ell_n \mathcal{G}_k\|_{L^2}  & \le \delta_n^k ( \delta_{n-1}^{-2}\| \bfD_n^{\ell+2} U_k \|_{L^2} + \tau^2 \| \bfD_n^\ell U_k \|_{L^2} ) \\
    & \quad + \delta_n^{k-1} ( 2\delta_{n-1}^{-1} \| \bfD_n^{\ell+1} U_k \|_{L^2_\x H^1_\y} + \delta_{n-1}^{-2} \| \bfD_n^{\ell+2} U_{k-1} \|_{L^2} + \tau^2 \| \bfD_n^\ell U_{k-1} \|_{L^2} )\\
    &\leq \delta_n^k\delta_{n-1}^{-k}C^{k-1}\Lambda_1^{\ell+k}(\ell+k+q)!+3\delta_n^{k-1}\delta_{n-1}^{-k+1}C^{k-1}\Lambda_1^{\ell+k-1}(\ell+k-1+q)!\\&\quad+\tau^2\delta_n^k\delta_{n-1}^{-k+2}C^{k-1}\Lambda_1^{\ell+k-2}(\ell+k-2+q)!\\
    &\quad+\tau^2\delta_n^{k-1}\delta_{n-1}^{-k+3}C^{k-2}\Lambda_1^{\ell+k-3}(\ell+k-3+q)! \\
      & \le (4+2\tau^2) e^{-k+1} \Lambda_1^\ell (\ell+q)!,
    \end{align*}
    provided that $(\delta_n/\delta_{n-1}) C\Lambda_1 (\ell+k+q)\leq 1/e$.
    Then Proposition \ref{flux-correctors_prop_energy1} applied to \eqref{flux-correctors_eq_flux_error} yields
    \begin{align*}
      \tau^2 \|  \bfD^\ell_n W_k \|_{L^2} \le C_d \| \bfD_n^\ell \mathcal{G}_k \|_{L^2}
        \leq 6C_d e^{-k+1}\Lambda_1^\ell(\ell+q)!.
    \end{align*}
    Let $\widetilde{C} = 6e C_d$ and assume $\tau$ satisfies \eqref{sep-con2}. Then the last inequality implies
    \begin{equation}\label{est.U-Wterm}
        \|  \bfD^\ell_n W_k \|_{L^2} \le \Lambda_1^\ell(\ell+q)!.
    \end{equation}
It is crucial to notice that the estimates \eqref{est.U-sumterm} and \eqref{est.U-Wterm} are derived directly for any scale $n$ without using induction. The scale-separation condition \eqref{sep-con2} is required for these estimates in the case of $j$ scales for each $2\le j\le n$. The constant $C$ and $\widetilde{C}$ may be different for different $j$'s; but we choose a unified one depending  only on $n$ and $d$.

Finally, in view of \eqref{eq.U=V+W}, to obtain the uniform bound of $U$, it suffices to estimate $U_0$. We will obtain the estimate of the pair $(U_0, U)$ via  an inductive argument on the number of scales. We begin with the case $n=2$. In this case, $\widehat{\nabla}_1 = \nabla_1 = \bfD_1$ and the equation \eqref{eq.U0} for $U_0$  is not degenerate.
Due to Proposition \ref{m_prop_Uk},  we know for $\ell\geq 0$
\begin{align*}
  \|\bfD_1^{\ell+2} U_0\|_{L^2_\x}=\|\bfD_1^\ell \widehat{\nabla}^2_1 U_0\|_{L^2}\leq C\Lambda_1^{\ell}(\ell+q)!,
\end{align*}
which, by the Poincar\'{e} inequality, yields that for $\ell=0, 1$,
\begin{align*}
  \|U_0\|_{L^2_\x} + \|\bfD_1 U_0\|_{L^2_\x} \leq C_d C q!.
\end{align*}
For simplicity, we can summarize for $\ell\geq 0$
\begin{align}\label{flux-correctors_es_U0}
  \|\bfD_1^{\ell} U_0\|_{L^2_\x}\leq C_1\Lambda_1^{(\ell-2)_+}((\ell-2)_++q)!,
\end{align}
where $C_1=\max\{C, C_dC\}$ and $(a)_+ = \max\{ a, 0 \}$. Combining \eqref{est.U-sumterm} and \eqref{est.U-Wterm} with \eqref{flux-correctors_es_U0}, we have
\begin{equation*}
    \| \bfD_2^\ell U \|_{L^2} \le \|\bfD_1^{\ell} U_0\|_{L^2_\x} + 2 \Lambda_1^\ell (\ell + q)! \le (C_1 + 2) \Lambda_1^\ell (\ell + q)!.
\end{equation*}
Without loss of generality, we have assumed $\Lambda_1 \ge 1$. Hence, we arrived \eqref{est.DlU.unif} for $n=2$ with $C_2 = C_1 + 2$.

Now, we assume \eqref{est.DlU.unif} holds for $n-1$ scales and prove it for $n$ scales. Indeed, if $U$ is the solution of \eqref{flux-correctors_eq_energy1} with $n$ scales, then $U_0$ is the solution of \eqref{eq.U0}, which is an equation of the same type with $n-1$ scales. Clearly $\Ag{G}_{y_n}$ satisfies the same assumption as $G$. Thus by the inductive assumption,
\begin{equation*}
    \| \bfD_{n-1}^\ell U_0 \|_{L^2} \le C_{n-1} \Lambda_1^\ell \ell!.
\end{equation*}
Consequently, by \eqref{est.U-sumterm}, \eqref{est.U-Wterm} and \eqref{eq.U=V+W}, we have
\begin{equation*}
    \| \bfD_n^\ell U \|_{L^2} \le \|\bfD_{n-1}^{\ell} U_0\|_{L^2_\x} + 2 \Lambda_1^\ell (\ell + q)! \le (C_{n-1} + 2) \Lambda_1^\ell (\ell + q)!.
\end{equation*}
This proves \eqref{est.DlU.unif} for general $n$ scales with $C_n = C_{n-1} + 2$. Note that $C_1$ (a constant arising for $n=2$) depends only on $d$ and thus $C_n$ can be given simply by $C_n = C_1 + 2(n-1)$.
\end{proof}

 \begin{theorem}\label{flux-correctors_coro_Ubound}
 Suppose that $A$ satisfies \eqref{as.ellipticity}, \eqref{as.periodicity} and \eqref{as.analyticity}. Let $\U$ be the solution of \eqref{flux-correctors_eq_approximateflux_lift} with \eqref{hatA}. Then there exist constants $\widetilde{C}, c>0$, depending only on $d, n, C_0$ and $\Lambda_0$, such that if
 \begin{equation}\label{cond.ei.fluxsep}
     \e_{j+1} \le \widetilde{C}^{-1}\e_j, \qquad \text{for each } j = 1,2,\cdots, n-1,
 \end{equation}
 then for some $\tau \simeq \max_i \{ e^{-c\e_i/\e_{i+1}} \}$,  \eqref{flux-correctors_eq_approximateflux_lift} has a unique bounded solution satisfying
 \begin{equation*}
     \| \widehat{\nabla}_n^2 \U \|_{L^\infty} + \| \widehat{\nabla}_n \U \|_{L^\infty} + \| \U \|_{L^\infty} \le C,
 \end{equation*}
 where  $C$ depends only on the characters of $A$. 
\end{theorem}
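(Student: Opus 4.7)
The plan is to cast \eqref{flux-correctors_eq_approximateflux_lift} as a special case of the degenerate equation \eqref{flux-correctors_eq_energy1} studied in Theorem~\ref{flux-correctors_thm_U_truncation}, use that theorem to obtain $L^2$ estimates of $\bfD_n^\ell \mathcal{U}$ uniform in $\tau$, and upgrade these to the three $L^\infty$ bounds by Sobolev embedding on $\T^{d\times n}$. The enabling structural fact is that $\widehat{\Delta}_n$ has constant coefficients and therefore commutes with every $\bfD_n^\ell$, which lets us test the differentiated equation cleanly at each order.

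First I would verify that if $\widetilde{C}$ in \eqref{cond.ei.fluxsep} is taken large enough (depending on $n$ and the characters of $A$), then the geometric bound $\e_{j+1}\le \widetilde{C}^{-1}\e_j$ forces $\log(\e_1/\e_{j-1})\lesssim (j-2)\log\widetilde{C}$ and hence implies the logarithmic condition \eqref{cond.ej.sepcond} of Theorem~\ref{thm.AppCorrector}. That theorem then produces the multiscale corrector $\mathcal{X}$ for a suitable $\tau^2\simeq \max_i\{e^{-c\e_i/\e_{i+1}}\}$, with $\|\mathcal{X}\|_{L^\infty}\le C$ and $\|\bfD_n^\ell\widehat{\nabla}_n\mathcal{X}\|_{L^2}\le C_*\Lambda^\ell \ell!$. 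By \eqref{hatA}, the right-hand side $G:=\overline{A}-A-A\widehat{\nabla}_n\mathcal{X}$ has $\langle G\rangle=0$, and combining \eqref{as.analyticity} with the generalized Leibniz rule yields
\[
\|\bfD_n^\ell G\|_{L^2}\le \Lambda_1^\ell\,\ell!,\qquad \ell\ge 0,
\]
for some $\Lambda_1$ depending only on the characters of $A$.

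Next I would apply Theorem~\ref{flux-correctors_thm_U_truncation} with $q=0$, checking its hypothesis \eqref{sep-con2} for every $\ell$ up to the Sobolev threshold $\ell_0:=[nd/2]+1$ on $\T^{d\times n}$. Taking $k_j\simeq \e_{j-1}/\e_j$, the first inequality in \eqref{sep-con2} holds once $\widetilde{C}$ dominates $eC\Lambda_1(\ell_0+k_j)$, while the second reduces to a lower bound $\tau^2\gtrsim e^{-k_j}$ that is compatible with the chosen $\tau\simeq \max_i\{e^{-c\e_i/\e_{i+1}}\}$ provided $c$ is taken small relative to $1/(eC\Lambda_1)$. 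Theorem~\ref{flux-correctors_thm_U_truncation} then produces the unique periodic $\mathcal{U}$ of \eqref{flux-correctors_eq_approximateflux_lift} with
\[
\|\bfD_n^\ell \mathcal{U}\|_{L^2}\le C_n\Lambda_1^\ell\,\ell!,\qquad 0\le \ell\le\ell_0,
\]
so Sobolev embedding immediately gives $\|\mathcal{U}\|_{L^\infty}\le C$. In parallel, Proposition~\ref{flux-correctors_prop_energy1} applied to the same equation yields the $\tau$-uniform bound $\|\bfD_n^\ell\widehat{\nabla}_n^2\mathcal{U}\|_{L^2}\le C\Lambda_1^\ell \ell!$, and Sobolev embedding produces $\|\widehat{\nabla}_n^2\mathcal{U}\|_{L^\infty}\le C$.

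What remains is the $L^\infty$ bound on $\widehat{\nabla}_n\mathcal{U}$, where a naive appeal to Proposition~\ref{flux-correctors_prop_energy1} would only give $\|\bfD_n^\ell\widehat{\nabla}_n\mathcal{U}\|_{L^2}\lesssim \tau^{-1}\Lambda_1^\ell \ell!$, blowing up as $\tau\to 0$. Here I would use that $\bfD_n^\ell$ commutes with $\widehat{\Delta}_n$: the tensor $\bfD_n^\ell\mathcal{U}$ satisfies $-\widehat{\Delta}_n\bfD_n^\ell\mathcal{U}+\tau^2\bfD_n^\ell\mathcal{U}=\bfD_n^\ell G$, and testing against itself gives
\[
\|\bfD_n^\ell\widehat{\nabla}_n\mathcal{U}\|_{L^2}^2+\tau^2\|\bfD_n^\ell\mathcal{U}\|_{L^2}^2\le \|\bfD_n^\ell G\|_{L^2}\,\|\bfD_n^\ell\mathcal{U}\|_{L^2}.
\]
Inserting the bounds above produces $\|\bfD_n^\ell\widehat{\nabla}_n\mathcal{U}\|_{L^2}\le C\Lambda^\ell \ell!$ uniformly in $\tau$, and one more Sobolev embedding finishes $\|\widehat{\nabla}_n\mathcal{U}\|_{L^\infty}\le C$. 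The genuinely delicate point is the simultaneous balancing of the two inequalities in \eqref{sep-con2}: the first forces $k_j\lesssim \e_{j-1}/\e_j$ (tightening as the Sobolev threshold $\ell_0$ grows with $n$), while the second requires $\log(1/\tau^2)\lesssim k_j$; the hypothesis \eqref{cond.ei.fluxsep} with $\widetilde{C}$ large and $c$ small (both depending on the characters of $A$, $\ell_0$, and the constants propagated from Theorems~\ref{thm.AppCorrector} and~\ref{flux-correctors_thm_U_truncation}) is precisely what makes the two constraints consistent.
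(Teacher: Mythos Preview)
Your overall outline matches the paper's proof closely, but the first step contains a genuine error. You claim that \eqref{cond.ei.fluxsep} implies the logarithmic separation condition \eqref{cond.ej.sepcond} because ``$\e_{j+1}\le \widetilde{C}^{-1}\e_j$ forces $\log(\e_1/\e_{j-1})\lesssim (j-2)\log\widetilde{C}$.'' This inequality goes the wrong way: the hypothesis $\e_{j+1}\le \widetilde{C}^{-1}\e_j$ only gives a \emph{lower} bound $\e_i/\e_{i+1}\ge \widetilde{C}$, hence $\e_1/\e_{j-1}=\prod_{i=1}^{j-2}\e_i/\e_{i+1}\ge \widetilde{C}^{\,j-2}$, not an upper bound. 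Under \eqref{cond.ei.fluxsep} the quantity $\log(\e_1/\e_{j-1})$ can be arbitrarily large, so \eqref{cond.ej.sepcond} need not hold and you cannot invoke the $L^\infty$ conclusion \eqref{inftychi} of Theorem~\ref{thm.AppCorrector} this way.

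The fix is that you do not need \eqref{cond.ej.sepcond} at all here. To bound $\|\bfD_n^\ell G\|_{L^2}$ you only use the energy estimate \eqref{est.X.energy}, and as the paper notes in the proof of Theorem~\ref{thm.AppCorrector}, that estimate is exactly Proposition~\ref{prop.Ytau.F} applied to $F=Av$ and holds for every $\tau>0$ without any scale-separation condition. Once this is corrected, the remainder of your argument is essentially the paper's: apply Theorem~\ref{flux-correctors_thm_U_truncation} (whose milder separation condition \eqref{sep-con2} is indeed ensured by \eqref{cond.ei.fluxsep} with the stated choice of $\tau$), then Sobolev embed. For the intermediate term $\widehat{\nabla}_n\mathcal{U}$, the paper uses the integration-by-parts identity $\int|\widehat{\nabla}_n\bfD_n^\ell\mathcal{U}|^2=-\int\bfD_n^\ell\mathcal{U}\cdot\widehat{\Delta}_n\bfD_n^\ell\mathcal{U}\le\|\bfD_n^\ell\mathcal{U}\|_{L^2}\|\bfD_n^\ell\widehat{\nabla}_n^2\mathcal{U}\|_{L^2}$, which is equivalent to your version (testing the differentiated equation against $\bfD_n^\ell\mathcal{U}$) since $-\widehat{\Delta}_n\mathcal{U}=G-\tau^2\mathcal{U}$.
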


 \begin{proof}
   Note that by \eqref{est.X.energy} in Theorem \ref{thm.AppCorrector}, $G=\overline{A}-A-A\cdot\widehat{\nabla}_n\mathcal{X}$ satisfies for $\ell\geq 0$,
   \begin{align*}
     \|\bfD^\ell_n G\|_{L^2}\leq C_0\Lambda_0^\ell \ell!+\sum_{m=0}^\ell \binom{\ell}{m}C_0\Lambda_0^mm!C_*\Lambda^{\ell-m}(\ell-m)!\leq C_0\Big(1+\frac{C_*^2}{C_*-1}\Big)\Lambda^\ell\ell!,
   \end{align*}
   where $\Lambda, C_*$ are as given in Theorem \ref{thm.AppCorrector}. Thanks to Theorem \ref{flux-correctors_thm_U_truncation}, under \eqref{sep-con2} we have
   \begin{align*}
     \|\bfD^\ell_n \U\|_{L^2}\leq C_nC_0\Big(1+\frac{C_*^2}{C_*-1}\Big)\Lambda^\ell\ell!.
   \end{align*}
   In particular, we let $0\le \ell \le \ell_0 := [nd]+1$. Then \eqref{sep-con2} holds for some $k_j$ if \eqref{cond.ei.fluxsep} holds for sufficiently large $\widetilde{C}$ and $\tau \simeq \max_i \{ e^{-c\e_{i+1}/\e_{i}} \}$ for sufficiently small $c$. Then by the Sobolev embedding theorem, we have
   \begin{equation*}
     \|\U \|_{L^\infty}\leq C.
   \end{equation*}

   On the other hand, Proposition \ref{flux-correctors_prop_energy1} and the Sobolev embedding theorem imply that 
    \begin{equation*}
     \| \widehat{\nabla}_{n}^2 \U \|_{L^\infty} \le C_d \sum_{\ell=0}^{\ell_0} \| \bfD_n^{\ell} \widehat{\nabla}_{n}^2 \U \|_{L^2} \leq C.
   \end{equation*}
   Now, observe that for $0\le \ell \le \ell_0$, by the integration by parts, we have
   \begin{equation*}
       \int_{\T^{d\times n}} |\widehat{\nabla}_n  \bfD_n^\ell \U|^2 = -\int_{\T^{d\times n}} \bfD_n^\ell \U \cdot \widehat{\Delta}_n \bfD^\ell_n \U \le \| \bfD_n^\ell \U \|_{L^2} \| \bfD_n^\ell \widehat{\nabla}_n^2 \U \|_{L^2} \le C.
   \end{equation*}
   Again, by the Sobolev embedding theorem, we obtain
   \begin{equation*}
       \| \widehat{\nabla}_{n} \U \|_{L^\infty} \le C,
   \end{equation*}
   for some $C$ depending only on $d, n, C_0$ and $\Lambda_0$. The proof is complete.
\end{proof}

\begin{remark}
In contrast to the strong scale-separation condition \eqref{cond.ej.sepcond} in Theorem \ref{thm.AppCorrector} , we only need a weak scale-separation condition \eqref{cond.ei.fluxsep} in Theorem \ref{flux-correctors_thm_U_truncation}. It seems that this distinction comes from the structure of the equation \eqref{flux-correctors_eq_energy1} for which the $H^2$ estimate is available. This structure makes $U_k$ have one more power of $\delta_{n-1}$ than $Y_k$.
\end{remark}

 For later applications, we will also consider the multiscale flux correctors for the coefficient matrix $A(x,y_1,\cdots, y_n)$ depending on an additional variable $x \in \R^d$. Let $\U = \U(x,y_1,\cdots, y_n)$ be the solution of
 \begin{equation}\label{eq.Uxy}
     -\widehat{\Delta}_n \U(x,\cdot)+\tau^2 \U(x,\cdot)=\overline{A}(x)-A(x,\cdot)-A(x,\cdot)\widehat{\nabla}_n\mathcal{X}(x,\cdot).
 \end{equation}
 
 Analogous to Theorem \ref{thm.AppCorrector.X}, we also have the following theorem as a corollary of Theorem \ref{flux-correctors_coro_Ubound}. 

 \begin{theorem}\label{thm.Ux-unif}
     Let $A$ satisfy the same assumptions as Theorem \ref{thm.AppCorrector.X}. Then under the same scale-separation conditions for $\e$ and $\tau$ as Theorem \ref{flux-correctors_coro_Ubound} (with possibly different constant $\widetilde{C}$ and $c$), we have
     \begin{equation}\label{est.Uxy}
         \sup_{x\in \R^d}\| \widehat{\nabla}_n^2 \U(x,\cdot) \|_{L^\infty} + \sup_{x\in \R^d}\| \widehat{\nabla}_n \U(x,\cdot) \|_{L^\infty} + \sup_{x\in \R^d} \| \U(x,\cdot) \|_{L^\infty} \le C,
     \end{equation}
     and
     \begin{equation}\label{est.DxUxy}
         \sup_{x\in \R^d}\| \widehat{\nabla}_n^2 \nabla_x \U(x,\cdot) \|_{L^\infty} + \sup_{x\in \R^d}\| \widehat{\nabla}_n \nabla_x \U(x,\cdot) \|_{L^\infty} + \sup_{x\in \R^d} \| \nabla_x \U(x,\cdot) \|_{L^\infty} \le C L_0,
     \end{equation}
     where $c$, $\widetilde{C}$ and $C$ depend only on $\lambda, d, n, C_0$ and $\Lambda_0$. In particular, they are independent of $L_0$.
 \end{theorem}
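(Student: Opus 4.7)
The plan is to derive the two sets of bounds separately. For \eqref{est.Uxy}, I would fix $x\in\R^d$ and apply Theorem \ref{flux-correctors_coro_Ubound} to \eqref{eq.Uxy} viewed as an instance of \eqref{flux-correctors_eq_approximateflux_lift} with coefficient $A(x,\cdot)$ and corrector $\X(x,\cdot)$. Hypothesis \eqref{ass.Ax} of Theorem \ref{thm.AppCorrector.X} furnishes \eqref{cond.A2} for $A(x,\cdot)$ uniformly in $x$, while the first parts of \eqref{est.XDxX} and \eqref{est.Ax.energy} supply the corresponding analyticity/energy bounds on $\X(x,\cdot)$ used in the proof of Theorem \ref{flux-correctors_coro_Ubound}. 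Taking the supremum over $x$ of the resulting pointwise estimate yields \eqref{est.Uxy}.

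For \eqref{est.DxUxy}, I would differentiate \eqref{eq.Uxy} in $x$, obtaining
\begin{equation*}
    -\widehat{\Delta}_n \nabla_x \U(x,\cdot)+\tau^2 \nabla_x \U(x,\cdot) = G_+(x,\cdot),
\end{equation*}
where
\begin{equation*}
\begin{aligned}
    G_+(x,\cdot) &= \nabla_x\overline{A}(x)-\nabla_x A(x,\cdot)-\nabla_x A(x,\cdot)\,\widehat{\nabla}_n \X(x,\cdot)\\
    &\quad -A(x,\cdot)\,\widehat{\nabla}_n \nabla_x \X(x,\cdot).
\end{aligned}
\end{equation*}
Differentiating the identity $\overline{A}(x)=\Ag{A(x,\cdot)+A(x,\cdot)\widehat{\nabla}_n\X(x,\cdot)}$ in $x$ forces $\Ag{G_+(x,\cdot)}=0$, so the differentiated equation falls under the framework of \eqref{flux-correctors_eq_energy1}.

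The central observation is that every summand of $G_+$ carries exactly one factor marked by $\nabla_x$. By hypothesis \eqref{ass.DxAx}, $\bfD_n^\ell \nabla_x A$ satisfies bounds of the form \eqref{cond.A2} with $C_0$ replaced by $L_0$; by the second parts of \eqref{est.XDxX} and \eqref{est.Ax.energy}, the analogous bounds for $\nabla_x \X$ and $\bfD_n^\ell \widehat{\nabla}_n \nabla_x \X$ hold with constants proportional to $L_0$. The generalized Leibniz rule, applied term by term as in the proof of Theorem \ref{flux-correctors_coro_Ubound}, then yields
\begin{equation*}
    \sup_{x\in\R^d}\|\bfD_n^\ell G_+(x,\cdot)\|_{L^2}\le L_0\,\widetilde{C}\,\widetilde{\Lambda}^\ell\,\ell!,\qquad \ell\ge 0,
\end{equation*}
with $\widetilde{C},\widetilde{\Lambda}$ independent of $L_0$ and of $x$.

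With this source estimate in hand, I would apply Theorem \ref{flux-correctors_thm_U_truncation} under the scale-separation condition of Theorem \ref{flux-correctors_coro_Ubound} (possibly adjusting the numerical constants to absorb $\widetilde{\Lambda}$) to obtain $\|\bfD_n^\ell \nabla_x\U(x,\cdot)\|_{L^2}\le L_0 C\widetilde{\Lambda}^\ell\ell!$ for $0\le\ell\le [nd]+1$, uniformly in $x$. Proposition \ref{flux-correctors_prop_energy1} then gives a $\tau$-independent bound for $\bfD_n^\ell\widehat{\nabla}_n^2\nabla_x\U$, and the interpolation identity $\|\widehat{\nabla}_n\bfD_n^\ell\nabla_x\U\|_{L^2}^2 \le \|\bfD_n^\ell\nabla_x\U\|_{L^2}\,\|\bfD_n^\ell\widehat{\nabla}_n^2\nabla_x\U\|_{L^2}$ (obtained by integration by parts on $\T^{d\times n}$) controls the intermediate norm. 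Sobolev embedding on $\T^{d\times n}$ converts all three bounds into the $L^\infty$ estimates asserted in \eqref{est.DxUxy}. The main obstacle, which is essentially bookkeeping, is to maintain strictly linear dependence on $L_0$ throughout; this is automatic from the linearity of \eqref{eq.Uxy} and its $x$-derivative, together with the linear-in-$L_0$ estimates already recorded in Theorem \ref{thm.AppCorrector.X} for $\nabla_x\X$.
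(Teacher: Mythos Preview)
Your proposal is correct and follows essentially the same route as the paper: fix $x$ and invoke Theorem~\ref{flux-correctors_coro_Ubound} for \eqref{est.Uxy}, then differentiate \eqref{eq.Uxy} in $x$, verify the analyticity bound $\|\bfD_n^\ell \nabla_x G(x,\cdot)\|_{L^2}\le L_0 C_\sharp\Lambda_\sharp^\ell\ell!$ on the differentiated source, and apply the machinery behind Theorem~\ref{flux-correctors_coro_Ubound} (namely Theorem~\ref{flux-correctors_thm_U_truncation}, Proposition~\ref{flux-correctors_prop_energy1}, the interpolation identity, and Sobolev embedding) to obtain \eqref{est.DxUxy}. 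Your write-up is in fact slightly more explicit than the paper's, which simply says ``applying Theorem~\ref{flux-correctors_coro_Ubound}'' for the second part; you have correctly unpacked what that means when the source term is $\nabla_x G$ rather than $G$ itself, and you have noted the mean-zero condition needed to invoke Theorem~\ref{flux-correctors_thm_U_truncation}.
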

\begin{proof}
    Let $G(x,\cdot) = \overline{A}(x)-A(x,\cdot)-A(x,\cdot)\widehat{\nabla}_n\mathcal{X}(x,\cdot)$. The estimate \eqref{est.Uxy} follows straightforward from Theorem \ref{flux-correctors_coro_Ubound}. To obtain \eqref{est.DxUxy}, we apply $\nabla_x$ to the equation \eqref{eq.Uxy} to obtain
    \begin{equation}\label{eq.DxU-DxG}
        -\widehat{\Delta}_n \nabla_x \U(x,\cdot)+\tau^2  \nabla_x \U(x,\cdot) = \nabla_x G(x,\cdot).
    \end{equation}
    By \eqref{est.Ax.energy}, we can show for any $\ell \ge 0$,
    \begin{equation*}
    \| \bfD_n^\ell \nabla_x G(x,\cdot) \|_{L^2} \le L_0 C_\sharp \Lambda_\sharp^\ell \ell!.
    \end{equation*}
    Then applying Theorem \ref{flux-correctors_coro_Ubound} under the scale-separation condition to the equation \eqref{eq.DxU-DxG}, we obtain \eqref{est.DxUxy}.
\end{proof}

Let $\U = \U(x,y_1,\cdots, y_n)$ be given by \eqref{eq.Uxy}. Define the multiscale flux corrector $\Phi=\Phi(x,y_1,y_2,\cdots,y_n)$ as follows,
\begin{align}\label{flux-co-yn}
    \Phi_{\ell ij}=(\widehat{\nabla}_n)_\ell \U_{ij}-(\widehat{\nabla}_n)_i \U_{\ell j},
\end{align}
where $(\widehat{\nabla}_n)_i= \sum_{k=1}^n \delta_k^{-1} \pa_{y_k^i},$ and $y^i_k, 1\le i\le d,$ denotes the $i$-th component of $y_k\in\mathbb{T}^d$. Obviously, $\Phi$ is antisymmetric in the sense that $\Phi_{\ell ij} = - \Phi_{ i \ell j}$.
Define $\U_\de(x,y)=\U(x,y,y/\de_2, \cdots, y/{\de_n})$ and $ \Phi_\de(x,y)= \Phi(x,y,y/\de_2, \cdots, y/{\de_n})$.  We can write \eqref{flux-co-yn} as 
  \begin{align}\label{flux-co}
    \Phi_{\de,\ell ij}=\partial_\ell \U_{\de,ij}-\partial_i \U_{\de,\ell j} 
     = \partial_{y^\ell} \U_{\de,ij}-\partial_{y^i} \U_{\de,\ell j} , 
      \end{align} 
where $y^i, 1\le i\le d,$ denotes the $i$-th component of $y\in\mathbb{T}^d$. This together with  \eqref{flux-correctors_eq_approximateflux}  implies that 
\begin{align*}
  \begin{split}
    \partial_{y^\ell} \Phi_{\de,\ell ij}&=\Delta_y \U_{\de,ij}-\partial_{y^\ell}\partial_{y^i} \U_{\de,\ell j}\\&=\tau^2 \U_{\de,ij}+B_{\de,ij}+B_{\de,ik}\partial_{y^k} \mathcal{X}^j_\delta-\overline{A}-\partial_{y^i}\partial_{y^\ell} \U_{\de,\ell j}.
    \end{split}
    \end{align*}
    Alternatively, we can write this equation as
    \begin{equation}\label{eq-Bdelta}
        B_\delta + B_\delta \nabla_y \X_\delta - \overline{A} = \nabla_y \cdot \Phi_\delta + (\nabla_y (\nabla_y \cdot \U_\delta) - \tau^2 U_\delta ).
    \end{equation}
We would like to show that the last two terms of the above equation are small, under the scale-separation condition. Clearly, by Theorem \ref{thm.Ux-unif}, we have
\begin{equation}\label{eq-tau2U}
    \sup_{x\in \R^d} \| \tau^2 \U_\delta(x,\cdot) \|_{L^\infty} \le C\tau^2. 
\end{equation}
It remains to estimate $\nabla_y (\nabla_y \cdot \U_\delta)$.
  
By equations \eqref{eq.X} and \eqref{flux-correctors_eq_approximateflux}  we have
  \begin{align*}
    -\Delta_y \partial_{y^i} \U_{\de,ij}(x,y)+\tau^2 \partial_{y^i}\U_{\de,ij}(x,y)&=\partial_{y^i}(\overline{A}_{ij}(x)-B_{\de,ij}(x,y)-B_{\de,i\ell}\partial_{y^\ell} \mathcal{X}^{j}_\delta (x,y))\\
    &=-\tau^2 \mathcal{X}^j_\delta(x,y) \qquad \text{for } y\in \R^d,
  \end{align*}
whose lifted version can be written as
  \begin{align*}
    -\widehat{\Delta}_n(\widehat{\nabla}_n\cdot \U(x,\cdot))_j+\tau^2 (\widehat{\nabla}_n\cdot \U(x,\cdot))_j=-\tau^2 \mathcal{X}^j(x,\cdot) \qquad \text{in } \T^{d\times n}.
  \end{align*}
By the energy estimate in Proposition \ref{flux-correctors_prop_energy1}, it holds that
  \begin{align*}\|\bfD_n^\ell\widehat{\nabla}_n(\widehat{\nabla}_n\cdot \U)\|_{L^2}\leq \tau C_d\|\bfD_n^\ell \mathcal{X}  \|_{L^2} \le \tau C_d C_n\Lambda_n^{\ell}(\ell+1)!
  \end{align*}
 for all $0\le \ell \le \ell_0 = [nd/2]+1$, where in the last step we have used the uniform estimate \eqref{est.DlX.unif} under the scale-separation condition in Theorem \ref{thm.AppCorrector} or \ref{thm.AppCorrector.X}. Hence, by the Sobolev embedding theorem, we obtain that 
  \begin{align*}
    \|\nabla_y (\nabla_y\cdot \U_{\de}) \|_{L^\infty} \le \|\widehat{\nabla}_n(\widehat{\nabla}_n\cdot \U) \|_{L^\infty} \leq C \tau.
  \end{align*}
The above estimate actually holds for each $x\in \R^d$, and thereby we have
\begin{equation}\label{eq-DyDyU}
    \sup_{x\in \R^d} \|\nabla_y (\nabla_y\cdot \U_{\de})(x,\cdot)\|_{L^\infty} \le C\tau.
\end{equation}

Taking \eqref{eq-tau2U} and \eqref{eq-DyDyU} into \eqref{eq-Bdelta}, we obtain
\begin{corollary}\label{coro.fluxCorr}
    Under the assumption of Theorem \ref{thm.AppCorrector.X}, we have
    \begin{equation*}
        \sup_{x\in \R^d} \| B_\delta(x,\cdot) + B_\delta(x,\cdot) \nabla_y \X_\delta(x,\cdot) - \overline{A}(x) - \nabla_y \cdot \Phi_\delta(x,\cdot) \|_{L^\infty} \le C\tau.
    \end{equation*}
\end{corollary}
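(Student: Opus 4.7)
The plan is to simply combine the identity \eqref{eq-Bdelta} with the two estimates \eqref{eq-tau2U} and \eqref{eq-DyDyU} that have already been established in the paragraphs immediately preceding the corollary statement. In other words, there is essentially no new work to do: the corollary is just the conclusion that is being set up.

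Concretely, I would start from \eqref{eq-Bdelta}, namely
\begin{equation*}
    B_\delta + B_\delta \nabla_y \X_\delta - \overline{A} - \nabla_y \cdot \Phi_\delta = \nabla_y (\nabla_y \cdot \U_\delta) - \tau^2 \U_\delta,
\end{equation*}
which was derived from the defining equations \eqref{eq.X} and \eqref{flux-correctors_eq_approximateflux} for the multiscale correctors and the definition \eqref{flux-co} of $\Phi_\delta$. Taking $\sup_{x\in\R^d}\|\cdot\|_{L^\infty}$ of both sides and applying the triangle inequality gives
\begin{equation*}
\sup_{x\in\R^d}\|B_\delta(x,\cdot)+B_\delta(x,\cdot)\nabla_y\X_\delta(x,\cdot)-\overline{A}(x)-\nabla_y\cdot\Phi_\delta(x,\cdot)\|_{L^\infty}\le \sup_{x\in\R^d}\|\nabla_y(\nabla_y\cdot\U_\delta)(x,\cdot)\|_{L^\infty}+\tau^2\sup_{x\in\R^d}\|\U_\delta(x,\cdot)\|_{L^\infty}.
\end{equation*}
By \eqref{eq-DyDyU} the first term on the right is bounded by $C\tau$, and by \eqref{eq-tau2U} the second term is bounded by $C\tau^2$. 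Since we may assume $\tau\le 1$, absorbing $\tau^2$ into $\tau$ yields the claimed bound $C\tau$.

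There is no genuine obstacle here; the only substantive content sits in the two estimates invoked. Estimate \eqref{eq-tau2U} is an immediate consequence of the $L^\infty$ bound on $\U$ in Theorem \ref{thm.Ux-unif}. Estimate \eqref{eq-DyDyU} requires a brief auxiliary argument: one differentiates \eqref{flux-correctors_eq_approximateflux} by $\partial_{y^i}$, uses the corrector equation \eqref{eq.X} to eliminate the right-hand side except for a residual $-\tau^2 \X^j_\delta$, lifts to $\T^{d\times n}$, applies the energy bound of Proposition \ref{flux-correctors_prop_energy1} to the lifted equation with the $L^2$ control on $\bfD_n^\ell\X$ from Theorem \ref{thm.AppCorrector.X} (which requires the scale-separation condition from Theorem \ref{thm.AppCorrector.X}), and finally uses the Sobolev embedding on $\T^{d\times n}$. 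Each of these steps is already carried out in full detail in the preceding discussion, so the proof of the corollary amounts to collecting the pieces.
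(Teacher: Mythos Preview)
Your proposal is correct and takes essentially the same approach as the paper: the corollary is stated immediately after the identity \eqref{eq-Bdelta} and the estimates \eqref{eq-tau2U}, \eqref{eq-DyDyU} precisely because it is obtained by combining them via the triangle inequality, exactly as you describe.
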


\section{Optimal convergence rates}
\label{sec.rate}
In this section, we will prove Theorem \ref{thm.MainRate}. The proof will be carried out in two steps. In the first step, we consider $n$ scales $\e_1 > \e_2 > \cdots > \e_n$ under the scale-separation condition \eqref{cond.ej.sepcond}. In this case, we can find the multiscale correctors and flux correctors as in the previous sections, and establish the convergence rate to an effective equation in one step that simultaneously homogenizes all the scales. 
Therefore, we will call this process simultaneous homogenization, which is in contrast to the reiterated homogenization that homogenizes one scale at a time and repeat the process until all scales are homogenized.

In the second step, we will consider the general situation that the scales may not be all separated, for which Theorem \ref{thm.AppCorrector} does not apply and we cannot find bounded correctors with full scales.
To handle this tricky situation, we need a new strategy to combine the simultaneous homogenization with reiterated homogenization.

\subsection{Under scale-separation conditions}
In this subsection, we consider the convergence rate for the equation
    \begin{equation}\label{eq51}
        \left\{ \begin{aligned}
            -\nabla\cdot A_\e(x) \nabla u_\e &= f \quad \text{in } \Omega,\\
            u_\e &= g \quad \text{on } \partial \Omega,
        \end{aligned}
        \right.
    \end{equation} under the scale-separation condition \eqref{cond.ej.sepcond}. Here $A_\e(x) =A(x, x/\e_1 , x/\e_2, \dots, x/\e_n )$, and $A=A(x,y_1,\cdots,y_n)$ satisfies \eqref{as.ellipticity}, \eqref{as.periodicity}, \eqref{ass.Ax}, and \eqref{ass.DxAx}. 
Under the above conditions, Theorem \ref{thm.AppCorrector} ensures that for some $\tau^2 \simeq \max_j \{ e^{-c\e_{j-1}/\e_j} \}$, the equation \eqref{eq.X}
admits a unique solution $\X^i_\de, 1 \le i\le d,$ in the form of
\begin{equation*}
    \mathcal{X}^i_\de(x,y)= \mathcal{X}^i(x, y, y/\delta_2,\cdots, y/\delta_n),
\end{equation*}
where $\delta_j = \e_j/\e_1$ and $\X^i$ is the solution to \eqref{eq.Xx} with the unit vector $v=e_i$.

Let $\overline{A}(x)$ be given by \eqref{hatA}, i.e.,
	\begin{align*}
		\overline{A}(x)=\int_{\mathbb{T}^d}\cdots \int_{\mathbb{T}^d}\big[A(x, y_1, \cdots, y_n) +A(x, y_1, \cdots, y_n) \widehat{\na}_n \mathcal{X} (x, y_1, \cdots, y_n) \big]dy_1\cdots dy_n.
	\end{align*}
Thanks to \eqref{ass.Ax}, \eqref{ass.DxAx} and \eqref{est.Ax.energy} with $\ell = 0$,
	we obtain that 
	\begin{align}\label{lip-Abar}
    \begin{split}
	 \| \overline{A} \|_{L^\infty} \le C\quad \text{ and	} \quad |\nabla_x \overline{A} \|_{L^\infty }\le CL_0.
    \end{split}
\end{align}
    Moreover, by a classical argument, $\overline{A}$ satisfies the ellipticity condition with the same constant $\lambda$ as $A$.

Let $u_0$ be the solution to the following problem
\begin{equation}\label{limit-eq}
    \left\{ \begin{aligned}
        -\na \cdot  \overline{A}(x)\nabla u_0 &= f \quad \text{in } \Omega,\\
        u_0 &= g \quad \text{on } \partial \Omega.
    \end{aligned}
    \right.
\end{equation}
Define 
\begin{equation}\label{def.w}
w_\e
=u_\e -u_0
-\e_1  \eta_{\e_1} \mathcal{X}_\de(x,x/\e_1) \nabla u_0 ,
\end{equation}
where  the cut-off function $\eta_{\e_1}$  in \eqref{def.w} is chosen so that
$\eta_{\e_1} \in C_0^\infty(\Omega)$, $0\le \eta_{\e_1}\le 1$,
$$
\aligned
& \eta_{\e_1}(x)=1\quad \text{  if  } x\in \Omega \text{  and dist} (x,\partial\Omega) \ge  4\e_1,\\
& \eta_{\e_1} (x)=0 \quad \text{  if dist} (x, \partial\Omega)\le 3\e_1,
\endaligned
$$
and $|\nabla \eta_{\e_1}| \le C \e_1^{-1}$.
Denote 
\begin{equation*}
\Omega_t =\big\{ x\in \Omega: \ \text{ dist}(x, \partial\Omega)< t \big\}.
\end{equation*}

\begin{lemma}\label{lemma-convergcen-sep}
Let $\Omega$ be a bounded Lipschitz domain in $\mathbb{R}^d$. Suppose that $A=A(x,y_1,\cdots,y_n)$ satisfies \eqref{as.ellipticity}, \eqref{as.periodicity}, \eqref{ass.Ax} and \eqref{ass.DxAx}. Assume $(\e_1,\e_2,\cdots, \e_n)$ satisfies the scale-separation condition \eqref{cond.ej.sepcond} and $\tau \simeq \max_j \{ e^{-c\e_{j-1}/\e_j} \}$ with $c_j$ and $c$ determined by Theorem \ref{thm.AppCorrector.X} (depending only on $d, n, \lambda, C_0 $ and $ \Lambda_0$).
	 Let $w_\e$ be defined by \eqref{def.w}.
   Then for any $\psi\in H^1_0(\Omega)$,
\begin{equation}\label{rel-c-s}
\aligned
& \Big|\int_\Omega A_\va
    \nabla w_\varepsilon\cdot \nabla \psi dx\Big| \\
&    \leq
C\e_1 \|\nabla\psi\|_{L^2(\Omega)}
\Big\{
L_0
\|\nabla u_0\|_{L^2(\Omega)}
+ \|\nabla^2 u_0\|_{L^2(\Omega\setminus\Omega_{3\e_1})} \Big\} \\
&\qquad +  C
    \|\nabla \psi \|_{L^2(\Omega_{5\e_1})}
    \|\nabla u_0\|_{L^2(\Omega_{4\e_1})} +  C\tau  \|\nabla\psi\|_{L^2(\Omega)}
\|\nabla u_0\|_{L^2(\Omega)},
\endaligned
\end{equation} 
where $L_0$ is given in \eqref{ass.DxAx} and the constant $C$ depends only on $d$, $n$, $\lm,$ $C_0$, $\Lm_0$ and $\Omega$. 
\end{lemma}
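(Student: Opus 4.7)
The plan is to follow the classical two-scale expansion strategy, with the multiscale correctors $\mathcal{X}_\delta(x,x/\e_1)$ and the multiscale flux correctors $\Phi_\delta(x,x/\e_1)$ playing the roles of the usual $\chi$ and $\phi$, and with two new error sources to keep track of: the $O(\tau)$ residue in the approximate flux identity from Corollary \ref{coro.fluxCorr}, and the $O(L_0)$ contribution from the parametric $x$-dependence of $\mathcal{X}_\delta$ and $\Phi_\delta$. First I would test the definition \eqref{def.w} of $w_\e$ against $\psi\in H_0^1(\Omega)$, use the equations \eqref{eq51} and \eqref{limit-eq} (so that $\int A_\e\nabla u_\e\cdot\nabla\psi=\int f\psi=\int\overline A\nabla u_0\cdot\nabla\psi$), and rewrite
\[
\int_\Omega A_\e \nabla w_\e\cdot\nabla\psi=\int_\Omega(\overline A-A_\e)\nabla u_0\cdot\nabla\psi-\int_\Omega A_\e\nabla\bigl(\e_1\eta_{\e_1}\mathcal X_\delta(x,x/\e_1)\nabla u_0\bigr)\cdot\nabla\psi.
\]

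Next I would expand the derivative of $\e_1\eta_{\e_1}\mathcal X_\delta^k(x,x/\e_1)\partial_k u_0$ by the chain rule, noting $\partial_i[\mathcal X_\delta^k(x,x/\e_1)]=(\partial_{x_i}\mathcal X_\delta^k)(x,x/\e_1)+\e_1^{-1}(\partial_{y_i}\mathcal X_\delta^k)(x,x/\e_1)$. This produces four groups of terms: (i) the leading $\eta_{\e_1}(A_\e\nabla_y\mathcal X_\delta)(x,x/\e_1)\nabla u_0$, which combines with $(\overline A-A_\e)\nabla u_0$ to give the key combination $\eta_{\e_1}(\overline A-B_\delta-B_\delta\nabla_y\mathcal X_\delta)(x,x/\e_1)\nabla u_0$; (ii) terms of order $\e_1$ carrying either $\nabla_x\mathcal X_\delta$ (bounded by $L_0$ via \eqref{naxchi.infty}) or $\mathcal X_\delta\nabla^2 u_0$; (iii) a cutoff term supported in $\Omega_{5\e_1}\setminus\Omega_{3\e_1}$ with an extra $\e_1\,\nabla\eta_{\e_1}\sim 1$; and (iv) the ``missing region'' $\Omega_{3\e_1}$ where $\eta_{\e_1}=0$, so that $(\overline A-A_\e)\nabla u_0$ is not cancelled and contributes to the boundary layer through $\|\nabla u_0\|_{L^2(\Omega_{4\e_1})}$ via H\"older on the thin strip.

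The heart of the argument is then to apply Corollary \ref{coro.fluxCorr} to the leading combination, giving
\[
(\overline A-B_\delta-B_\delta\nabla_y\mathcal X_\delta)(x,x/\e_1)=-(\nabla_y\cdot\Phi_\delta)(x,x/\e_1)+R(x,x/\e_1),\qquad\|R\|_{L^\infty}\le C\tau.
\]
The $R$ term is immediate: it contributes $C\tau\|\nabla u_0\|_{L^2}\|\nabla\psi\|_{L^2}$. For the flux term, I would convert $y$-derivatives to $x$-derivatives through
\[
(\partial_{y^\ell}\Phi_{\delta,\ell ij})(x,x/\e_1)=\e_1\,\partial_{x_\ell}[\Phi_{\delta,\ell ij}(x,x/\e_1)]-\e_1\,(\partial_{x_\ell}\Phi_{\delta,\ell ij})(x,x/\e_1),
\]
where the second term is bounded in $L^\infty$ by $C\e_1 L_0$ thanks to \eqref{est.DxUxy} in Theorem \ref{thm.Ux-unif}, hence contributes $C\e_1L_0\|\nabla u_0\|_{L^2}\|\nabla\psi\|_{L^2}$. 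For the first term, I integrate by parts the divergence-in-$x$ and exploit the antisymmetry $\Phi_{\delta,\ell ij}=-\Phi_{\delta,i\ell j}$ inherited from \eqref{flux-co}: the ``bad'' contribution $\e_1\int\Phi_{\delta,\ell ij}\partial_ju_0\,\partial_\ell\partial_i(\eta_{\e_1}\psi)$ vanishes on the subset where we pair against $\partial_\ell\partial_i\psi$ (symmetric in $\ell,i$), leaving only $\e_1\int\eta_{\e_1}\Phi_\delta\,\nabla^2u_0\,\nabla\psi$ (bounded by $C\e_1\|\nabla^2 u_0\|_{L^2(\Omega\setminus\Omega_{3\e_1})}\|\nabla\psi\|_{L^2}$ since $\eta_{\e_1}$ is supported there) and the cutoff piece $\e_1\int\Phi_\delta\,\nabla u_0\,\nabla\psi\otimes\nabla\eta_{\e_1}$, which gives the boundary-layer contribution $C\|\nabla u_0\|_{L^2(\Omega_{4\e_1})}\|\nabla\psi\|_{L^2(\Omega_{5\e_1})}$.

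The main obstacle I expect is the bookkeeping: one must keep track of (a) the $x$-dependence of correctors (which forces us to carry $L_0$ through the estimate), (b) the antisymmetry-based cancellation of the nominal $O(1)$ term, and (c) the correct allocation of each error to either the global $O(\e_1+\tau)$ part, the second-derivative $\nabla^2 u_0$ part localized away from $\partial\Omega$, or the thin-strip $\Omega_{5\e_1}$ boundary-layer part. All pointwise bounds $\|\mathcal X_\delta\|_{L^\infty}+\|\nabla_y\mathcal X_\delta\|_{L^\infty}\le C$, $\|\nabla_x\mathcal X_\delta\|_{L^\infty}\le CL_0$, $\|\Phi_\delta\|_{L^\infty}+\|\nabla_y\Phi_\delta\|_{L^\infty}\le C$, and $\|\nabla_x\Phi_\delta\|_{L^\infty}\le CL_0$ are already in hand from \eqref{chi.infty}, \eqref{naxchi.infty}, \eqref{est.Uxy} and \eqref{est.DxUxy}, so only H\"older's inequality and the antisymmetry cancellation remain. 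Summing the four groups of contributions above yields exactly the three terms on the right-hand side of \eqref{rel-c-s}.
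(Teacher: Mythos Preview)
Your proposal is correct and follows essentially the same approach as the paper's proof: expand $\nabla w_\e$ via the chain rule, isolate the leading combination $\eta_{\e_1}(\overline A - B_\delta - B_\delta\nabla_y\mathcal X_\delta)\nabla u_0$, replace it by $-\eta_{\e_1}(\nabla_y\cdot\Phi_\delta)\nabla u_0 + O(\tau)$ via Corollary~\ref{coro.fluxCorr}, convert $\nabla_y\cdot\Phi_\delta$ to a total $x$-divergence minus $\e_1(\nabla_x\cdot\Phi_\delta)$, integrate by parts, and kill the $\partial_\ell\partial_i\psi$ term by antisymmetry. One small correction in your bookkeeping: after the integration by parts the derivative $\partial_\ell$ lands on the product $\eta_{\e_1}\,\partial_j u_0\,\partial_i\psi$, not on $\eta_{\e_1}\psi$ as you wrote, so the three surviving pieces are $\eta_{\e_1}\Phi_\delta\nabla^2 u_0\nabla\psi$, $\Phi_\delta\nabla u_0\nabla\psi\nabla\eta_{\e_1}$, and the antisymmetry-cancelled $\eta_{\e_1}\Phi_\delta\nabla u_0\nabla^2\psi$ --- exactly as the paper records them as $I_{12},I_{13},I_{11}$.
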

\begin{proof}
Note that  
\begin{equation*}
\aligned
-\nabla\cdot A_\e \nabla w_\e
&= \text{\rm div} \big[ ( A_\e +A_\e   (\nabla_y \mathcal{X}_\de)(x,x/\e_1)  -\overline{A}) \nabla u_0  \eta_{\e_1}\big]
+\text{\rm div} \big[
(A_\e   -\overline{A}) \nabla u_0  (1-\eta_{\e_1})\big]\\
&
\quad + \e_1\, \text{\rm div}
\big[ A_\e  \mathcal{X}_\de(x,x/\e_1) \nabla u_0\nabla \eta_{\e_1}  \big]
+\e_1\,  \text{\rm div} \big[
A_{\e}   (\nabla_x \mathcal{X}_\de)(x,x/\e_1)  \nabla u_0 \eta_{\e_1} \big]\\
&\quad +
\e_1\,  \text{\rm div}
\big[ A_\e   \mathcal{X}_\de(x,x/\e_1) \nabla^2 u_0 \eta_{\e_1} \big].
\endaligned
\end{equation*}
This implies that 
\begin{align}\label{pl-c-s-2}
\begin{split}
  \Big|\int_\Omega A_\e
    \nabla w_\varepsilon\cdot \nabla \psi dx\Big| 
    &
    \le \Big|\int_\Omega
   ( A_\e +A_\e   (\nabla_y \mathcal{X}_\de)(x,x/\e_1)  -\overline{A})  \eta_{\e_1}  \nabla u_0 \nabla \psi \, dx \Big|  \\
   &\qquad  + C  \int_\Omega
|  (1- \eta_{\e_1} ) (A_\e   -\overline{A})  \nabla u_0  |\, |\nabla \psi|\, dx \\
&\qquad +  C \e_1 \int_\Omega
|  (\nabla \eta_{\e_1} ) \mathcal{X}_\de(x,x/\e_1) \nabla u_0  |\, |\nabla \psi|\, dx\\
& \qquad + C \e_1 \int_\Omega
| \eta_{\e_1} (\na_x \mathcal{X}_\de)(x,x/\e_1)  \nabla u_0 |\, |\nabla \psi|\, dx\\
& \qquad + C \e_1 \int_\Omega
|\eta_{\e_1} \mathcal{X}_\de(x,x/\e_1) \nabla^2 u_0 | \, |\nabla \psi|\, dx\\
&\doteq I_1+\dots + I_5.
\end{split}
\end{align}
By the definition of $\eta_{\e_1}$, and the estimates for $\mathcal{X}_\de  $ and $ \na_x\mathcal{X}_\de $  in \eqref{chi.infty}, we have  
\begin{align*}
\begin{split}
I_2+\cdots+I_5  &\leq  C \|\na u_0\|_{L^2(\Omega_{4\e_1})} \|\na \psi\|_{L^2(\Omega_{4\e_1})} + C \e_1 \|\nabla^2 u_0\|_{L^2(\Omega\setminus\Omega_{3\e_1})}   \|\na \psi\|_{L^2(\Omega)}\\
&\quad+   C \e_1  L_0 \|\na u_0\|_{L^2(\Omega )}   \|\na \psi\|_{L^2(\Omega)}.
\end{split}
\end{align*}
It remains to  deal with $I_1$. In view of $A_\e(x) = B_\de(x,x/\e)$ and Corollary \ref{coro.fluxCorr}, we have
\begin{equation*}
    \| A_\e +A_\e   (\na_y \mathcal{X}_\de)(x,x/\e_1)  -\overline{A} - (\nabla_y\cdot \Phi_\delta) (x,x/\e_1) \|_{L^\infty} \le C\tau,
\end{equation*}
where $\Phi_\delta = (\Phi_{\delta, \ell i j})$ is given by \eqref{flux-co}.
Observe that
\begin{equation*}
    (\nabla_y\cdot \Phi_\delta) (x,x/\e_1) = \e_1 \nabla\cdot ( \Phi_\delta (x,x/\e_1) ) - \e_1 (\nabla_x \cdot \Phi_\delta) (x,x/\e_1).
\end{equation*}
By the integration by parts, we deduce that 
\begin{align*}
\begin{split}
I_1 
&\leq \e_1 \Big| \int_\Omega     \Phi_{\de,\ell ij}(x,x/\e_1) \pa_{x^j}u_0   \partial_{x^\ell} \pa_{x^i} \psi \eta_\e \Big|  \\
&\quad+ \e_1 \Big| \int_\Omega    \Phi_{\de,\ell ij}(x,x/\e_1)   \partial_{x^\ell}\pa_{x^j}u_0   \pa_{x^i} \psi  \eta_\e\Big|  \\
&\quad+ \e_1 \Big| \int_\Omega     \Phi_{\de,\ell ij}(x,x/\e_1) \pa_{x^j}u_0   \pa_{x^i} \psi  \partial_{x^\ell} \eta_\e \Big| \\ 
&\quad+ \e_1 \int_\Omega  \big|( \partial_{x^\ell} \Phi_{\de,\ell ij})(x,x/\e_1)\big| |\pa_{x^j}u_0 |\, |\pa_{x^i} \psi| \\
    &\quad+ \int_\Omega  C\tau  \, |\nabla u_0|\, | \nabla \psi| \\
   &\doteq I_{11}+\cdots+I_{15}.
\end{split}
\end{align*}
Since $\Phi_\delta$ is antisymmetric (i.e., $\Phi_{\delta, \ell ij} = - \Phi_{\delta, i \ell j}$), we have $I_{11}=0$.  By the definition of $\Phi_\delta$ \eqref{flux-co} and Theorem \ref{thm.Ux-unif}, 
$$\|\Phi_\delta \|_{L^\infty}\leq C \|\widehat{\na}_n \U\|_{L^\infty} \leq C  \quad \text{and} \quad  \|\na_x \Phi_\delta \|_{L^\infty}\leq C \|\na_x \widehat{\na}_n \U\|_{L^\infty} \leq C L_0. $$
It follows that 
\begin{align*}
\begin{split}
I_1 & \le I_{12} +  I_{13} + I_{14} + I_{15}  \\
& \leq  C\e_1 \big\{
L_0 
\|\nabla u_0\|_{L^2(\Omega)}
+ \|\nabla^2 u_0\|_{L^2(\Omega\setminus\Omega_{3\e_1})} \big\} \|\na \psi\|_{L^2(\Omega)}\\
& \qquad +  C
    \|\nabla \psi \|_{L^2(\Omega_{5\e_1})}
    \|\nabla u_0\|_{L^2(\Omega_{4\e_1})} + C \tau  \|\na u_0\|_{L^2(\Omega )}   \|\na \psi\|_{L^2(\Omega)}.
\end{split}
\end{align*}
By taking the estimates of $I_1$ -- $I_5$ into \eqref{pl-c-s-2}, we derive \eqref{rel-c-s} immediately. 
\end{proof}

\begin{remark} \label{remark5.01}
As a consequence of the lemma above, we have the error estimate in $H^1(\Omega)$, 
\begin{align}\label{H-1-est}
\begin{split}
\|  w_\e\|_{H^1(\Omega)}
&\le C   \e_1^{1/2} \| u_0\|^{1/2}_{H^2(\Omega)} \| \nabla u_0\|^{1/2}_{L^2(\Omega)} +C \tau \|\nabla u_0\|_{L^2(\Omega)} \\
&\quad  +C\e_1  \big\{\| u_0\|_{H^2(\Omega)}
+ L_0 \|\nabla u_0\|_{L^2(\Omega)}\big\}.
\end{split}
\end{align} 
Indeed, by taking  $\psi =w_\e$ in \eqref{rel-c-s} we deduce  that  
\begin{align}\label{L-2-bl}
\begin{split}
\|  w_\e\|_{H^1(\Omega)}
&\le C \e_1 \big\{
L_0  \|\nabla u_0\|_{L^2(\Omega)}
+ \|\nabla^2 u_0\|_{L^2(\Omega\setminus \Omega_{3\e_1} )} \big\}\\
&\quad + C \| \nabla u_0\|_{L^2(\Omega_{4\e_1})}  +  C\tau  \|\nabla u_0\|_{L^2(\Omega)} .
\end{split}
\end{align}
Recall that for any $u\in H^1(\Omega)$ and $t>0$,
\begin{equation}\label{bl-est}
\| u\|_{L^2(\Omega_t)}
\le C t^{1/2} \| u\|_{L^2(\Omega)}^{1/2}  \| u\|^{1/2}_{H^1(\Omega)}.
\end{equation}
This together with \eqref{L-2-bl} gives  \eqref{H-1-est}.
\end{remark}

\begin{remark} \label{remark5.2}
{\rm In view of \eqref{lip-Abar}, if $\Omega$ is $C^{1, 1}$ the energy estimate and 
the $H^2$ estimate for $u_0$ imply that 
\begin{align*}
&\| u_0\|_{H^1(\Omega)}
\le C
\big(
\| f\|_{L^2  (\Omega)}
+ \|g\|_{H^{1/2}(\partial\Omega)}
\big),\\
 &\| u_0\|_{H^2(\Omega)}
\le C  (L_0 +1)
\big(
\| f\|_{L^2(\Omega)}
+ \| g\|_{H^{3/2}(\partial\Omega)}
\big).
\end{align*}
 This combined with \eqref{H-1-est} gives  
\begin{equation}\label{remark5.2.re}
 \|  w_\e\|_{H^1(\Omega)}\\
 \le
C \Big\{\e_1^{1/2}  \big(
 1  +  L_0 ^{1/2}
 +\e_1^{1/2}   L_0 \big) +\tau \Big\}
\big(
\|f\|_{L^2(\Omega)}
+ \|g\|_{H^{3/2}(\partial\Omega)}
\big),
\end{equation}
where $C$ depends only on $d$, $n$, $\lm,$ $C_0$, $\Lm_0$ and $\Omega$.}
\end{remark}
	
The following theorem establishes the optimal convergence rate in $L^2(\Omega)$ under the scale-separation condition.
\begin{theorem}\label{thm.con.sep}
In addition to the assumption of Lemma \ref{lemma-convergcen-sep}, 
 let $\Omega$ be a bounded $C^{1,1}$ domain in $\mathbb{R}^d$. 
 Let $u_\e$ be a weak solution of \eqref{eq51}, 
 and $u_0$ the solution of \eqref{limit-eq} with  $f\in L^2(\Omega)$ and $g\in H^{3/2}(\partial\Omega)$.
  Then
  \begin{equation}\label{thm.con.sep.re}
  \|u_\e-u_0\|_{L^{2}(\Omega)}\\
 \leq C
  \big\{\e_1 \big (
  1 +
  L_0 +\e_1 L_0^2\big) +\tau \big(  
  1 +
  \e_1  L_0\big ) 
 \big\}
 \big( \| f\|_{L^2(\Omega)}
 + \|g\|_{H^{3/2}(\partial\Omega)}\big),
  \end{equation}
 where $\tau^2 \simeq \max_j \{ e^{-c\e_{j-1}/\e_j} \}$ with $c$ depending only on $d, n, \lambda, C_0 $ and $ \Lambda_0$, and the constant $C$ depends additionally on $\Omega$. 
\end{theorem}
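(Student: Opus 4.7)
The plan is to upgrade the $H^1$-estimate in Remark \ref{remark5.01} to the sharper $L^2$-estimate via an Aubin--Nitsche style duality argument performed in the $\e$-setting, so that the adjoint multiscale correctors can absorb the oscillating gradient of the dual solution in the boundary layer.

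First I split $u_\e-u_0=w_\e+\e_1\eta_{\e_1}\X_\delta(x,x/\e_1)\nabla u_0$. The corrector piece is bounded directly in $L^2(\Omega)$ by $C\e_1\|\nabla u_0\|_{L^2(\Omega)}$ using the uniform $L^\infty$-bound $\|\X_\delta\|_{L^\infty}\le C$ from Theorem \ref{thm.AppCorrector.X}, which accounts for the leading $\e_1$ term in \eqref{thm.con.sep.re}. For $\|w_\e\|_{L^2}$, the plan is duality: for $h\in L^2(\Omega)$ let $z_\e\in H^1_0(\Omega)$ solve the adjoint $\e$-problem $-\nabla\cdot(A_\e^T\nabla z_\e)=h$. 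Since $w_\e\in H^1_0(\Omega)$, integration by parts gives
\[
\int_\Omega w_\e\,h\,dx=\int_\Omega A_\e\nabla w_\e\cdot\nabla z_\e\,dx,
\]
and I apply Lemma \ref{lemma-convergcen-sep} with $\psi=z_\e$. Using the energy bound $\|\nabla z_\e\|_{L^2(\Omega)}\le C\|h\|_{L^2(\Omega)}$ together with $\|u_0\|_{H^2(\Omega)}\le C(1+L_0)(\|f\|_{L^2(\Omega)}+\|g\|_{H^{3/2}(\partial\Omega)})$ from Remark \ref{remark5.2}, every term in \eqref{rel-c-s} except the boundary-layer product $\|\nabla z_\e\|_{L^2(\Omega_{5\e_1})}\|\nabla u_0\|_{L^2(\Omega_{4\e_1})}$ falls immediately in the desired order.

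The boundary-layer product is the main obstacle, since the crude energy bound on $\nabla z_\e$ alone only yields $O(\e_1^{1/2})$ for the full product instead of $O(\e_1)$. To gain the extra factor $\e_1^{1/2}$, I apply the entire construction of Sections \ref{sec.corrector}--\ref{sec_flux-correctors} to the transpose matrix $A^T$ (which satisfies identical hypotheses): let $z_0\in H^2(\Omega)\cap H^1_0(\Omega)$ solve the adjoint homogenized problem with datum $h$, and let $\X^*_\delta$ denote the associated multiscale correctors. Applying the adjoint version of \eqref{H-1-est}, the remainder $r_\e := z_\e - z_0 - \e_1\eta_{\e_1}\X^*_\delta(x,x/\e_1)\nabla z_0$ satisfies $\|r_\e\|_{H^1(\Omega)}\le C\{\e_1^{1/2}(1+L_0^{1/2}+\e_1^{1/2}L_0)+\tau\}\|h\|_{L^2(\Omega)}$. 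Combining this with $\|\nabla_y\X^*_\delta\|_{L^\infty}\le C$ and the layer inequality \eqref{bl-est} applied to $\nabla z_0$ (recalling $\|z_0\|_{H^2}\le C(1+L_0)\|h\|_{L^2}$) yields
\[
\|\nabla z_\e\|_{L^2(\Omega_{5\e_1})}\le C\{\e_1^{1/2}(1+L_0)+\tau\}\|h\|_{L^2(\Omega)},
\]
and multiplying by the matching layer estimate $\|\nabla u_0\|_{L^2(\Omega_{4\e_1})}\le C\e_1^{1/2}(1+L_0)^{1/2}(\|f\|_{L^2(\Omega)}+\|g\|_{H^{3/2}(\partial\Omega)})$ produces a product of the claimed order.

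Taking the supremum over $\|h\|_{L^2(\Omega)}\le 1$ controls $\|w_\e\|_{L^2(\Omega)}$, and combining with the direct bound from the splitting gives \eqref{thm.con.sep.re}. The only remaining work is careful bookkeeping of the powers of $L_0$, $\e_1$ and $\tau$ through the two $H^2$-regularity cascades for $u_0$ and $z_0$; this is routine but accounts for the precise form of the constants in the right-hand side.
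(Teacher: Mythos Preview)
Your proposal is correct and follows the same duality idea as the paper (Aubin--Nitsche in the $\e$-setting with adjoint multiscale correctors). The execution is organized differently, however. The paper decomposes the adjoint solution $v_\e=\widetilde w_\e+v_0+\e_1\widetilde\eta_{\e_1}\X^*_\delta\nabla v_0$ \emph{before} pairing with $A_\e\nabla w_\e$, splitting the integral into $J_1,J_2,J_3$: for $J_1$ it uses Cauchy--Schwarz together with the two $H^1$ estimates; for $J_2,J_3$ it applies Lemma~\ref{lemma-convergcen-sep} separately with $\psi=v_0$ and $\psi=\e_1\widetilde\eta_{\e_1}\X^*_\delta\nabla v_0$. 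A key trick is that the second cutoff $\widetilde\eta_{\e_1}$ is chosen to vanish on the larger layer $\Omega_{8\e_1}$, so the boundary-layer product in $J_3$ vanishes identically and only the easy $\|\nabla v_0\|_{L^2(\Omega_{5\e_1})}$ appears (in $J_2$). By contrast, you apply Lemma~\ref{lemma-convergcen-sep} once with $\psi=z_\e$ and then bound $\|\nabla z_\e\|_{L^2(\Omega_{5\e_1})}$ by expanding $z_\e$ via the adjoint correctors. That route is perfectly valid: the corrector and remainder contributions in the layer, combined with Young's inequality for the resulting $\e_1^{3/2}(1+L_0)^{3/2}$ term, recover exactly the constants in \eqref{thm.con.sep.re}. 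The paper's two-cutoff decomposition avoids this bookkeeping, but the substance is the same.
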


\begin{proof}
By the definition of  $w_\e$, it suffices to 
 show that $\|w_\e\|_{L^2(\Omega)}$ is bounded by the right-hand side of \eqref{thm.con.sep.re}. 
We perform a duality argument 
 as in  \cite{suslinaD2013} (also see \cite{SZ17, shenbook1}). 
Let $ A^*(x,y_1,\cdots,y_n)$ be the adjoint of ${A}(x,y_1,\cdots,y_n)$.
Note that $A^*$ satisfies the same conditions as $A$.
Let $ \mathcal{X}^*$ and $\Phi^*$ be the corresponding correctors and flux correctors respectively, and 
    $\overline{ A^*}$  be defined by \eqref{hatA},  with $A$ and $ \mathcal{X}$  replaced by $A^*$ and $ \mathcal{X}^*$, respectively.   Note that $ \overline{A^*}= \overline{A}^*$.

 For $G\in C_0^\infty(\Omega)$, let $v_\e$ be the weak solution of the following Dirichlet problem,
\begin{equation*} 
  -  \na \cdot   {A}^*(x, x/\e_1, \cdots,x/\e_n) \na v_{\e}    =G  \quad\text{in } \Omega \,\quad \text{ and }  \,\quad 
 v_\e=0  \quad \text{ on } \partial\Omega,
 \end{equation*}
and $v_0$ the homogenized solution to the problem \eqref{limit-eq} with $\overline{A}$ replaced by $\overline{ A^*}.$ 
Define
 \begin{align*}
\widetilde{ w}_\e (x)=&v_\e-v_0-\e_1  \mathcal{X}^*_\de(x, x/\e_1)   \nabla v_0  \widetilde {\eta}_{\e_1} ,
\end{align*}
where  $\widetilde{\eta}_{\e_1} \in C_0^\infty(\Omega)$ is a cut-off function
such that $0\le \widetilde{\eta}_{\e_1} \le 1$,
$$
\widetilde{\eta}_{\e_1} (x)=1   \text{ in }  \Omega\setminus\Omega_{10\e_1},\quad
\widetilde{\eta}_{\e_1} (x)=0   \text{ in } \Omega_{ 8 \e_1 },
$$
and $|\nabla \widetilde{\eta}_{\e_1} | \le C \e_1^{-1}$. By Remark \ref{remark5.2}, we have
\begin{equation}\label{est.v0H1H2}
    \| v_0 \|_{H^1(\Omega)} \le C\| G\|_{L^2(\Omega)}, \qquad \| v_0 \|_{H^2(\Omega)} \le C(1+L_0) \| G\|_{L^2(\Omega)},
\end{equation}
and
\begin{align}\label{4.12}
 \| \widetilde{w}_\va\|_{H^1 (\Omega)} \leq 
 C \big\{\e_1^{1/2}  \big(
 1  +  L_0^{1/2}
 +\e_1^{1/2}   L_0 \big) +\tau  \big\}
  \| G\|_{L^2(\Omega)}.
  \end{align}
The proof of (\ref{4.12}) is the same as that of \eqref{remark5.2.re}.

Now, note that
\begin{align*}
  \Big|\int_\Omega w_\varepsilon\cdot G\,  dx\Big|
  &=\Big|\int_\Omega {A}_\e (x)\nabla w_\varepsilon\cdot\nabla v_\varepsilon \, dx\Big|\nonumber\\
  &\leq \Big|\int_\Omega {A}_\e(x)\nabla w_\varepsilon\cdot\nabla \widetilde{w}_\varepsilon \, dx\Big|
  +\Big|\int_\Omega {A}_\e(x)\nabla w_\varepsilon\cdot\nabla v_0 \, dx\Big|\nonumber\\
  &\quad
  +\e_1 \Big|\int_\Omega {A}_\varepsilon(x)\nabla w_\varepsilon\cdot
  \nabla\big[
   \mathcal{X}^*_\de(x, x/\e_1) \nabla v_0 \widetilde{\eta}_{\e_1}  \big] dx\Big|\nonumber\\
  &\doteq J_1+J_2+J_3.
\end{align*}

By using the Cauchy inequality, \eqref{remark5.2.re} and \eqref{4.12}, we obtain
\begin{equation*} 
\aligned
J_1
& \le C \|\nabla w_\e \|_{L^2(\Omega)}
\|\nabla \widetilde{w}_\e \|_{L^2(\Omega)}\\
& \le
C \big\{\e_1 \big(1  +    L_0 + \e_1 L_0^2\big) +\tau^2
 \big\}
\big( \|f\|_{L^2(\Omega)}
+ \|g\|_{H^{3/2}(\partial\Omega)}\big)
\| G \|_{L^2(\Omega)}.
\endaligned
\end{equation*}

Next, we use \eqref{rel-c-s} to obtain 
\begin{equation}\label{4.13}
 \aligned
 J_2
 &\le C\e_1 \|\nabla v_0\|_{L^2(\Omega)}
\big\{L_0 \|\nabla u_0\|_{L^2(\Omega)}
+ \|\nabla^2 u_0\|_{L^2(\Omega\setminus\Omega_{3\e_1})} \big\} \\
&\quad +  C
    \|\nabla v_0\|_{L^2
    (\Omega_{5\e_1})}
    \|\nabla u_0\|_{L^2(\Omega_{4\e_1})} +  C\tau  \|\nabla v_0\|_{L^2(\Omega)}
\|\nabla u_0\|_{L^2(\Omega)}.
 \endaligned
 \end{equation}
 Note that by (\ref{bl-est}),
 $$
  \|\nabla v_0\|_{L^2(\Omega_{5\e_1})}
 \|\nabla u_0\|_{L^2(\Omega_{4\e_1})}
 \le C \e_1 \| \nabla v_0\|_{L^2(\Omega)}^{1/2} \| v_0\|_{H^2(\Omega)}^{1/2}
 \| \nabla u_0\|_{L^2(\Omega)}^{1/2}
 \| u_0\|_{H^2(\Omega)}^{1/2}.
 $$
 This, together with (\ref{4.13}), \eqref{est.v0H1H2} and Remark \ref{remark5.2},
 gives
 \begin{equation*} 
 J_2 \le C \big\{\e_1 (1+L_0) +\tau  \big \}
 \big( \|f\|_{L^2(\Omega)}
+ \|g\|_{H^{3/2}(\partial\Omega)}\big)
\| G \|_{L^2(\Omega)}.
 \end{equation*}

The estimate of $J_3$ is similar to that of $J_2$.
By \eqref{rel-c-s} we see that
\begin{align*} 
\begin{split}
J_3
&\le C\e_1^2  \| \nabla   \big( \mathcal{X}^*_\de 
    \nabla v_0 \widetilde{\eta}_{\e_1} \big)  \|_{L^2(\Omega)}
  \big\{
  L_0 \|\nabla u_0\|_{L^2(\Omega)}
  +\|\nabla^2 u_0\|_{L^2(\Omega)} \big\}\\
  &\quad  + C\tau \e_1 \| \nabla \big( \mathcal{X}^*_\de
    \nabla v_0 \widetilde{\eta}_{\e_1}  \big) \|_{L^2(\Omega)} \|\na u_0\|_{L^2(\Omega)}. \end{split}
 \end{align*}
   Note that by the estimates of $\mathcal{X}^*_\de $ as in \eqref{chi.infty} and \eqref{naxchi.infty},
  $$
  \aligned
   &  \| \nabla\big[
  \mathcal{X}^*_\de \nabla v_0 \widetilde{\eta}_{\e_1} \big]\|_{L^2(\Omega)}\\
  &\le \| 
   (\nabla \widetilde{\eta}_{\e_1})  \mathcal{X}^*_\de  \nabla v_0 \|_{L^2(\Omega)}
  + \|  \widetilde{\eta}_{\e_1} (\na_x \mathcal{X}^*_\de)(x,x/\e_1) \nabla v_0 \big] \|_{L^2(\Omega)}\\
&\quad
  + \e_1^{-1} \|  \widetilde{\eta}_{\e_1}  (\na_y \mathcal{X}^*_\de)(x,x/\e_1) \nabla v_0 \|_{L^2(\Omega)}
  + \|  \widetilde{\eta}_{\e_1} \mathcal{X}^*_\de \nabla^2 v_0 \|_{L^2(\Omega)}\\
  &\le C \e_1^{-1} \|\nabla v_0\|_{L^2(\Omega)} +  C   L_0 \|\nabla v_0\|_{L^2(\Omega)} 
  + C \|\nabla^2 v_0\|_{L^2(\Omega)} \\
  & \le C(\e_1^{-1} + 1+L_0) \| G \|_{L^2(\Omega)}.
  \endaligned
  $$
  It follows that
$$
\aligned
J_3  & \le C \e_1 \big\{ \big (1+ \e_1 L_0 \big)\|G \|_{L^2(\Omega)} \big\} \cdot\big\{ L_0 \| \nabla u_0\|_{L^2(\Omega)}
+ \|\nabla^2 u_0\|_{L^2(\Omega)} \big\}\\
&\quad+ C \tau \big\{ \big (1+ \e_1L_0 \big)\|G \|_{L^2(\Omega)} \big\}  \|\nabla u_0\|_{L^2(\Omega)} \\
&\le C \big\{  \e_1 \big(1+L_0 \big) + \tau \big\}\big(1+\e_1 L_0\big) 
 \big( \|f\|_{L^2(\Omega)}
+ \|g\|_{H^{3/2}(\partial\Omega)}\big)
\| G \|_{L^2(\Omega)}.
\endaligned
$$
By combining  the estimates of $J_1, J_2$ and $J_3$, we obtain for any $G \in C_0^\infty(\Omega)$,
$$
\aligned
   \Big|\int_{\Omega}w_\varepsilon\cdot G  \, dx\Big| 
&   \le C
  \big\{ \e_1 \big(
  1 +  L_0  +\e_1 L_0^2    \big) +\tau \big(
  1 +\e_1  L_0\big) \big\}\\
  &\quad \times \big\{ \|f\|_{L^2(\Omega)} +  \|g\|_{H^{3/2}(\partial\Omega)} \big\}  \|G\|_{L^2(\Omega)},
\endaligned
$$
from which the desired estimate for $w_\e$  follows by duality.
\end{proof}

\begin{remark}
    In particular, if $A = A(y_1,\cdots, y_n)$ is independent of $x$, then \eqref{thm.con.sep.re} holds with $L_0 = 0$. This is exactly \eqref{est.MainRate}. Thus, we have proved Theorem \ref{thm.MainRate} under the extra scale-separation condition \eqref{cond.ej.sepcond}.
\end{remark}

\subsection{Remove scale-separation conditions}\label{sec.Remove-Scale-Separation}
In this subsection, we will remove the scale-separation conditions and fully prove Theorem \ref{thm.MainRate}. Note that in the case of two scales, the scale-separation condition \eqref{cond.ej.sepcond} reads $\e_2/\e_1 \le c$ for some $c>0$. This condition can be assumed automatically since otherwise the convergence rate is trivial. Thus we focus on the cases of more than two scales.

Assume that the scales $(\e_1, \cdots, \e_n)$ do not satisfy the scale-separation condition \eqref{cond.ej.sepcond}.
Starting from the smallest scale $\e_n$ and with at least three scales, i.e., $(\e_{n-2}, \e_{n-1}, \e_n)$, we examine if the scale-separation condition is satisfied for these scales. If the scale-separation condition is satisfied, then we add one more scale $\e_{n-3}$ and examine if the scales $(\e_{n-3}, \e_{n-2}, \e_{n-1}, \e_n)$ satisfy the scale-separation condition. Generally, we can keep adding scales until the scale-separation condition is not satisfied. This process helps us locate the scale gap where the scale-separation condition fails and therefore we can apply the reiterated homogenization at this scale gap.
In order to make this idea working for obtaining the optimal convergence rate, we need another subtle modification. In the initial case of three scales, we will examine a slightly stronger separation condition by replacing $c_j$ by $2^{3-n} c_j$ in \eqref{cond.ej.sepcond}. Then each time we add one more scale in, we increase the constant by multiplying a factor $2$. Since we will add at most $n-3$ scales, the constant will increase at most to $ c_j$. Now suppose the above process stops at $\e_{n-m}$ for some $m$ with $2\le m\le n$, namely, $(\e_{n-m+1}, \cdots, \e_n)$ satisfies the scale-separation condition with constant $2^{m-n} c_j$ and $(\e_{n-m}, \e_{n-m+1}, \cdots, \e_n)$ does not satisfy the scale-separation condition with constant $2^{m+1-n}c_j$. 
Precisely, this means that $m \ge 2$ is the largest integer such that for all $n-m+2 \le j\le n$
\begin{equation}\label{est.msej}
    \e_j \le \frac{2^{m-n} c_j \e_{j-1}}{1+ \log(\e_{n-m+1}/\e_{j-1})}.
\end{equation}
Moreover, there exists some $n-m+1 \le p \le n$ such that
\begin{equation}\label{est.msep}
    \e_{p} > \frac{ 2^{m+1-n} c_{p} \e_{p-1} }{1+ \log(\e_{n-m} /\e_{p-1} )}.
\end{equation}
Note that if \eqref{est.msej} holds for $m = n$, then $(\e_1,\cdots, \e_n)$ satisfies the scale-separation condition \eqref{cond.ej.sepcond}.

The following lemma is crucial in removing the scale-separation condition in our main theorem.
\begin{lemma}\label{lem.sep.group} 
    Let $2\le m< n$ be selected as above.  Then we 
    have either $$\e_{n-m+1} > 2^{m+1-n} c_{n-m+1} \e_{n-m},\quad \text{i.e., } p = n-m+1\text{ in \eqref{est.msep}},$$ or  \begin{equation}\label{est.ep-2}
    p \ge n-m+2 \quad\text{and}\quad  
        \e_p > \frac{2^{m-n} c_p \e_{p-1}}{1+ \log(\e_{n-m}/\e_{n-m+1})}.
    \end{equation}
\end{lemma}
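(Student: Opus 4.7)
The plan is to split into two cases according to the value of the index $p$ furnished by \eqref{est.msep}. The first case, $p = n-m+1$, is immediate: in \eqref{est.msep} the denominator $1 + \log(\e_{n-m}/\e_{p-1})$ collapses to $1$ because $\e_{p-1} = \e_{n-m}$, so \eqref{est.msep} reads exactly $\e_{n-m+1} > 2^{m+1-n} c_{n-m+1} \e_{n-m}$, i.e.\ the first alternative of the lemma.

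The substantive case is $p \ge n-m+2$. Here I would combine the upper bound \eqref{est.msej} for this specific $p$ (available because $p \ge n-m+2$) with the lower bound \eqref{est.msep}. Canceling the common factors $\e_p$ and $\e_{p-1}$ and decomposing
\[
\log(\e_{n-m}/\e_{p-1}) = \log(\e_{n-m}/\e_{n-m+1}) + \log(\e_{n-m+1}/\e_{p-1}),
\]
the factor-$2$ gap between $2^{m+1-n}$ and $2^{m-n}$ produces the key intermediate inequality
\[
1 + \log(\e_{n-m+1}/\e_{p-1}) < \log(\e_{n-m}/\e_{n-m+1}).
\]
Morally, this says that the failure of scale-separation at the gap $(\e_{n-m},\e_{n-m+1})$ dominates the cumulative separation accumulated across the smaller scales $\e_{n-m+1},\ldots,\e_{p-1}$ that are already controlled by \eqref{est.msej}.

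Plugging this intermediate inequality back into the decomposition of $1 + \log(\e_{n-m}/\e_{p-1})$ yields
\[
1 + \log(\e_{n-m}/\e_{p-1}) < 2\log(\e_{n-m}/\e_{n-m+1}) \le 2\bigl(1 + \log(\e_{n-m}/\e_{n-m+1})\bigr).
\]
Substituting this upper bound for the denominator into the right-hand side of \eqref{est.msep} absorbs the factor $2$ into $2^{m+1-n}$, converts the constant to $2^{m-n}$, and replaces the denominator by $1+\log(\e_{n-m}/\e_{n-m+1})$; this is exactly \eqref{est.ep-2}. The argument is purely algebraic, so I do not anticipate a genuine obstacle; the only delicate point is the careful bookkeeping of the factor of $2$ separating the two scale-separation constants, which is precisely what makes the weakened denominator admissible.
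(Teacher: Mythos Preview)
Your proposal is correct and follows essentially the same route as the paper's proof: combine \eqref{est.msej} and \eqref{est.msep} for $p\ge n-m+2$ to obtain $1+\log(\e_{n-m+1}/\e_{p-1}) < \log(\e_{n-m}/\e_{n-m+1})$ (equivalently, $1+\log(\e_{n-m}/\e_{p-1}) < 2\log(\e_{n-m}/\e_{n-m+1})$), then feed this back into \eqref{est.msep} to absorb the factor $2$ and reach \eqref{est.ep-2}. Your handling of the case $p=n-m+1$ and the bookkeeping of the factor $2$ are exactly what is needed.
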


\begin{proof}
    It suffices to consider $p \ge n-m+2$. Then by \eqref{est.msej} and \eqref{est.msep}, we have
    \begin{equation*}
        \frac{ 2^{m+1-n} c_{p} \e_{p-1} }{1+ \log(\e_{n-m} /\e_{p-1} )} \le \e_p \le \frac{2^{m-n} c_p \e_{p-1}}{1+ \log(\e_{n-m+1}/\e_{p-1})}.
    \end{equation*}
    This yields
    \begin{equation*}
        1+ \log(\e_{n-m}/\e_{p-1}) \le  2\log(\e_{n-m}/\e_{n-m+1}).
    \end{equation*}
    Inserting this into \eqref{est.msep}, we arrive at \eqref{est.ep-2}.
\end{proof}

We are now ready to provide the proof of Theorem \ref{thm.MainRate}. 

\begin{proof} [\textbf{Proof of Theorem \ref{thm.MainRate}}]
The case of $n=1$ is well-known \cite{BLP78,klsal2,shenbook1}.  
For the case $n=2$,  the desired estimate follows directly from Theorem \ref{thm.con.sep}. Indeed, when $n=2$, i.e, $A_\e=A(x/\e_1,x/\e_2)$, the scale-separation condition \eqref{cond.ej.sepcond} reads $\e_2 \le c\e_1$, which can be assumed automatically for otherwise the convergence rate is trivial.  Hence for $n=2$ we directly obtain the optimal convergence rate from Theorem \ref{thm.con.sep} with $L_0 = 0$ and $\tau \simeq e^{-c\e_1/\e_2}$,
\begin{equation*}
  \|u_\e-u_0\|_{L^{2}(\Omega)}
 \leq C
  \big(\e_1 + e^{-c\e_1/\e_2} \big)
 \big( \| f\|_{L^2(\Omega)}
 + \|g\|_{H^{3/2}(\partial\Omega)}\big). 
\end{equation*}

Let us consider the case $n\geq 3$ next. 
The proof combines the ideas of reiterated homogenization and simultaneous homogenization mentioned above. 
Assume the theorem is true if the number of scales is smaller than $n$. We need to show the theorem holds for $n$ scales.  Let $m \ge 2$ be the largest integer such that \eqref{est.msej} holds for all $n-m+2\le j \le n$. If $m = n$, as mentioned earlier, the scale-separation condition \eqref{cond.ej.sepcond} is satisfied. Then the desired convergence rate follows from Theorem \ref{thm.con.sep} with $L_0 = 0$.

Suppose now $m < n$. 
We use the idea of reiterated homogenization and consider the matrix
$$
\mathcal{A} (x, y_{n-m+1},\cdots, y_n) := A(x/\e_1, \dots, x/\e_{n-m},  y_{n-m+1}, \cdots, y_n).
$$
It is obvious that $\mathcal{A}$ satisfies the ellipticity condition \eqref{as.ellipticity} and is
1-periodic in $y_j$ for $n-m+1 \le j\le n$.

With an abuse of notation, let $\bfD_m = \bfD_{n,m} = (\nabla_{y_{n-m+1}}, \cdots, \nabla_{y_n})$. Then 
 \begin{equation*}
     \bfD_{m}^\ell \nabla_x \mathcal{A}(x,\cdot) = \sum_{i=1}^{n-m} \e_i^{-1} (\nabla_{y_i}  \bfD_{m}^\ell A)(x/\e_1, \cdots, x/\e_{n-m}, \cdot).
 \end{equation*}
Consequently, we have
\begin{equation*}
    \sup_x \|  \bfD_{m}^\ell \mathcal{A}(x,\cdot) \|_{L^\infty} \le C_0 \Lambda_0^\ell \ell!,
\end{equation*}
and
\begin{equation*}  
    \sup_x \|  \bfD_{m}^\ell \nabla_x \mathcal{A}(x,\cdot) \|_{L^\infty} \le \frac{C_0}{\e_{n-m}} \Lambda_0^{\ell+1} (\ell+1)!\le   \frac{C_1}{\e_{n-m}} (2\Lambda_0)^{\ell} \ell!,
\end{equation*}
where $C_1 = C_0 \Lambda_0$ and
we have used $\ell+1 \le 2^\ell$ in the last inequality.
Hence the matrix $\mathcal{A}$  satisfies the analyticity conditions \eqref{ass.Ax}  and \eqref{ass.DxAx} with $L_0 = C_1 \e_{n-m}^{-1}$ and $\Lambda_0$ replaced by $2\Lambda_0$. 
We now homogenize the scales $\e_{n-m+1},\cdots, \e_n$  simultaneously. Let $$\overline{\mathcal{A}}_{\e'} (x)=\overline{\mathcal{A}} (x/\e_1,\cdots, x/\e_{n-m})$$ be the matrix of effective coefficients given by Theorem \ref{thm.con.sep}, where $\e' = (\e_1,\cdots, \e_{n-m})$, and $v_{\e'}$ be the solution to
\begin{equation*}
    \left\{ \begin{aligned}
        -\na \cdot \overline{\mathcal{A}}_{\e'} \na v_{\e'} & =f  \quad\text{in  }  \Omega,\\
        v_{\e'} & =g \quad\text{on }  \pa\Omega.
    \end{aligned}
    \right.    
\end{equation*}
As $(\e_{n-m+1}, \cdots, \e_n)$ satisfies the scale-separation condition \eqref{est.msej}, which is a version of \eqref{cond.ej.sepcond} with $m$ scales, we apply Theorem \ref{thm.con.sep}, with $\e_1$ replaced by $\e_{n-m+1}$, to conclude that
 \begin{align}\label{pro.thm.MainRate1}
 \begin{split}
 &\|u_\e -v_{\e'} \|_{L^2(\Omega)}  
  \leq  C\bigg\{\frac{\e_{n-m+1}}{\e_{n-m}}
   + \max_{n-m+1 \le i\le n-1} \{ e^{-c\e_{i}/\e_{i+1}} \}
 \bigg\}
 \big\{ \|f\|_{L^2(\Omega)} +  \|g\|_{H^{3/2}(\partial\Omega)} \big\} .
 \end{split}
 \end{align}
 In view of Lemma \ref{lem.sep.group}, by our choice of $m$,
 we have either
 $$\e_{n-m+1} > c_0  \e_{n-m},$$ for some $c_0>0$,  or there exists $ p \ge n-m+2$ such that
    \begin{equation*}
        \e_p > \frac{2^{m-n} c_p \e_{p-1}}{1+ \log(\e_{n-m}/\e_{n-m+1})}.
    \end{equation*}
 In the first case, $\e_{n-m}$ and $\e_{n-m+1}$ are not separated well and therefore the convergence rate in \eqref{est.MainRate} is trivial as $\e_{n-m}/\e_{n-m+1}\simeq 1$.  In the second case,  $\e_{n-m+1}$ and $\e_{n-m}$ are separated better comparing to the separation between  $\e_{p}$ and $\e_{p-1}$,
 and we have
\begin{equation}\label{est.breakpoint}
    \frac{\e_{n-m+1}}{\e_{n-m}} \le C_0 e^{-c_p\e_{p-1}/\e_p},
\end{equation}
which together with \eqref{pro.thm.MainRate1} gives
\begin{align}\label{pro.thm.MainRate2}
 \begin{split}
  \|u_\e -v_{\e'}\|_{L^2(\Omega)}  \le  C
   \max_{n-m+1 \le i\le n-1} \{ e^{-c\e_{i}/\e_{i+1}} \}
  \big\{ \|f\|_{L^2(\Omega)} +  \|g\|_{H^{3/2}(\partial\Omega)} \big\}.
 \end{split}
 \end{align}
We note that the application of reiterated homogenization on the break point between $\e_{n-m+1}$ and $\e_{n-m}$ does not cause an essential loss on the convergence rate due to \eqref{est.breakpoint}.

On the other hand, since $\overline{\mathcal{A}}(y_1,\cdots,y_{n-m})$ satisfies the assumptions \eqref{as.ellipticity}--\eqref{as.analyticity}. By the inductive assumption, there exists a constant (effective) matrix $\overline{A}$,  and a unique  weak solution $u_0$ to the problem
\begin{equation*}
    \left\{ \begin{aligned}
        -\nabla\cdot \overline{A} \nabla u_0 & =f  \quad\text{in  }  \Omega,\\
        u_0 & =g \quad\text{on }  \pa\Omega,
    \end{aligned}
    \right.      
\end{equation*}
such that
    \begin{equation*}
        \| v_{\e'} - u_0 \|_{L^2(\Omega)} \le C\big( \e_1 + \max_{1\le i\le n-m-1} \{ e^{-c\e_{i}/\e_{i+1}} \} \big) \big\{ \|f\|_{L^2(\Omega)} +  \|g\|_{H^{3/2}(\partial\Omega)} \big\}, 
    \end{equation*}
    where $C$ and $c$ are constants depending only on characters of $A$ and $\Omega$.
This together with \eqref{pro.thm.MainRate2}  gives  \eqref{est.MainRate}. The proof is complete.
\end{proof}

\begin{remark} \label{re-on-thm1} 
The main result of Theorem \ref{thm.MainRate} could be extended to the more general case $A_\e = A(x,x/\e_1,\cdots, x/\e_n)$ satisfying  conditions \eqref{as.ellipticity}, \eqref{as.periodicity}, \eqref{ass.Ax} and \eqref{ass.DxAx}.  In this case, the homogenized coefficient matrix $\overline{A}(x)$ depends on $x$ smoothly (Lipschitz), and the same optimal convergence rate \eqref{est.MainRate} can be established analogously with slight modifications. 
\end{remark}

\section{Uniform Lipschitz estimates}
\label{sec.Lip}
This section is devoted to the proof of Theorem  \ref{thm.lip.est} by the quantitative approach formulated in \cite{shenan2017}, originating from \cite{AS16}.

 \begin{lemma} \label{lem.appro}  Assume that $A=A(x,y_1,\cdots,y_n)$  satisfies  conditions \eqref{as.ellipticity}, \eqref{as.periodicity}, \eqref{ass.Ax} and \eqref{ass.DxAx}.  
 Let $u_\e$ be a weak solution to  
 \begin{equation*}
     -\nabla\cdot A(x,x/\e_1,\cdots, x/\e_n)\nabla u_\e = f \quad \text{ in } B_1.
 \end{equation*}
Then there exists a  matrix $\overline{A}(x)$ such that for each $r\in (\e_1, 1/2)$, there is a solution $u^r$ to 
\begin{equation}\label{eq.ur.inBr}
    -\nabla\cdot \overline{A}(x) \na u^r = f \quad \text{ in } B_r,
\end{equation}
such that
\begin{align} \label{lem.appro.re}
\begin{split}
    \bigg( \fint_{B_r} |u_\e - u^r|^2 \bigg)^{1/2} \le &C\bigg\{ \Big( \frac{\e_1}{r} \Big)^{1/2} + \max_{1\le i\le n-1} e^{-c\e_i/\e_{i+1}} \bigg\}\\&\quad\times \bigg\{ \bigg( \fint_{B_{2r}} |u_\e|^2 \bigg)^{1/2} + r^2\bigg( \fint_{B_{2r}} |f|^2 \bigg)^{1/2} \bigg\},
\end{split}
\end{align}
where the constants $C, c$ depend only on the characters of $A$ (including $L_0$). 
\end{lemma}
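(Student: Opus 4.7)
The plan is to rescale to the unit ball, invoke Theorem \ref{thm.MainRate} (in the form of Remark \ref{re-on-thm1}), and handle the low regularity of the boundary trace via a boundary layer argument. For $r\in(\e_1,1/2)$, set $v(y):=u_\e(ry)$ and $\hat f(y):=r^2 f(ry)$ for $y\in B_2\subset B_{1/r}$; then $v$ satisfies $-\nabla_y\cdot\hat A\nabla_y v=\hat f$ in $B_2$ with $\hat A(y):=A(ry,y/\hat\e_1,\dots,y/\hat\e_n)$ and $\hat\e_i:=\e_i/r\in(0,1)$. The scale ratios $\hat\e_i/\hat\e_{i+1}=\e_i/\e_{i+1}$ are preserved, and the Lipschitz constant of $\hat A$ in its $x$-slot is bounded by $rL_0\le L_0$, so the hypotheses of Theorem \ref{thm.MainRate} remain valid for $\hat A$.

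Let $\overline{\hat A}$ denote the effective matrix produced by Theorem \ref{thm.MainRate} for $\hat A$, and let $\bar v\in H^1(B_1)$ be the weak solution of $-\nabla\cdot\overline{\hat A}\nabla\bar v=\hat f$ in $B_1$ with $\bar v=v$ on $\partial B_1$. Setting $u^r(x):=\bar v(x/r)$ and $\overline A(x):=\overline{\hat A}(x/r)$, one checks that $u^r$ solves \eqref{eq.ur.inBr}, and that after rescaling \eqref{lem.appro.re} reduces to
\begin{equation*}
\|v-\bar v\|_{L^2(B_1)}\le C\bigl\{(\hat\e_1)^{1/2}+\max_i e^{-c\e_i/\e_{i+1}}\bigr\}\bigl\{\|v\|_{L^2(B_2)}+\|\hat f\|_{L^2(B_2)}\bigr\}.
\end{equation*}

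The main obstacle is that $v|_{\partial B_1}$ lies only in $H^{1/2}$, so Theorem \ref{thm.MainRate} cannot be applied directly --- it uses an $H^{3/2}$ boundary trace to obtain the $H^2$-estimate of $\bar v$ that enters Lemma \ref{lemma-convergcen-sep}. To bypass this, I would perform the classical boundary layer decomposition. Introduce a cutoff $\eta\in C_0^\infty(B_1)$ with $\eta\equiv 1$ on $B_{1-\sigma}$ and $|\nabla\eta|\lesssim\sigma^{-1}$, form the two-scale corrector ansatz $w:=v-\bar v-\hat\e_1\eta\,\mathcal{X}_\delta(ry,y)\nabla\bar v$ as in \eqref{def.w}, and repeat the weak-form calculation of Lemma \ref{lemma-convergcen-sep}. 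The interior contribution is controlled by $\{\hat\e_1+\max e^{-c\e_i/\e_{i+1}}\}\|\bar v\|_{H^2(B_{1-\sigma})}$, which by interior Calderon--Zygmund is $\lesssim\sigma^{-1}\|\nabla\bar v\|_{L^2(B_1)}$, while the boundary layer contribution is $\lesssim\|\nabla\bar v\|_{L^2(B_1\setminus B_{1-\sigma})}$. Combining the two via the trace-type bound $\|u\|_{L^2(\Omega_t)}\lesssim t^{1/2}\|u\|_{L^2(\Omega)}^{1/2}\|u\|_{H^1(\Omega)}^{1/2}$ used in Remark \ref{remark5.01}, and optimizing $\sigma\asymp\hat\e_1^{1/2}$, converts the linear factor $\hat\e_1$ into $\hat\e_1^{1/2}=(\e_1/r)^{1/2}$; the purely interior exponential ratio terms pass through unaffected.

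Finally, Caccioppoli's inequality applied to $v$ and $\bar v$ on $B_1\subset B_2$ yields $\|\nabla v\|_{L^2(B_1)}+\|\nabla\bar v\|_{L^2(B_1)}\lesssim\|v\|_{L^2(B_2)}+\|\hat f\|_{L^2(B_2)}$, closing the reduced estimate. Unrolling the rescaling via the identities $(\fint_{B_r}|u_\e-u^r|^2)^{1/2}=(\fint_{B_1}|v-\bar v|^2)^{1/2}$ and $(\fint_{B_{2r}}|u_\e|^2)^{1/2}=(\fint_{B_2}|v|^2)^{1/2}$ (together with the analogous identity for $\hat f$ producing the $r^2$ factor) recovers \eqref{lem.appro.re}. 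The truly delicate step is the boundary layer trade-off producing the $(\e_1/r)^{1/2}$ exponent --- a standard argument that follows the blueprint of \cite{shenbook1}, adapted here to the multiscale setting.
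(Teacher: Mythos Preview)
Your rescaling and the identification of the low-regularity boundary trace as the main obstacle are both correct and match the paper's setup. However, the boundary-layer argument you outline has a genuine gap. You propose to control the boundary term $\|\nabla\bar v\|_{L^2(B_1\setminus B_{1-\sigma})}$ via the trace-type inequality \eqref{bl-est}, but applying that inequality to $u=\nabla\bar v$ requires $\nabla\bar v\in H^1(B_1)$, i.e.\ $\bar v\in H^2(B_1)$ globally. This is precisely the regularity that is \emph{unavailable}: $\bar v$ solves a Dirichlet problem with boundary data $v|_{\partial B_1}\in H^{1/2}(\partial B_1)$ only. The interior Calder\'on--Zygmund estimate you invoke gives $\|\bar v\|_{H^2(B_{1-\sigma})}\lesssim\sigma^{-1}\|\nabla\bar v\|_{L^2(B_1)}$, but this does not feed back into \eqref{bl-est}, which needs the full $H^2(B_1)$ norm. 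Without a nontrivial bound on $\|\nabla\bar v\|_{L^2(\Omega_\sigma)}$, the optimization in $\sigma$ cannot produce the factor $(\e_1/r)^{1/2}$; you are left with an $O(1)$ boundary contribution. A Meyers-type $L^{2+\delta}$ estimate would give \emph{some} decay in $\sigma$, but only a suboptimal exponent, not the $1/2$ that the lemma claims.

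The paper circumvents this by a different mechanism. After rescaling to $r=1$, for each $t\in(1,3/2)$ it solves the effective Dirichlet problem on $B_t$ with data $u_\e|_{\partial B_t}$ and records two estimates: the sharp rate from Remark~\ref{re-on-thm1},
\[
\|u_\e-u_0^t\|_{L^2(B_1)}\le C\bigl(\e_1+\max_i e^{-c\e_i/\e_{i+1}}\bigr)\bigl(\|u_\e\|_{H^{3/2}(\partial B_t)}+\|f\|_{L^2}\bigr),
\]
and the trivial energy bound $\|u_\e-u_0^t\|_{L^2(B_1)}\le C(\|u_\e\|_{H^{1/2}(\partial B_t)}+\|f\|_{L^2})$. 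Since the dependence on the boundary data is linear, real interpolation between these gives the $H^1(\partial B_t)$ trace with prefactor $(\e_1^{1/2}+\max_i e^{-c'\e_i/\e_{i+1}})$. The $H^1(\partial B_t)$ norm is then removed by \emph{averaging $u_0^t$ over $t\in(1,3/2)$} and using the co-area formula, so that $\int_1^{3/2}\|u_\e\|_{H^1(\partial B_t)}^2\,dt\le\|u_\e\|_{H^1(B_{3/2})}^2$, which Caccioppoli bounds by $\|u_\e\|_{L^2(B_2)}+\|f\|_{L^2(B_2)}$. This interpolation-plus-radius-averaging device is what replaces the boundary-layer optimization you attempted.
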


\begin{proof}
Fix $r\in (\e_1, 1/2)$ and let  $v (x)= u_\e (rx)$. It follows that $v$ satisfies the equation
  $$-\na \cdot  \widetilde{A} (x, x/\e'_1,\cdots, x/\e'_n) \nabla v = \widetilde{f} \quad\text{ in }  B_2 ,$$
where $\widetilde{A} (x, y_1,\cdots,y_n)=A(rx, y_1,\cdots, y_n)$, $\e'_i=\e_i/r$, and $\widetilde{f}(x)=r^2 f(rx)$. Also, observe that 
$$
\sup_{x\in \R^d} \| \bfD^\ell_n \widetilde{A}(x,\cdot) \|_{L^\infty}=   \sup_{x\in \R^d} \| \bfD^\ell_n A(x,\cdot) \|_{L^\infty}, 
$$ 
$$
\sup_{x\in \R^d} \| \bfD^\ell_n \na_x \widetilde{A}(x,\cdot) \|_{L^\infty}=  r \sup_{x\in \R^d} \| \bfD^\ell_n \na_x A(x,\cdot) \|_{L^\infty}.
$$ 
This shows that \eqref{lem.appro.re} can be rescaled to the case $r = 1$.
We therefore assume $r=1$ hereafter. 

Now, suppose that $-\na \cdot A (x, x/\e_1,\cdots, x/\e_n)\nabla  u_\e  =f$ in $B_{2}$. For $1<t<3/2$, by Remark \ref{re-on-thm1}, there exist a  matrix $\overline{A }(x) $ and the weak solution $u_0^t$ to 
$$
-\na \cdot \overline{A }(x) \na u^t_0= f \quad \text{in  }  B_{t}
\quad \text{ and }
\quad
u^t_0=u_\e \quad \text{on } \partial B_{t}, 
$$
such that 
\begin{equation*}
    \| u_\e - u_0^t \|_{L^2(B_1)} \le C (\e_1 + \max_{1\le i\le n-1} e^{-c\e_i/\e_{i+1}}) (\| u_\e\|_{H^{3/2}(\partial B_t)} + \| f\|_{L^2(B_t)} ).
\end{equation*}
The trivial energy estimates for both $u_\e$ and $u^t_0$ imply
\begin{equation*}
    \| u_\e - u_0^t \|_{L^2(B_1)} \le C \big( \| u_\e \|_{H^{1/2}(\partial B_t)} + \| f\|_{L^2(B_t)} \big).
\end{equation*}
An interpolation argument yields
\begin{equation*}
    \| u_\e - u_0^t \|_{L^2(B_1)} \le C \big(\e_1^{1/2} + \max_{1\le i\le n-1} e^{-\frac12 c\e_i/\e_{i+1}}\big)\big (\| u_\e \|_{H^{1}(\partial B_t)} + \| f\|_{L^2(B_t)} \big).
\end{equation*}

Let
\begin{equation*}
    u_0 = 2\int_1^{3/2} u_0^t dt.
\end{equation*}
Note that $u_0$ is still a solution of the homogenized equation \eqref{eq.ur.inBr} in $B_{1}$.
Then
\begin{equation*}
\begin{aligned}
    \| u_\e - u_0 \|_{L^2(B_1)}^2 & \le \int_1^{3/2} \| u_\e - u_0^t \|_{L^2(B_1)}^2 dt \\
    & \le C (\e_1 + \max_{1\le i\le n-1} e^{- c\e_i/\e_{i+1}}) \int_1^{3/2}( \| u_\e \|_{H^{1}(\partial B_t)}^2 + \| f\|_{L^2(B_t)}^2 )  dt \\
    & \le C (\e_1 + \max_{1\le i\le n-1} e^{- c\e_i/\e_{i+1}}) ( \| u_\e \|_{H^1(B_{3/2})}^2 + \| f \|_{L^2(B_{3/2})}^2 ) \\
    & \le C (\e_1 + \max_{1\le i\le n-1} e^{- c\e_i/\e_{i+1}}) ( \| u_\e \|_{L^2(B_{2})}^2 + \| f \|_{L^2(B_{2})}^2 ),
    \end{aligned}
\end{equation*}
where we have used the co-area formula and the Caccioppoli inequality.
\end{proof}

 The proof of Theorem \ref{thm.lip.est} relies on the following iteration lemma, which is a slight modification of \cite[Lemma 8.5]{shenan2017}.  
\begin{lemma}\label{shen.lem}
  Let $H(r)$ and $h(r)$ be two nonnegative continuous functions on the interval $(0,1]$ and let $\rho \in (0,1/4)$. Assume that
\begin{align}\label{Lip_cond_1}
\max_{r\leq t\leq 2r} H(t)\leq C_0H(2r), ~~~~~\max_{r\leq t, s\leq 2r} | h(t)-h(s)|\leq C_0H(2r),
\end{align}
for any $r\in [\rho, 1/2]$,  and  for some fixed $\theta \in (0,1/8)$,
\begin{align}\label{Lip_cond_2}
H( \theta  r) \leq \frac{1}{2} H(r) + C_0 (\omega (\rho/r)+\ga)\{ H(2r)+h(2r)\},
\end{align}
for any $r\in [\rho, 1/2]$, where 
$\omega$ is a nonnegative increasing function on $[0, 1]$, such that $\omega(0)=0$, and 
\begin{align}\label{Lip_cond_3}
\int_0^1 \frac{\omega(s)}{s} ds < \infty. \end{align}
Then exists $c_0$, depending only on $C_0$ and $\theta$,  such that if $\ga \le c_0 |\log \rho|^{-1}$, we have  \begin{align}\label{Lip_es_H}
\max_{\rho\leq r\leq 1} \{H(r)+h(r)\}\leq C \{H(1) +h(1)\} ,
\end{align}
where $C$ depends only on $C_0,  \theta,$ and $\omega$. 
\end{lemma}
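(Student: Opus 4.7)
The statement is a standard Shen-type iteration lemma with the added twist of a uniform error $\gamma$ in \eqref{Lip_cond_2}. My approach is to iterate \eqref{Lip_cond_2} along the geometric sequence $r_k = \theta^k$, use the doubling-type bound \eqref{Lip_cond_1} to convert values at arbitrary scales to values at the dyadic scales $r_k$, and then control the accumulated error via the Dini condition \eqref{Lip_cond_3} plus a direct bookkeeping of $\gamma$. The final smallness $\gamma \le c_0|\log\rho|^{-1}$ appears precisely because the number of iterations before we reach the cut-off scale $\rho$ is $K \simeq |\log\rho|/|\log\theta|$, and the cumulative contribution of $\gamma$ over these iterations must remain bounded.

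\textbf{Step 1: Reduce $h$ to $H$ across nearby scales.} First I would use \eqref{Lip_cond_1} inductively to show that for $\rho \le s \le r \le 2^N s$ with $s \ge \rho$ and $r \le 1$,
\[
|h(s)-h(r)| \le C_0 \sum_{j=0}^{N-1} H(2^{-j+1} r) \le C_0 N \max_{s \le t \le 2r} H(t).
\]
Since $\theta \in (0,1/8)$ is fixed, passing between the consecutive dyadic scales $r_k$ and $r_{k+1}=\theta r_k$ costs only a bounded number of doublings. Consequently we obtain both $H(r_{k+1}) \le C_1 H(r_k)$ (auxiliary monotonicity) and $|h(r_{k+1})-h(r_k)| \le C_1 H(r_k)$, with $C_1$ depending only on $C_0$ and $\theta$.

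\textbf{Step 2: Combined iteration for $G(r) := H(r)+h(r)$.} Plugging $r=r_k$ into \eqref{Lip_cond_2} and invoking \eqref{Lip_cond_1} to replace $H(2r_k),h(2r_k)$ with quantities comparable to $H(r_{k-1}),h(r_{k-1})$ (recalling $2r_k = 2\theta r_{k-1} < r_{k-1}$ and iterating doublings backward), I would obtain
\[
H(r_{k+1}) \le \tfrac{1}{2} H(r_k) + C_2(\omega(\rho/r_k)+\gamma)\, G(r_{k-1}).
\]
Combining with the $h$-estimate of Step 1 and absorbing everything into $G$, this yields
\[
G(r_{k+1}) \le G(r_k) + C_3(\omega(\rho/r_k)+\gamma)\, G(r_{k-1}),
\]
because the $\tfrac12 H(r_k)$ together with $h(r_{k+1}) \le h(r_k) + C_1 H(r_k)$ combines into $\le G(r_k)$ (the key is that the $1/2$ beats the added $C_1 H(r_k)$ after choosing constants; a standard weighted sum trick, assigning large weight to $h$, makes this work).

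\textbf{Step 3: Sum the errors.} Iterating the inequality of Step 2 and applying a discrete Gronwall argument gives
\[
\max_{0 \le k \le K} G(r_k) \le \exp\!\Bigl(C_3 \sum_{k=0}^{K}\bigl(\omega(\rho/r_k)+\gamma\bigr)\Bigr) G(1).
\]
For the $\omega$-sum, since $\rho/r_k = \rho\theta^{-k}$ is geometric, a comparison with $\int_0^1 \omega(s)/s\,ds$ (using $\omega$ increasing and $\theta$ fixed) yields $\sum_k \omega(\rho/r_k) \le C(\theta) \int_0^1 \omega(s)/s\,ds$, which is finite by \eqref{Lip_cond_3}. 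For the $\gamma$-sum, $\sum_{k=0}^{K} \gamma = (K+1)\gamma \le C|\log\rho|\,\gamma$, which is bounded once we choose $c_0$ so that $c_0 C_3/|\log\theta| \le 1$ and demand $\gamma \le c_0|\log\rho|^{-1}$.

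\textbf{Step 4: From dyadic to continuous scales.} Finally, given any $r \in [\rho,1]$, pick the unique $k$ with $r_{k+1} \le r < r_k$ and apply \eqref{Lip_cond_1} on $[r_{k+1},r_k]$ (which is contained in $[r_{k+1},2r_{k+1}\theta^{-1}]$, i.e., a bounded number of doublings) to pass from the dyadic bound of Step 3 to $G(r) \le C G(1)$ for the full range, establishing \eqref{Lip_es_H}.

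\textbf{Main obstacle.} The critical new point, relative to \cite[Lemma 8.5]{shenan2017}, is the treatment of the scale-independent $\gamma$-term. Unlike $\omega(\rho/r_k)$, which is summable unconditionally by the Dini assumption, the $\gamma$-contributions are identical at every scale and therefore their cumulative effect grows linearly in $K \sim |\log\rho|$. Balancing this requires the sharp smallness hypothesis $\gamma \le c_0|\log\rho|^{-1}$; making the constants in Steps 2–3 explicit enough to justify this bound is the only delicate part. The rest follows the well-known bootstrap pattern for Shen-type estimates.
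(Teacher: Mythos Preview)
Your approach is correct in spirit and leads to the same conclusion, but it is organized quite differently from the paper's argument, and there is one slip in Step~2 that you should fix.

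\textbf{Comparison with the paper.} The paper does not iterate along the geometric scales $r_k=\theta^k$. Instead it works with continuous integrals: it first telescopes the second inequality in \eqref{Lip_cond_1} to obtain
\[
h(a)\le C\Big\{h(1)+H(1)+\int_a^1\frac{H(r)}{r}\,dr\Big\},
\]
substitutes this into \eqref{Lip_cond_2}, divides by $r$ and integrates over $(\nu\rho,1)$. A Fubini step then produces the factor $\int_0^{1/\nu}(\omega(s)+\gamma)\,s^{-1}ds$ in front of $\int H(r)/r\,dr$, and the Dini condition \eqref{Lip_cond_3} together with $\gamma\le c_0|\log\rho|^{-1}$ makes this factor small enough to absorb. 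Your discrete Gronwall scheme is an equally valid route and is arguably closer to the original Avellaneda--Lin style iterations; the paper's integral argument has the advantage of never needing to set up a weighted combination of $H$ and $h$ and of handling the full continuous range without a separate Step~4.

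\textbf{The slip.} In Step~2 you write that the ``weighted sum trick'' works by ``assigning large weight to $h$.'' This is backwards. From $H(r_{k+1})\le\tfrac12 H(r_k)+\text{(error)}$ and $h(r_{k+1})\le h(r_k)+C_1H(r_k)$, forming $G=\mu H+h$ gives
\[
G(r_{k+1})\le\big(\tfrac{\mu}{2}+C_1\big)H(r_k)+h(r_k)+\mu\cdot\text{(error)},
\]
and you need $\tfrac{\mu}{2}+C_1\le\mu$, i.e.\ $\mu\ge 2C_1$. So the large weight must sit on $H$, not on $h$. With that correction your two-step recursion $G(r_{k+1})\le G(r_k)+C_3(\omega(\rho/r_k)+\gamma)G(r_{k-1})$ is valid; taking $M_k=\max_{j\le k}G(r_j)$ reduces it to a one-step recursion and the rest of Steps~3--4 goes through as you describe.
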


 We now prove Theorem \ref{thm.lip.est}, deferring the proof of Lemma \ref{shen.lem} until the end of this section.

 \begin{proof}[Proof of Theorem \ref{thm.lip.est}]   
We prove the theorem in a more general setting, where the coefficient matrix $A= A(x,y_1,\cdots,y_n)$ satisfies  the assumptions \eqref{as.ellipticity}, \eqref{as.periodicity}, \eqref{ass.Ax} and \eqref{ass.DxAx} with some $L_0 \ge 0$. The constants throughout the proof are allowed to depend on $L_0$. 

Let $v_\e \in H^1(B_1)$ be a weak solution to \begin{align} \label{pro-thm12-1}
 -\nabla\cdot A(x,x/\e_1,\cdots,x/\e_n)\nabla v_\e = f   \quad \text{ in } B_1 
 \end{align} 
  with $f\in L^p(B_1), p>d$. Under the scale-separation condition \eqref{cond.SS4Lip}, we shall prove that 
 \begin{equation}\label{lip-vx}
        \| \nabla v_\e \|_{L^\infty(B_{1/2})} \le C\big( \| v_\e \|_{L^2(B_1)} + \| f \|_{L^p(B_1)} \big).
    \end{equation}
  Note that if   $A$ is independent of $x$, the conditions \eqref{ass.Ax} and \eqref{ass.DxAx} are reduced to \eqref{as.analyticity}. Thus the Lipschitz estimate \eqref{lip-vx} in the general setting implies \eqref{mian-re-thm2} in Theorem \ref{thm.lip.est}.

To verify \eqref{lip-vx}, by translation and dilation it suffices to verify  
 \begin{equation*} 
    |\nabla v_\e (0)| \le C\bigg( \fint_{B_1}  |\nabla v_\e|^2 \bigg)^{1/2} +\bigg( \fint_{B_1}  |f|^p \bigg)^{1/p},
\end{equation*}
which, by a simple blow-up argument, is a direct consequence of the large-scale estimate
\begin{align}\label{lip,largescale.en}
\bigg(\fint_{B_{\e_n}} |\na v_\e|^2\bigg)^{1/2}\leq C \bigg\{ \bigg(\fint_{B_1}
|  \nabla v_{\va}|^2\bigg)^{1/2} +
\bigg(\fint_{B_1} |f |^p\bigg)^{1/p}\bigg\}.
\end{align}
Let us next prove \eqref{lip,largescale.en} by induction. For $n=1$, the above estimate has been derived in \cite{NSX20}.   
 Assume that \eqref{lip,largescale.en} holds for $n-1$ scales, we show it is true for $n$ scales.

Let $v_\e$ be a weak solution of \eqref{pro-thm12-1}. Thanks to Lemma \ref{lem.appro}, there exists a matrix $\overline{A}(x)$ such that for any $\e_1 < r\le 1$, there exists a solution $v_0$ to $-\nabla\cdot \overline{A}(x) \na v_0 = f$ in $B_r$, such that \eqref{lem.appro.re} holds.     
For $0<r\leq 1$, we define  
  \begin{align}
  \label{defh}
  \begin{split}
  H(r;v_\e)=\frac{1}{r}\inf_{P\in \mathcal{P} }\bigg\{\bigg(\fint_{B_r}|v_\e-P|^2\bigg)^{1/2}
  +r^{2-d/p} |\na P| \bigg\}
   +r\bigg(\fint_{B_r}|f|^p\bigg)^{1/p},
  \end{split}
  \end{align} 
where   $\mathcal{P}$  denotes the linear space of affine functions. 
Note that  $\overline{A}(x)$ is Lipschitz with a constant independent of $\e$. By the $C^{1,\alpha}$  regularity of $v_0$, there exists some $\theta \in (0,1/8)$ such that (see \cite[Lemma 6.1]{NSX20})
$$ 
H(\theta r;v_0) \le  \frac{1}{2}H(r;v_0). 
$$
Fix such $\theta$. We deduce that  
      \begin{align}\label{vc-Lip_cond_1}
      \begin{split}
      H(\theta r;v_\e)&\leq \frac{1}{\theta r}\left(\fint_{B_{\theta r}}|v_{\va}-v_0|^2\right)^{1/2}+ H(\theta r; v_0)\\
       &\leq \frac{C}{ r}\left(\fint_{B_{r}}|v_{\va}-v_0|^2\right)^{1/2} + \frac{1}{2} H(r;v_0)\\
      &\leq C\bigg\{ \Big( \frac{\e_1}{r} \Big)^{1/2} + \max_{1\le i\le n-1} e^{-c\e_i/\e_{i+1}} \bigg\} \bigg\{\bigg( \frac{1}{r} \fint_{B_{2 r}}|v_{\va}-b|^2\bigg)^{1/2} +r \bigg(\fint_{B_{2r}} | f |^2\bigg)^{1/2}\bigg\}
      \\
      & \qquad +\frac{1}{2}H(r;v_0)
     \end{split}
    \end{align}
for any $b\in \mathbb{R}$, where we have used Lemma \ref{lem.appro}  in the last inequality above. 
 Let $P_r$  be the affine function  achieving the infimum in \eqref{defh}, and $h(r)=|\nabla P_r|$. 
By the Poincar\'{e}  inequality, we deduce that  
\begin{align*}
\begin{split}
    \inf_{b\in \mathbb{R }}  \frac{1}{r} \left(\fint_{B_{2 r}}|v_{\va}-b|^2\right)^{1/2} & \leq H(2r;v_\e)+\inf_{b\in \mathbb{R }} \frac{1}{r} \left( \fint_{B_{2r}}|P_{2r}-b|^2\right)^{1/2} \\
    &\leq H(2r;v_\e)+Ch(2r).
    \end{split}
\end{align*}
 This, combined with \eqref{vc-Lip_cond_1}, implies that $H(r)=H(r;v_\e)$ and $h(r)$ satisfy the assumption \eqref{Lip_cond_2}  of Lemma \ref{shen.lem} with 
$\omega(s) = s^{1/2} $ and  $\gamma=\max_{1\le i\le n-1} e^{-c\e_i/\e_{i+1}}.$ 
 
On the other hand, for $t\in[r, 2r]$, it is obvious that $H(t;v_\e)\leq CH(2r;v_\e)$. Moreover,  
\begin{align*}
|h(t)-h(s)| &\le |\nabla (P_t-P_s)|\leq \frac{C}{r} \left( \fint_{B_r}|P_t-P_s|^2\right)^{1/2}\\
&\leq \frac{C}{t} \left( \fint_{B_r}|v_{\va}-P_t|^2\right)^{1/2}+\frac{C}{s} \left( \fint_{B_s}|v_{\va}-P_s|^2\right)^{1/2}\\
&\leq C\{H(t)+H(s)\} \\
&\leq CH(2r)
\end{align*} 
 for all $t, s\in[r, 2r]$.
Therefore,  $H(r)$ and $h(r)$ satisfy the condition \eqref{Lip_cond_1}.

Finally, by the assumption \eqref{cond.SS4Lip},  we have 
$\frac{\e_i}{\e_{i+1}} \ge M \log \log \e_1^{-1}$  and therefore 
$$\gamma   \le e^{-c M \log\log \e_1^{-1}}  = |\log \e_1|^{-cM} \le c_0  |\log \e_1|^{-1},$$ 
where the last inequality holds provided that $M$ is large enough and $\e_1 < 1/10$.
Thanks to Lemma \ref{shen.lem}, we get the uniform Lipschitz estimate for $v_\e$ down to the scale $\e_1$,  
\begin{align}\label{lip,largescale.e1}
\begin{aligned}
\bigg(\fint_{B_{\e_1}} |\na v_\e|^2\bigg)^{1/2} & \leq C(H(2\e_1) + h(2\e_1)) \\
& \le C(H(1) + h(1)) \\
& \le C \bigg\{ \bigg(\fint_{B_1}
|  \nabla v_\e|^2\bigg)^{1/2} +
\bigg(\fint_{B_1} |f |^p\bigg)^{1/p} \bigg\},
\end{aligned}
\end{align}
where we have used the Caccioppoli and Poincar\'{e} inequalities.

To prove \eqref{lip,largescale.en}, we perform a rescaling argument and use the inductive assumption.  Let $\widetilde{u}_\e=u_\e(\e_1 x)$ and $ \widetilde{f} =\e_1^2 f(\e_1 x)$. It follows that 
$$-\nabla\cdot E(x,  x/\e'_1,\cdots,  x/\e'_{n-1})\nabla \widetilde{u}_\e = \widetilde{f},$$ where  $\e'_i=\e_{i+1}/\e_1, i=1,2,\cdots,n-1,$ and  $ E(x,  y_2,\cdots,  y_{n})= A(\e_1x, x, y_2,\cdots,   y_{n})$. Since $$ \sup_{x\in \R^d}|\bfD_{n-1}^\ell E(x,\cdot)|
 \le \sup_{x\in \R^d} |\bfD_{n}^\ell A(x,\cdot)| \le C_0\Lambda_0^\ell \ell!,
 $$ and 
\begin{align*}
\sup_{x\in \R^d}|\bfD_{n-1}^\ell \na_x E(x,\cdot)|  
&\le \sup_{x\in \R^d}    \e_1|\bfD^\ell_{n,n-1}  \na_x A(x,\cdot)| +|   \bfD^\ell_{n,n-1}\na_{y_1} A(x,\cdot)|  \\
&  \le\e_1\sup_{x\in \R^d} |\bfD_{n}^\ell \na_xA(x,\cdot)| + \sup_{x\in \R^d}|\bfD_{n}^{\ell+1} A(x,\cdot)|\\
&\le ( \e_1 L_0+C_0) \Lambda_0^{\ell+1} (\ell+1)!\\
&\le ( \e_1 L_0+C_0) \Lm_0  (2\Lambda_0)^\ell \ell!,
\end{align*}
where $ \bfD_{n,m} = (\nabla_{y_{n-m+1}}, \cdots, \nabla_{y_n}).$
We know that $E$  satisfies  \eqref{as.ellipticity}, \eqref{as.periodicity}, \eqref{ass.Ax} and  \eqref{ass.DxAx} (the constants are slightly different). 
Moreover,  by \eqref{cond.SS4Lip} for $(\e_1,\cdots, \e_n)$, 
\begin{align*}
 \frac{\e'_i}{\e'_{i+1}} =  \frac{\e_{i+1}}{\e_{i+2}} \ge M \log \log \e_{i+1}^{-1} \ge M \log \log \e_1 \e_{i+1}^{-1} = M \log \log {\e'_i}^{-1}.
\end{align*}
Thus the new scales $(\e'_1,\cdots,\e'_{n-1})$ also satisfy the scale-separation condition \eqref{cond.SS4Lip}.   
The inductive assumption implies that   
\begin{equation*}
    \bigg( \fint_{B_{\e'_{n-1}}}  |\nabla \widetilde{u}_\e|^2 \bigg)^{1/2} \le C \bigg\{\bigg( \fint_{B_1}  |\nabla \widetilde{u}_\e|^2 \bigg)^{1/2} + 
\bigg(\fint_{B_1} |\widetilde{f}|^p\bigg)^{1/p}\bigg\}.
\end{equation*}
By rescaling back to $v_\e$ it follows that
\begin{equation*}
    \bigg( \fint_{B_{\e_n}}  |\nabla v_\e|^2 \bigg)^{1/2} \le C \bigg\{\bigg( \fint_{B_{\e_1}}  |\nabla v_\e|^2 \bigg)^{1/2} +  \e_1 
\bigg(\fint_{B_{\e_1}}| f|^p \bigg)^{1/p}\bigg\},
\end{equation*}
which, combined with \eqref{lip,largescale.e1}, gives \eqref{lip,largescale.en}. The proof is  complete. 
  \end{proof}

We finally provide the proof of Lemma \ref{shen.lem}.
 \begin{proof}[Proof of Lemma \ref{shen.lem}]
By the second inequality of \eqref{Lip_cond_1}, we have 
\begin{align*}
h(r) \le h(2r) +C_0 H (2r) 
\end{align*} for any $ \rho\le r\le 1/2,$ from which we deduce that  
\begin{align*}
 \int_a^{1/2} \frac{h(r)}{r} dr \le \int_{2a}^{1} \frac{h(r)}{r} dr + C_0\int_{2a}^{1} \frac{H(r)}{r} dr,
\end{align*} 
for any $ \rho \le a \le 1/4.$ This implies that 
\begin{align*}
 \int_a^{2a} \frac{h(r)}{r} dr &\le \int_{1/2}^{1} \frac{h(r)}{r} dr + C_0\int_{2a}^{1} \frac{H(r)}{r} dr,\\
 & \le  C_0 (\log 2)  (h(1)+H(1))  + C_0\int_{2a}^{1} \frac{H(r)}{r} dr.
\end{align*} 
By this and the second inequality of \eqref{Lip_cond_1}, 
\begin{align*}
    h(a) &\le (\log 2)^{-1} \int^{2a}_{a} \frac{|h(r)-h(a)| +h(r)} {r}  dr\\
 & \le  C_0 (\log 2)^{-1}\bigg\{ H(2a) +    h(1)+H(1)   +  \int_{2a}^{1} \frac{H(r)}{r} dr\bigg\}.
 \end{align*}
Moreover, by the first inequality of \eqref{Lip_cond_1},
\begin{align}\label{pro-shen-lem-1}
\begin{split}
H(2a) & \le  C_0 (\log 2)^{-1} \int^1_{2a} \frac{H(r)}{r} dr,  \quad\text{ if }   \rho \le  a\le 1/8,\\
H(2a) & \le  C_0^2  H(1), \quad\text{ if }   1/8 \le  a\le 1/4.
\end{split}
\end{align}
Therefore we obtain that 
 \begin{align}  \label{pro-shen-lem-2}
 h(a)  \le  C_1 \bigg\{  h(1)+H(1)   +  \int_{a}^{1} \frac{H(r)}{r} dr\bigg\},
\end{align} where $C_1$ depends only on $C_0.$
 This together with \eqref{Lip_cond_2} and \eqref{pro-shen-lem-1} implies that 
 \begin{align*}  
 H(\theta r)  \le  \frac{1}{2} H(r) + C_2(\omega(\rho/r) +\ga)  \{ h(1)+H(1)\}   +  C_2 (\omega(\rho/r) +\ga)  \int_r^{1} \frac{H(t)}{t} dt,
\end{align*} 
where $C_2$ depends only on $C_0$ and $\theta.$
By using \eqref{Lip_cond_3}, dividing the last inequality by $r$ and integrating over $r\in (\nu \rho,1)$ for some $\nu > 1$, we  deduce that
\begin{align*} 
\begin{split}
 \int_{\nu \theta \rho} ^\theta \frac{ H( r)} {r} dr&\le  \frac{1}{2} \int_{\nu\rho} ^1 \frac{ H( r)} {r} dr + \{C_\nu +C_2 \ga \log \rho^{-1}\}  \{ h(1)+H(1)\}  \\
 & \quad+  C_2\int_{\nu \rho} ^1  \frac{\omega(\rho/r) +\ga}{r}  \bigg\{\int_r^{1} \frac{H(t)}{t} dt\bigg\} dr. 
 \end{split}
\end{align*}  
Note that 
\begin{align*}  
    \int_{\nu \rho} ^1  \frac{\omega(\rho/r) +\ga }{r}  \bigg\{\int_r^{1} \frac{H(t)}{t} dt\bigg\} dr
   &= \int_{\nu \rho} ^1 \frac{ H(t) }{t}  \bigg\{\int_{\rho/t}^{1/\nu} \frac{\omega(s) +\ga}{s}    ds\bigg\}   dt\\
   & \le \big\{(8C_2)^{-1}+   \ga \log \rho^{-1} \big\} \int_{\nu \rho} ^1 \frac{ H(t) }{t}  dt, 
\end{align*} 
for sufficiently large $\nu = \nu(\omega)$, where we have used the assumption \eqref{Lip_cond_3}. Here the value of $\nu$ depends on $\omega$ and $C_2$, and will be fixed hereafter. Setting $c_0 = (8C_2)^{-1}$ and using the assumption $\gamma < c_0 |\log \rho|^{-1}$, 
we obtain from the last two displayed inequalities that 
\begin{align*}  
\begin{split}
 \int_{\nu \theta \rho} ^\theta \frac{ H( r)} {r} dr&\le  \frac{3}{4} \int_{\nu\rho} ^1 \frac{ H( r)} {r} dr + \{C_\nu +C_2\}  \{ h(1)+H(1)\} . 
 \end{split}
\end{align*} 
Then it follows that 
\begin{equation*}
    \int_{\nu \theta \rho}^1 \frac{H(r)}{r} \le 4\int_\theta^1 \frac{H(r)}{r} dr + 4\{C_\nu +C_2\}  \{ h(1)+H(1)\}.
\end{equation*}
In view of \eqref{pro-shen-lem-1} and \eqref{pro-shen-lem-2}, this implies \eqref{Lip_es_H} for $\nu \theta \rho \le r \le 1$. Finally, for $\rho \le r\le \nu \theta \rho$, the desired estimate follows from the case $r = \nu \theta \rho$ and  \eqref{Lip_cond_1}.
\end{proof}

\section{Counterexamples}\label{sec.egs}

\subsection{Necessity of scale separation}\label{sec.eg1}
In our main result, we have obtained a fast convergence rate even under a mild (logarithmic) scale-separation condition.
Then one may naturally ask if it is possible to find a suitable constant effective matrix and establish a quantitative convergence rate or qualitative homogenization theorem for the multiscale problem without any separation condition. In the following, we will give a counterexample showing that a multiscale problem with non-separated scales may not converge to an effective problem with constant coefficients.

Let $d = 1$ and $n = 2$. Consider the second-order elliptic equation in one dimension (ODE)
\begin{equation}\label{eq.1d.eg}
    -\frac{d}{dx} a_\e(x) \frac{d u_\e}{dx} = f,
\end{equation}
where
\begin{equation*}
    a_\e(x) = \frac{1}{1 + 2\alpha \sin(2\pi x/\e_1) \sin(2\pi x/\e_2)},
\end{equation*}
and $0< \alpha < 1/2$. We let $\e_1 = \e \ll 1$ and $\e_2 = \e/(1+\e) < \e_1$. Thus, $a_\e$ has two oscillating 1-periodic scales which are not separated and actually are almost resonant. In the following, we will study the effect of multiplying two almost resonant waves (with high frequency) in homogenization.

By the trigonometric identity
\begin{equation*}
    \sin \theta \sin \gamma = \frac12 (\cos(\theta -\gamma) - \cos(\theta + \gamma)),
\end{equation*}
we see that
\begin{equation*}
    \frac{1}{a_\e(x)} = 1 +  \alpha ( \cos(2\pi x) + \cos(2\pi x/\e_3)), \quad \text{where } \e_3 = \frac{\e}{2+\e}.
\end{equation*}
Note that $\e_3 \le \e$. Thus $a_\e$ is converted into a function with one rapidly oscillating scale and the other component $\cos(2 \pi x)$ does not oscillate rapidly.
As $\e \to 0$, it is well known that the homogenized coefficient for the one-dimensional equation is
\begin{equation*}
    \bar{a} = ( \lim_{\e \to 0} a_\e(x)^{-1} )^{-1} = \frac{1}{ 1 +  \alpha  \cos(2\pi x) },
\end{equation*}
where the limit is understood as the weak limit in $L^2$.
This homogenized coefficient $\bar{a} = \bar{a}(x)$ relies on the  variable $x$. In fact, by the standard homogenization theory (or a straightforward computation for ODE), one can show that the solution $u_\e$ of \eqref{eq.1d.eg} converges to the solution of
\begin{equation}\label{eq.1d.effective}
    -\frac{d}{dx} \bar{a}(x) \frac{d \bar{u}}{dx} = f,
\end{equation}
with a convergence rate of $O(\e)$; see e.g. \cite{NSX20}. Clearly, the equation \eqref{eq.1d.effective} generally cannot be further approximated by any equations with constant coefficients.

The above example illustrates the necessity of the scale-separation condition for the existence of an algebraic convergence rate in general multiscale homogenization problems. But recall that in some special scenarios without the scale-separation conditions, it is still possible to obtain algebraic convergence rates , such as under a Diophantine condition \cite{Koz78, Shen15} (also see \cite{AGK16,SZ18}) or with a reperiodization technique \cite{NZ23}. Nevertheless, the method for these special scenarios cannot be extended to the general cases.

\begin{remark}
    Note that in the previous example if we consider initially $\e_2 = \e/(1+\beta \e) < \e_1 = \e$ with a fixed $\beta > 1$, eventually we will find $\bar{a}(x) = (1 + \alpha \cos(2\pi \beta x))^{-1}$, which is still rapidly oscillating if $\beta$ is relatively large. This provides an example showing that a two-scale problem \eqref{eq.1d.eg} quantitatively converges to a one-scale problem which is still rapidly oscillating. This phenomenon turns out to be generic. In fact, in \cite{NZ24}, we have shown that an elliptic equation with $n$ oscillating periodic scales can always be quantitatively approximated by another elliptic equation with at most $n-1$ oscillating scales. The above example shows that the result in \cite{NZ24} is the best we can expect in general.
\end{remark}

\subsection{Optimality of exponential rate}\label{sec.eg2}

We show that the exponential parts of the convergence rate $\exp(-c\e_i/\e_{i+1})$ in our main result are optimal in the sense that it cannot be true if any of them is replaced by $\exp(-C \e_i/\e_{i+1} )$ for some large $C > 0$.
Indeed, for any $\beta_0 \in \N$, we can find $\beta \in (\beta_0, \beta_0+1)$ and two scales $\e_1 = \e$ and $\e_2 = \e /\beta<\e_1$, and construct an analytic coefficient such that the solution to some elliptic equation (see \eqref{eq.1d.BVP} below) satisfies
\begin{equation*}
    \| u_\e - \bar{u} \|_{L^2} \ge c e^{-C\beta},
\end{equation*}
for any solution $\bar{u}$ of the equations with constant coefficient, where the constant $c, C>0$ are independent of $\e$ and $\beta$.

Let $\beta_0 $ be some large integer and $\beta = \beta_0 + \e \in (\beta_0, \beta_0+1)$. This choice of $\beta$ is to avoid the possible reperiodization argument introduced in \cite{NZ23}, which leads to a fast convergence rate. Consider a coefficient in the form of
\begin{equation*}
    a(y_1, y_2) = \frac{1}{1 + b_1(y_1)  b_2(y_2)}.
\end{equation*}
Here, we let
\begin{equation*}
    b_1(y_1) =  \frac{\beta_0 !}{ \beta_0^{\beta_0}} \sin(\beta_0 2\pi y_1), \qquad b_2(y_2) = \sin(2\pi y_2).
\end{equation*}
It is easy to verify that $b_1$ is analytic with $|\frac{d^k b_1}{d y_1^k}| \le (2\pi)^k k!$ for any $k \ge 0$. The same estimate holds also for $b_2$. Therefore, $a(y_1,y_2)$ is analytic in both $y_1$ and $y_2$ with quantitative estimates on all the derivatives. Now consider
\begin{equation*}
    a_\e(x) = a(x/\e, \beta x/\e) = \frac{1}{1 + b_1(x/\e)  b_2(\beta x/\e)},
\end{equation*}
and let $u_\e$ be the solution of
\begin{equation}\label{eq.1d.BVP}
    \left\{
    \begin{aligned}
        & -\frac{d}{dx} a_\e(x) \frac{d u_\e}{dx} = 1 \quad \text{in } (0,1),\\
        & u_\e(0) = u_\e(1) = 0.
    \end{aligned}
    \right.
\end{equation}
The explicit solution of the above ODE is given by
\begin{equation*}
    u_\e(x) = -\int_0^x \frac{t + p_\e}{a_\e(t)} dt,
\end{equation*}
for some constant $p_\e$ determined by $u_\e(1) = 0$. Using the trigonometric identity
\begin{equation*}
    \sin \theta \sin \gamma = \frac12 (\cos(\theta -\gamma) - \cos(\theta + \gamma)),
\end{equation*}
and $\beta - \beta_0 = \e$, we have
\begin{equation*}
    \frac{1}{\alpha_\e(t)} = 1 + \frac{\beta_0 !}{ \beta_0^{\beta_0}} \sin(\beta_0 2\pi t/\e) \sin(\beta 2\pi t/\e) = 1 + \frac12 \frac{\beta_0 !}{ \beta_0^{\beta_0}} 
 ( \cos(2\pi t) - \cos((\beta+\beta_0) 2\pi t/\e)).
\end{equation*}
Thus
\begin{equation*}
\begin{aligned}
    u_\e(x) & = -\int_0^x (t+p_\e)(1  + \frac12 \frac{\beta_0 !}{\beta_0^{\beta_0}} 
 ( \cos(2\pi t) - \cos((\beta+\beta_0) 2\pi t/\e) ) )dt  \\
 & = -\int_0^x (t+p_\e)(1  + \frac12 \frac{\beta_0 !}{ \beta_0^{\beta_0}} 
  \cos(2\pi t) )dt + O(\e/\beta).
\end{aligned}  
\end{equation*}
Moreover, using $u_\e(1) = 0$, one observes that $p_\e = -1/2 + O(\e/\beta)$. Thus, we get
\begin{equation*}
    u_\e(x) = \frac12 x(1-x) -  \frac{\beta_0 !}{ \beta_0^{\beta_0}} \frac{1}{8\pi^2} ( \pi (2x+1) \sin(2\pi x) + \cos(2\pi x) - 1 ) + O(\e/\beta).
\end{equation*}

Now note that any solution $\bar{u}$ of
\begin{equation*}
    \left\{
    \begin{aligned}
        & -\frac{d}{dx} \bar{a} \frac{d \bar{u}}{dx} = 1 \quad \text{in } (0,1), \\
        & \bar{u}(0) = \bar{u}(1) = 0,
    \end{aligned}
    \right.
\end{equation*}
with constant coefficient $\bar{a}>0$ is given by
\begin{equation*}
    \bar{u}(x) = \frac{1}{2\bar{a}} x(1-x).
\end{equation*}
Thus
\begin{equation*}
\begin{aligned}
    \inf_{\bar{a}>0} \| u_\e - \bar{u} \|_{L^2(0,1)} & \ge  \frac{\beta_0 !}{ \beta_0^{\beta_0}} \frac{1}{8\pi^2} \inf_{k\in \R} \| \pi (2x+1) \sin(2\pi x) + \cos(2\pi x) - 1 - kx(1-x) \|_{L^2(0,1)} - \frac{C\e}{\beta} \\
    & \ge c_1 \frac{\beta_0 !}{ \beta_0^{\beta_0}} - \frac{C\e}{\beta_0},
\end{aligned}
\end{equation*}
where $c_1>0$ and $C>0$ are absolute constants independent of $\e$ and $\beta$. The lower bound $c_1>0$ is guaranteed by the fact that the trigonometric function $\pi (2x+1) \sin(2\pi x) + \cos(2\pi x) - 1$ has a positive distance from the linear span of $x(1-x)$ in the Hilbert space $L^2(0,1)$. This lower bound can even be computed explicitly with a careful calculation.

Now 
recall that the Stirling's formula implies
\begin{equation*}
    \frac{\beta_0 !}{ \beta_0^{\beta_0}} \ge e^{-\beta_0}.
\end{equation*}
We assume $\e$ is small and $\beta_0 \le M |\log \e|$ for sufficiently large $M$. This is the typical case that $\exp(-c\e_1/\e_2)$ dominates the convergence rate and in this case
\begin{equation*}
    c_1 e^{-\beta_0} \ge \frac{2C\e}{\beta_0}.
\end{equation*}
As a result, we obtain
\begin{equation*}
    \inf_{\bar{a}>0} \| u_\e - \bar{u} \|_{L^2(0,1)} \ge \frac12 c_1 e^{-\beta_0} \ge ce^{-\beta},
\end{equation*}
for some $c>0$ independent of $\e$ and $\beta$. This proves the optimality of the exponential ratio terms in the convergence rate.

\subsection{Analyticity helps in a toy problem}\label{sec.eg3}
We use a toy example showing how the analyticity of coefficients helps improve the convergence rates. In view of the 1-D example in the previous subsections, it is reasonable to consider the averaging process of the product of two scalar functions oscillating at different scales. Let $f:\R\to \R$ be a 1-periodic analytic function and $g: \R \to \R$ be a 1-periodic locally $L^2$ function.
For $\beta \in (1,\infty)$, define
\begin{equation*}
    h_\beta(x/\e) = f(\frac{x}{\beta \e}) g(\frac{x}{\e}).
\end{equation*}
Clearly, $h_\beta$ is quasiperiodic. Let $\bar{h}_\beta$ be the mean of $h_\beta$. If we fix $\beta$, then as $\e \to 0$, $h_\beta$ converges to $\bar{h}_\beta$ in the weak sense. However, the convergence rate can be arbitrarily slow depending on the rationality/irrationality and magnitude of $\beta$. On the other hand, if $\beta$ is relatively large (say $\beta = |\log \e|$), we may use the idea of reiterated averaging/homogenization to show that $h_\beta$ is close to $\bar{f} \bar{g}$ in the weak sense, namely, we can quantify
\begin{equation*}
    \rho(\e,\beta) := \bigg|\int_0^1 h_\beta(x/\e) dx - \bar{f} \bar{g} \bigg|.
\end{equation*}
The natural question is to estimate this error. A typical idea of reiterated homogenization is to average $g$ first as it oscillates much faster than $f$, due to $\beta \gg 1$, and then average $f$. That is, we write
\begin{equation*}
    \rho(\e,\beta) \le \bigg|\int_0^1 f(\frac{x}{\beta \e}) g(\frac{x}{\e}) dx - \int_0^1 f(\frac{x}{\beta \e}) \bar{g} dx \bigg| + \bigg| \int_0^1 f(\frac{x}{\beta \e}) \bar{g} dx -  \bar{f} \bar{g} \bigg|,
\end{equation*}
and estimate these two errors correspondingly.
This will give an error of $ \beta^{-1} + \beta \e$ in general. 

In the following, we will use the analyticity of $f$ to sharpen this error to $\beta \e + e^{-c\beta}$, which is optimal in general.
Consider the Fourier series of $f$ truncated at $k > 0$ (to be determined):
\begin{equation*}
    f_k(y) = \sum_{|j| \le k } \hat{f}_j e^{-2\pi i jy},
\end{equation*}
where $\hat{f}_j = \int_{0}^1 f(y) e^{-2\pi i jy} dy$. Due to the real analyticity of $f$, we have
\begin{equation*}
    \| f - f_k \|_\infty \le Ce^{-ck},
\end{equation*}
for some $C$ and $c>0$ depending on $f$. Thus,
\begin{equation*}
\begin{aligned}
    \rho(\e,\beta) & \le \bigg| \int_0^1 \sum_{|j| \le k } \hat{f}_j e^{-2\pi i jx/(\beta \e)} \sum_{m\in \Z} \hat{g}_m e^{-2\pi i m x/\e}  - \bar{f} \bar{g} \bigg| + \bigg| \int_0^1 (f-f_k)(x/(\beta \e)) g(x/\e)  \bigg| \\
    & \le \bigg| \sum_{0\le |j|\le k, m\in \Z\setminus \{0\} } \hat{f}_j \hat{g}_m \int_0^1 e^{-2\pi i (\frac{j}{\beta} + m) x/\e } dx \bigg| \\
    & \qquad + \bigg| \sum_{0<  |j|\le k } \hat{f}_j \hat{g}_0 \int_0^1 e^{-2\pi i (\frac{j}{\beta}) x/\e } dx \bigg| + Ce^{-ck} \| g \|_2.
\end{aligned}
\end{equation*}
We notice that the worst term in the summation above is the one when $\frac{j}{\beta} + m$ is close to zero, which means those oscillating frequencies of $f$ and $g$ are almost resonant and canceled, leading to slow convergence. To avoid this situation, we can restrict $j$ such that $|j/\beta| \le 1/2$. Since $m$ is always a nonzero integer, we must have $|j/\beta + m| \ge |m| - 1/2 \approx |m|$. Hence we can handle all the nonresonant estimates of the summation if $k = [\beta/2]$. On the other hand, the real analyticity guarantees the tail error (possibly resonant terms) is exponentially small.

Precisely, we choose $k = [\beta/2]$. Then $|\frac{j}{\beta} + m| \simeq |m| $ for $|j| \le k$ and  $m \neq 0$. Hence, it is easy to verify that
\begin{equation*}
    \bigg| \int_0^1 e^{-2\pi i (\frac{j}{\beta} + m) x/\e } dx \bigg| \le \frac{C\e}{|m|} \quad \text{for any } m \neq 0.
\end{equation*}
Similarly, for $0<|j| \le k$,
\begin{equation*}
    \bigg| \int_0^1 e^{-2\pi i (\frac{j}{\beta} ) x/\e } dx \bigg| \le \frac{C\beta \e}{|j|}.
\end{equation*}
It follows that
\begin{equation*}
\begin{aligned}
    \rho(\e,\beta) & \le C\e \sum_{|j|\le [\beta/2]} |\hat{f}_j| \sum_{m\neq 0} |m|^{-1} |\hat{g}_m| +  C\beta \e \sum_{0<|j|\le [\beta/2]} |j|^{-1} |\hat{f}_j| |\hat{g}_0| + Ce^{-c\beta/2} \| g \|_2 \\
    & \le C(\beta \e + e^{-c\beta/2}) \| g\|_2.
\end{aligned}
\end{equation*}
The above argument explains how the real analyticity helps improve the error bound in a simple averaging process. In homogenization of elliptic equations, the mapping from coefficients to the solutions is much more complicated and how the analyticity plays a role is far from obvious.

\bibliographystyle{abbrv}
\bibliography{ref}
\end{document}